%% file: main.tex
\documentclass[twoside,11pt]{article}
\usepackage{float}
\usepackage[preprint]{jmlr2e}
\usepackage[T1]{fontenc}
\usepackage{times}
\usepackage{amsmath,mathtools,bm}
\usepackage{booktabs,array,multirow,longtable}
\usepackage{enumitem,microtype,xcolor}
\usepackage{needspace,placeins}
\usepackage{listings}
\hypersetup{hidelinks,pdftitle={The Manifold Hypothesis under Unknown Gaussian Noise: Conditional Certificates and Consistent Dimension Estimation},pdfauthor={U Jin Choi},pdfsubject={Manuscript prepared for JMLR}}
\newtheorem{assumption}[theorem]{Assumption}
\allowdisplaybreaks[2]
\ShortHeadings{The Manifold Hypothesis under Gaussian Noise}{Choi}
\firstpageno{1}
\newcommand{\R}{\mathbb{R}}
\newcommand{\E}{\mathbb{E}}
\newcommand{\Pp}{\mathbb{P}}
\newcommand{\tr}{\operatorname{tr}}
\newcommand{\rank}{\operatorname{rank}}
\newcommand{\dist}{\operatorname{dist}}
\newcommand{\op}{\mathrm{op}}
\newcommand{\cN}{\mathcal{N}}

\newcommand{\cR}{\mathcal{R}}
\newcommand{\cT}{\mathcal{T}}

\newcommand{\cE}{\mathcal{E}}
\newcommand{\cB}{\mathcal{B}}
\newcommand{\cI}{\mathcal{I}}
\newcommand{\cP}{\mathcal{P}}
\newcommand{\cA}{\mathcal{A}}
\newcommand{\muc}{\mu^{\circ}}

\begin{document}
\raggedbottom
\title{The Manifold Hypothesis under Unknown Gaussian Noise:\\
Conditional Certificates and Consistent Dimension Estimation}
\author{\name U Jin Choi\\
\addr Department of Mathematical Sciences\\
Korea Advanced Institute of Science and Technology\\
Daejeon 34141, Republic of Korea}
\maketitle
\begin{abstract}
We study what noisy data can establish about the Manifold Hypothesis under
explicit identification and regularity conditions. A population residual
certificate combines independent-view localization, Gaussian concentration,
membership uncertainty, and population transfer. Existing rectifiability
criteria then yield a covered-scale consequence. For a local smooth manifold
with positive H\"older density, the actual-ball covariance limit identifies
the spectral crossing with geometric dimension. We prove almost-sure eventual
recovery under repeated observations. Reusing accurate localization averages
improves the sufficient point-sample condition from $Nr^{d+4}\gg\log N$ to
$Nr^d\gg\log N$, with replication $kr^2\gg\log N$. A two-mass certificate
controls incorrect geometric-dimension emissions under declared class bounds.
For single observations with unknown Gaussian noise, affine-support or known
coordinate-bound restrictions provide noise intervals and consistent Gaussian
correlation-dimension estimators. Ahlfors regularity identifies this exponent
with Hausdorff dimension and with the geometric dimension of a homogeneous
smooth class. Exact Cantor calculations delineate the limits of integer
spectral counts and adjacent-radius slopes. We credit established local PCA,
rectifiability, concentration, binomial inference, and deconvolution results
before specifying our constructions. Reproducible experiments distinguish
point estimation, finite-scale coverage, and certificate emission.
\end{abstract}
\begin{keywords}
Manifold Hypothesis, intrinsic dimension, Gaussian deconvolution,
local covariance, geometric certification
\end{keywords}
\section{Introduction}
\label{sec:intro}
A small estimated dimension does not by itself imply a manifold, rectifiable set, or even locally coherent geometry. A covariance matrix can be low rank for purely linear reasons; conversely, a nonlinear geometric support can have full global covariance. This distinction matters in representation learning, scientific inverse problems, and geometric data analysis, where a dimension estimate is often treated as evidence for a latent manifold.

\paragraph{Prior credit and the precise scope of this work.}
Testing a specified manifold class already has a statistical literature
\citep{narayananmitter2010,fefferman2016}. Local PCA dimension consistency,
tangent estimation, and covariance scaling are established
\citep{little2017,kaslovsky2014,aamari2019,lim2024}; in particular,
\citet[Proposition 4.4 and Theorem 5.3]{lim2024} supplies a direct
predecessor for tangent-disk covariance comparison. We do not claim to
originate these objectives or the geometric criteria applied below.
Table~\ref{tab:opening-credit} states which prior results we use and what
is calculated specifically for this observation design. A difference in
assumptions is not, by itself, proof of historical priority.

\begin{table}[ht]
\centering\small
\begin{tabular}{p{0.44\linewidth}p{0.49\linewidth}}
\toprule
Established result and credit & Calculation supplied in this manuscript\\
\midrule
Local PCA and manifold inference: Little et al.; Kaslovsky--Meyer;
Aamari--Levrard; Lim--Oberhauser--Nanda & Actual-ball density and covariance
bounds, shrinking crossing margin, and Gaussian membership-aware
estimators (Theorems~\ref{thm:dimension-consistency}--\ref{thm:hard-repeated}).\\[3pt]
Jones square functions and rectifiability: Jones; David--Semmes;
Pajot; Badger--Schul; Azzam--Tolsa & Finite noisy population residual and
mass intervals, followed by an application of the existing criterion
(Theorem~\ref{thm:rect}, Corollary~\ref{cor:rect}).\\[3pt]
Hoeffding; Laurent--Massart; Hsu et al.; Tropp; Clopper--Pearson;
Davidson--Szarek; Maurer--Pontil & Explicit constants and conditioning for
selected cores, sharper shell transfer, mass certification, and restricted
single-view noise intervals.\\[3pt]
Correlation dimension: Grassberger--Procaccia; unknown-noise deconvolution:
Fan, Matias, Schwarz & A bounded-pair Gaussian-energy construction with
noise and cutoff error, and sufficient almost-sure schedules under stated
identification classes (Theorem~\ref{thm:hard-single}).\\[3pt]
Richardson--Gaunt observed order; bounded-error inference; equivalence
testing & A conservative contrast enclosure with specified remainder
bounds (Proposition~\ref{thm:adaptive}); no new exponent identity.\\
\bottomrule
\end{tabular}
\caption{Credit precedes the contribution claim. Supporting citations and
source-level comparisons are given here and in the discussion of prior work.}
\label{tab:opening-credit}
\end{table}
The concentration and interval inputs are credited explicitly to
\citet{hoeffding1963,laurentmassart2000,hsu2012,tropp2012,
clopperpearson1934,davidsonszarek2001,maurerpontil2009}.
The energy construction builds on correlation dimension and deconvolution
\citep{grassbergerprocaccia1983,fan1991,matias2002,schwarz2010}.
Self-similarity is credited to \citet{hutchinson1981}; the complete book
credit is David Mumford, Caroline Series, and David Wright
\citep{mumfordserieswright2002}. Visual resemblance to a book figure is
neither a proof nor a verified implementation of its generating group.
The principal message is conditional consistency and calibrated resolved-scale
inference, not a claim that arbitrary data satisfy the Manifold Hypothesis.

Classical and modern work has developed strong tools for intrinsic-dimension estimation, local PCA, multiscale covariance analysis, tangent and curvature estimation, and manifold reconstruction \citep{levinabickel2005,facco2017,little2017,kaslovsky2014,aamari2019,genovese2012,fefferman2016}. The $r^2/r^4$ tangent--curvature scaling and noise--curvature tradeoff are therefore not new observations. Likewise, rectifiability from Jones-type square functions is a deep geometric-measure-theory result rather than a contribution of this paper \citep{jones1990,davidsemmes1993,tolsa2015,azzamtolsa2015,edelen2025}. Our question runs in the converse statistical direction: \emph{which quantities computed from noisy observations are sufficient to certify the hypotheses entering a rectifiability theorem?}

We consider latent samples $Z_i\sim\mu\subset\R^{D_N}$ observed through
\begin{equation}
X_i=Z_i+\varepsilon_i,\qquad \varepsilon_i\sim\cN(0,\sigma_N^2 I_{D_N}),
\label{eq:model}
\end{equation}
with unknown $d_*$ and $\sigma_N$. The observed law is full dimensional whenever $\sigma_N>0$, so the target is the latent structural measure rather than the convolved observation law.

\paragraph{Identifiability boundary.}
This target is \emph{not} identifiable over an unrestricted unknown-noise decomposition class. If $P_X=\mu*\gamma_{\sigma^2}$ and $0<\tau^2<\sigma^2$, then
\begin{equation}
P_X=\bigl(\mu*\gamma_{\sigma^2-\tau^2}\bigr)*\gamma_{\tau^2}.
\label{eq:semigroup-nonid}
\end{equation}
When $d_*<D$ and $\mu$ is rectifiable, $\mu$ is singular whereas $\mu*\gamma_{\sigma^2-\tau^2}$ has a smooth full-dimensional density. Thus no statistic of $P_X$ alone can distinguish these two latent interpretations without an additional identification condition. This is the familiar obstruction behind unknown-noise deconvolution; support restrictions, absence of a Gaussian component, multivariate dependence, or repeated measurements can restore identifiability \citep{matias2002,schwarz2010,gassiat2022,capitao2026,delaigle2008,capitaorepeated2025}. Section~\ref{app:id} gives the one-line impossibility proof. Our theorems therefore begin with an explicit \emph{C0 identifiability/localization gate}; the downstream geometry certificate is data-driven conditional on that gate.

\paragraph{Contribution and credit boundary.}
Theorem~\ref{thm:rect} presents a particular conditional population residual
certificate. Its Gaussian concentration, unbiased covariance, U-statistic
transfer, and sample splitting have established precedents
\citep{hoeffding1948,hoeffding1963,laurentmassart2000,hsu2012,wasserman2009}.
The potential contribution is the specific finite-core and membership-uncertainty
assembly under Assumption~\ref{ass:honest}; its priority remains undetermined.
Finite-approximation routes to rectifiability already occur in
\citet[Theorem 0.5]{buet2015}. Statistical varifold estimation from a
rectifiable support is developed by \citet[Theorem 7.4]{boricaudbuet2025}.
Our covered-scale corollary is an application of existing rectifiability
criteria, not a new characterization of rectifiable measures.

The auxiliary scaling result, Proposition~\ref{thm:adaptive}, uses the
classical observed-order identity from Richardson extrapolation
\citep{richardsongaunt1927,celik2008}; its
uncertainty propagation is bounded-error interval reasoning
\citep{jaulinwalter1993}, and its acceptance rule is intersection--union
equivalence testing \citep{berger1982,schuirmann1987,bergerhsu1996}.
Neither that identity nor those inferential principles are claimed as new.

\paragraph{Identification versus estimation rates.}
The restricted identification result of \citet{gassiat2022} must be read
with its author-hosted correction \citep{gassiaterratum}: the correction
affects the estimation-rate argument in Proposition A.2 and supplies extra
conditions for the stated rates; it does not retract the identification
theorem. A qualitative identification result alone supplies neither an
implementable finite-sample noise interval nor all the localization events
required by C0. The explicit replicated Gaussian design below remains a
separate sufficient observation model, with prior repeated-measurement
deconvolution work credited above.

\paragraph{Multiscale approximation and further statistical predecessors.}
Geometric multiresolution analysis \citep{allard2012} precedes the
non-asymptotic population approximation bounds of
\citet[Theorem 2]{maggioni2016} and the adaptive approximation guarantees of
\citet[Theorem 8]{liaomaggioni2019}. Although these are not the same
membership-based intervals, they preclude broad claims of the first
multiscale, adaptive, or out-of-sample statistical geometry guarantee.
Noisy manifold fitting and local geometric estimation continue in
\citet{yaoxia2025,aizenbudsober2025}. The latent-variable explanation in
\citet{whiteley2026} addresses a distinct statistical formulation of
manifold structure. Learned eigenspace control below applies Davis--Kahan
and its statistical variants \citep{daviskahan1970,yuwangsamworth2015}.
Probabilistic Richardson extrapolation also has established predecessors
\citep{oates2025}. These comparisons supplement the claim-specific credit
in Table~\ref{tab:opening-credit}; the source package retains the search
and comparison ledger. Failure to locate an identical theorem is not
evidence that no predecessor exists.

\paragraph{Organization and practical use.}
Section~\ref{sec:related} places the results in the relevant literature.
Section~\ref{sec:contributions} states the four principal results and their
observation contracts. Section~\ref{sec:experiments} contains all retained
computed visualizations, numerical tables, and figure-specific code pointers.
The references follow the experiments. All proofs, auxiliary results, and
detailed calculations are collected in Appendices~\ref{sec:setup}--\ref{app:code-map}.
The practical distinction is between proposing a dimension, certifying a
resolved-scale statement, and drawing a conditional limiting geometric
conclusion. These outputs require different information and error control.

\section{Related Work}
\label{sec:related}
The relevant literature separates several questions: testing a specified manifold
class, estimating an integer dimension, reconstructing a support, identifying a
latent law through noise, and proving rectifiability. These questions have
different assumptions and losses. We organize the comparison by the object
estimated and identify the established tools used in our proofs. The bibliography
also credits the numerical constructions used in Section~\ref{sec:experiments}.

\subsection{Testing and statistical formulations of the manifold hypothesis}
\citet{narayananmitter2010} studied sample complexity for testing the manifold
hypothesis. \citet{fefferman2016} developed a testing framework for a class with
specified dimension, volume, and reach, using approximation error to distinguish
alternatives. These are direct predecessors for the statistical question of
whether a distribution is close to a controlled manifold class. Their objective
is stronger and different from drawing a low-dimensional embedding. The
latent-variable formulation of \citet{whiteley2026} gives a separate statistical
explanation of manifold structure. None of these formulations means that every
unknown-noise data distribution has an identifiable manifold representation.
Our target is a conditional residual certificate for an identified measure,
followed by dimension estimation under explicitly stated local regularity.
We do not claim the first statistical treatment or a complete resolution of the
manifold hypothesis. In particular, a finite accepted scale range does not
establish a limiting geometric property without the additional premises of
Corollary~\ref{cor:rect}.

\subsection{Intrinsic dimension, local PCA, and tangent covariance}
Dimension consistency also has direct predecessors in the kernel-based
analysis of \citet{heinaudibert2005}, the minimax dimension bounds of
\citet{kimrinaldowasserman2019}, and the finite-sample Wasserstein estimator
of \citet{block2022}. These results must remain in the comparison even
when a manifold is assumed to exist. Our sufficient rates are not shown
to improve their minimax risk, and the observation budgets differ.
Likelihood and neighbor-ratio dimension estimators were developed by
\citet{levinabickel2005} and \citet{facco2017}; angle and norm concentration are
used by \citet{ceruti2014}. The review and benchmark framework of
\citet{campadelli2015} document the diversity of targets, scales, and failure
modes. Multiscale SVD, curvature, and noise were studied by
\citet{little2017}; \citet{kaslovsky2014} quantified tangent-space perturbation.
\citet{aamari2019} obtained non-asymptotic manifold, tangent, and curvature
estimation rates under quantitative smoothness assumptions.
\citet{lim2024} directly link tangent-space and dimension estimation through
Wasserstein comparison, including tangent-disk covariance bounds. Thus neither
local PCA consistency nor tangent-versus-curvature spectral scaling originates
here. Our actual-ball calculation tracks the positive H\"older density, the
shrinking mean-crossing margin, and the selected-core error for the specific
Gaussian observation designs. An isotropic noise floor cancels from the
eigenvalue-versus-mean comparison, but noise still affects localization and
sampling error. A small numerical rank without these controls remains a point
diagnostic.

In this paper, ``spectral dimension'' means the \emph{covariance
mean-crossing count} defined below. It is not a dimension defined by a
Laplacian spectrum or a heat kernel. The geometric bridge is a local-PCA
consequence for a regular smooth interior point; the observation-specific
work controls its shrinking margin and uncertain membership.

\subsection{Reach, manifold reconstruction, and multiscale approximation}
Reach and curvature measures originate in \citet{federer1959}; statistical reach
estimation is developed by \citet{aamari2019reach}. Support estimation with
singular deconvolution is treated by \citet{genovese2012}, and noisy manifold
fitting by \citet{fefferman2018}. More recent fitting and local projection
results include \citet{yaoxia2025} and \citet{aizenbudsober2025}. Their explicit
sampling and noise assumptions must be compared to ours before interpreting
rates or reconstruction errors. The local affine projection appended in our
figures is not claimed as a new manifold-fitting method.

\paragraph{Ahlfors regularity and reach are established premises.}
\citet{lim2024} uses positive reach and quantitative density control for
local covariance and dimension inference. \citet{yaoxia2025} fits a
pre-existing smooth manifold under unbounded Gaussian noise.
\citet{boricaudbuet2025} explicitly uses Ahlfors regularity on rectifiable
supports, with stronger piecewise regularity for uniform rates; its
pointwise convergence analysis does not impose a positive lower reach.
Consequently, neither Ahlfors regularity nor dispensing with reach is a
novelty claim here. Our rectifiability consequence applies an existing
criterion to a certified residual envelope, whereas our dimension bridge
itself assumes a local smooth manifold. A finite data set does not verify
all of these limiting premises.

Geometric multiresolution analysis was introduced by \citet{allard2012}.
Non-asymptotic dictionary-learning bounds and adaptive geometric approximation
are established in \citet{maggioni2016,liaomaggioni2019}. Local best-fit flats
also underlie hybrid linear modeling \citep{zhang2012}. These works preclude
broad priority claims for multiscale, adaptive, or population-level geometric
approximation. Our narrower construction propagates uncertain ball membership
through a finite core, a shell, and an independent population-transfer step.

\subsection{Jones quantities and rectifiability}
The traveling-salesman theorem of \citet{jones1990} and the uniform
rectifiability program of \citet{davidsemmes1993} provide the foundational
geometric setting. Quantitative criteria and least-squares geometric quantities
are developed by \citet{pajot1997,lerman2003,lermanwhitehouse2012}.
\citet{tolsa2015,azzamtolsa2015} characterize rectifiability using Jones square
functions under the stated density hypotheses. The measure-theoretic results
of \citet{badgerschul2015,badgerschul2016,badgerschul2017} further clarify
necessary and sufficient conditions. Higher-dimensional traveling-salesman
and Reifenberg results include
\citet{azzamschul2018,davidtoro2012,edelen2019,edelen2025}.

Finite-approximation and varifold approaches also precede this work:
\citet{buet2015} gives quantitative rectifiability conditions, and
\citet{boricaudbuet2025} studies statistical varifold-type estimation from a
rectifiable support. We apply existing geometric criteria after establishing
an upper bound for the relevant residual and mass product. We do not reverse
their hypotheses on the strength of a finite picture, and we do not present a
new characterization of rectifiable measures. Spatial covering, density
control, consistent targets across scales, and a limiting square-function
bound remain separate requirements.

\subsection{Random matrices, singular values, and low-rank approximation}
The sample-covariance bulk law of \citet{marchenko1967} and the edge behavior
studied by \citet{baiyin1993} describe high-dimensional noise. Spiked models,
extreme eigenvalues, eigenvectors, and spike counts are addressed by
\citet{bbp2005,johnstone2001,paul2007,passemier2014}. These results concern
matrix spectra under their distributional regimes; a spectral spike is not
automatically a geometric tangent direction.

Low-rank approximation is classical \citep{eckartyoung1936}, as are efficient
randomized matrix decompositions \citep{halko2011}. Optimal hard thresholding
and data-driven shrinkage were developed by
\citet{gavishdonoho2014,nadakuditi2014}. Their matrix-denoising targets differ
from a certificate for an unknown nonlinear support. The no-condensation
calculation in Appendix~\ref{app:supporting} explains why a fixed number of
generic Wishart eigenvalues cannot account for a nonvanishing fraction of the
total trace in the declared aspect-ratio regime. It does not prove a converse
that every nongeneric spectrum comes from a manifold. For learned planes we
use the established perturbation theory of
\citet{daviskahan1970,yuwangsamworth2015}.

\subsection{Concentration of measure and finite-sample probability}
The concentration program is central to the subject. The product-space
isoperimetric results of \citet{talagrand1995}, the systematic treatment by
\citet{ledoux2001}, and the entropy and independence methods collected in
\citet{boucheron2013} form its broader foundation. High-dimensional random
vectors and matrices are treated by \citet{vershynin2018}. These sources are
credited for the surrounding theory; we do not assert that a Talagrand
inequality is directly substituted into a proof where the displayed bound is
actually Hoeffding or matrix Bernstein.

Specifically, our scalar and order-two transfer arguments use
\citet{hoeffding1948,hoeffding1963}; Gaussian quadratic-form events use
\citet{laurentmassart2000,hsu2012}; covariance control uses
\citet{tropp2012,tropp2015}. The singular-value input is credited to
\citet{davidsonszarek2001}, empirical Bernstein calibration to
\citet{maurerpontil2009}, and exact binomial endpoints to
\citet{clopperpearson1934}. Conditioning on design and localization data is
part of the proof: concentration for a fixed statistic does not automatically
apply after selecting it on the same validation outcomes.

\subsection{Unknown-noise identification and repeated measurements}
Gaussian deconvolution is severely ill-posed, with classical rate theory due
to \citet{fan1991}. Unknown-variance and partially specified noise models are
studied by \citet{matias2002,schwarz2010}. Repeated-measurement deconvolution
has direct predecessors in \citet{delaigle2008,capitaorepeated2025}.
\citet{gassiat2022} establishes unknown-noise identification under a restricted
multivariate model; its rate argument must be read with the author-hosted
correction \citep{gassiaterratum}. Support and distribution inference under
noise is further developed by \citet{capitao2026}.

Our C0 gate is an explicit identification and localization contract. Repeated
Gaussian views give one sufficient realization. For one observation per latent
point, the affine-support and known-coordinate-bound classes in
Theorem~\ref{thm:hard-single} are distinct sufficient restrictions. A curved
low-dimensional manifold need not have deficient global affine rank.
Consequently, our affine calibration must not be advertised as a general
unknown-noise solution for arbitrary manifolds.

\subsection{Honest selection, simultaneous statements, and abstention}
Sample splitting and post-selection inference have substantial precedents
\citep{wasserman2009,lee2016}; orthogonal scores and cross-fitting also occur
in a different inferential setting in \citet{chernozhukov2018}. We use these
citations to credit the statistical design principles, not to identify our
geometric target with a treatment-effect parameter. Queries and projectors
are frozen before independent validation, and the error budget covers the
complete queried family. The reported guarantee bounds incorrect emissions
in the full sample space; it is not coverage conditional on obtaining an
answer. Equivalence testing and intersection--union acceptance rules are
established by \citet{berger1982,schuirmann1987,bergerhsu1996};
\citet{lakens2017} provides an accessible account. Abstention preserves the
scope of a guarantee but must be reported separately from point-estimation
accuracy.

\subsection{Scaling, extrapolation, and controlled remainder terms}
The observed-order identity is part of Richardson extrapolation
\citep{richardsongaunt1927}. Discretization-uncertainty assessment and
probabilistic extrapolation are treated by \citet{celik2008,oates2025}.
Bounded-error interval reasoning is developed by \citet{jaulinwalter1993}.
Our contrast enclosure uses these established principles with a declared
remainder envelope. It does not obtain that envelope from the same three
function values or transform a straight log--log segment into an asymptotic
law. The positive-part constraint, weak-CLT limitations, and sandwich-variance
assumptions are therefore retained in the detailed Appendix calculations.

\subsection{Correlation dimension, fractals, and geometric interpretation}
Correlation dimension originates in \citet{grassbergerprocaccia1983};
self-similar constructions are credited to \citet{hutchinson1981}.
The geometric visualization in \citet{mumfordserieswright2002} is the work of
David Mumford, Caroline Series, and David Wright; the related exposition by
\citet{serieswright2007} is also credited. Our exact Cantor calculation is
used to distinguish a real-valued scaling exponent from an integer spectral
count. Ahlfors regularity controls the wide-ratio Gaussian-energy slope;
it does not force every adjacent-radius slope to converge. No resemblance to
an Indra's Pearls image is used as a proof or as evidence that its generating
group has been implemented.

\subsection{Embeddings, topology, and scientific benchmarks}
Isomap \citep{tenenbaum2000}, locally linear embedding \citep{roweis2000},
Laplacian eigenmaps \citep{belkin2003}, diffusion maps \citep{coifman2006},
and vector diffusion maps \citep{singerwu2012} pursue useful representations
of geometry. Spectral clustering is reviewed by \citet{vonluxburg2007};
\citet{vandermaaten2008,mcinnes2018} provide widely used visualization
methods. None is used here to refit separate display coordinates for a
reference and its comparison. Topological recovery has its own assumptions,
as illustrated by \citet{niyogi2008} and the overview of
\citet{chazalmichel2021}. A dimension estimate alone does not recover homology.

The dynamical systems are credited to \citet{lorenz1963,rossler1976}, the
digit benchmark to \citet{lecun1998}, and the multidimensional KS setting to
\citet{larios2022}. The stiff integration scheme is credited to
\citet{kassam2005}. Our finite-grid solutions are numerical references, not
exact PDE solutions. Independent initializations give independent finite-time
endpoints under the stated simulation design; they do not establish that the
endpoint law is an invariant measure or a smooth low-dimensional manifold.

\paragraph{Position of the present results.}
The contribution is the conditional assembly and its explicit calculations:
membership-aware population intervals, geometric dimension recovery under
specified view schedules, and restricted single-view energy inference. We
claim neither the underlying concentration inequalities, local PCA, nor the
rectifiability criteria as new. To the best of our knowledge, the precise
combination is worth comparing at the level of assumptions and conclusions;
we do not claim historical priority solely from failing to locate an identical
statement. The distinction is summarized in Table~\ref{tab:opening-credit}.

\section{Our Contributions}
\label{sec:contributions}
We state the main results with their hypotheses and sufficient observation
designs. The scope is conditional: neither manifold membership nor unknown-noise
identification is inferred without the corresponding assumptions. Existing
geometric and probabilistic results are credited in Section~\ref{sec:related}.
Every proof is in the Appendix; the statements below retain their original
constants, probability budgets, and limiting regimes.

\subsection{Targets and observation contracts}
The ambient data dimension is $D_N$ in \eqref{eq:model}. After a separately
justified identification or coordinate step, the target measure is $\muc$ on
$\R^q$, where $q$ is the target-coordinate dimension. The integer $d$ is a
geometric dimension only under the specified smooth class; $s$ denotes a
possibly noninteger Ahlfors or correlation exponent. Spatial dimension of a
PDE, the number of displayed coordinates, covariance rank, and geometric
intrinsic dimension are distinct quantities.

For a frozen anchor $x$ and radius $r_j$, the localization procedure returns
a certain core $I_j$ and a possible set $P_j$. On its simultaneous event,
\[
 I_j\subseteq\{i:Z_i^{\circ}\in B(x,r_j)\}\subseteq P_j.
\]
The core size is $n_j=|I_j|$, the unresolved shell size is
$a_j=|P_j\setminus I_j|$, and $Q$ is a frozen orthogonal residual projector
of rank $m=q-d$. The same $Q$ and noise level are used within a telescoping
ladder. Analysis errors are isotropic Gaussian and conditionally independent
of localization, as required by Assumption~\ref{ass:honest}.
The symbol $K$ in a union bound counts the complete frozen query family.
The full gate, including its failure budget $\delta_{\rm str}$ and the
conditional population transfer premise, is given in Appendix~\ref{sec:setup}.

\begin{table}[ht]\centering\small
\begin{tabular}{p{.21\linewidth}p{.34\linewidth}p{.36\linewidth}}
\toprule
Result & Required information & Output and interpretation\\
\midrule
Theorem~\ref{thm:rect} & Identified target, honest membership bounds, frozen queries, independent validation & Simultaneous population residual and finite-scale Jones bounds; no automatic limiting manifold conclusion.\\[3pt]
Theorem~\ref{thm:dimension-consistency} & Local smooth geometry, positive density, independent analysis and repeated localization views & Eventually correct integer mean-crossing dimension at a fixed regular anchor.\\[3pt]
Theorem~\ref{thm:hard-repeated} & Accurate averaged views; supplied class bounds for the geometric mass rule & Improved sufficient point-sample rate; geometric certificate only with valid class bounds.\\[3pt]
Theorem~\ref{thm:hard-single} & One view per point; restricted affine support or a known coordinate bound; Ahlfors regularity & Consistent real-valued Gaussian-energy exponent; finite-scale intervals with separately controlled bias needed for limiting-dimension inference.\\
\bottomrule
\end{tabular}
\caption{The four results solve different inference tasks. All measurement and point-sample costs are stated separately.}
\label{tab:main-results}
\end{table}
\subsection{Conditional population residual certification}
\label{sec:thm1}
\paragraph{Three different residuals.}
Suppress the anchor index. Write $n_j=|I_j|\ge2$, $a_j=|P_j\setminus I_j|$,
\begin{equation}
\nu_j=\frac{n_j}{n_j-1},\qquad
\omega_j=\left(\frac{n_j}{n_j+a_j}\right)^2.
\label{eq:omega-main}
\end{equation}
The empirical latent covariance residual $R^Z_{I_j}$, its unbiased normalization $T_j$, and the observed statistic $\widehat U_j$ are
\begin{align}
R^Z_{I_j}(Q_j)&=\frac1{n_j}\sum_{i\in I_j}
\|Q_j(Z_i^\circ-\bar Z^\circ_{I_j})\|^2,\\
T_j&=\nu_jR^Z_{I_j}(Q_j),\\
\widehat U_j&=\frac1{n_j-1}\sum_{i\in I_j}
\|Q_j(X_i^A-\bar X^A_{I_j})\|^2.
\label{eq:unbiased-core}
\end{align}
The population residual is a separate quantity, defined for positive ball mass and set to zero when $\muc(B(x,r_j))=0$:
\begin{equation}
R^{\muc}_{Q_j}(x,r_j)=
\tr\{Q_j\operatorname{Cov}_{\muc(\cdot\mid B(x,r_j))}(Z^\circ)\},
\quad R_d^{\muc}(x,r_j)=\inf_{\substack{Q=Q^\top=Q^2\\\rank Q=q-d}}R_Q^{\muc}(x,r_j).
\label{eq:population-target}
\end{equation}

\paragraph{Observable error radii and terminal completion.}
Set $t_R=\log(2K/\delta_R)$, $t_P=\log(2K/\delta_P)$, and
\begin{align}
e_j&=2\sqrt{\frac{(m\bar\sigma^4+4\bar\sigma^2r_j^2)t_R}{n_j-1}}
+\frac{2\bar\sigma^2t_R}{n_j-1},\label{eq:ej-main-repaired}\\
g_j&=r_j^2\sqrt{\frac{2t_P}{\lfloor n_j/2\rfloor}}.
\label{eq:population-radius}
\end{align}
For a terminal query $T>j$, define
\begin{align}
\mathcal C^{\rm TV}_{j:T}
&=2r_T^2+\sum_{k=j}^{T-1}|\widehat U_k-\widehat U_{k+1}|
+e_j+2\sum_{k=j+1}^{T-1}e_k+e_T,\label{eq:core-tail-main}\\
\mathcal C^{\rm EP}_{j:T}
&=[2r_T^2+\widehat U_j-\widehat U_T+e_j+e_T]_+,\label{eq:endpoint}\\
\mathcal C_j&=\min\{\nu_jr_j^2,\mathcal C^{\rm EP}_{j:T},
\mathcal C^{\rm TV}_{j:T}\}.
\label{eq:core-min}
\end{align}
The endpoint bound is deterministically no larger than the total-variation bound. It uses exact telescoping and avoids accumulating intermediate uncertainty. At the terminal query itself set $\mathcal C_T=\nu_T r_T^2$. If a simultaneous lower noise bound $\underline\sigma^2$ is available, also include $[\widehat U_j+e_j-m\underline\sigma^2]_+$ in each minimum, charging its calibration failure to $\delta_{\rm str}$. Without one, $\underline\sigma^2=0$ is valid. Define
\begin{align}
\underline T_j&=[\widehat U_j-e_j-m\bar\sigma^2]_+,\\
\underline R_j&=[(\omega_j/\nu_j)\underline T_j-g_j]_+,\label{eq:pop-lower}\\
\overline R_j&=\min\{r_j^2,\,
\omega_j\mathcal C_j+2r_j^2\nu_j(1-\omega_j)+g_j\}.
\label{eq:taildom}
\end{align}
An empty core, incompatible interval, or failed numerical prerequisite returns \texttt{NO CERTIFICATE}.

\paragraph{Attribution of the statistical construction.}
The following is a conditional application theorem. Covariance as an
order-two U-statistic and its bounded-kernel concentration are classical
\citep{hoeffding1948,hoeffding1963}; the noise event uses established
Gaussian quadratic-form inequalities \citep{laurentmassart2000,hsu2012}.
The finite-set shell step is an elementary pairwise decomposition, and
endpoint cancellation is algebraic. No individual ingredient, first
finite-sample multiscale guarantee, or optimal rate is claimed here.
See Section~\ref{sec:intro} for the closest statistical and geometric
predecessors and the unresolved priority of the full assembly.

\begin{theorem}[Core-to-population certificate]
\label{thm:rect}
Under Assumption~\ref{ass:honest}, with probability at least
$1-\delta_{\rm str}-\delta_R-\delta_P$, simultaneously for every frozen positive-mass query with $n_j\ge2$ and every terminal query needed for its bound also having at least two core points,
\begin{equation}
\boxed{\underline R_j\le R^{\muc}_{Q_j}(x,r_j)\le\overline R_j,
\qquad R_d^{\muc}(x,r_j)\le\overline R_j.}
\label{eq:main-population}
\end{equation}
The result concerns the identified target and its balls; it does not itself assert ambient lifting or infinitesimal geometry. A required query with fewer than two core points causes abstention for any claim depending on it. The probability bound controls false reported intervals unconditionally, including abstentions in the sample space; it is not coverage conditional on emission.

For an independent mass block of size $n_M$, let
$\widehat p_{I,j}=|I^M_j|/n_M$, $\widehat p_{P,j}=|P^M_j|/n_M$, and
\begin{equation}
u_M=\sqrt{\frac{\log(2K/\delta_M)}{2n_M}},\quad
\underline p_j=(\widehat p_{I,j}-u_M)_+,\quad
\overline p_j=\min\{1,\widehat p_{P,j}+u_M\}.
\label{eq:mass-main}
\end{equation}
Include its membership sandwiches in $\delta_{\rm str}$. Except on total probability
$\delta_{\rm str}+\delta_R+\delta_P+\delta_M$,
\begin{equation}
\underline p_j\le\muc(B(x,r_j))\le\overline p_j,\qquad
\boxed{\beta_{\muc,2}^{d}(x,r_j)^2
\le \frac{\overline p_j\overline R_j}{r_j^{d+2}}.}
\label{eq:finite-beta}
\end{equation}
\end{theorem}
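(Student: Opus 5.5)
The plan is to work on the intersection of three events and then argue deterministically. The first is the structural event of Assumption~\ref{ass:honest}, of probability at least $1-\delta_{\rm str}$. On it the sandwiches $I_j\subseteq\{i:Z_i^\circ\in B(x,r_j)\}\subseteq P_j$ hold for every frozen query, along with any calibration of $\underline\sigma^2$ and the bound $\sigma^2\le\bar\sigma^2$.

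The second is a noise event $\cE_R$ of probability at least $1-\delta_R$. On it $|\widehat U_j-T_j-m\sigma^2|\le e_j$ for all $K$ queries. This comes from conditioning on localization, cores and $Q_j$, which are frozen and independent of the analysis errors. Expanding $\widehat U_j$ gives three pieces: $T_j$, a cross term that is linear Gaussian with variance at most $4\sigma^2 r_j^2/(n_j-1)$ (each $Q_j(Z_i^\circ-\bar Z^\circ)$ has norm at most $2r_j$, with a $\nu_j$ adjustment), and a centered Gaussian quadratic form with mean $m\sigma^2$ and rank-$m(n_j-1)$ structure. Laurent--Massart/Hsu et al.\ bounds for the quadratic part plus a Gaussian tail for the cross part give the two-sided $e_j$ at level $\delta_R/K$ per query. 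A union bound over $K$ accounts for $t_R=\log(2K/\delta_R)$.

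The third is a transfer event $\cE_P$ of probability at least $1-\delta_P$. Conditional on the true in-ball index set, the in-ball points are i.i.d.\ from $\muc(\cdot\mid B(x,r_j))$, by the conditional population transfer premise in the gate. The unbiased pair covariance $\frac{1}{2\binom{n}{2}}\sum_{i<l}\|Q(Z_i-Z_l)\|^2$ is a U-statistic with kernel in $[0,4r_j^2]$. Hoeffding's $\lfloor n/2\rfloor$-block bound then gives deviation $g_j$ from $R^{\muc}_{Q_j}$, after checking the constant $2r_j^2\sqrt{2t_P/\lfloor n/2\rfloor}$ matches the kernel range $2r_j^2$ for half-squared differences.

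With these events in hand, the upper bound follows from two deterministic facts. First, on $\cE_R$ each of the three expressions in $\mathcal C_j$ upper-bounds $T_j$. The trivial bound is $\nu_j r_j^2$. For the others, write $T_j=(T_j-T_T)+T_T$ with $T_T\le\nu_T r_T^2\le 2r_T^2$, and use $T_j-T_T=(\widehat U_j-\widehat U_T)-(\text{noise}_j-\text{noise}_T)$. The same $Q$ and $\sigma$ along the ladder make $m\sigma^2$ cancel. This gives $\mathcal C^{\rm EP}$, and telescoping with triangle inequalities gives $\mathcal C^{\rm TV}$; the $\underline\sigma^2$ variant is immediate.

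Second, the shell step relates the core residual to the full in-ball residual. Write the true in-ball set $S$ with $|S|\le n_j+a_j$ and $I_j\subseteq S$. In pairwise form, $\binom{|S|}{2}\hat R_S=\sum_{\text{pairs in }S}\tfrac12\|Q(Z_i-Z_l)\|^2$. The pairs inside $I_j$ contribute $\binom{n_j}{2}T_j$, and the remaining pairs each contribute at most $2r_j^2$. Dividing, and bounding $\binom{n_j}{2}/\binom{|S|}{2}\le\omega_j$ together with the complementary fraction, yields $\omega_j T_j+2r_j^2\nu_j(1-\omega_j)$. This is the step I expect to be the main obstacle: the ratio of pair counts is not exactly $\omega_j$, and I would verify monotonicity in $|S|$ and absorb the discrepancy through the $\nu_j$ factor. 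Adding $g_j$ and capping by $r_j^2$, which is the trivial bound on a conditional variance in a ball, gives $\overline R_j$. The inequality $R^{\muc}_d\le\overline R_j$ is then trivial because $Q_j$ is one admissible projector.

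For the lower bound, $\cE_R$ gives $T_j\ge\underline T_j$. The pairwise decomposition in the other direction, dropping the nonnegative extra pairs, gives $\hat R_S\ge(\omega_j/\nu_j)T_j$ up to the same normalization care. Subtracting $g_j$ and taking positive parts produces $\underline R_j$.

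For the mass statement, $n_M\widehat p_{I,j}$ and $n_M\widehat p_{P,j}$ sandwich a Binomial$(n_M,\muc(B))$ count on the structural event. Two-sided Hoeffding with a union over $K$ gives $\underline p_j\le\muc(B)\le\overline p_j$ at cost $\delta_M$. Finally, $\beta_{\muc,2}^d(x,r)^2$ is $r^{-d-2}\inf_L\int_B\dist(y,L)^2\,d\muc$ over affine $d$-planes $L$. Choosing $L$ through the conditional mean with orthogonal complement given by $Q_j$ makes the integral equal $\muc(B)\,R^{\muc}_{Q_j}$, which is at most $\overline p_j\overline R_j$. This gives \eqref{eq:finite-beta}, with zero-mass balls handled by the convention. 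Abstention for cores with fewer than two points affects only which statements are emitted, not the event bounds.
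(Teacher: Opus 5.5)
Your architecture matches the paper's: a structural event, a Gaussian event and a true-ball transfer event, followed by deterministic terminal-completion, shell, cap, mass and $\beta$ steps. The terminal bound, the cap $r_j^2$, the mass interval and the $\beta$ identity go through as you describe. The lower bound also works, because $\binom{n_j}{2}/\binom{M}{2}\ge n_j(n_j-1)/(n_j+a_j)^2=\omega_j/\nu_j$ for $M=|S|\le n_j+a_j$.

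The genuine gap is the shell \emph{upper} bound. In your distinct-pair decomposition the core weight is $c=n_j(n_j-1)/\{M(M-1)\}$, and $\hat R_S\le cT_j+2r_j^2(1-c)$. This bound is nonincreasing in $c$ because $T_j\le 2r_j^2$, so you need a \emph{lower} bound on $c$. The inequality you wrote, $c\le\omega_j$, points the wrong way and is also false: $c=1>\omega_j$ when $M=n_j<n_j+a_j$. At $M=n_j+a_j$ one has $c<\omega_j$, so $c$ cannot be replaced by $\omega_j$ in either direction.

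The paper avoids this with the ordered-pair identity of Lemma~\ref{lem:paircov}, diagonal included, i.e.\ the raw residual $R^Z$ normalized by $1/M^2$. There the core fraction is exactly $(n_j/M)^2\ge\omega_j$, which gives $R^Z_{B^{(N)}}\le\omega_jR^Z_{I_j}+2r_j^2(1-\omega_j)$ (Proposition~\ref{prop:shellres}). Only afterwards does it pass to the unbiased U-statistic via $M/(M-1)\le\nu_j$, which yields exactly $\omega_jT_j+2r_j^2\nu_j(1-\omega_j)$.

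Your own route can also be closed. The target inequality is affine in $T_j\in[0,2r_j^2]$ and is worst at $M=n_j+a_j$. At $T_j=2r_j^2$ it reads $1\le\omega_j+\nu_j(1-\omega_j)$. At $T_j=0$ it reduces to $\frac{M(n_j-1)}{(M-1)n_j}\cdot\frac{M+n_j-1}{M+n_j}\le 1$. As written, however, the step rests on a false inequality rather than a proof.

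Two constant-level slips also keep you from the stated $e_j$ and $g_j$.

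\emph{Gaussian step.} Bounding the linear cross term and the centered chi-square term separately gives a radius of the form $2\sqrt{A}+2\sqrt{B}$ rather than $2\sqrt{A+B}$. It also costs two failure events per query, i.e.\ $2\delta_R$ in total with $t_R=\log(2K/\delta_R)$. The paper instead treats $(n_j-1)\widehat U_j=\|a+\sigma g\|^2$ as one noncentral chi-square with $k=m(n_j-1)$ and $\|a\|^2=(n_j-1)T_j$. Its exact log-moment generating function yields $2\sqrt{(k\sigma^4+2\sigma^2\|a\|^2)t}+2\sigma^2t$ with probability $2e^{-t}$.

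That bound also needs $T_j\le\nu_jr_j^2\le 2r_j^2$, which comes from taking the anchor $x$ as competitor for the centering: $R^Z_{I_j}\le n_j^{-1}\sum_{i\in I_j}\|Q_j(Z_i^\circ-x)\|^2\le r_j^2$. Your per-point bound $\|Q_j(Z_i^\circ-\bar Z^\circ)\|\le 2r_j$ only gives $T_j\le 4\nu_jr_j^2$. Relatedly, the cross-term variance is $4\sigma^2T_j/(n_j-1)$, which can reach $8\sigma^2r_j^2/(n_j-1)$, not $4\sigma^2r_j^2/(n_j-1)$.

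\emph{Transfer step.} The kernel $\tfrac12\|Q_j(z-z')\|^2$ has range $2r_j^2$, so the Hoeffding block argument gives $r_j^2\sqrt{2t_P/\lfloor M/2\rfloor}$, not $2r_j^2\sqrt{\cdot}$. You also need $\lfloor M/2\rfloor\ge\lfloor n_j/2\rfloor$ on the structural event to pass from the unknown true count to the observable $g_j$.
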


\subsection{Consistency for geometric intrinsic dimension}
\label{sec:dimension-bridge-main}

We now connect the mean-crossing score to geometric dimension, using actual
Euclidean-ball conditioning and a shrinking-radius observation design.
Local PCA dimension estimation and covariance approximation by a tangent
disk have direct predecessors, especially \citet[Theorems B and 5.3]{lim2024},
as well as \citet{little2017,kaslovsky2014,aamari2019}.
The consistency principle is not claimed as new. The calculation below
supplies an explicit sufficient route for this manuscript's independent
Gaussian analysis view and uncertain membership interface. No optimality
or priority is asserted for the resulting observation design or rates.

\paragraph{Dimension pilot and target.}
Use a pilot independent of the residual validation data in
Theorem~\ref{thm:rect}. Fix $q\ge2$ and an interior target point $x$.
In a neighborhood of $x$, assume
$\muc=f\mathcal H^d|_M$ on a $C^2$ embedded manifold of unknown
integer dimension $1\le d<q$, where $f(x)>0$ and $f$ is locally
H\"older continuous. The neighborhood contains only the local graph of
$M$ at $x$. These are conditions on the identified target, not on a
different latent law before an arbitrary projection.
Let $C_x(r)=\operatorname{Cov}_{\muc}(Z\mid \|Z-x\|\le r)$ and
$P_x$ be the tangent projector. Section~\ref{app:dimension-bridge}
derives, without assuming isotropic input coordinates,
\begin{equation}
\left\|\frac{C_x(r)}{r^2}-\frac{P_x}{d+2}\right\|_{\op}\longrightarrow0.
\label{eq:dimension-bridge-limit}
\end{equation}
Consequently the eigenvalues $\theta_1(r)\ge\cdots\ge\theta_q(r)$
cross their mean strictly after $d$ for all sufficiently small $r$.
The crossing margin has order $r^2$, not a fixed positive constant.

At epoch $N$, take $N$ iid latent pilot points, one independent analysis
view per point with error $N(0,\sigma^2 I_q)$, and $k_N$ independent
localization replicates per point and per anchor. Here $\sigma^2<\infty$
is fixed and unknown. A disjoint block of $n_C=N$ replicate pairs
calibrates an upper variance bound. All measurement errors are mutually
independent and independent of the latent points. Average the localization
replicates and form the core and possible sets in
\eqref{eq:dimension-core}. Compute the unbiased analysis covariance
$\widehat M_N$ on the core. With eigenvalues in decreasing order, define
\begin{equation}
\widehat d_N=\#\{j:\lambda_j(\widehat M_N)>\tr(\widehat M_N)/q\}.
\label{eq:dimension-estimator}
\end{equation}
Return the sentinel $0$ when localization is unresolved, the core has
fewer than two points, or the covariance is completely tied. The
estimator does not use $d$, the true tangent plane, true membership, or
the true noise variance. The isotropic noise floor cancels in the mean
comparison. An alternative training/evaluation split computes the
original orthogonally cross-fitted score from the two core covariances.

\begin{theorem}[Geometric intrinsic-dimension consistency]
\label{thm:dimension-consistency}
Under the preceding local geometric and repeated-view conditions, set
each of the four error budgets in Section~\ref{app:dimension-bridge}
to $(N+1)^{-4}/8$. If
\begin{equation}
r_N\downarrow0,\qquad
\frac{N r_N^{d+4}}{\log(N+1)}\longrightarrow\infty,\qquad
\frac{k_Nr_N^2}{\log(N+1)}\longrightarrow\infty,
\label{eq:dimension-rates}
\end{equation}
then
\begin{equation}
\Pp\{\widehat d_N=d\text{ for every sufficiently large }N\}=1.
\label{eq:dimension-eventual}
\end{equation}
In particular, $\widehat d_N\to d$ almost surely and
$\Pp(\widehat d_N\ne d)\to0$.
The same conclusion holds for the covariance version of the
orthogonally cross-fitted argmax in \eqref{eq:dimension-crossfit},
using two pilot blocks of size $N$ and allocating the error budget
over both blocks. The optional observable spectral certificate in
\eqref{eq:dimension-cert} also emits $d$ eventually almost surely.
A schedule requiring no knowledge of $d$ is
\begin{equation}
r_N=N^{-a},\quad 0<a<\frac1{q+3},\qquad
k_N=\left\lceil r_N^{-2}\log^3(N+1)\right\rceil,\quad n_C=N.
\label{eq:dimension-schedule}
\end{equation}
\end{theorem}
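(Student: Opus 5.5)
The plan is a Borel--Cantelli argument. For each epoch $N$ we build a good event $G_N$ with $\Pp(G_N^c)\le(N+1)^{-4}/2$, which is the sum of the four budgets. On $G_N$ we show deterministically that $\widehat d_N=d$ once $N$ exceeds a fixed threshold $N_0$. Since $\sum_N(N+1)^{-4}<\infty$, Borel--Cantelli then gives~\eqref{eq:dimension-eventual}. Almost-sure convergence and convergence in probability follow at once.

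The four events are as follows.
\begin{enumerate}[label=(\roman*)]
\item \emph{Variance calibration.} The $n_C=N$ replicate pairs give an upper bound $\bar\sigma^2$ with $\bar\sigma^2\le 2\sigma^2$, say, for large $N$. We use a Laurent--Massart bound on the $\chi^2_{qN}$ sum of halved pair differences.
\item \emph{Localization.} Each averaged localization view has noise $\cN(0,\sigma^2 I_q/k_N)$. A Gaussian norm tail, union-bounded over the $N$ points, gives a maximal deviation $\eta_N\lesssim\bar\sigma\sqrt{\log(N+1)/k_N}$. By the second rate in~\eqref{eq:dimension-rates}, $\eta_N=o(r_N)$. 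Hence the core and possible sets satisfy $I_N\subseteq\{i:Z_i\in B(x,r_N)\}\subseteq P_N$, with $P_N\setminus I_N$ contained in the shell $B(x,r_N+2\eta_N)\setminus B(x,r_N-2\eta_N)$.
\item \emph{Counts.} Positive Hölder density on the $d$-dimensional graph gives $\muc(B(x,r))\asymp f(x)\omega_d r^d$. Bernstein/Chernoff then give $n_N\gtrsim Nr_N^d$. The shell count is $a_N\lesssim N r_N^{d-1}\eta_N+\log N$, so $a_N/n_N\to0$.
\item \emph{Covariance concentration.} Conditionally on localization, the analysis errors are independent of the core. Matrix Bernstein \citep{tropp2012}, applied to the bounded latent part and the Gaussian part, together with a Hanson--Wright bound for cross terms, gives
\[
\|\widehat M_N-C_x(r_N)-\sigma^2I_q\|_{\op}\lesssim r_N^2\sqrt{\tfrac{\log N}{n_N}}+\sigma^2\sqrt{\tfrac{\log N}{n_N}}+r_N^2\tfrac{a_N}{n_N}.
\]
The last term is the ball-versus-core discrepancy, bounded by the pairwise shell decomposition already used for Theorem~\ref{thm:rect}.
\end{enumerate}

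The deterministic step uses~\eqref{eq:dimension-bridge-limit}. We assume this bridge from Section~\ref{app:dimension-bridge}; if it needs proof, we would parametrize $M$ as a graph over $T_xM$, bound the normal component by $O(r^2)$, and use the Hölder density to compare ball measure with the tangent-disk measure. The eigenvalues of $C_x(r)+\sigma^2 I$ are then $\sigma^2+r^2/(d+2)+o(r^2)$ for $d$ of them and $\sigma^2+o(r^2)$ for the rest. The mean is $\sigma^2+r^2 d/(q(d+2))+o(r^2)$. The gap on each side of the mean is therefore at least $c\,r^2$ with $c=\min\{1-d/q,\,d/q\}/(d+2)>0$, and the noise floor cancels. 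Weyl's inequality then shows $\widehat d_N=d$ as soon as the perturbation is at most $c\,r_N^2/3$.

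The main obstacle is the noise term $\sigma^2\sqrt{\log N/n_N}$, which is not scaled by $r_N^2$. Requiring it to be $o(r_N^2)$ means $n_N r_N^4\gg\log N$, that is, $Nr_N^{d+4}\gg\log N$, which is exactly the first rate. I would take care that the Gaussian fluctuation is measured against the crossing margin $r_N^2$ rather than a constant; this is the shrinking-margin point emphasized before the theorem. The cross term $r_N\sigma\sqrt{\log N/n_N}$ is dominated similarly.

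The remaining claims are checked as follows.
\begin{itemize}
\item \emph{Cross-fitted version.} Run the same argument on two blocks, each with budget $(N+1)^{-4}/16$.
\item \emph{Observable certificate.} It emits $d$ because its error radii are the same $o(r_N^2)$ quantities.
\item \emph{Schedule~\eqref{eq:dimension-schedule}.} Since $d\le q-1$, we have $Nr_N^{d+4}\ge N^{1-a(q+3)}$, which grows polynomially when $a<1/(q+3)$. Also $k_Nr_N^2\ge\log^3(N+1)$, so $k_Nr_N^2/\log(N+1)\to\infty$.
\end{itemize}
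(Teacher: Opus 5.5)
\emph{Plug-in estimator: same route as the paper.} Your argument for the plug-in estimator matches the paper's proof. It uses summable budgets with Borel--Cantelli, a calibrated $\bar\sigma^2$ that is eventually a bounded multiple of $\sigma^2$, and localization error $o(r_N)$ from $k_Nr_N^2\gg\log N$. It then uses the membership sandwich, counts of order $Nr_N^d$, and a covariance error that is $o(r_N^2)$. The binding term $\sigma^2\sqrt{\log N/(Nr_N^{d+4})}$ is compared with the $r^2$-order crossing margin through Weyl's inequality, and the schedule check is also the same.

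\emph{Gap: the cross-fitted claim.} ``Run the same argument on two blocks'' controls $\|\widehat M_{\rm tr}-M\|_{\op}$ and $\|\widehat M_{\rm ev}-M\|_{\op}$, where $M=C_x(r)+\sigma^2I_q$. But $\widehat G_k=\tr[(\widehat P_k-kI_q/q)\widehat M_{\rm ev}]$ depends on the training projectors $\widehat P_k$ for every $k$, and for $k\ne d$ these are not consistent. The eigenvalues of $r^{-2}C_x(r)$ tend to $1/(d+2)$ ($d$ times) and to $0$ ($q-d$ times), so the only resolved eigengap is at $k=d$. What is missing is the Ky Fan comparison $0\le\tr(P_kM)-\tr(\widehat P_kM)\le2k\|\widehat M_{\rm tr}-M\|_{\op}$, which needs no projector consistency. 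Combine it with $\|\widehat P_k-kI_q/q\|_*=2k(1-k/q)$ and the zero trace of the isotropic floor against $\widehat P_k-kI_q/q$. Together these give $|\widehat G_k-G_k|\le2(q-1)(E_{\rm tr}+E_{\rm ev})$. You also need the identity $G_k-G_{k-1}=\theta_k-\bar\theta$. It shows that the population score has the unique maximizer $d$ with margin $\Delta_r\ge g_{d,q}r^2/2$, so an $o(r^2)$ score error suffices.

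\emph{Smaller repairs.}
First, your shell rate $a_N\lesssim Nr_N^{d-1}\eta_N+\log N$ presupposes a thin-annulus bound $\muc(B(x,r+h)\setminus B(x,r-h))\lesssim hr^{d-1}$. That bound does not follow from $\muc(B(x,r))\asymp f(x)v_dr^d$: ball masses known to relative error $b_r$ only give annulus mass $o(r^d)$ when $h/r\to0$. The weaker statement is all you need for $a_N/n_N\to0$. The paper obtains it by squeezing counts at the deterministic radii $(1\pm\varepsilon)r_N$, which also avoids treating the data-dependent radii $r_N\pm2\eta_N$ as binomial parameters.

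Second, make explicit that bounded matrix Bernstein is applied to the true-ball points, not to the selected core. Those points are iid from $\muc(\cdot\mid B(x,r))$ given their count, and the result is then transferred to the core by the deterministic shell inequality.

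Third, the unbounded Gaussian part needs a Gaussian quadratic-form bound (the paper uses a $1/4$-net) rather than bounded matrix Bernstein.

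Finally, your count events and the upper calibration bound lie outside the four stated budgets. This is harmless, since their failure probabilities are still summable.
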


\paragraph{Meaning and limits.}
This proves consistency for the geometric dimension of a specified
smooth latent class. It does not establish the manifold hypothesis
for an unrestricted distribution, or equate fractal dimension with
mean-crossing dimension. Fixed positive Gaussian analysis noise is
allowed because localization precision increases through replication.
Fixed single-view localization is not covered by this result. $N$
counts independent latent points; the raw measurement cost is of
order $Nk_N$, rather than $N$. Full dimension $d=q$ is excluded because
the limiting centered tangent spectrum then has no crossing margin.
Theorem~\ref{thm:rect} remains a separate conditional residual result;
its finite-sample validity does not require successful dimension recovery.
Appendix~\ref{app:dimension-bridge} proves Theorem~\ref{thm:dimension-consistency}. The derivation
is for one fixed regular anchor. It is not a uniform-in-location statement
without uniform geometric and density assumptions. All covariances and
balls use the same identified target coordinates.

\subsection{Averaged views and finite geometric certification}
\label{sec:hard-main}

The available localization averages contain information discarded by a
single-analysis-view covariance. A direct perturbation calculation lets us
reuse those averages for dimension estimation. This changes the estimator
and improves its sufficient sample requirement. Exact binomial intervals
\citep{clopperpearson1934} also give a complementary certificate based on
two ball masses. The local mass expansion, binomial inference, and local
PCA principles have established precedents; priority of this assembly is
not asserted.

\begin{theorem}[Averaged-view dimension recovery and mass certification]
\label{thm:hard-repeated}
Use the independent Gaussian replicated observation model and the local
$C^2$, positive H\"older-density manifold model of
Theorem~\ref{thm:dimension-consistency}. Average $k_N$ localization views,
use $n_C=N$ separate calibration pairs, and form the core $I_r$ and possible
set $P_r$ of \eqref{eq:dimension-core}. Set each elementary error budget
to $(N+1)^{-4}/8$. Suppose
\[
r_N\downarrow0,\qquad Nr_N^d/\log(N+1)\to\infty,
\qquad k_Nr_N^2/\log(N+1)\to\infty.
\]
For $1\le d<q$, the mean-crossing estimate from the raw covariance of
the averaged views, with sentinel zero for an ineligible core, equals
$d$ eventually almost surely. Its spectral emission rule with threshold
$2(1-1/q)E_L$, where $E_L$ is \eqref{eq:hard-localization-error}, also
emits $d$ eventually almost surely. The covariance cross-fitted argmax
has the same conclusion, allocating budgets over its two blocks.

For a finite-sample geometric certificate, allow $1\le d\le q$ and
declare a candidate set containing $d$ and valid graph/density bounds
$K,\ell,\alpha$ throughout the radius-$2r$ neighborhood, as specified in
Section~\ref{app:hard-cert}. Retain precisely the candidates whose
geometric ratio bands \eqref{eq:hard-geometric-ratio-band} intersect the
observable mass-ratio interval \eqref{eq:hard-mass-ratio-ci}. Emit only a
unique retained candidate. At any fixed eligible radius,
\[
\Pp(\text{an incorrect geometric dimension is emitted})
\le\delta_C+\delta_L+\delta_M.
\]
With fixed valid class constants and the rates above this rule emits $d$
eventually almost surely, including $d=q$. A dimension-independent
sufficient schedule is $r_N=N^{-a}$, $0<a<1/q$, and
$k_N=\lceil r_N^{-2}\log^3(N+1)\rceil$.
\end{theorem}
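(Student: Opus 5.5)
The argument has two parts: averaged-view dimension recovery and the two-mass certificate.

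\emph{Dimension recovery.} Write the averaged localization view as $\bar X_i=Z_i+\bar\varepsilon_i$, with $\bar\varepsilon_i\sim\cN(0,(\sigma^2/k_N)I_q)$. The raw covariance $\widehat M$ of $\{\bar X_i:i\in I_r\}$ then splits into four pieces: the latent core covariance, the isotropic term $(\sigma^2/k_N)I_q$, cross terms, and the fluctuation of the noise covariance. The isotropic term cancels in the mean-crossing comparison, exactly as in Theorem~\ref{thm:dimension-consistency}.

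On the simultaneous localization event of \eqref{eq:dimension-core}, Gaussian norm bounds with $t=\log(N+1)$ give $\max_i\|\bar\varepsilon_i\|\lesssim\sqrt{\sigma^2\log(N+1)/k_N}$. Call this $E_L$; it is $o(r_N)$ because $k_Nr_N^2/\log(N+1)\to\infty$. The core and possible sets then bracket $B(x,r_N\mp E_L)$. I would bound the perturbation deterministically, by Weyl, as
\[
\|\widehat M-C_{\rm core}-(\sigma^2/k_N)I\|_{\op}\lesssim r_NE_L+E_L^2=o(r_N^2).
\]
The same bound holds for the non-isotropic fluctuation of the noise covariance, via a matrix Bernstein or Hsu et al.\ bound conditional on localization.

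Next, the core covariance over a ball of radius $r_N(1+o(1))$ must be compared with $C_x(r_N)$. Both the shell mass and the sampling error are needed here. With core size $n\asymp Nr_N^d$ and entries bounded by $r_N^2$, matrix Bernstein (Tropp) gives an error of $r_N^2\sqrt{\log(N+1)/(Nr_N^d)}=o(r_N^2)$. This is exactly where the rate improves from $r^{d+4}$ to $r^d$: the averaged view has bounded radius-$r$ entries, whereas in the single-view covariance the noise enters at scale $\sigma^2$ and so required $Nr^{d+4}$. The shell perturbation is $O(r_N^2\cdot E_L/r_N)$, by the H\"older density and the $C^2$ volume expansion of the annulus mass.

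Combining these with \eqref{eq:dimension-bridge-limit}, whose margin is of order $r_N^2$, the eigenvalue-versus-mean ordering is preserved. With failure budgets $(N+1)^{-4}/8$, Borel--Cantelli gives the eventually almost-sure statement. The emission rule with threshold $2(1-1/q)E_L$ follows the same pattern: show that the threshold is $o(r_N^2)$, or more precisely dominated by the margin after scaling. The cross-fitted argmax follows by a Davis--Kahan bound applied to each block.

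\emph{Certificate.} On the events $\cE_C,\cE_L,\cE_M$, the Clopper--Pearson intervals for the core and possible-set counts at radii $r$ and $2r$ bracket the true masses of $B(x,r)$ and $B(x,2r)$. Their ratio interval therefore contains $\muc(B(x,2r))/\muc(B(x,r))$. The declared graph and density bounds $K,\ell,\alpha$ place this true ratio inside the geometric band \eqref{eq:hard-geometric-ratio-band} for the true $d$, so $d$ is always retained on these events. An incorrect emission would require $d$ to be eliminated or a different candidate to be uniquely retained; the first is impossible on the events and the second requires $d$ to be absent. The error is therefore at most $\delta_C+\delta_L+\delta_M$.

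For eventual emission, the band for dimension $d'$ concentrates around $2^{d'}$ with width $O(Kr^2+\ell r^\alpha)\to0$, so the bands become disjoint. The interval widths are $O(\sqrt{\log(N+1)/(Nr^d)})+O(E_L/r)\to0$, so eventually only $d$ is retained; Borel--Cantelli again applies, and this includes $d=q$. For the schedule, $Nr^d\ge N^{1-aq}$ grows polynomially, which gives the rate condition, and $k_Nr^2=\log^3(N+1)$ gives the replication condition.

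\emph{Main obstacle.} The hard step is showing that the deterministic perturbation from uncertain membership, $a_j$ shell points each contributing $O(r^2)$, is $o(r^2)$ relative to the core size. The first approach I would try is to bound the shell count by the mass of the annulus $B(x,r+E_L)\setminus B(x,r-E_L)$, which is of order $Nr^{d-1}E_L$, plus a Bernstein deviation term. The shell-to-core ratio is then $O(E_L/r)=o(1)$, and this has to hold uniformly along the schedule.
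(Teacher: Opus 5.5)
Your architecture matches the paper's: a deterministic localization perturbation of order $2ra+a^2$, matrix Bernstein on a true-ball sample (which is where $Nr_N^d$ replaces $Nr_N^{d+4}$), a shell transfer driven by $n/p\to1$, Clopper--Pearson widening, and retention of the true candidate band. The emission-rule step, however, fails as written.

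You set $E_L:=\max_i\|\bar\varepsilon_i\|$. The $E_L$ in the statement is instead the full observable operator-norm radius \eqref{eq:hard-localization-error}, $E_L=2ra+a^2+r^2S(n/p)+r^2[\sqrt{8t_P/n}+(8/3+2q)t_P/n]$. Here $a$ is the localization radius of \eqref{eq:dimension-core}, i.e.\ your noise bound. Under your reading the threshold $2(1-1/q)E_L$ is a length, not a squared length. For $\sigma>0$ and the stated schedule, $a\asymp r_N/\log(N+1)$, so $a/r_N^2\to\infty$ and the rule never emits. Your claim that the threshold is $o(r_N^2)$ is therefore false for that quantity.

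The argument needs three things you do not supply:
\begin{itemize}
\item the event $\|\widehat C_L^0-C_x(r)\|_{\op}\le E_L$ for the correct radius;
\item the trace-centred Weyl constant \eqref{eq:hard-centered-constant}, which gives $|\widehat a_i-(\theta_i-\bar\theta)|\le2(1-1/q)E_L$ with $\widehat a_i=\lambda_i(\widehat C_L^0)-\tr(\widehat C_L^0)/q$;
\item $E_L/r_N^2\to0$ almost surely, so that the margin $g_{d,q}r_N^2/2$ of \eqref{eq:dimension-crossing} yields $|\widehat a_i|\ge g_{d,q}r_N^2/2-2(1-1/q)E_L>2(1-1/q)E_L$ eventually.
\end{itemize}

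The second failing step is your bound on the noise-covariance fluctuation ``via matrix Bernstein or Hsu et al.\ conditional on localization''. The averaged views \emph{are} the localization data, so the core is selected from the $\bar\varepsilon_i$ themselves. Conditional on localization nothing random remains, and unconditionally the selected errors are not iid Gaussian. Running Bernstein directly on the core has the same defect.

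The paper needs neither step:
\begin{itemize}
\item On the uniform event $\|e_i\|\le a$, Cauchy--Schwarz with $\tr C_I^0\le r^2$ gives $\|\widehat C_L^0-C_I^0\|_{\op}\le2ra+a^2$ for every selected set. This already absorbs the isotropic term $\sigma^2/k_N$ and its fluctuation, so no cancellation is used.
\item Bernstein and a Hoeffding centring bound are applied to the true-ball index set. That set is iid from $\muc(\cdot\mid B(x,r))$ given its count, and the bounds are formed before intersecting with localization validity.
\item The deterministic bound \eqref{eq:hard-sharp-shell}, $\|C_B^0-C_I^0\|_{\op}\le r^2S(n/p)$, then moves the result to the core. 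For the threshold this shell term must use the observable $n/p$; your annulus-mass estimate is fine for the rate but is not computable.
\end{itemize}

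Two smaller repairs are needed. For the cross-fitted argmax, Davis--Kahan gives nothing at $k\ne d$, where the limiting spectrum has no gap of order $r^2$ at position $k$. Use instead the gap-free Ky Fan bound $0\le\tr(P_kM)-\tr(\widehat P_kM)\le2k\|\widehat M_{\rm tr}-M\|_{\op}$. For the certificate, coverage comes from the unobserved true count $M_r\sim\operatorname{Bin}(N,p_x(r))$. It transfers to $L_N(n_r;\varepsilon)$ and $U_N(p_r;\varepsilon)$ only through monotonicity of the Clopper--Pearson endpoints in the count. The core and possible counts are not themselves $\operatorname{Bin}(N,p_x(r))$.
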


The stronger finite claim uses supplied geometric bounds; it does not
infer those bounds or manifold membership from the same data. $N$ counts
distinct latent points, while the repeated-view measurement cost remains
of order $Nk_N$. These are sufficient conditions, not minimax rates.

The calculations in Appendix~\ref{app:hard-cert} prove Theorem~\ref{thm:hard-repeated}, including
the sharp shell constant and the contraction of the mass certificate.

\subsection{One observation per latent point}
\label{sec:hard-single-main}

An integer spectral count cannot be a general fractal dimension estimator.
We therefore use the Gaussian correlation energy
$\mathcal E_\mu(r)=\E e^{-\|Z-Z'\|^2/(2r^2)}$, whose exponent is a
correlation dimension. This target originates in correlation-dimension
analysis \citep{grassbergerprocaccia1983}; the Fourier inversion uses
classical deconvolution ideas \citep{fan1991,matias2002,schwarz2010}.
The finite noise intervals below are proved for two explicit restricted
models. Their Gaussian singular-value and empirical Bernstein inputs are
credited to \citet{davidsonszarek2001,maurerpontil2009}.

\begin{theorem}[Single-view correlation and intrinsic-dimension consistency]
\label{thm:hard-single}
Let $q$ be fixed and let $\mu$ be an $s$-Ahlfors regular probability
measure, $0<s\le q$, as in \eqref{eq:hard-ahlfors}. Observe independent
latent points once each, with independent $N(0,\sigma^2I_q)$ errors,
where the fixed $\sigma^2\ge0$ is unknown. Use disjoint calibration and
analysis samples. Either of the following observation classes suffices:
\begin{enumerate}[label=(\roman*),leftmargin=2em]
\item The entire latent support lies in an unknown affine space of rank
less than $q$. At epoch $N$, use $N$ calibration points and $N$ disjoint
analysis pairs. Use the variance interval \eqref{eq:hard-single-noise},
polynomial summable budgets, and the schedule
\eqref{eq:hard-single-schedule}, with $0<a<1/2$, $0<c_T<1/2$,
$s_0>0$, and $0<\beta<1$.
\item A specified bound $|Z_1|\le R<\infty$ is available. At dyadic epoch
$j$, use $N_j=2^j$ calibration points and $N_j$ disjoint analysis pairs,
variance interval \eqref{eq:hard-compact-noise}, and each budget
$(j+1)^{-4}/8$. Set
\[
\begin{gathered}
T_j=j^b/(s_0^2+U_j)^{1/2},\qquad r_j=j^{-a},\qquad R_j=r_j^\beta,\\
0<a<b<1/4,\qquad s_0>0,\qquad 0<\beta<1.
\end{gathered}
\]
\end{enumerate}
Compute the energy estimator and intervals in
\eqref{eq:hard-fourier-statistic}--\eqref{eq:hard-analytic-kernel}.
At the two scheduled radii their relative errors tend to zero almost
surely. With a fixed sentinel for unavailable positive estimates, the
observed wide-ratio slope satisfies
\[
\widehat s=
\frac{\log\widehat{\mathcal E}(R)-\log\widehat{\mathcal E}(r)}
     {\log(R/r)}
\longrightarrow s\quad\text{almost surely and in probability}.
\]
For (ii), hold the estimate constant between epochs. Under the Ahlfors
premise, $s=\dim_H\operatorname{supp}\mu$; for a homogeneous smooth
manifold class this equals geometric intrinsic dimension. If that
dimension is known to be an integer, nearest-integer rounding is
eventually correct almost surely.

At finite pre-specified radii the energy intervals and the induced
finite-scale slope interval have false-report probability at most
$\delta_C+\delta_E$. This is a finite-scale slope guarantee; it is not
a confidence interval for the limiting dimension without a bias bound.
\end{theorem}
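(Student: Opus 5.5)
The proof splits into three layers: noise calibration, energy estimation at a fixed radius, and slope consistency from Ahlfors regularity.

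\textbf{Noise calibration.} Under (i), the centered calibration covariance equals the latent covariance, of rank below $q$, plus $\sigma^2 I_q$. Hence its smallest eigenvalue estimates $\sigma^2$ from above and below. I would use the Davidson--Szarek singular-value bounds for the Gaussian part and a Tropp-type matrix bound for the cross terms. This should give an interval $[\underline\sigma^2,\bar\sigma^2]$ of width $O(\sqrt{\log(1/\delta)/N})$ with polynomial budgets. Under (ii), the bound $|Z_1|\le R$ lets a moment or characteristic-function identity isolate $\sigma^2$; I would apply empirical Bernstein (Maurer--Pontil) to bounded-pair statistics to obtain an interval of width of order $N_j^{-1/2}$ up to logarithms. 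In both cases the budgets are summable, so by Borel--Cantelli the intervals eventually contain $\sigma^2$ and have widths tending to zero almost surely.

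\textbf{Energy at a fixed radius.} Write $\mathcal E_\mu(r)$ in Fourier form as
\[
\mathcal E_\mu(r)=c\,r^q\int e^{-r^2\|\xi\|^2/2}\,|\hat\mu(\xi)|^2\,d\xi .
\]
For an analysis pair, $|\hat\mu(\xi)|^2$ is estimated without bias by $e^{\sigma^2\|\xi\|^2}\cos(\xi\cdot(X-X'))$.

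\begin{enumerate}[label=(\alph*),leftmargin=2em]
\item Substitute $\bar\sigma^2$ for $\sigma^2$ and truncate the frequency integral at $\|\xi\|\le T$.
\item Bound the \emph{cutoff bias}, i.e.\ the Gaussian tail of the integral beyond $T$. It is relatively negligible when $Tr\gg1$, using the Ahlfors lower bound $\mathcal E_\mu(r)\gtrsim r^s$.
\item Bound the \emph{noise-plug-in error}, which is of order $T^2\cdot|\bar\sigma^2-\sigma^2|$ times the inflation factor $e^{\sigma^2T^2}$.
\item Bound the \emph{sampling error}. The summand is bounded by $e^{\bar\sigma^2T^2}$, so Hoeffding gives an error of order $e^{\bar\sigma^2T^2}\sqrt{\log/N}$.
\end{enumerate}

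The schedules are chosen exactly for this balance. In (ii), $T_j=j^b/(s_0^2+U_j)^{1/2}$ makes $e^{U_jT_j^2}\le e^{j^{2b}}$, while the analysis sample size is $2^j$. Since $2b<1/2$, the sampling error is $e^{j^{2b}}2^{-j/2}$ up to logarithms, which decays. Dividing by $r_j^s=j^{-as}$ leaves a polynomial factor only, and $T_jr_j\sim j^{b-a}\to\infty$ kills the cutoff bias.

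In (i) the choice of $T$ must satisfy $e^{\bar\sigma^2T^2}\le N^{c_T}$, which is the role of $c_T<1/2$. This gives $T^2\asymp\log N$, so with $r=N^{-a}$ both the sampling error $N^{c_T-1/2}$ and the bias must be beaten by $r^s$. Checking this against the schedule \eqref{eq:hard-single-schedule} is the main obstacle; I expect it to force constraints linking $a$, $c_T$ and $s_0$. I would handle it by making the bias criterion $Tr\to\infty$ precise with a $\log$ factor, or by letting the wide radius $R=r^\beta$ carry the slack. Borel--Cantelli over the summable budgets then gives almost-sure relative-error convergence at both radii.

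\textbf{Slope, dimension, and finite-scale guarantee.} Ahlfors regularity, obtained by integrating ball masses against the Gaussian kernel, gives $c_1\rho^s\le\mathcal E_\mu(\rho)\le c_2\rho^s$. Therefore
\[
\frac{\log\mathcal E(R)-\log\mathcal E(r)}{\log(R/r)}=s+O\!\left(\frac{1}{\log(R/r)}\right),
\]
and $\log(R/r)=(1-\beta)\,a\log N\to\infty$. Since the relative errors tend to zero, the added log-errors are $o(1)$, so $\widehat s\to s$ almost surely and hence in probability. For (ii), the between-epoch constancy preserves the limit. The identity $s=\dim_H\operatorname{supp}\mu$ is the standard mass-distribution principle for Ahlfors measures, and for homogeneous smooth manifolds $s$ equals the manifold dimension. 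Rounding is eventually correct once $|\widehat s-s|<1/2$. Finally, the finite-scale statement follows from a union bound over the calibration event ($\delta_C$) and the energy concentration events ($\delta_E$), since the plug-in interval is monotone in $\sigma^2$.
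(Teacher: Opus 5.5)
Your three-layer plan and the Ahlfors slope step match the paper's proof. However, your class (i) analysis uses the wrong radius scale, and the conflict you flag as the main obstacle cannot be removed the way you suggest.

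Your sampling bound needs a cutoff with $e^{\sigma^2T^2}\le N^{c_T}$, and when $\sigma>0$ any such cutoff has $T\lesssim\sqrt{\log N}$. With $r_N=N^{-a}$ you then get $r_NT_N\to0$. Your own cutoff criterion therefore fails, and the tail bound $Q_q(r_N^2T_N^2)$ tends to one instead of being $o(r_N^s)$. For $s<q$ the truncated energy is at most a fraction of order $(r_NT_N)^{q-s}\to0$ of $\mathcal E_\mu(r_N)$. No logarithmic refinement of the bias criterion fixes this, and neither does shifting slack to the wide radius, because both radii need vanishing relative error.

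The missing idea is that Gaussian deconvolution resolves only logarithmically small scales. The schedule \eqref{eq:hard-single-schedule} takes $r_N=(\log(N+1))^{-a}$ with $a<1/2$, so $r_NT_N\asymp(\log N)^{1/2-a}\to\infty$. The errors then behave as follows:
\begin{itemize}
\item The cutoff tail is at most $C_q(1+z^{q/2})e^{-z/4}$ with $z\asymp(\log N)^{1-2a}$, so it decays faster than every power of $\log N$.
\item The stochastic term $N^{c_T-1/2}\sqrt{\log N}$ is $o(r_N^s)$, because $r_N^s$ is only polylogarithmically small.
\item The relative noise factor $(U_N-L_N)T_N^2=O((\log N)^{3/2}/\sqrt N)$ vanishes.
\end{itemize}
The slope denominator is $(1-\beta)a\log\log(N+1)$, not $(1-\beta)a\log N$. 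It still diverges, which is all the Ahlfors sandwich $\log\mathcal E_\mu(r)=s\log r+O(1)$ needs.

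In class (ii) the gap is the noise calibration. You posit an interval of width about $N_j^{-1/2}$ from a moment or characteristic-function identity. A support bound does not give one: $\operatorname{Var}X_1=\operatorname{Var}Z_1+\sigma^2$ with $\operatorname{Var}Z_1\in[0,R^2]$ unknown, and every cumulant of $X_1$ of order at least three equals that of $Z_1$. So no fixed moment or bounded-statistic identity isolates $\sigma^2$.

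In any case, the theorem prescribes the maximum-based interval \eqref{eq:hard-compact-noise}. It is built from $M_n=\max_i|X_{i1}|$ and is valid because $\bigl|M_n-\sigma\max_i|G_i|\bigr|\le R$. Its width is only $O(R\sigma/\sqrt j+(\sigma^2\log j+R^2)/j)$, that is, of order $(\log N_j)^{-1/2}$ when $\sigma R>0$. With that width the decisive check is $(U_j-L_j)T_j^2=O(j^{2b-1/2})\to0$, which is the actual reason for $b<1/4$. Your sampling term $e^{j^{2b}}2^{-j/2}$ would decay for every $b<1/2$.

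Your additive plug-in bound $T^2|\bar\sigma^2-\sigma^2|e^{\sigma^2T^2}$ would here be of order $j^{2b-1/2}e^{cj^{2b}}$ with $c>0$ when $\sigma>0$, which is not $o(r_j^s)$. What you need is the multiplicative sandwich $e^{-(U-L)T^2}\mathcal E^{(T)}_\mu(r)\le F_r(v)\le e^{(U-L)T^2}\mathcal E^{(T)}_\mu(r)$ for calibration values $v\in[L,U]$ on the coverage event. It follows from the nonnegativity of $|\phi_\mu|^2$ in the Fourier mean and carries no factor $e^{\sigma^2T^2}$.

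Two smaller omissions:
\begin{itemize}
\item When $\sigma=0$ in class (ii), $U_j=O(R^2/j)$ eventually triggers the analytic branch \eqref{eq:hard-analytic-kernel}. Its calibration bias needs a separate bound: the ratio of its mean to $\mathcal E_\mu(r)$ lies in $[1,(1-2v/r^2)^{-q/2}]$, by $0\le r\mathcal E_\mu'/\mathcal E_\mu\le q$.
\item In class (i), a Tropp bound on the cross term scales with the unknown latent spread, so it cannot certify the observable endpoint in \eqref{eq:hard-single-noise}. Completing the square instead gives $m\widehat\Sigma\succeq\sigma^2G^\top HG$, where $H$ projects the centered sample space off the latent columns. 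Then $G^\top HG$ is Wishart with at least $n-q$ degrees of freedom, and Davidson--Szarek applies with no cross term.
\end{itemize}
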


Class (i) is a global affine restriction, not a consequence of a curved
manifold having intrinsic dimension below $q$. Class (ii) accommodates
full affine span but gives slow resolution. Neither construction defeats
the unrestricted Gaussian-semigroup nonidentifiability obstruction.
Here each latent point is measured exactly once:
$X=Z+\eta$, $\eta\sim N(0,\sigma^2I_q)$, independently of $Z$.
Calibration uses additional independently sampled points, not repeated
measurements of the same point. Two explicit identification classes are
given below. Unknown-noise deconvolution has established precedents
\citep{matias2002,schwarz2010}; Fourier regularization and the slow rates
associated with Gaussian noise are classical \citep{fan1991}.
No unrestricted identification or priority claim is made.

\clearpage
\section{Experiment}
\label{sec:experiments}
Each benchmark places its reference above the computed output. For the
synthetic examples, ``reference'' means our independently generated clean
analytic or numerical test sample from the cited model; it is not a
reproduction of another author's figure. For MNIST it means the original
observed test image. The lower rows are returned by the mean-crossing
estimator followed by a standard local affine PCA projection. This
projection is an appended diagnostic, not an output guaranteed by a
reconstruction-error theorem. Clean references and display colors never
enter the fitting routine. Missing outputs remain missing.

\subsection{Swiss roll and two-torus: computed held-out outputs}
\begin{figure}[H]\centering
\includegraphics[width=\linewidth,height=.60\textheight,keepaspectratio]{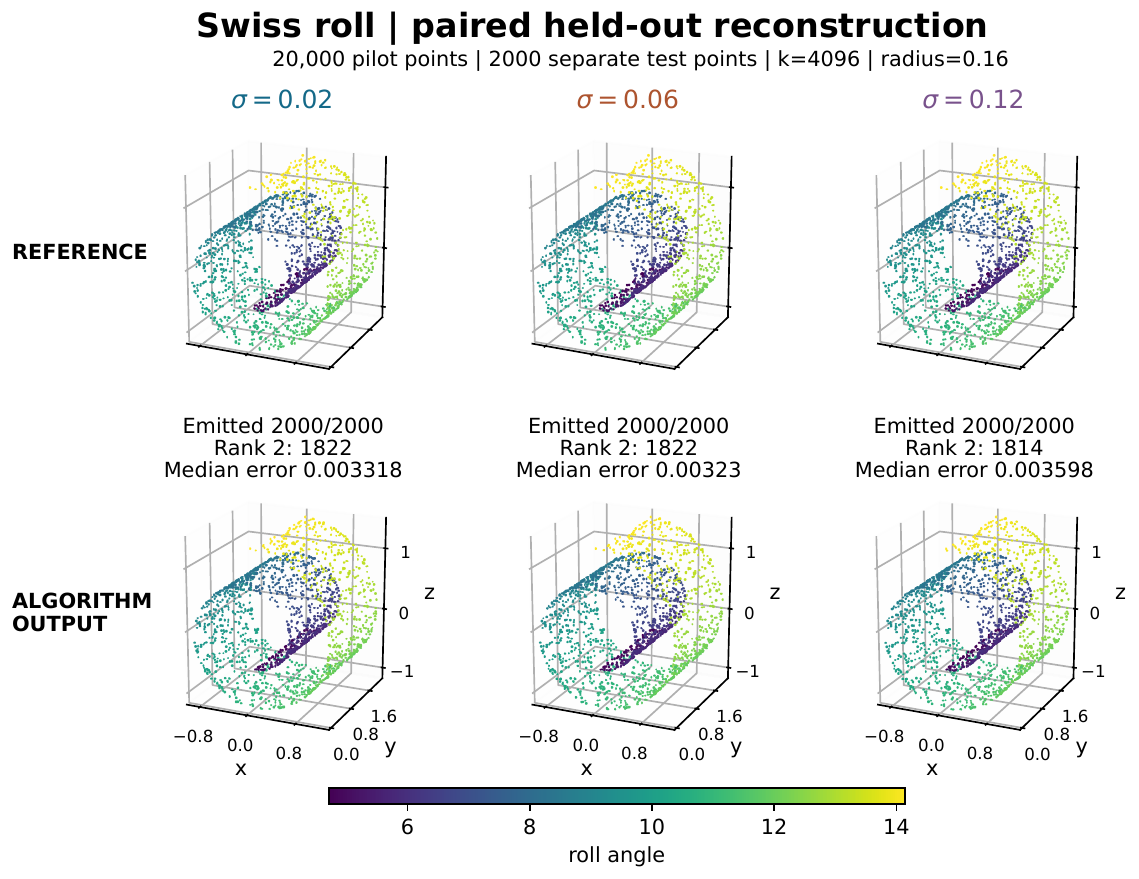}
\caption{Swiss roll, following the benchmark geometry of
\citet{tenenbaum2000}. Top: the clean held-out reference. Bottom: actual
mean-crossing rank followed by local affine projection of the noisy
held-out averages. Color is the matched roll angle and is used only for
display. The lower titles give emissions, the number of rank-two outputs,
and median Euclidean error in normalized coordinates. These fixed-scale
outputs include errors and are not a proof of geometric consistency.
Code C04 in Appendix~\ref{app:code-map} generates every plotted point.}
\label{fig:reference-swiss}\end{figure}

\clearpage
Each geometry uses 20,000 pilot samples and 2,000 independent test samples
in $q=3$, with $k=4096$ Gaussian views and 512 independent calibration
pairs. The exact distribution of each view average is simulated. Columns
vary the raw coordinate noise $\sigma=0.02,0.06,0.12$; each has separately
drawn errors. The reference sample and its angle colors are shared.
The displayed radii, fixed before this run, are $0.16$ for Swiss roll
and $0.25$ for the torus. The estimator uses all three coordinates and
its computed rank, without inserting the known dimension two.
All 2,000 test samples are retained, including Swiss roll boundary points.
The recorded wrong ranks and projection errors are part of the result.

\begin{figure}[H]\centering
\includegraphics[width=\linewidth,height=.66\textheight,keepaspectratio]{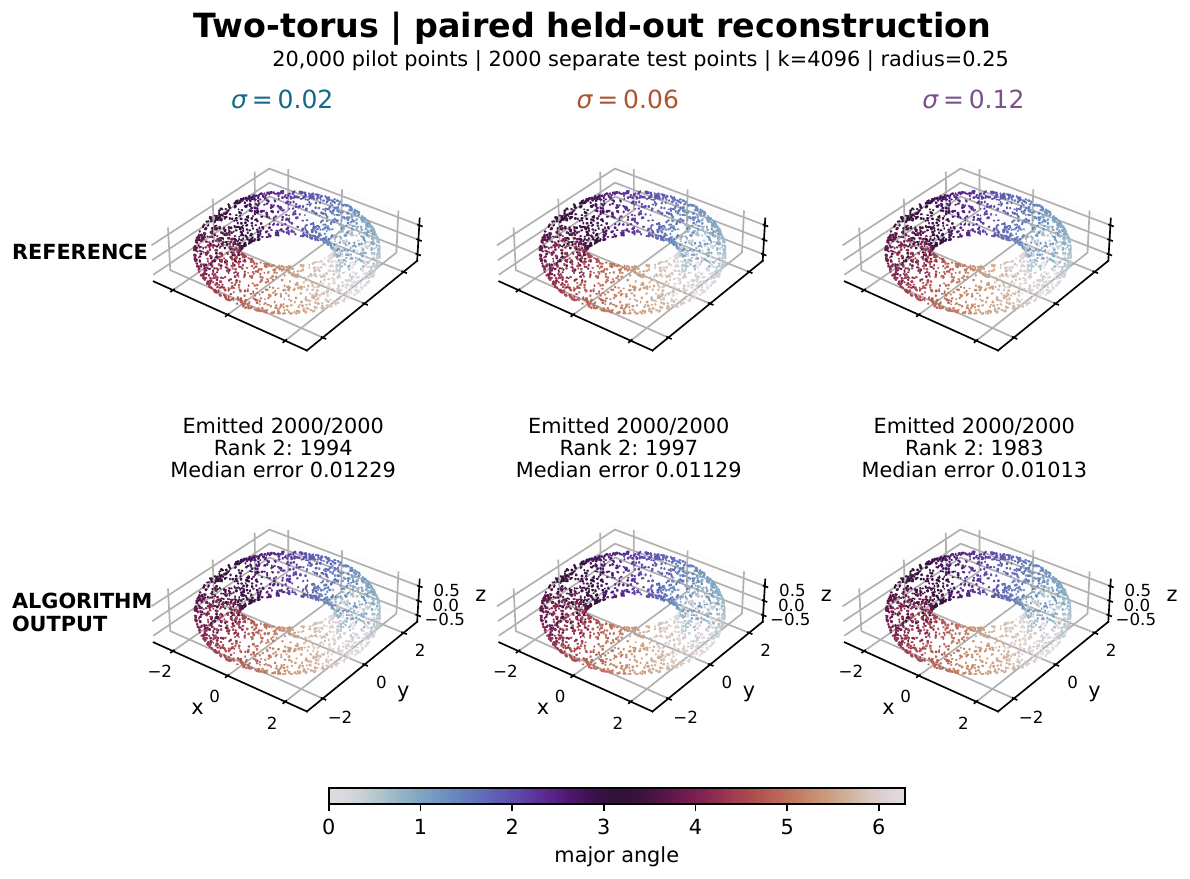}
\caption{Two-torus with major radius 2 and minor radius 0.65, sampled
uniformly in area. Top: the clean held-out sample. Bottom: computed
local affine projections, with major-angle colors shared across rows.
The input consists of independent Gaussian averages, not the reference.
The known dimension is used only to count correct point estimates;
no geometric class constants are supplied. Code C04 saves the observed
pilot, test observations, calibration pairs, returned arrays and ranks.}
\label{fig:reference-torus}\end{figure}

\clearpage
\subsection{Lorenz and R\"ossler: actual outputs at increasing pilot sizes}
For each of the three parameter values of each system, we generate
10,000 independent pilot endpoints and 2,000 separate test endpoints.
The same test endpoints and their noisy averages are used for the nested
pilot prefixes $N=1{,}000,6{,}000,10{,}000$. Every endpoint uses vectorized
RK4 with $\Delta t=0.02$ and final time $40.02$, from the declared
independent initial-condition law. There are also 256 separately seeded
design endpoints per case; they do not enter the fit.
Each observed coordinate has raw Gaussian standard deviation $0.02$,
$k=1024$ views, and 512 calibration pairs. The fixed radii are $0.10$
for Lorenz and $0.12$ for R\"ossler in coordinates divided by 30 and 15,
respectively. Reference $z$ supplies a display-only color shared by every
row; the columns vary the dynamical parameter. Limits and camera are
identical within each matched column.

\begin{figure}[H]\centering
\includegraphics[width=\linewidth,height=.72\textheight,keepaspectratio]{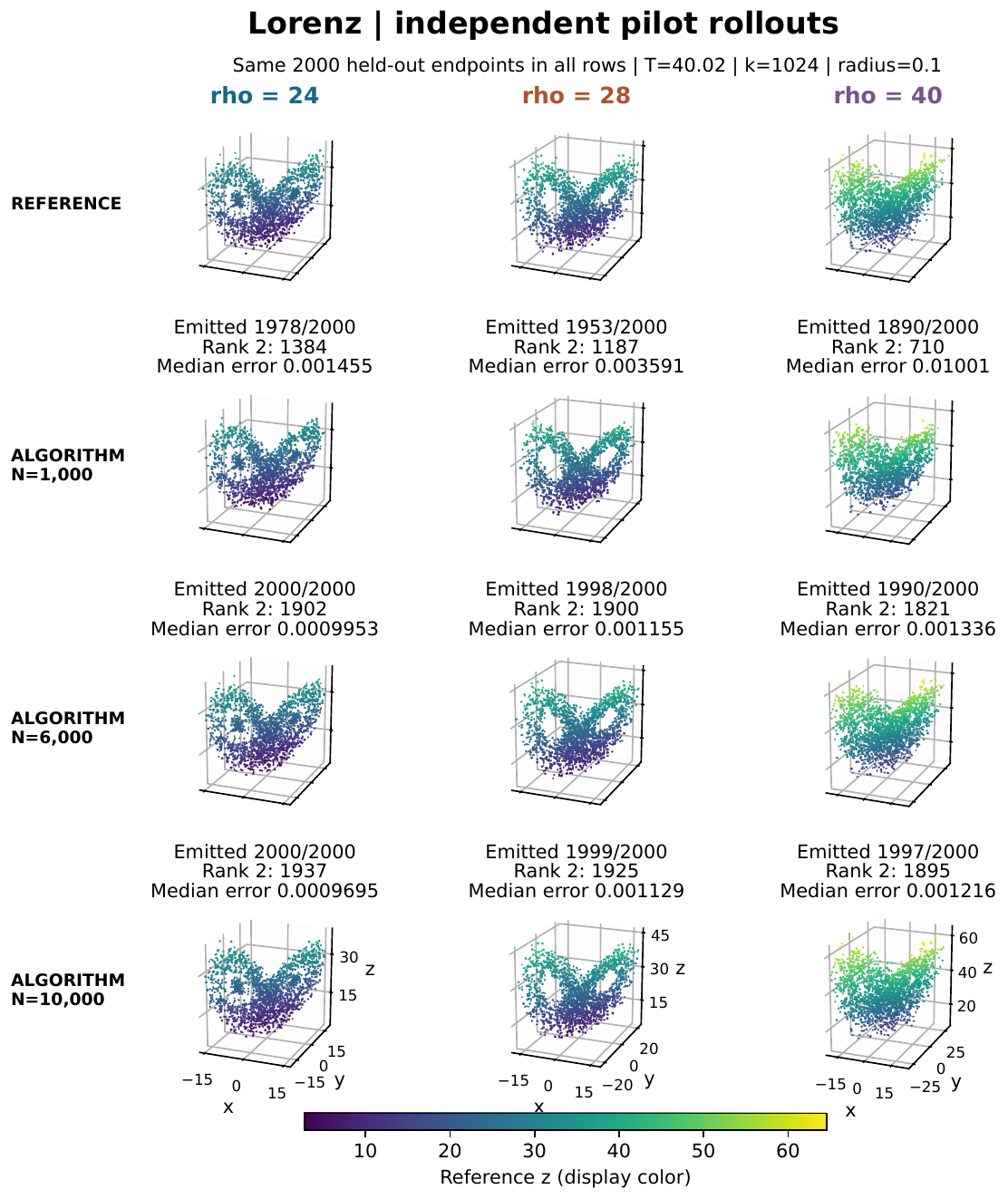}
\caption{Lorenz equations \citep{lorenz1963}, with $\rho=24,28,40$,
$\sigma_{\rm Lorenz}=10$ and $\beta_{\rm Lorenz}=8/3$.
Top: the fixed clean test endpoints. Lower rows: actual algorithm
outputs for increasing pilot sizes. All rows target the same 2,000
points, so a denser displayed cloud is not substituted for an
improvement in reconstruction. Omitted outputs are counted as
abstentions. Code C04 records paired errors and certificate flags.}
\label{fig:reference-lorenz}\label{fig:lorenz-rollouts}\end{figure}

\clearpage
\begin{figure}[H]\centering
\includegraphics[width=\linewidth,height=.74\textheight,keepaspectratio]{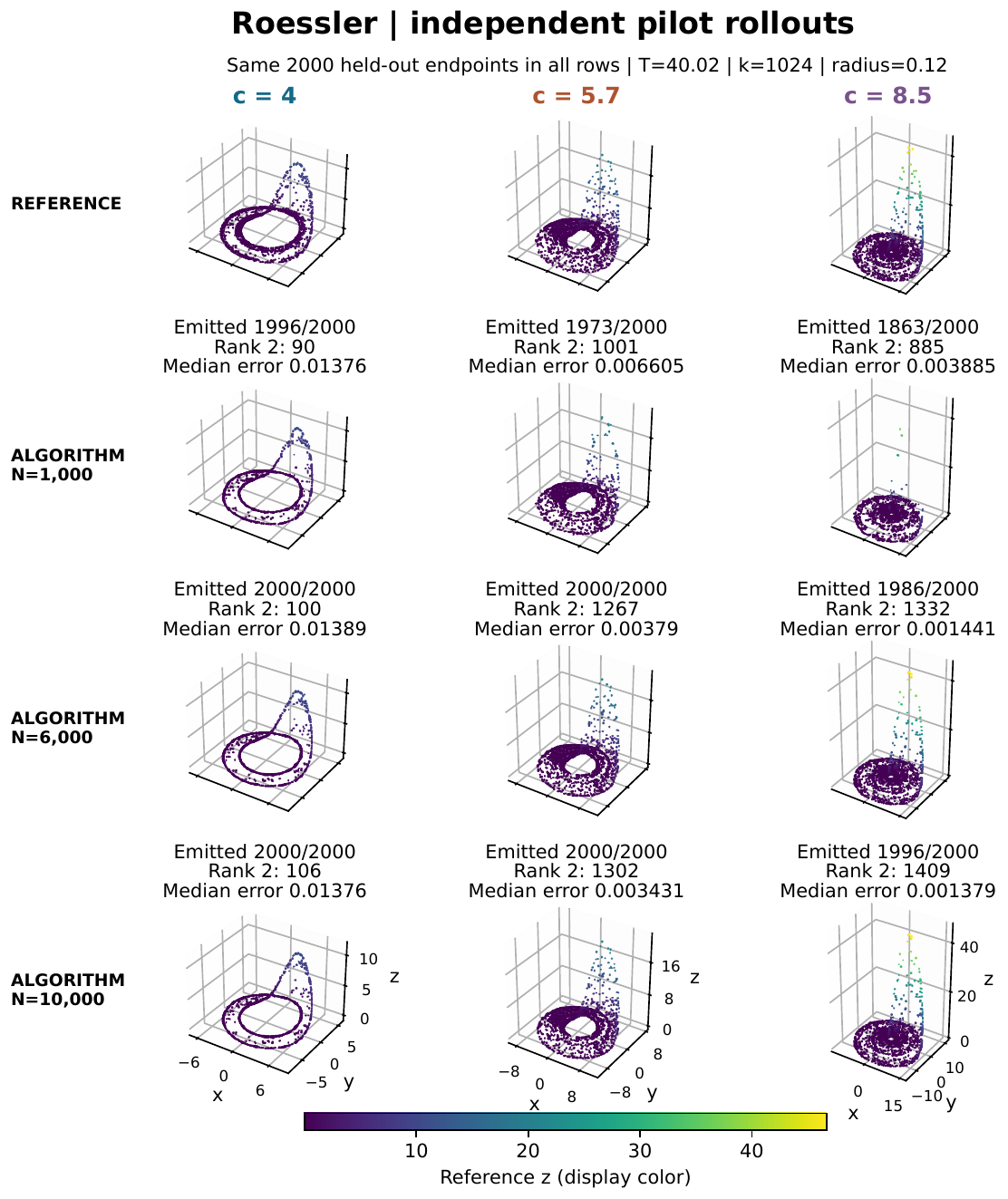}
\caption{R\"ossler equations \citep{rossler1976}, with $a=b=0.2$ and
$c=4,5.7,8.5$. Top: clean held-out endpoints; lower rows: returned
local-PCA projections at the three pilot sizes. Errors in the panel
titles use normalized coordinates; axes and colors use physical
coordinates. The pilot and test roles have distinct seeds. Code C04
reproduces the fit and the complete plotted arrays.}
\label{fig:reference-rossler}\label{fig:rossler-rollouts}\end{figure}

\paragraph{Interpretation of independent rollouts.}
Independent initializations supply an iid finite-time sampling design.
They do not establish an invariant distribution or its dimension. A
finite-time flow from a full-dimensional initial law need not have a
lower-dimensional support. The mean-crossing count can therefore be a
finite-scale diagnostic without equalling attractor dimension. We run
the population spectral gate when its observation assumptions hold;
geometric certification is unavailable without valid class bounds.
The old observation-cloud plots and their sample-density-dependent
Chamfer comparisons are superseded by these paired algorithm outputs.

\clearpage
\subsection{MNIST: computed digits and the reconstruction procedure}
The dimension estimator itself returns a count or abstention. To obtain an
image or a sensor vector, we explicitly append a conventional local affine PCA
projection. Local PCA, tangent estimation and manifold inference have established
precedents \citep{aamari2019,lim2024,yaoxia2025}; we claim no novelty for this
projection, and these figures are not reproductions of those authors' algorithms.

For a query $x$ and accepted neighborhood $I$, put
\[
 \bar x_I=\frac1{|I|}\sum_{i\in I}x_i,\qquad
 C_I=\frac1{|I|}\sum_{i\in I}(x_i-\bar x_I)(x_i-\bar x_I)^T,
 \qquad \widehat d=\#\{j:\lambda_j(C_I)>\tr(C_I)/q\}.
\]
The original numerical tie guard is retained. If a rank is available and
$U_{\widehat d}$ contains the corresponding leading orthonormal eigenvectors,
the appended reconstruction is
\[
 \widehat x=\bar x_I+
 U_{\widehat d}U_{\widehat d}^{T}(x-\bar x_I).
\]
Abstentions are not replaced by copies of the reference or by interpolated
outputs. Every emitted rank and every core count was checked against the
previous dimension-estimation result. The maximum computed orthogonality
error is $9.61\times10^{-14}$. This checks the implementation; it is not a
geometric or reconstruction-error theorem.

\paragraph{MNIST.}
The training/test distinction is explicit: 2,000 training images define the
original design display, another 8,000 training images form the pilot sample,
and 40 test images are the pre-existing balanced anchors. The estimator uses
pixel vectors divided by $255\sqrt{784}$; displayed intensities are in $[0,1]$.
Digit labels do not enter the neighborhood, covariance, rank, or projection.
No noise is added in this diagnostic. For the emitted set $E_r$, we report
\[
 \mathrm{MSE}_i=\frac{\|\widehat x_i-x_i\|_2^2}{784},\qquad
 \mathrm{PSNR}_i=-10\log_{10}\mathrm{MSE}_i
\]
on intensity vectors, and average these quantities separately over $E_r$.
The PSNR mean is therefore not $-10\log_{10}$ of the mean MSE.
\begin{center}
\begin{tabular}{rrrrr}
\toprule
Radius & Outputs / 40 & Rank range & Mean MSE & Mean PSNR (dB)\\
\midrule
0.16 & 12 & 1--53 & 0.007775 & 23.62\\
0.20 & 25 & 1--57 & 0.011232 & 21.39\\
0.24 & 36 & 1--76 & 0.012113 & 21.37\\
0.28 & 39 & 5--89 & 0.008364 & 22.63\\
\bottomrule
\end{tabular}
\end{center}
The emitted subsets differ across radii, so these means do not constitute a
paired ranking of radii. The main figure uses all anchors at the already
predeclared largest radius, without selecting images by reconstruction error.
The evidence does not justify calling the reconstructions indistinguishable
from their references.

\begin{figure}[H]\centering
\includegraphics[width=\linewidth,height=.73\textheight,keepaspectratio]{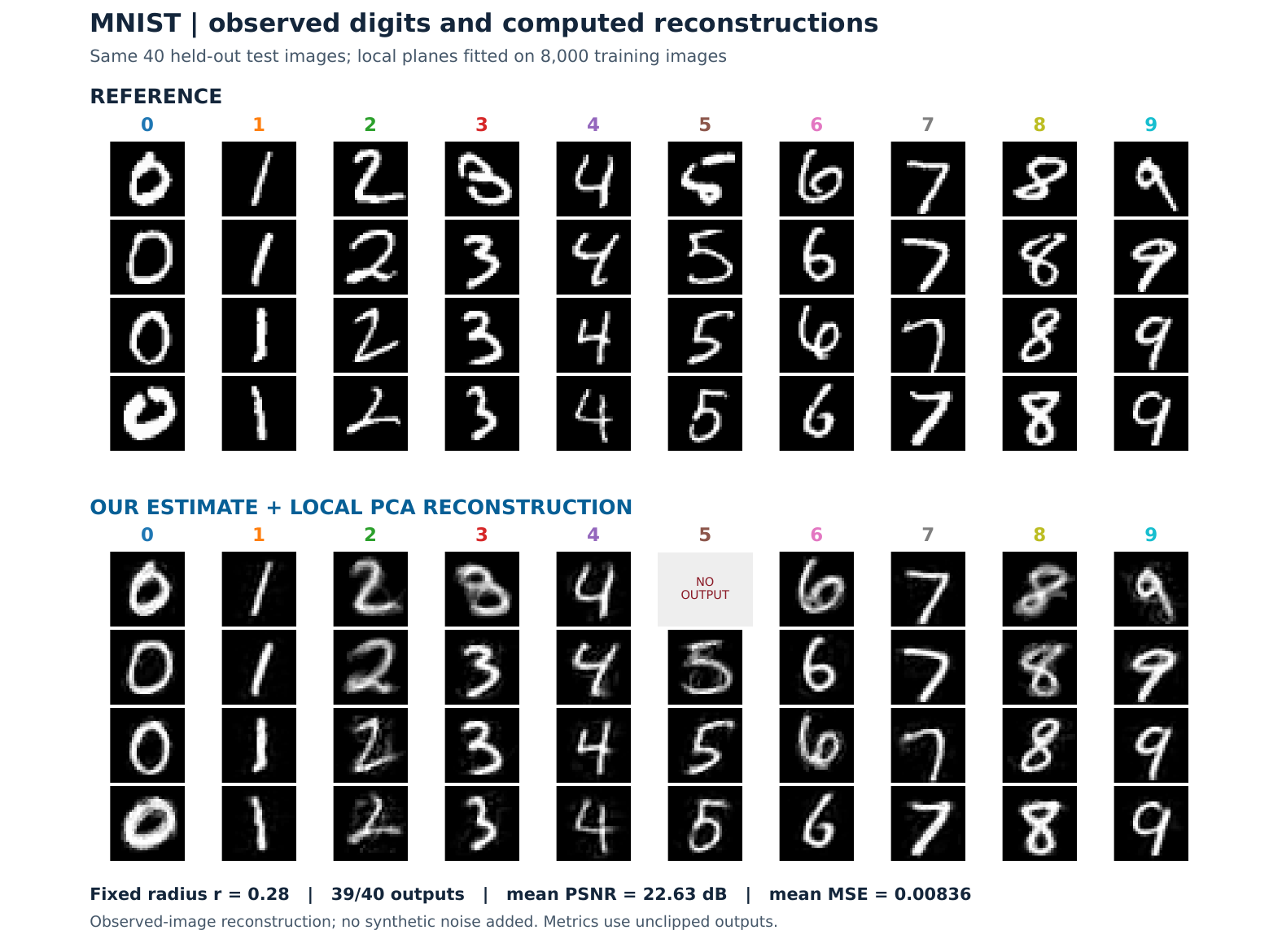}
\caption{Actual MNIST digit comparison \citep{lecun1998}. Top: the first four official test images of each digit; bottom: local affine PCA reconstructions using our mean-crossing ranks, at the predeclared radius $r=0.28$. The local planes use 8,000 official training images. All 40 existing anchors are shown: 39 produce outputs; one has insufficient local support. Mean per-image PSNR is $22.63$ dB and mean pixel MSE is $0.008364$, conditional on emission. Metrics use unclipped outputs; all tiles share the same $[0,1]$ grayscale. This is observed-image reconstruction without added noise, not a clean-latent denoising test, and it does not establish near-perfect reconstruction or geometric dimension.}
\label{fig:mnist-computed}
\end{figure}

\begin{figure}[H]
\centering
\includegraphics[width=\linewidth,height=.73\textheight,keepaspectratio]{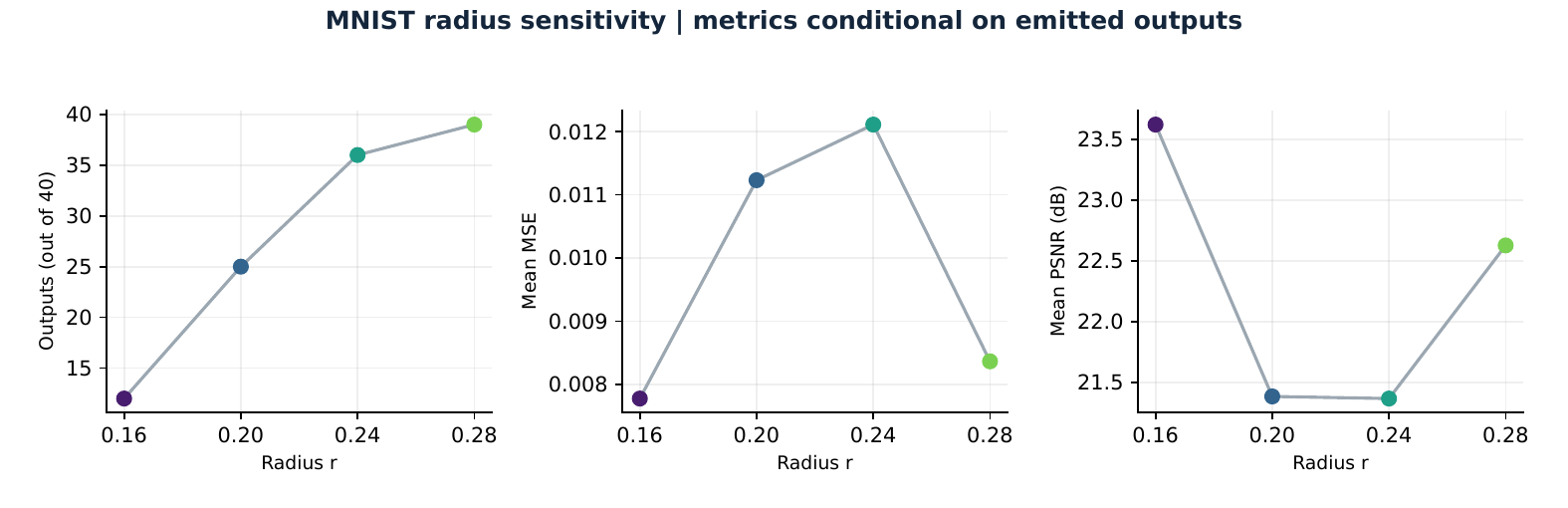}
\caption{MNIST radius sensitivity over the same 40 declared anchors. Output counts, emitted ranks, pixel MSE and PSNR are evaluated at the four predeclared radii. Error summaries are conditional on emission and therefore use different subsets across radii. The plot does not claim a paired superiority comparison or a geometric-dimension certificate.}
\label{fig:mnist-radius}
\end{figure}

\subsection{KS trajectory states and local projection bias}
The reference computation uses the periodic scalar potential equation, in the
mean-zero gauge,
\[
 u_t=-\Delta u-\Delta^2u
       -\tfrac12\bigl(|\nabla u|^2-\overline{|\nabla u|^2}\bigr).
\]
The scalar/vector distinction and the two- and three-dimensional setting are
discussed by \citet{larios2022}; ETDRK4 is due to \citet{kassam2005}. The present
reference is a new integration of the stated equation, not a copy of their
numerical experiment. These finite computations establish no general
well-posedness statement for the multidimensional PDE.
The same saved initial data, $dt=0.05$, and $T=160$ are replayed.
The two-dimensional grids have $48^2$ points and $16^2$ sensors; the
three-dimensional grids have $24^3$ points and $8^3$ sensors.
The reference states use the RMS metric, $x=u_{\rm sensors}/\sqrt q$.

The design interval is $40\le t<60$, the pilot interval is $60\le t\le130$
(701 observations), and 24 anchors span $136\le t\le160$.
Each raw sensor noise standard deviation is $0.03$ and the replicated design
uses $k=1024$. The original routine simulates the exact distribution of each
view average, rather than storing 1,024 raw arrays.
The appended projection sees only those averages and the calibrated core.
The latent reference is used afterward to compute
\[
 \mathrm{RMSE}_{\rm sensor}=\|\widehat x-x_{\rm ref}\|_2,\qquad
 e_{\rm rel}=\frac{\|\widehat x-x_{\rm ref}\|_2}{\|x_{\rm ref}\|_2}.
\]
For the plotted largest radius, these are medians over emitted anchors:
\begin{center}
\begin{tabular}{rrrrr}
\toprule
Spatial dim. & $L$ & Outputs / 24 & Input RMSE & Projected RMSE\\
\midrule
2 & 12 & 24 & 0.000923 & 0.113054\\
2 & 18 & 24 & 0.000956 & 2.314580\\
2 & 24 & 24 & 0.000927 & 3.204219\\
3 & 8  & 0 & --- & ---\\
3 & 12 & 24 & 0.000938 & 0.150919\\
3 & 16 & 24 & 0.000931 & 3.535971\\
\bottomrule
\end{tabular}
\end{center}
The already-averaged input is more accurate than this appended local-plane
projection in every emitted condition. Thus this experiment diagnoses
projection bias; it does not demonstrate denoising improvement. Small radii
also produce empty or unresolved cores, retained in the full result files.
All temporal dependence remains, and population/geometric certificates are
withheld. The three PCA display coordinates are fitted to the noisy pilot
and held fixed for both rows; they are not the estimated intrinsic dimension.

\begin{figure}[H]\centering
\includegraphics[width=\linewidth,height=.73\textheight,keepaspectratio]{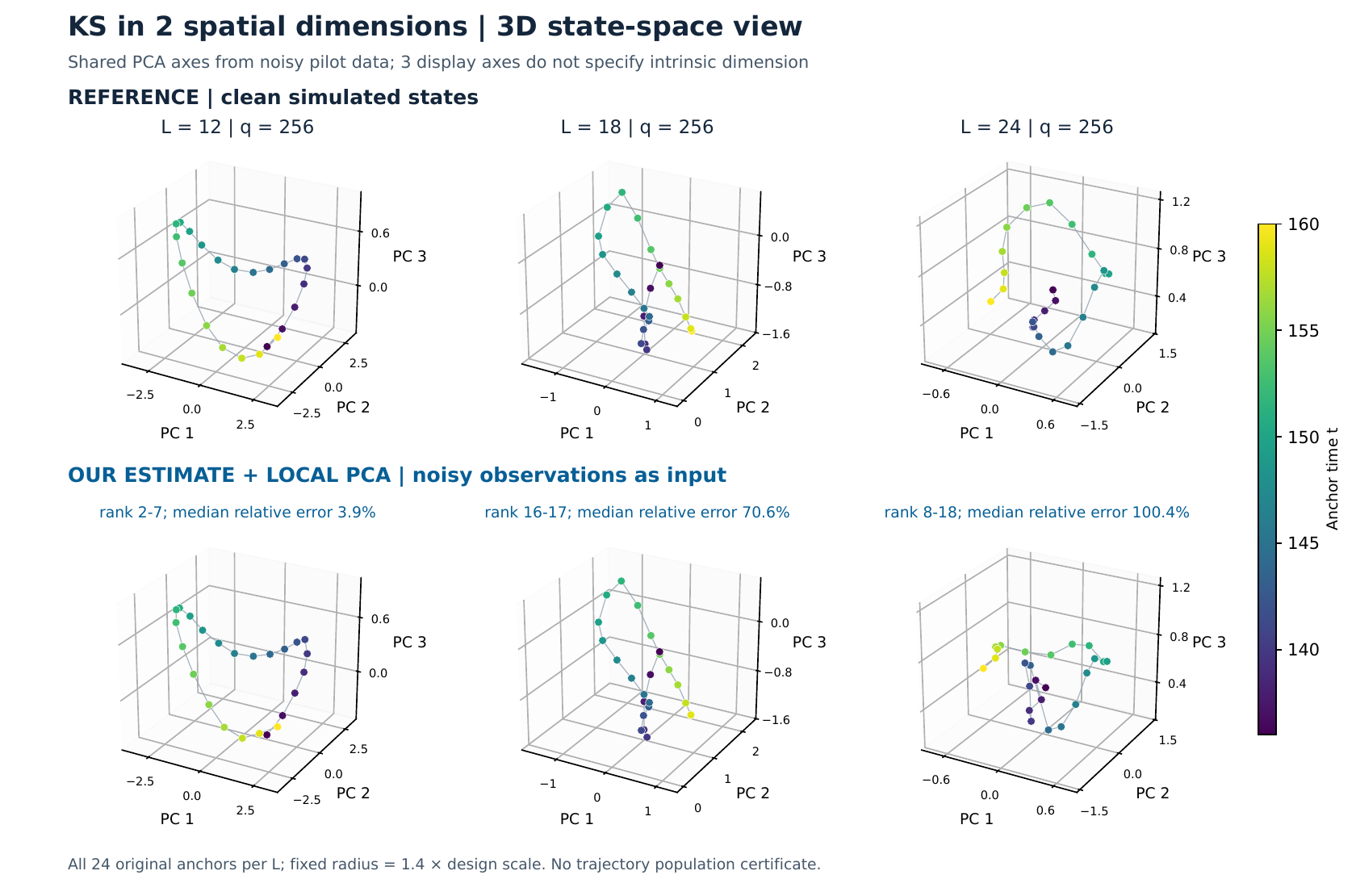}
\caption{KS in two spatial dimensions, displayed in three common state coordinates. Top: the same 24 clean simulated anchor states for each $L$; bottom: projections of noisy observations onto local planes with mean-crossing rank. Both rows use one PCA map fitted only on noisy pilot data. The displayed radius is $1.4$ times the design scale. Median relative sensor errors are $3.87\%$, $70.57\%$, and $100.36\%$ for $L=12,18,24$. The last two conditions expose substantial projection bias. Three plotted coordinates do not imply a three-dimensional intrinsic state space.}
\label{fig:ks2-state3d}
\end{figure}

\begin{figure}[H]\centering
\includegraphics[width=\linewidth,height=.73\textheight,keepaspectratio]{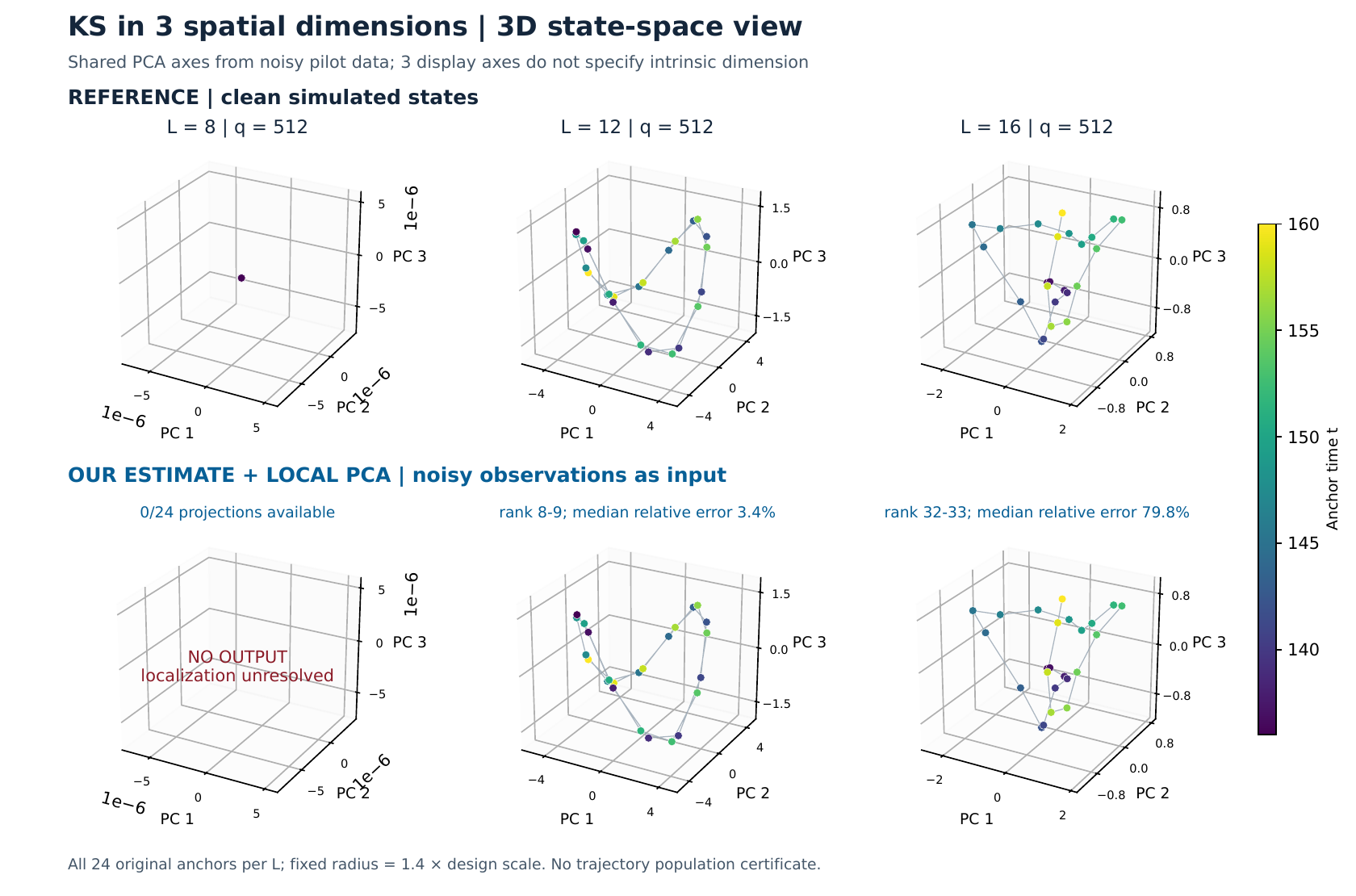}
\caption{KS in three spatial dimensions, displayed in a three-coordinate state space. Reference and reconstruction share sample times, axes, camera, and time colors within each column. At $L=8$ localization is unresolved and the procedure abstains. At $L=12,16$, median relative sensor errors are $3.44\%$ and $79.81\%$. Similarity in this projection is not evidence of small error in all $q=512$ sensor coordinates; $L=16$ demonstrates this directly. Lines join the 24 sampled states for orientation only. No dense recovered solution manifold is asserted.}
\label{fig:ks3-state3d}
\end{figure}

\begin{figure}[H]\centering
\includegraphics[width=\linewidth,height=.73\textheight,keepaspectratio]{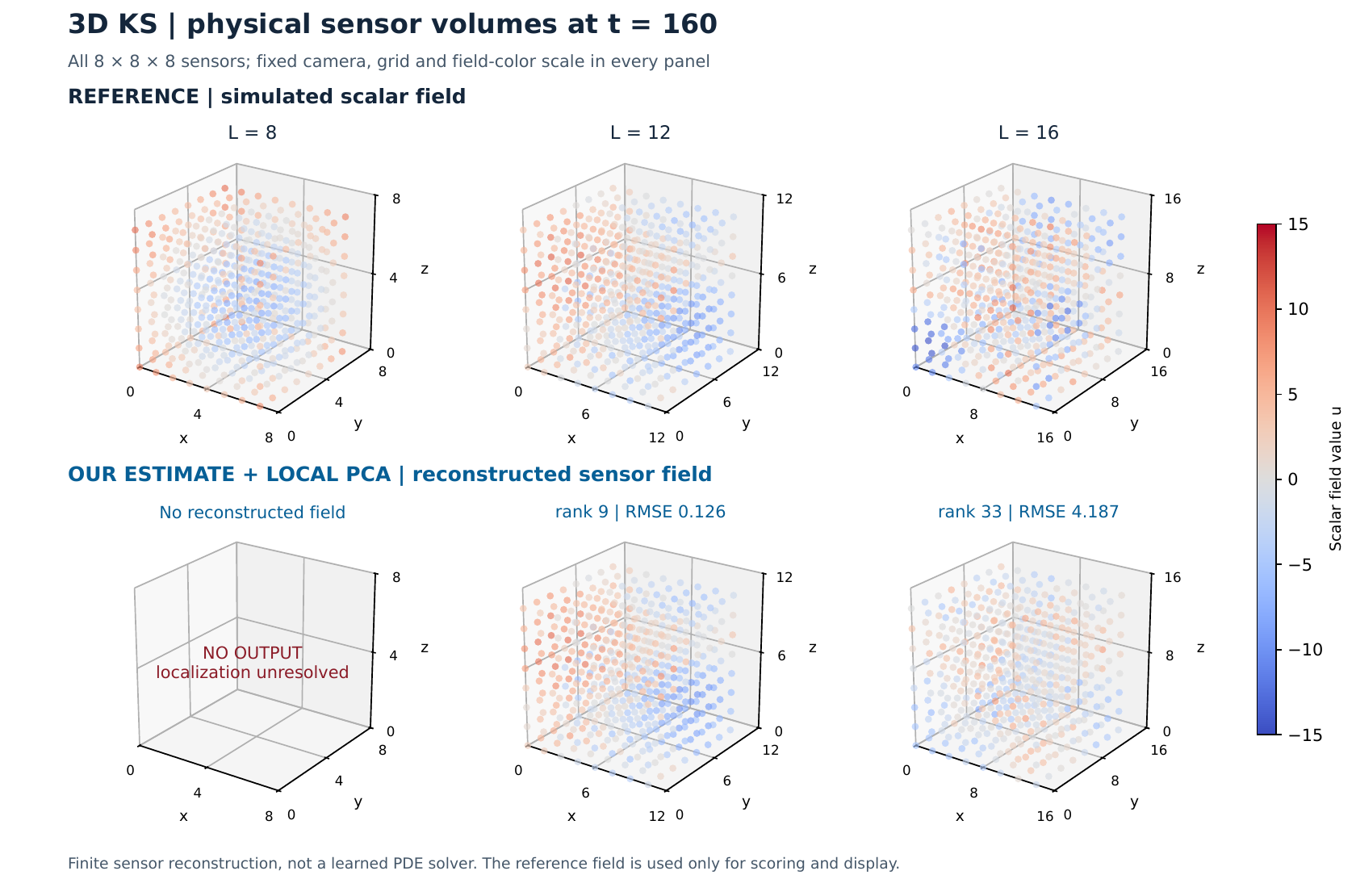}
\caption{Physical three-dimensional sensor volumes for the scalar KS field at $t=160$. Each cube contains all $8^3$ sensor locations. Top: numerical reference; bottom: the computed local PCA reconstruction of the noisy sensor vector, using a common field-value scale across all panels. No interpolated continuous field is claimed. Final-time sensor RMSE is $0.126$ for $L=12$ and $4.187$ for $L=16$; $L=8$ remains unavailable. The large error at $L=16$ is retained. The reference is used only for scoring and display, never as an argument to the local projection routine.}
\label{fig:ks3-volume}
\end{figure}

\clearpage
\subsection{Independent KS rollouts: 2D and 3D fields}
\label{sec:ks-rollouts}
The larger KS study uses 61,152 independently initialized finite-grid
rollouts. For each of six cases, a pilot of 10,000 endpoints, 64 held-out
test endpoints, and 128 design endpoints are disjoint. Each endpoint is at
$T=160$; a checkpoint at $T=80$ is not an additional independent sample.
Pilot prefixes $N=1{,}000,6{,}000,10{,}000$ reuse samples and are not
independent repetitions of an experiment.

The scalar potential equation is the same mean-zero KS equation stated
above. The ensemble calculation uses dealiased Fourier/ETDRK4 with
$\Delta t=0.25$, grids $32^2$ and $16^3$, and sensor grids $16^2$ and $8^3$
in spatial dimensions two and three, respectively. These settings differ
from the preceding single-trajectory diagnostic and are reported separately.
Each observed sensor vector has the exact distribution of an average of
1,024 independent Gaussian views with raw sensor noise standard deviation
$0.03$. The independent 128-endpoint design set fixes the scale. The
displayed radius is 1.4 times that scale, and displayed test index 0 is
fixed before examining the errors. Reference states are used only in
scoring and display; the projection takes observed data as input.

\begin{table}[ht]\centering\small
\begin{tabular}{rrrrrr}
\toprule
Spatial dim. & $L$ & Grid & Sensors & $q$ & Pilot / test / design\\
\midrule
2 & 12,18,24 & $32^2$ & $16^2$ & 256 & 10,000 / 64 / 128\\
3 & 8,12,16 & $16^3$ & $8^3$ & 512 & 10,000 / 64 / 128\\
\bottomrule
\end{tabular}
\caption{Independent KS ensemble design. Each listed $L$ is a separate case; the total is $6(10{,}000+64+128)=61{,}152$ rollouts.}
\label{tab:ks-design}
\end{table}

The comparisons below show actual saved numerical arrays. Their interpretation
is finite time and finite grid. Independent initialization establishes the
sampling design for these endpoints; it does not establish an invariant law,
a smooth low-dimensional support, or a certificate for its dimension.
The population gate is evaluated, but none of the reported ensemble fits
passes it; the geometric certificate is unavailable. All displayed ranks
therefore remain point estimates. The separately saved 32-initial-condition
discretization check also limits the interpretation. At $T=160$, median
relative differences after halving the time step range from
$2.86\times10^{-5}$ to $1.64$, and differences after spatial refinement
range from $1.37\times10^{-8}$ to $1.47$. Several conditions have order-one
long-time discrepancies. These checks do not establish convergence to
continuum trajectories, and the present claims concern only the stated
finite-grid, finite-time endpoint distributions.
\begin{table}[ht]\centering\small
\begin{tabular}{rrrrrr}
\toprule
Spatial dim. & $L$ & Input RMSE & $N=1{,}000$ & $N=6{,}000$ & $N=10{,}000$\\
\midrule
2 & 12 & 0.000937 & 0.310699 & 0.310456 & 0.311323\\
2 & 18 & 0.000929 & 0.662572 & 0.669425 & 0.653899\\
2 & 24 & 0.000940 & 0.964652 & 0.943657 & 0.952001\\
3 & 8 & 0.000941 & 0.033145 & 0.033446 & 0.033269\\
3 & 12 & 0.000943 & 0.820826 & 0.813902 & 0.814013\\
3 & 16 & 0.000941 & 1.426808 & 1.382894 & 1.381539\\
\bottomrule
\end{tabular}
\caption{Median sensor RMSE over the same 64 test rollouts in the independent KS experiment at the predeclared radius multiplier 1.4. The columns report computed projections. Larger pilot counts do not remove local-plane bias.}
\label{tab:ks-rollout-errors}
\end{table}

\clearpage
\begin{figure}[H]
\centering
\includegraphics[width=\linewidth,height=.73\textheight,keepaspectratio]{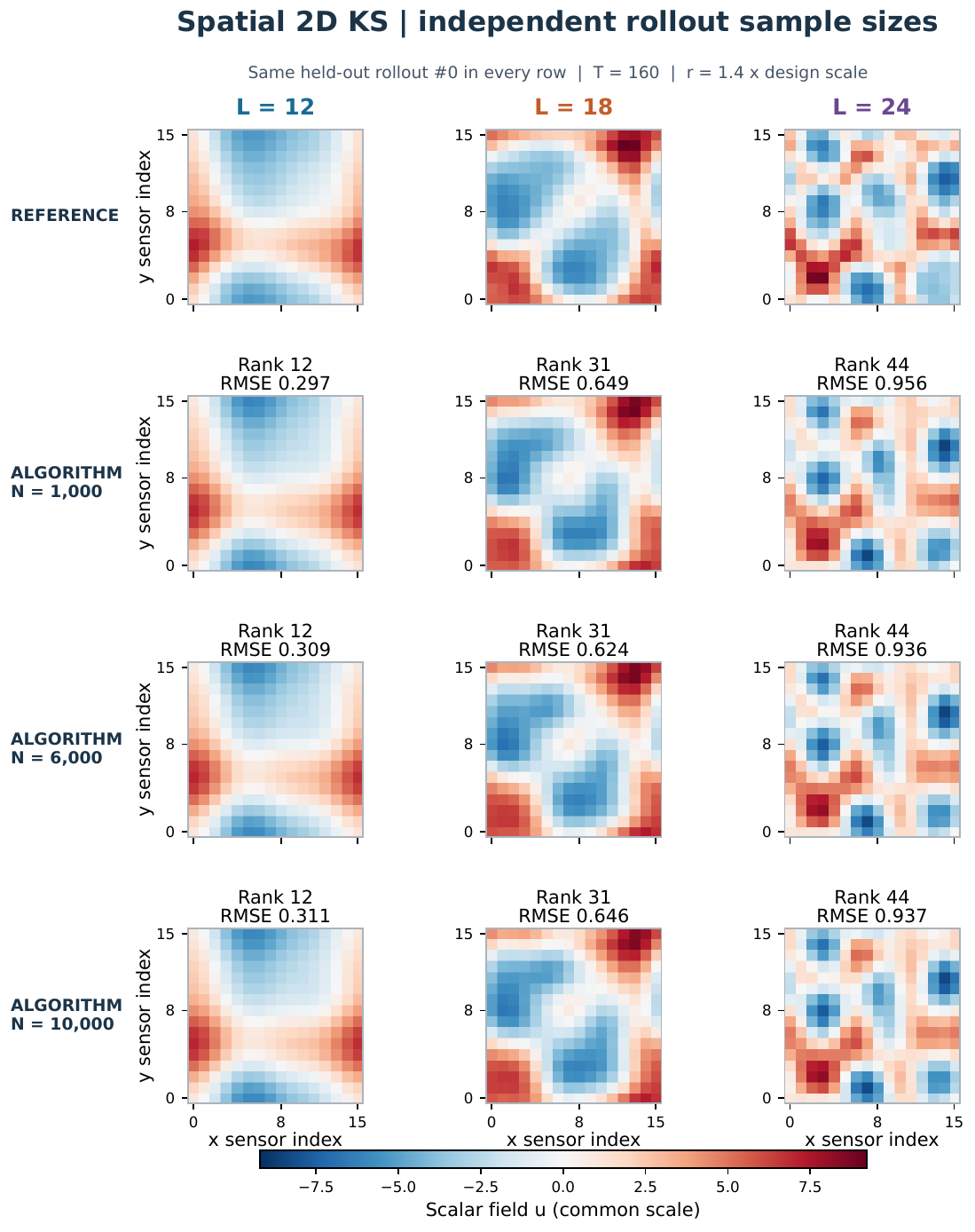}
\caption{Spatial two-dimensional KS fields. The reference is the fixed clean test endpoint in the top row; lower rows are the local-PCA output at 1,000, 6,000 and 10,000 pilot rollouts. Columns use $L=12,18,24$. All panels share one scalar color scale. These are the same held-out endpoint and radius rule across sample sizes.}
\label{fig:ks2-rollout-fields}
\end{figure}

\begin{figure}[H]
\centering
\includegraphics[width=\linewidth,height=.73\textheight,keepaspectratio]{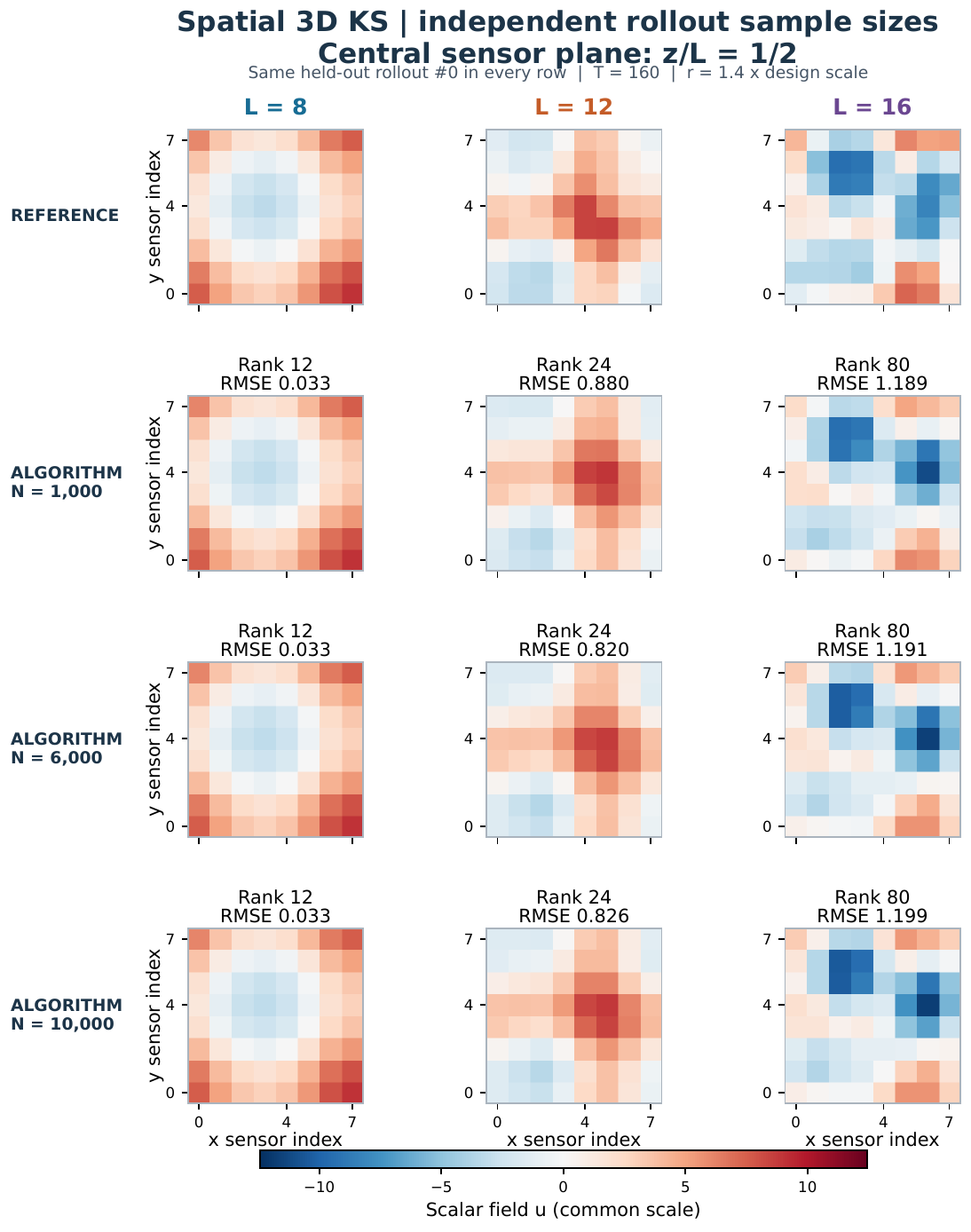}
\caption{Spatial three-dimensional KS: the central sensor plane of the fixed held-out endpoint. The top row is the reference and subsequent rows use 1,000, 6,000 and 10,000 pilot rollouts. Columns use $L=8,12,16$ and share a field-value scale. A two-dimensional slice of a three-dimensional field is explicitly distinguished from the full volume.}
\label{fig:ks3-rollout-fields}
\end{figure}

\begin{figure}[H]
\centering
\includegraphics[width=\linewidth,height=.73\textheight,keepaspectratio]{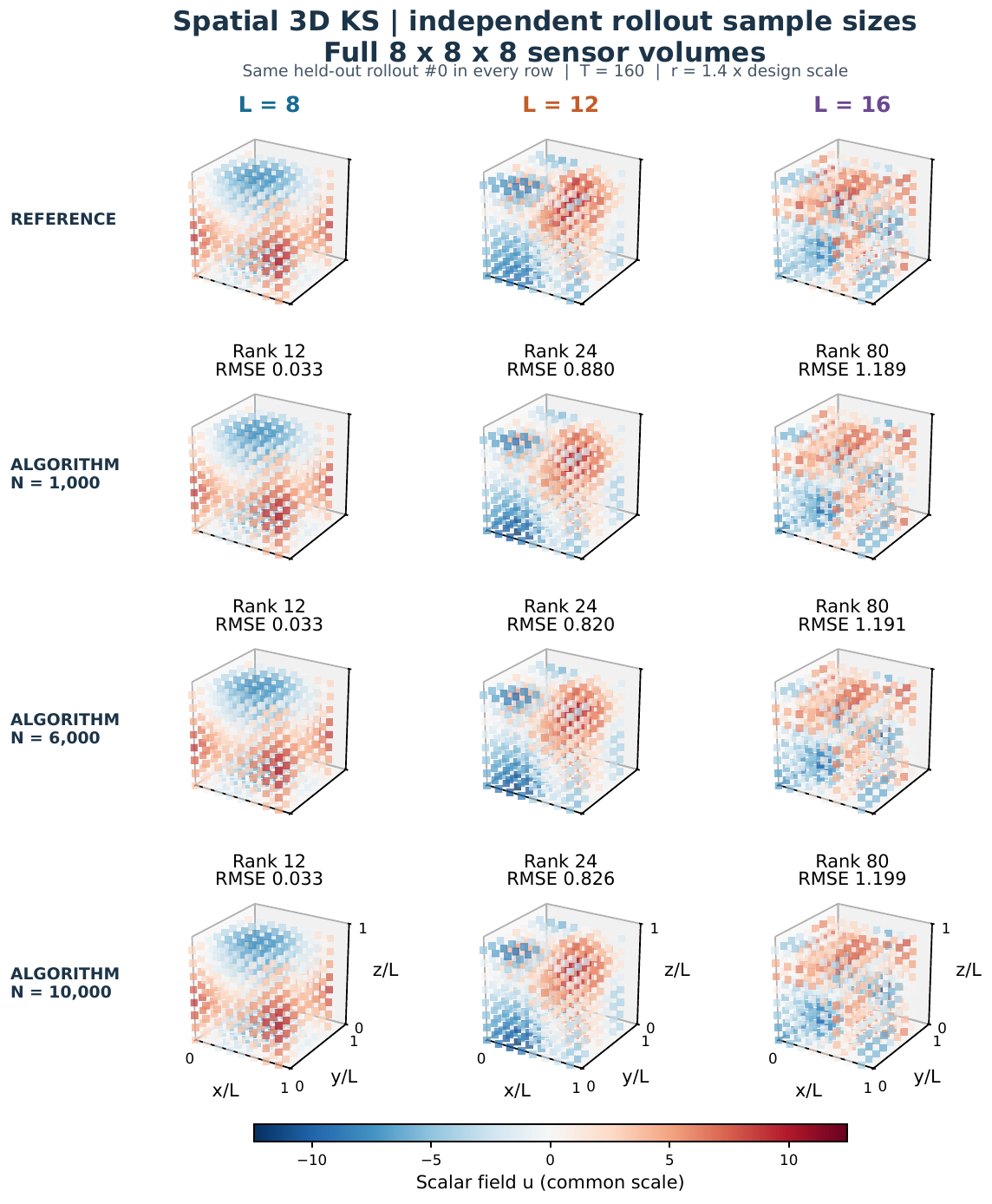}
\caption{Spatial three-dimensional KS: all $8^3$ sensor values of the fixed held-out endpoint. Top: clean finite-grid reference; lower rows: computed local-PCA reconstructions for increasing pilot counts. The same camera, coordinates, test identifier and scalar color scale are used throughout. The cubes represent sensor samples, not an interpolated continuous solution.}
\label{fig:ks3-rollout-volume}
\end{figure}

\begin{figure}[H]
\centering
\includegraphics[width=\linewidth,height=.73\textheight,keepaspectratio]{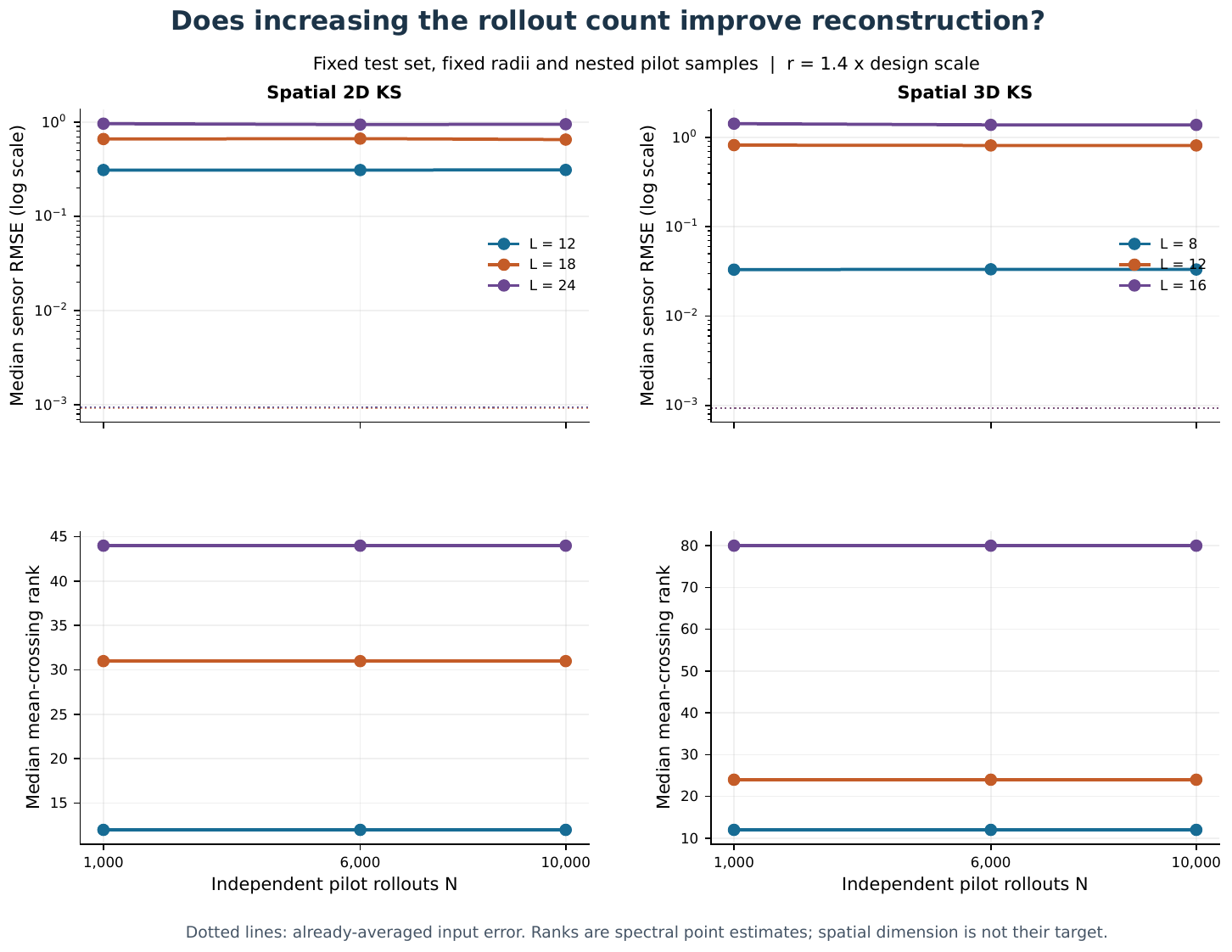}
\caption{KS reconstruction error and spectral rank versus the number of independent pilot endpoints. Medians use the fixed 64 test states at the largest predeclared radius. Dotted lines give the already-averaged input error. Substantial nearly flat projection error can persist as the pilot grows; the reported rank is a spectral point estimate in $q=256$ or $512$ coordinates, not the spatial dimension.}
\label{fig:ks-rollout-metrics}
\end{figure}

\clearpage
\input{fractal_controls_experiments.tex}

\clearpage
\subsection{Analytic covariance calibration}
Exact circle and sphere calculations and deterministic planar integration
show the radius dependence of covariance and its mean crossing. These are
population reference curves, not estimated sample spectra. They illustrate
the actual-ball limit underlying Theorem~\ref{thm:dimension-consistency} and
the failure of a global spectrum to identify arbitrary local geometry.

\begin{figure}[H]
\centering
\includegraphics[width=\linewidth,height=.53\textheight,keepaspectratio]{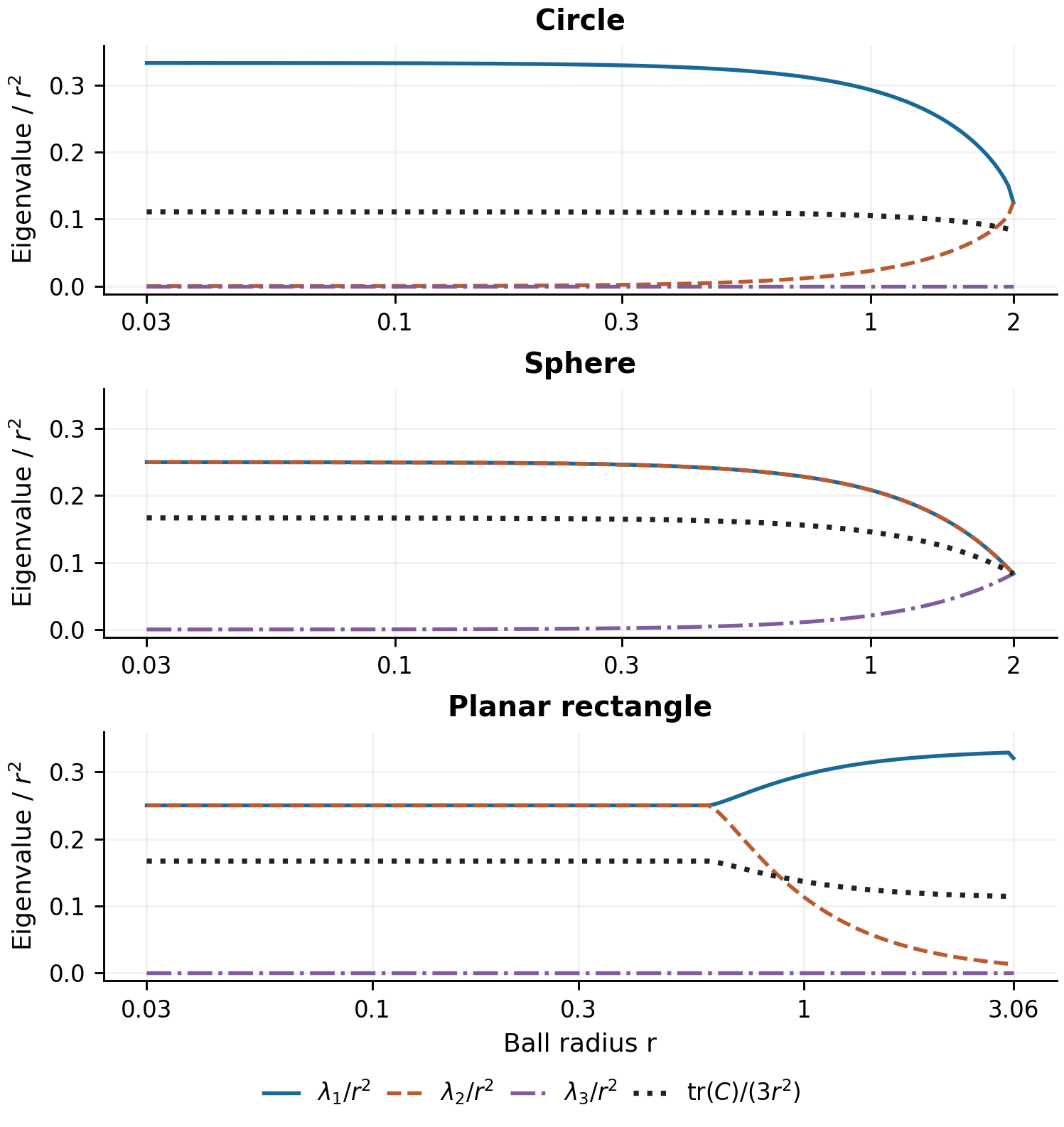}
\caption{Population covariance reference curves for the circle, sphere,
and planar rectangle. Solid, dashed, and dash-dotted curves give the ordered
eigenvalues divided by $r^2$; the dotted black curve is their mean. The
small-radius limits are $(1/3,0,0)$ and $(1/4,1/4,0)$ respectively.
These are analytic or quadrature reference curves, not sampled estimates.
Coincident eigenvalues are drawn with different line styles.}
\label{fig:local-covariance}
\end{figure}

\clearpage

\subsection{Emission, coverage, and resolution under controlled models}
The following controlled-model experiments retain their stored trial records,
probability budgets and distinctions between point accuracy and certificate
emission. The displayed confidence limits are described with each figure.
\label{sec:controlled-experiments}
\label{sec:hard-experiments}
The principal experiment uses seed 20260913 and the implemented formulas in
\path{code/hard_calculations.py}. The estimator receives only observations,
budgets, and declared class constants. Analytic truth is used afterwards
for diagnostics. All reported trials use the stated observation designs. Historical visual
comparisons are not used as evidence for these guarantees.

\paragraph{Repeated-view finite geometric certification.}
There are 100 independent trials at each of four sample sizes for a unit
circle, unit sphere, and a uniform anisotropic planar rectangle in
$\mathbb R^3$: 1,200 trials in total. Set $\sigma=0.08$,
$r=0.20(N/1024)^{-0.08}$, $k=\lceil r^{-2}\log^3(N+1)\rceil$,
and total error budget $0.05$ for each separately evaluated procedure.
Use candidates $\{1,2,3\}$, specified graph bound $K=1.1$, constant
density ($\ell=0$), and $\alpha=1$. These class bounds are valid through
radius $2r\le0.4$: the sphere/circle normal graph satisfies
$\|Dg(u)\|/\|u\|\le1/\sqrt{1-0.4^2}<1.1$; the planar graph is zero.
Thus the candidate rule contains no ground-truth dimension. This is a
conditional class validation, not estimation of $K$ from those trials.

\begin{table}[ht]
\centering\small
\begin{tabular}{rrrrr}
\toprule
$N$ & Circle mass emissions & Sphere & Planar rectangle & Spectral emissions\\
\midrule
1024 & 1/100 & 0/100 & 0/100 & 0\\
4096 & 96/100 & 0/100 & 8/100 & 0\\
16384 & 100/100 & 97/100 & 100/100 & 0\\
65536 & 100/100 & 100/100 & 100/100 & 0\\
\bottomrule
\end{tabular}
\caption{Finite geometric mass certificates: 802 emissions in 1,200 trials,
all correct. The spectral column is zero for each of the original,
sharpened analysis-view, and averaged-view spectral emission rules.}
\label{tab:hard-mass-experiment}
\end{table}
There were no observed mass-interval, averaged-covariance operator-bound,
or radial-moment-bound failures in this run. No incorrect mass dimension
was emitted. Zero observed errors do not prove zero risk: for each
100-trial cell, the one-sided 95\% binomial upper bound after zero errors
is $1-0.05^{1/100}\simeq0.02951$, without simultaneous adjustment across
cells. The theorem's error guarantee comes from its proof.
At $N=65536$, $k=66339$ and the represented total measurement count is
$N(k+1)+k+2N=4{,}347{,}855{,}651$ vectors per trial. The code samples the
Gaussian replicate averages directly from their exact law rather than
materializing billions of independent vectors. The large measurement
cost is part of the observation design, even though the simulation is fast.

\begin{figure}[htbp]
\centering
\includegraphics[width=0.96\linewidth]{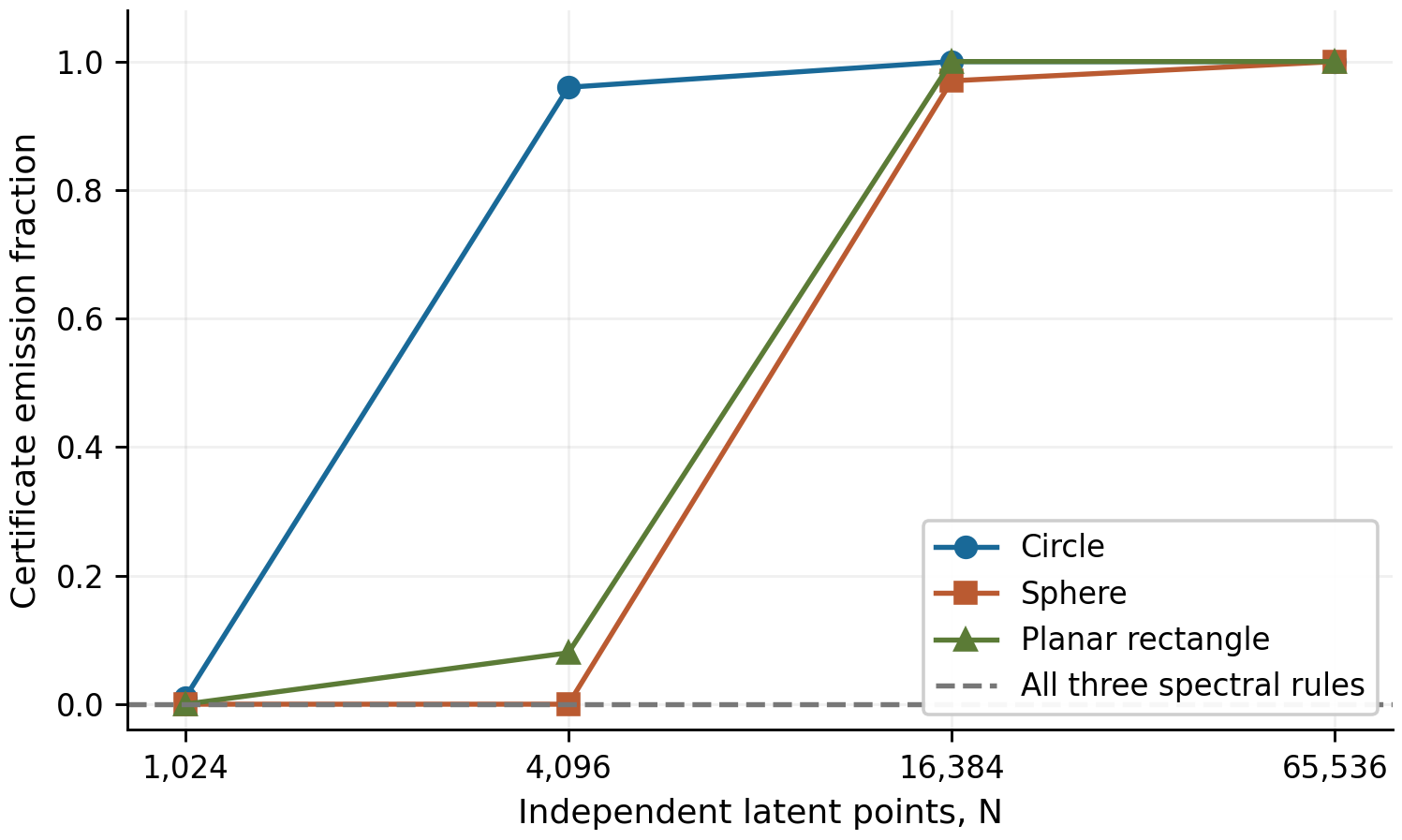}
\caption{Finite certificate power in the declared smooth class. Improvements
in sufficient asymptotic rates and sharper constants do not imply that a
spectral confidence rule must emit in this finite range. The two-mass rule
yields the observed non-vacuous geometric certificates.}
\label{fig:mass-certificates}
\end{figure}

\paragraph{Single-view finite-scale inference.}
There are 800 main trials: a uniform line segment of length one and a
centered middle-third Cantor measure, each embedded in $\mathbb R^3$,
with 100 trials per sample size above. Each uses $N$ calibration points
and $2N$ independent analysis points, one observation each, fixed
$\sigma=0.025$, radii $0.06,0.12$, and budgets
$\delta_C=\delta_E=0.025$. The affine interval is used for energy inference.
A separate diagnostic also evaluates the compact interval with the valid
bound $|Z_1|\le0.5$. The analytic inverse kernel applies in nearly all
these trials; the code uses the Fourier branch if its condition fails.

\begin{table}[ht]
\centering\small
\begin{tabular}{lrrrr}
\toprule
Model at $N=65536$ & Limit dimension & Finite-scale target & Mean estimate & SD\\
\midrule
Uniform line & 1 & 0.925574 & 0.923136 & 0.008437\\
Cantor line & 0.630930 & $[0.625133,0.625531]$ & 0.622192 & 0.007826\\
\bottomrule
\end{tabular}
\caption{The finite-scale target is distinguished from the limiting dimension.
Median finite-slope interval widths are 0.203534 and 0.197678 respectively.}
\label{tab:hard-single-experiment}
\end{table}
For the uniform segment the exact energy is
\[
\mathcal E(r)=\sqrt{2\pi}\,r\,\operatorname{erf}(1/(\sqrt2r))
             -2r^2(1-e^{-1/(2r^2)}).
\]
For Cantor truth, the deterministic 12-digit approximation has energy
error at most $e^{-1/2}3^{-12}/r$. This follows from the maximum derivative
of the Gaussian kernel and the paired digit-tail displacement; it is
propagated to the finite-slope enclosure in the table. Sampling uses
30 digits, with coordinate error at most $3^{-30}/2$.
These are mathematical truncation bounds; ordinary floating-point
computations are not claimed to be outward-rounded interval arithmetic.

There was one affine noise-interval failure among the 800 trials, at
$N=1024$ on the uniform line. There were no compact noise-interval
failures and no reported energy interval failed to contain its exact or
deterministically enclosed target. The direct logarithmic transformation
gave 797 finite slope intervals. Applying the universal range
\eqref{eq:hard-energy-slope-range} gives 800 bounded intervals, with the
upper endpoint supplied by $q=3$ in the remaining three zero-lower-energy
cases. None misses the corresponding target enclosure. This deterministic
range restriction is not evidence of improved dimension-identification
power. These diagnostics include
all trials in the denominator and do not redefine risk conditional on emission.
The compact intervals are much wider: at the largest $N$ their median
variance widths are about 0.0730 and 0.0742, versus about $3.11\times10^{-5}$
for the affine intervals. No useful compact-route finite-sample power is claimed.

\begin{figure}[htbp]
\centering
\includegraphics[width=0.96\linewidth]{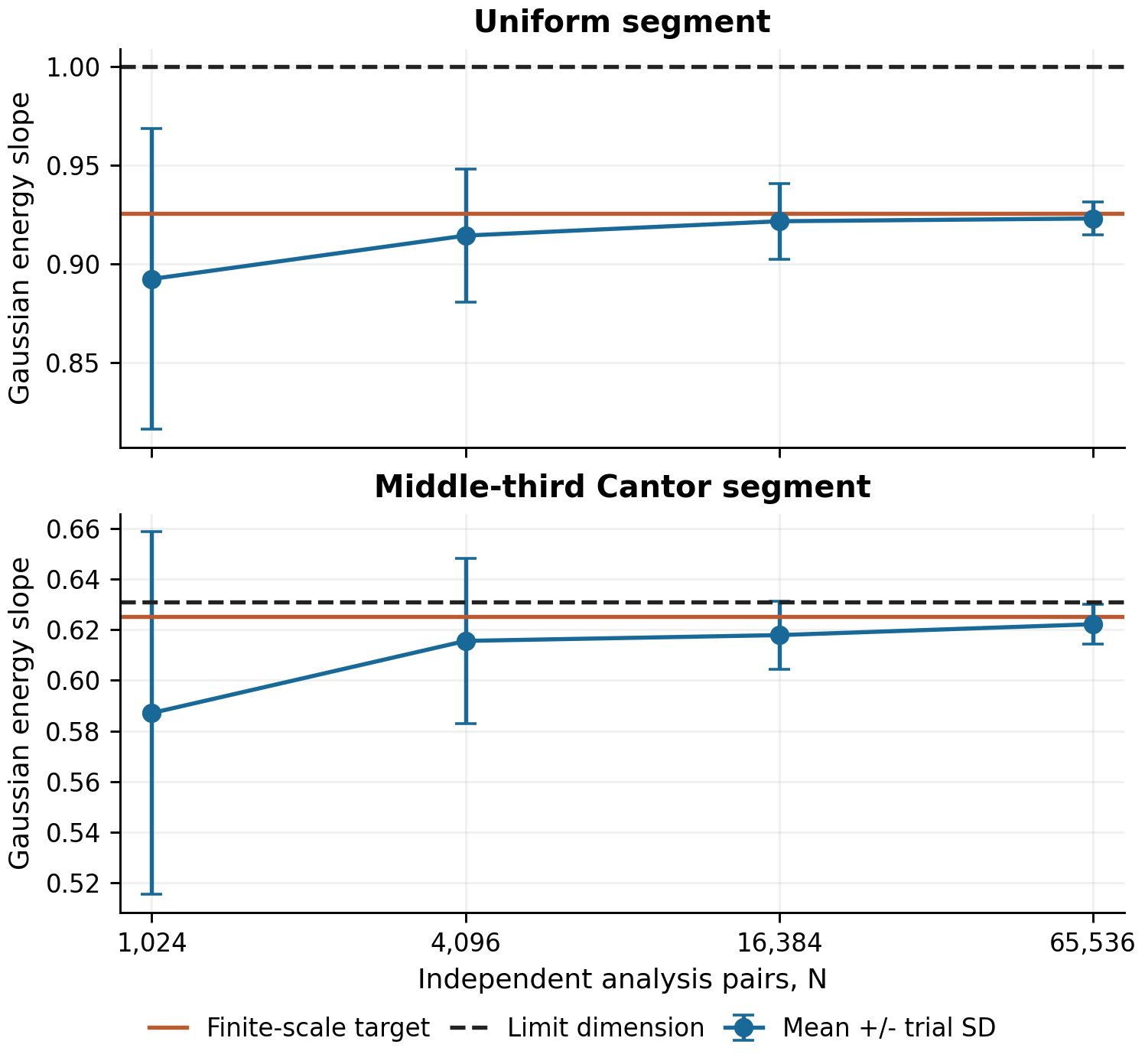}
\caption{Single-view slope estimates at fixed radii. Dispersion decreases with
sample size, but the fixed-radius population slope differs from the limit
dimension. Error bars show the standard deviation across trials, not
confidence intervals for the limiting dimension. The almost-sure theorem uses shrinking, increasingly separated
radii; this figure is not a simulation of that slow asymptotic regime.}
\label{fig:single-view-slopes}
\end{figure}
An additional 60 trials force the Fourier branch at radii $0.025,0.05$
and $N=65536$. All 60 energy intervals contain the reference targets,
but the median small-radius width is about 0.589 and the cutoff-tail
bound about 0.508. Before the ambient restriction, 20/30 uniform-line and
30/30 Cantor trials give finite slope intervals. Intersection with $[0,3]$
gives 60 bounded intervals, supplies an upper endpoint in the ten formerly
unbounded cases, and reduces the upper endpoint in 53/60 cases in total.
The visible tail term is a limitation of this
finite design; it is not removed from the reported interval.

\clearpage
\subsubsection{Independent analysis-view dimension diagnostics}
\label{sec:dimension-experiments}
The new experiment runs the observable dimension pilot and its cross-fitted
variant on independent samples from a unit circle, a unit sphere, and the
uniform planar rectangle $[-3,3]\times[-0.6,0.6]\times\{0\}$ in $\mathbb R^3$.
The anchor is respectively $(1,0,0)$, $(0,0,1)$, and the origin.
The geometric dimensions are one, two, and two. The point estimates never
receive the latent points, these dimensions, or the analytic covariance.
Truth is used only after estimation to score errors and coverage diagnostics.

Each setting uses 100 independent seeded trials, two blocks of $N$ points,
one analysis view with fixed $\sigma=0.08$, $N$ independent calibration
pairs, and $k=\lceil r^{-2}\log^3(N+1)\rceil$ localization replicates.
The radius is $r=0.55(N/256)^{-0.12}$; its exponent satisfies the
$q=3$ sufficient condition $0.12<1/6$. Each failure budget is
$(N+1)^{-4}/8$. The code samples localization averages from their exact
Gaussian law with variance $\sigma^2/k$, instead of allocating all raw
replicate arrays. The represented raw measurement count for the two-block
experiment is $2N(k+1)+k+2N$, from 290,869 at $N=256$ to 268,673,029
at $N=16,384$. This cost is part of the design and is not hidden in $N$.
The plugin uses only the first block; cross-fitting uses both, so the
accuracy table is not an equal-cost comparison between methods.

\begin{table}[ht]
\centering\small
\begin{tabular}{lrrrrr}
\toprule
Model & $N$ & $k$ & Median core & Plugin correct & OC correct\\
\midrule
Circle & 256 & 565 & 39 & 100/100 & 100/100 \\
Circle & 1,024 & 1,537 & 138 & 100/100 & 100/100 \\
Circle & 4,096 & 3,702 & 476.5 & 100/100 & 100/100 \\
Circle & 16,384 & 8,197 & 1631 & 100/100 & 100/100 \\
Sphere & 256 & 565 & 15 & 50/100 & 87/100 \\
Sphere & 1,024 & 1,537 & 45 & 91/100 & 98/100 \\
Sphere & 4,096 & 3,702 & 137 & 99/100 & 100/100 \\
Sphere & 16,384 & 8,197 & 399 & 100/100 & 100/100 \\
Planar rectangle & 256 & 565 & 25 & 70/100 & 96/100 \\
Planar rectangle & 1,024 & 1,537 & 78.5 & 99/100 & 100/100 \\
Planar rectangle & 4,096 & 3,702 & 235 & 100/100 & 100/100 \\
Planar rectangle & 16,384 & 8,197 & 697.5 & 100/100 & 100/100 \\
\bottomrule
\end{tabular}
\caption{Geometric dimension recovery in the new repeated-view pilot.
These are finite simulation frequencies, not a proof of consistency or a
certificate power guarantee. Seed: 20260912.}
\label{tab:dimension-recovery}
\end{table}

All 2,400 eligible block covariances satisfied the computed operator bound
against the analytic ball covariance plus $\sigma^2I$. No membership
sandwich violation was observed. These zero counts are diagnostics, not
an empirical verification of the extremely small nominal failure budgets.
Crucially, the optional spectral certificate emitted on zero of the 1,200
first-block trials: its conservative bound exceeded the required crossing
margin throughout this range. Thus the numerical evidence supports the
behavior of the point estimators, and supplies no finite-range evidence
of nonvacuous certified recovery. At $N=16,384$, all three models had
100/100 correct plugin and cross-fitted estimates; each such marginal
frequency has a 95\% Wilson interval approximately $[0.963,1]$.
Complete per-setting intervals and bound sizes are recorded in the JSON.

Figure~\ref{fig:dimension-recovery} plots the same trial frequencies as
Table~\ref{tab:dimension-recovery}, with marginal Wilson intervals.
\begin{figure}[p]
\centering
\includegraphics[width=0.96\linewidth]{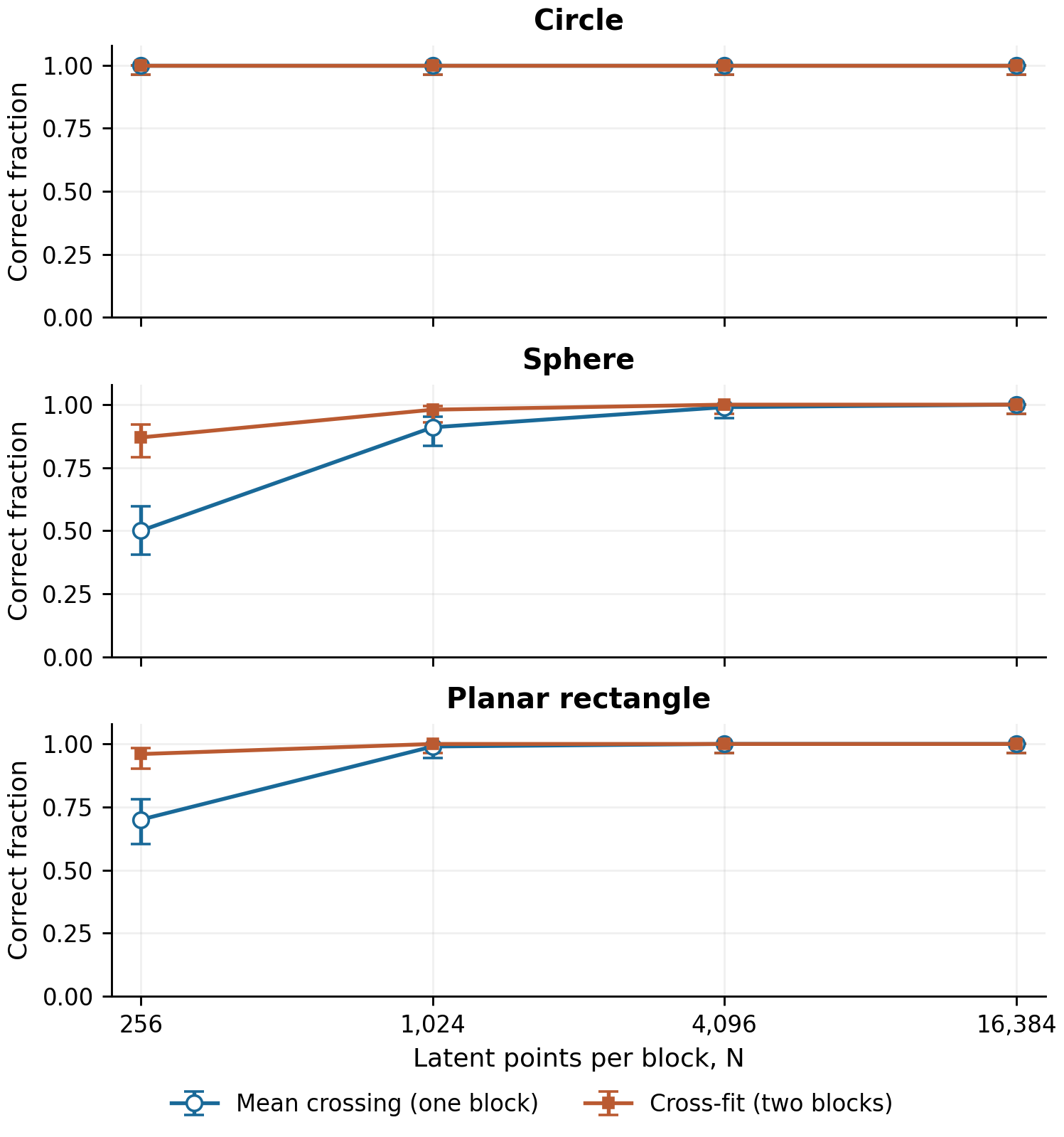}
\caption{Point-estimator dimension accuracy over 100 trials per setting.
Error bars are marginal 95\% Wilson intervals for the probability of a
correct point estimate. The mean-crossing estimator uses one block and
cross-fitting uses two; their observation costs differ. The optional
spectral certificate emits in zero trials here, so these accuracy curves
are not curves of certificate power.}
\label{fig:dimension-recovery}
\end{figure}

\paragraph{Analytic reference laws.}
For the circle, $a=2\arcsin(r/2)$ and the covariance in radial, tangent,
and unused coordinates is
\[
\operatorname{diag}\left(
\tfrac12+\frac{\sin(2a)}{4a}-\left(\frac{\sin a}{a}\right)^2,
\tfrac12-\frac{\sin(2a)}{4a},0\right).
\]
For the sphere at the north pole it is
\[
\operatorname{diag}(r^2/4-r^4/24,\ r^2/4-r^4/24,\ r^4/48).
\]
Indeed, the circle angle is uniform on $[-a,a]$, while $1-Z_3$ on
the spherical cap is uniform on $[0,r^2/2]$. The planar rectangle has
exact disk covariance $(r^2/4)\operatorname{diag}(1,1,0)$ at all radii
used in the trials. For the full geometry plot at larger radii,
deterministic one-dimensional quadrature integrates the intersection
of the disk and rectangle; it is not a Monte Carlo estimate.

\paragraph{Reproduction and scope.}
Run \texttt{python code/validate\_intrinsic\_dimension.py} from the source
root. The run also checks 300 adversarial finite-set shell inequalities,
100 covariance perturbations for the cross-fitted score bound, analytic
small-ball limits, zero-noise normalization, orthogonal/translation
invariance, calibration scaling, abstention, and invalid-input guards.
These are numerical implementation checks, not a machine-checked proof.
The former four-system and fractal experiments remain withdrawn.
The new experiment does not validate those systems or demonstrate
residual-certificate power.

\clearpage
\subsubsection{Analytic population-certificate stress test}
\label{app:repairtests}
The independent analytic stress test comprises 4,000 independent Monte Carlo trials (500 per
condition; seed 20260912). Let $U$ be uniform in the unit disk and
$Z=(U_1,U_2,c\|U\|^2)$ with $c=0.4$. At the frozen origin anchor, put
\[
v(r)=\frac{2r^2}{1+\sqrt{1+4c^2r^2}}.
\]
The exact ball probability is $v(r)$. For the rank-one normal projector
$Q=nn^\top$, $n=(\sin\theta,0,\cos\theta)$, symmetry gives
\[
R_Q^\mu(0,r)=\frac{\sin^2\theta}{4}v(r)
+\frac{\cos^2\theta\,c^2}{12}v(r)^2.
\]
Indeed, conditional on the ball, $\|U\|^2$ is uniform on $[0,v]$,
$\operatorname{Var}(U_1)=v/4$, and the cross covariance with $\|U\|^2$ is zero.
Thus the target is analytic, not a large-sample numerical surrogate.

Each trial uses independent replicated noise calibration, independent
localization and analysis views, an independent mass block, and frozen
projectors. Radii are $(0.8,0.5,0.3,0.18)$. Each of calibration,
localization, Gaussian residual, population transfer and mass concentration
receives budget 0.01, giving total bound 0.05. An entire ladder is withheld
if any required core has fewer than two points.

\begin{table}[ht]
\centering\small
\caption{Analytic population-certificate checks. A failure counts any false emitted residual or
mass interval in a trial, with abstentions included in the denominator.
Inflation is the median coarse-scale upper bound divided by analytic truth
among complete ladder outputs. The projectors are fixed, not learned.}
\begin{tabular}{rrrrrr}
\toprule
$N$ & $\sigma$ & $\theta$ & Outputs & Failures & Inflation\\
\midrule
256 & 0.002 & 0.0 & 498/500 & 0/500 & 107.3 \\
256 & 0.002 & 0.1 & 492/500 & 0/500 & 82.7 \\
256 & 0.010 & 0.0 & 223/500 & 0/500 & 140.2 \\
256 & 0.010 & 0.1 & 240/500 & 0/500 & 107.0 \\
1024 & 0.002 & 0.0 & 500/500 & 0/500 & 79.8 \\
1024 & 0.002 & 0.1 & 500/500 & 0/500 & 61.3 \\
1024 & 0.010 & 0.0 & 487/500 & 0/500 & 140.2 \\
1024 & 0.010 & 0.1 & 494/500 & 0/500 & 107.0 \\
\bottomrule
\end{tabular}
\end{table}

No false emitted residual or mass interval was observed in these trials.
The one-sided 95\% upper binomial bound is approximately 0.00597 separately
for each zero-failure 500-trial condition. The intervals are nevertheless
conservative: coarse-scale median upper-bound inflation ranges from about
61 to 140. This is a validity-oriented stress test of the formulas and
observation contract, not evidence of sharpness, learned-projector power,
or an empirical proof of infinitesimal rectifiability. Deterministic checks
also cover mean-crossing/rank disagreement, the shell inequality, exact
contrast inversion with nonzero remainders, endpoint-versus-TV ordering,
and abstention on an empty confidence set.

Figure~\ref{fig:population-validation} displays the output fractions and
the unconditional failure diagnostics for these same eight conditions.
\begin{figure}[p]
\centering
\includegraphics[width=0.96\linewidth]{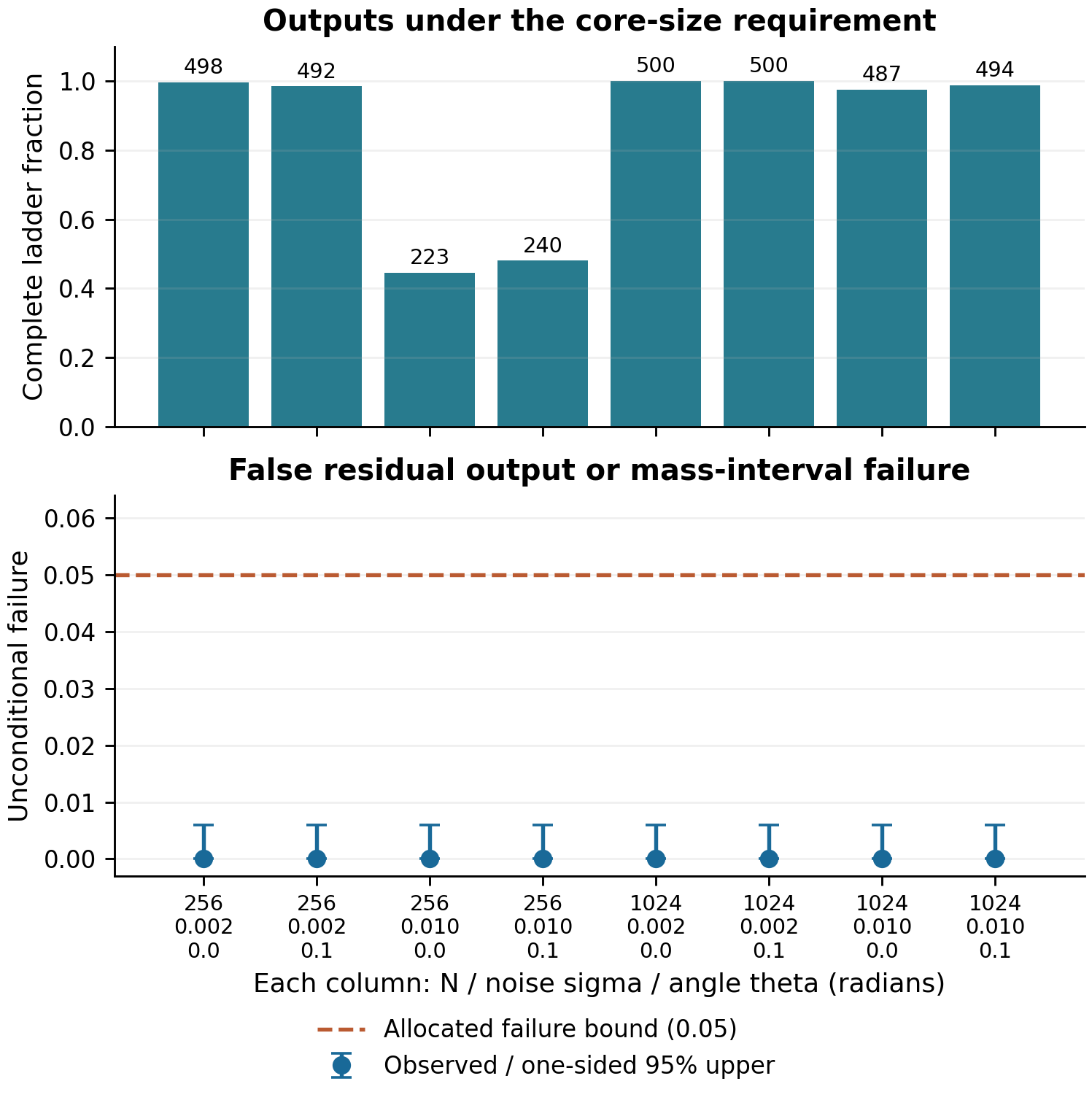}
\caption{Population-certificate stress test with 500 trials per condition
and fixed projectors. Bar annotations count complete ladder outputs. Lower
panel points are observed unconditional failure frequencies, and their
upper error bars are separate one-sided 95\% exact binomial bounds. These
bounds are not adjusted simultaneously across conditions. Zero observed
failures do not establish zero risk; the theoretical budget is 0.05.}
\label{fig:population-validation}
\end{figure}
\clearpage

\subsection{Discussion of the empirical and theoretical scope}
The message is conditional consistency, with explicit separation between
the geometric target, its observable surrogate, and finite-sample evidence.
Theorem~\ref{thm:dimension-consistency} establishes eventual almost-sure
geometric-dimension recovery from an independent analysis view.
Theorem~\ref{thm:hard-repeated} changes the estimator by reusing sufficiently
accurate averages and derives the improved point-sample condition. Its
finite geometric mass certificate needs declared curvature and density
bounds, and the simulation's replication cost is large.
Theorem~\ref{thm:hard-single} supplies a separate single-view route under
explicit identification restrictions. Its energy exponent is a correlation
dimension; equality with Hausdorff or manifold dimension needs the homogeneous
regularity conditions proved above. Fixed-radius intervals cover a finite-scale
quantity, not the limit dimension without an additional bias bound.

Theorem~\ref{thm:rect} controls false population reports, including abstention
in the probability space. Its covered-scale consequence invokes established
rectifiability theorems under Ahlfors bounds and spatial coverage. Neither
a finite list of accepted scales nor visual resemblance proves unrestricted
manifold structure. Projected conclusions require separate ambient premises
for lifting. The Richardson enclosure needs its stated remainder bounds;
the asymptotic profile and sandwich calculations need their own sampling
and moment conditions.

The experiments provide non-vacuous mass certificates in the declared
smooth classes and finite-scale energy intervals on line and Cantor models.
They also show spectral abstention, broad compact-route noise intervals,
and a large Fourier cutoff penalty at the forced small radii. These are
material limits of the current procedures. Learned-projector power,
dependent-trajectory inference, and useful finite-sample performance for
more general compact supports remain open implementation questions.
No unrestricted identification, minimax optimality, or historical priority
for the full statistical assembly is asserted.

\subsection{Reproducibility and figure-specific code}
\label{app:code}
The accompanying source package contains this integrated manuscript,
the unmodified JMLR style, bibliography, executable certificate and dimension
estimators, seeded trial records, deterministic algebraic checks, and build
instructions. The main scientific results and their proofs are all in this
paper. Code listings and the editorial proof ledger are supplied as
reproduction materials. Earlier withdrawn or schematic comparisons remain
identified as historical records and are not included as experimental
evidence here. The current revision separates fresh benchmark runs from retained
controlled-model records, as documented in Appendix~\ref{app:code-map}.

All five statistical or population-reference plots are included in this
PDF alongside the algorithm figure. The figure replay stage regenerates the plots with
\path{code/render_manuscript_figures.py}, which reconciles available trial
arrays with their summary values and checks the population formulas by
independent quadrature. This is a rendering and consistency check and
draws no new Monte Carlo trials. The PNG
files are supplied for direct viewing. The source-to-figure mapping and
hashes are in \path{audit/figure_followup/figure_manifest.json}.

\paragraph{AI use statement.}
Generative AI tools assisted with algebraic checking, proof development,
coding, literature verification, and document preparation. The author remains
responsible for all scientific claims, assumptions, experiments and attribution.

\FloatBarrier
\clearpage
\phantomsection
\pdfbookmark[1]{References}{references}
\bibliography{references}

\clearpage
\appendix
\input{proof_audit.tex}
\section{Setup and Observable Certificates}
\label{sec:setup}
\paragraph{One fixed target and one metric.}
Fix an identified probability measure $\muc$ on $\R^q$. It is either the full latent law or one fixed identified projection of that law. Every ball, covariance, mass, and Jones number below is defined in these same target coordinates. In particular,
\[
\Pi_\#\{\mu(\cdot\mid B_D(x,r))\}
\ne(\Pi_\#\mu)(\cdot\mid B_q(\Pi x,r))
\]
in general. An ambient neighborhood followed by a projected covariance is not silently substituted for a projected-measure ball. If the target is a projection, the conclusion concerns that projection unless the separate ambient assumptions in Section~\ref{app:lift} hold.

\begin{assumption}[Honest fixed-query observation design]
\label{ass:honest}
Let $\mathcal G$ contain a preliminary identification/calibration block, a design/training block, and a separate anchor block. Conditional on $\mathcal G$, freeze a finite list of $K$ queries $(x_a,r_j,Q_{a,j})$, including terminal queries. Here $Q_{a,j}=I-\widehat P_{a,j}$ is an orthogonal projector of rank $m=q-d$; $q$ and the proposed integer $1\le d<q$ are constant along each ladder. The validation latent samples $Z_i^\circ$ are iid from $\muc$, independent of the preliminary blocks. They have an analysis view
\begin{equation}
X_i^A=Z_i^\circ+\eta_i,\qquad
\eta_i\stackrel{\rm iid}{\sim}N(0,\sigma^2 I_q).
\label{eq:honest-analysis}
\end{equation}
Let $\mathcal L_N$ be the sigma-field of all validation localization measurements. For one fixed variance $\sigma^2\ge0$, require the conditional law
\begin{equation}
\mathcal L\big((\eta_i)_{i=1}^N\mid\mathcal G,(Z_i^\circ)_{i=1}^N,\mathcal L_N\big)
=\bigotimes_{i=1}^N N(0,\sigma^2 I_q).
\label{eq:conditional-noise-law}
\end{equation}
In particular, marginal Gaussianity of $\eta_i$ is insufficient: analysis noise must retain its product Gaussian law after freezing the latents and localization data. Section~\ref{app:theorem-recheck} gives an explicit counterexample to marginal Gaussianity alone. Projectors do not use validation points in either view. An independent mass block is used for the optional mass confidence intervals. C0 identifies the target and provides $\sigma^2\le\bar\sigma^2$, together with simultaneous membership sandwiches
\begin{equation}
I_{a,j}\subseteq B_{a,j}^{(N)}\subseteq P_{a,j},
\quad
B_{a,j}^{(N)}=\{i:Z_i^\circ\in B(x_a,r_j)\}.
\label{eq:honest-sandwich}
\end{equation}
The total failure probability of these identification, calibration, and localization statements is at most $\delta_{\rm str}$.
\end{assumption}

Latent anchor coordinates may be included in the conditioning argument even though only their noisy views are used by the algorithm. All queried radii are fixed before inspecting the mass or residual validation blocks. Queries or dimensions selected from those blocks instead require simultaneous bounds over the entire pre-specified candidate family.

\paragraph{An explicit C0 implementation.}
Independent repeated target-coordinate measurements
$X_i^L=Z_i^\circ+\xi_i$ and $X_i^A=Z_i^\circ+\eta_i$ instantiate Assumption~\ref{ass:honest} when all replicate errors are mutually independent $N(0,\sigma^2 I_q)$, independent of the latent variables and preliminary blocks. Anchor errors obey the same independent replicate design. This is additional information beyond one noisy view and is part of this concrete design. A disjoint calibration sample of $n_C$ replicated pairs gives
\[
V_C=\frac{1}{2qn_C}\sum_{i=1}^{n_C}
\|X_i^{(1)}-X_i^{(2)}\|^2,\qquad
\frac{qn_C V_C}{\sigma^2}\sim\chi^2_{qn_C}\quad(\sigma>0).
\]
For $\sigma=0$, all replicate differences are zero almost surely and the upper bound below is zero. Thus $\bar\sigma^2=qn_CV_C/\chi^2_{qn_C,\delta_{\rm cal}}$ is a valid upper confidence bound. For $M^0$ pre-specified localization pairs, put
\[
\eta_L=\sqrt2\,\bar\sigma
\{\sqrt q+\sqrt{2\log(M^0/\delta_L)}\}.
\]
The intervals $[(s_{ai}-\eta_L)_+^2,(s_{ai}+\eta_L)^2]$, where
$s_{ai}=\|X_a^L-X_i^L\|$, contain every queried squared target distance except on the calibration/localization failure event. Their upper and lower endpoints define $I_{a,j}$ and $P_{a,j}$. Shared anchor noise does not invalidate a union bound. Other identification/localization modules may be used only if they satisfy the same target and independence conditions. The optional Haar interval in Section~\ref{app:id} is applied to an independent localization view of this target. A same-observation orthogonal split alone does not establish every part of Assumption~\ref{ass:honest}.

\paragraph{Dimension is a proposal, not an assumed geometric truth.}
On independent design data, an orthogonally cross-fitted score may propose $d$. For $Y=S+\eta$, a rank-$d$ training projector, and
\begin{equation}
A_d=\widehat P_d-\frac dq I_q,\qquad
\widehat G_d=\frac1L\sum_{\ell=1}^L Y_\ell^\top A_dY_\ell,
\label{eq:ocscore}
\end{equation}
conditional isotropy and independence give
\begin{equation}
\E_\eta[Y^\top A_dY\mid S,\widehat P_d]=S^\top A_dS.
\label{eq:cancel}
\end{equation}
The population argmax is a mean-crossing spectral dimension. Theorem~\ref{thm:dimension-consistency} now supplies explicit smooth-ball and repeated-view conditions under which this spectral dimension equals the geometric dimension and a covariance implementation of the OC argmax is strongly consistent. Section~\ref{app:oc} retains the general spectral statement; Section~\ref{app:dimension-bridge} proves the shrinking-scale bridge. Theorem~\ref{thm:rect} is valid for any independently frozen proposed $d$ and does not require successful dimension estimation.

\subsection{The identification boundary and optional distance interfaces}
\label{app:id}
\begin{proposition}[Gaussian-semigroup nonidentifiability]
\label{prop:nonidentifiability}
Let $\gamma_t=\cN(0,tI_D)$ and suppose $P_X=\mu*\gamma_{\sigma^2}$ with $\sigma^2>0$. For every $0<\tau^2<\sigma^2$,
\begin{equation}
P_X=\mu_\tau*\gamma_{\tau^2},\qquad
\mu_\tau:=\mu*\gamma_{\sigma^2-\tau^2}.
\end{equation}
If $\mu$ is supported on a $d$-rectifiable set with $d<D$, then $\mu$ is singular with respect to $D$-dimensional Lebesgue measure, whereas every $\mu_\tau$ has a $C^\infty$ strictly positive density. Hence latent $d$-rectifiability is not identifiable from $P_X$ over the unrestricted pair class $(\mu,\sigma^2)$.
\end{proposition}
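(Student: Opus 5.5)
The proof has three steps: establish the convolution identity, show that $\mu$ is singular, and show that each $\mu_\tau$ has a smooth positive density. Nonidentifiability then follows at once.

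\textbf{Semigroup identity.} Gaussian measures form a convolution semigroup, $\gamma_s*\gamma_t=\gamma_{s+t}$ for $s,t>0$. This is immediate from characteristic functions, since $\widehat{\gamma_t}(\xi)=e^{-t\|\xi\|^2/2}$ and exponents add. Convolution of probability measures is associative, so
\[
\mu_\tau*\gamma_{\tau^2}=(\mu*\gamma_{\sigma^2-\tau^2})*\gamma_{\tau^2}=\mu*\gamma_{\sigma^2}=P_X .
\]
This is \eqref{eq:semigroup-nonid}.

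\textbf{Singularity of $\mu$.} Suppose $\mu$ is concentrated on a $d$-rectifiable set $E$ with $d<D$, meaning $\mu(\R^D\setminus E)=0$. By definition $E$ is, up to an $\mathcal H^d$-null set, contained in a countable union of Lipschitz images $f_k(A_k)$ with $A_k\subseteq\R^d$.

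Each Lipschitz image has $\mathcal H^d(f_k(A_k))<\infty$ when $A_k$ is bounded. Bounded pieces suffice, because $A_k$ can be split into countably many bounded pieces. A set of finite $\mathcal H^d$ measure has $\mathcal H^D$ measure zero when $d<D$, and $\mathcal H^D$ is a constant multiple of Lebesgue measure on $\R^D$.

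The exceptional $\mathcal H^d$-null set is also Lebesgue-null. Hence $E$ is Lebesgue-null while $\mu(E)=1$, so $\mu\perp\mathrm{Leb}_D$.

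I expect this to be the only step requiring care. The care is in fixing which notion of rectifiable support is meant: the statement should read as "concentrated on" a rectifiable set, since a support in the closed-set sense could be all of $\R^D$. I will state that reading explicitly.

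\textbf{Smooth positive density of $\mu_\tau$.} Put $s=\sigma^2-\tau^2>0$. The measure $\mu_\tau$ has density
\[
p(x)=\int (2\pi s)^{-D/2}e^{-\|x-z\|^2/(2s)}\,\mu(dz).
\]
The integrand is strictly positive for every $x$ and $\mu$ is a probability measure, so $p(x)>0$ everywhere.

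For smoothness, each $x$-derivative of the Gaussian kernel is a polynomial in $(x-z)/s$ times the kernel. Every such function is bounded uniformly in $x$ and $z$. Dominated convergence then justifies differentiating under the integral to all orders, so $p\in C^\infty$. In particular $\mu_\tau$ is absolutely continuous, hence not singular, and it is not carried by any $d$-rectifiable set with $d<D$.

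\textbf{Conclusion.} The pairs $(\mu,\sigma^2)$ and $(\mu_\tau,\tau^2)$ produce the same law $P_X$. In the first pair the latent measure is $d$-rectifiable; in the second it is absolutely continuous with respect to $D$-dimensional Lebesgue measure. No functional of $P_X$ can therefore decide latent $d$-rectifiability over the unrestricted pair class.
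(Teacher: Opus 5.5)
Your proposal is correct and follows the paper's proof. Both use the semigroup identity with associativity, then singularity of a measure concentrated on a $d$-rectifiable set, then the explicit Gaussian-mixture density of $\mu_\tau$, which is $C^\infty$ and strictly positive. You add detail the paper leaves out: the Hausdorff-measure argument that a countably $d$-rectifiable set is Lebesgue-null, which the paper only asserts, and the dominated-convergence justification of smoothness. Your ``concentrated on'' reading also covers the closed-support reading as a special case.
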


\begin{proof}
The Gaussian semigroup identity $\gamma_a*\gamma_b=\gamma_{a+b}$ gives
\begin{align}
\mu_\tau*\gamma_{\tau^2}
&=(\mu*\gamma_{\sigma^2-\tau^2})*\gamma_{\tau^2}\\
&=\mu*(\gamma_{\sigma^2-\tau^2}*\gamma_{\tau^2})\\
&=\mu*\gamma_{\sigma^2}=P_X.
\end{align}
A $d$-rectifiable measure with $d<D$ is concentrated on a Lebesgue-null set. Convolution with a nondegenerate Gaussian has density
\begin{equation}
f_\tau(x)=\int (2\pi(\sigma^2-\tau^2))^{-D/2}
\exp\!\left[-\frac{\|x-z\|^2}{2(\sigma^2-\tau^2)}\right]d\mu(z),
\end{equation}
which is $C^\infty$ and strictly positive. Thus the two latent measures have incompatible dimensional geometry but produce the same observed law. No measurable decision rule based only on $P_X$ can distinguish them uniformly. Restrictions restoring identifiability are therefore mathematically necessary \citep{matias2002,schwarz2010,gassiat2022,capitao2026}.
\end{proof}

\subsection{Uniform interval-separated localization gate}
Conditional on an identification module that identifies the structural target and supplies a valid one-sided confidence bound, we now construct the localization component of C0.  The identification and localization roles are logically distinct: the bound below does not by itself identify the latent structural measure.  Suppose the identification module supplies
\begin{equation}
\Pp(\sigma^2\le\overline\sigma^2)\ge1-\delta_{\rm id}.
\label{eq:sigmaupper}
\end{equation}
The bound requires an actual simultaneous finite-sample construction in a
specified identification model; qualitative identifiability is insufficient.
Restricted deconvolution models and replicated measurements motivate possible
constructions \citep{matias2002,schwarz2010,gassiat2022,capitao2026,
delaigle2008,capitaorepeated2025}. Any imported estimation rate must satisfy
its own assumptions and the relevant correction \citep{gassiaterratum}. Let $1\le p\le D$, and draw the Haar rotation $R$ independently of the latent vectors and localization errors, conditionally on any information fixing the candidate family. The localization errors are iid $N(0,\sigma^2 I_D)$ and independent of the latents. Let $P_L$ select the first $p$ coordinates and define, for every queried pair $(i,j)$,
\begin{equation}
W_{ij}=\sqrt{D/p}\,P_LR(X_i-X_j)=a_{ij}+\xi_{ij},
\qquad
a_{ij}=\sqrt{D/p}\,P_LR(Z_i-Z_j).
\label{eq:projectedpair}
\end{equation}
Because $\varepsilon_i-\varepsilon_j\sim\cN(0,2\sigma^2I_D)$,
\begin{equation}
\xi_{ij}\sim\cN\!\left(0,\frac{2D\sigma^2}{p}I_p\right).
\label{eq:locnoise}
\end{equation}
Let $\mathcal P_N^0$ be a deterministic or pre-randomization family of anchor--candidate pairs, fixed before drawing the Haar localization map and before observing the localization noise, and suppose every pair that may subsequently be queried belongs to $\mathcal P_N^0$. Put $M_N^0=|\mathcal P_N^0|\ge1$; an empty family needs no intervals. All simultaneous projection and localization-noise bounds below are constructed uniformly over $\mathcal P_N^0$; subsequent data-dependent scale selection or neighborhood filtering therefore does not invalidate the intervals. If no smaller pre-specified family is available, one may take $\mathcal P_N^0=\{(i,j):1\le i<j\le N\}$ and $M_N^0=\binom N2$.

Finite-family random projection is classical \citep{johnsonlindenstrauss1984};
the calculation below adds the stated Gaussian localization error.

\begin{proposition}[Uniform latent-distance intervals]
\label{prop:interval}
Fix $\delta_{\rm JL},\delta_{\xi}\in(0,1)$ and define
\begin{equation}
\varepsilon_{\rm JL}
=8\sqrt{\frac{\log(4M_N^0/\delta_{\rm JL})}{p}},
\qquad \varepsilon_{\rm JL}<1,
\label{eq:epsjl}
\end{equation}
and
\begin{equation}
\eta_N=
\sqrt{\frac{2D}{p}}\,\overline\sigma
\left(\sqrt p+\sqrt{2\log(M_N^0/\delta_\xi)}\right).
\label{eq:etan}
\end{equation}
For $s_{ij}=\|W_{ij}\|$, set
\begin{equation}
L_{ij}=\frac{(s_{ij}-\eta_N)_+^2}{1+\varepsilon_{\rm JL}},
\qquad
U_{ij}=\frac{(s_{ij}+\eta_N)^2}{1-\varepsilon_{\rm JL}}.
\label{eq:distinterval}
\end{equation}
Then, simultaneously for every $(i,j)\in\mathcal P_N^0$,
\begin{equation}
L_{ij}\le\|Z_i-Z_j\|^2\le U_{ij}
\label{eq:simdist}
\end{equation}
with probability at least $1-\delta_{\rm id}-\delta_{\rm JL}-\delta_\xi$.
\end{proposition}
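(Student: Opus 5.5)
The plan intersects three events and controls each by a union bound: the identification event $\{\sigma^2\le\overline\sigma^2\}$ with failure at most $\delta_{\rm id}$ by \eqref{eq:sigmaupper}; a Johnson--Lindenstrauss event for the projected latent differences with failure at most $\delta_{\rm JL}$; and a Gaussian-norm event for the projected noise with failure at most $\delta_\xi$. On the intersection, \eqref{eq:simdist} should follow by the triangle inequality and algebra.

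\textbf{JL step.} Condition on the latents and on $\mathcal P_N^0$. Because $R$ is Haar and independent of them, each fixed vector $v=Z_i-Z_j$ satisfies that $\|\sqrt{D/p}\,P_LRv\|^2/\|v\|^2$ is distributed as $(D/p)$ times a Beta$(p/2,(D-p)/2)$ variable with mean one. Standard concentration for random projections, of Dasgupta--Gupta or sub-exponential type, gives
\[
\Pp\bigl(|\,\|a_{ij}\|^2-\|Z_i-Z_j\|^2|>\varepsilon\|Z_i-Z_j\|^2\bigr)\le 4e^{-p\varepsilon^2/64}
\]
for $\varepsilon<1$, where a generous constant covers the two tails. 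With $\varepsilon=\varepsilon_{\rm JL}$ as in \eqref{eq:epsjl}, this probability equals $\delta_{\rm JL}/M_N^0$, and a union bound over $M_N^0$ pairs costs $\delta_{\rm JL}$. The main obstacle is certifying the constant $64$ from a cited inequality valid for all $D\ge p$; the case $p=D$ is trivial. The fact that $\mathcal P_N^0$ is fixed before $R$ is drawn is what keeps the union bound legitimate under later data-dependent filtering.

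\textbf{Noise step.} Condition on $R$ and on the latents. Independence of the localization errors and orthogonality of $P_LR$ on its range give \eqref{eq:locnoise}. Then $\|\xi_{ij}\|$ is $\sqrt{2D\sigma^2/p}$ times a $1$-Lipschitz function of a standard $p$-dimensional Gaussian. Gaussian concentration together with $\E\|g\|\le\sqrt p$ yields
\[
\|\xi_{ij}\|\le\sqrt{2D/p}\,\sigma\bigl(\sqrt p+\sqrt{2\log(M_N^0/\delta_\xi)}\bigr)
\]
except with probability $\delta_\xi/M_N^0$. Taking a union bound, and replacing $\sigma$ by $\overline\sigma$ on the identification event, gives $\|\xi_{ij}\|\le\eta_N$ for all pairs. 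Dependence between pairs sharing an index does not matter for the union bound. Because the bound holds conditionally on $R$, it holds unconditionally; and $\overline\sigma$ is independent of the localization noise or is handled simply by intersecting events.

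\textbf{Assembly.} On the intersection, the triangle inequality gives $(s_{ij}-\eta_N)_+\le\|a_{ij}\|\le s_{ij}+\eta_N$. The JL event gives
\[
(1-\varepsilon_{\rm JL})\|Z_i-Z_j\|^2\le\|a_{ij}\|^2\le(1+\varepsilon_{\rm JL})\|Z_i-Z_j\|^2 .
\]
Dividing through yields $L_{ij}\le\|Z_i-Z_j\|^2\le U_{ij}$, with total failure probability at most $\delta_{\rm id}+\delta_{\rm JL}+\delta_\xi$.
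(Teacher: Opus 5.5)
Your architecture matches the paper's proof exactly: the three events $\{\sigma^2\le\overline\sigma^2\}$, the projection event and the noise-norm event, combined by a union bound and finished with the triangle inequality. The one step you leave open is the per-pair projection tail, which you attribute to an unspecified ``standard'' inequality and flag as uncertified. That step is where the paper does its actual work. It cannot be left to a generic sub-exponential bound, because the proposition fixes the constants $8$ and $4$ in $\varepsilon_{\rm JL}$.

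\textbf{How the paper closes it.} The paper writes $Ru/\|u\|=g/\|g\|$ with $g\sim N(0,I_D)$. Then $\|a_{ij}\|^2/\|u\|^2=(D/p)A/T$, where $A\sim\chi^2_p$ and $T\sim\chi^2_D$. It applies Laurent--Massart to both with the same $t=p\varepsilon^2/64$:
\begin{itemize}
\item for the numerator, $2\sqrt{pt}+2t\le 9p\varepsilon/32<p\varepsilon/3$;
\item for the denominator, $p\le D$ gives $2\sqrt{Dt}+2t\le 9D\varepsilon/32<D\varepsilon/3$.
\end{itemize}
Hence $|A-p|\le p\varepsilon/3$ and $|T-D|\le D\varepsilon/3$ except with probability $4e^{-p\varepsilon^2/64}$. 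The factor $4$ comes from two two-sided chi-square bounds. The elementary inequalities $\frac{1-\varepsilon/3}{1+\varepsilon/3}\ge1-\varepsilon$ and $\frac{1+\varepsilon/3}{1-\varepsilon/3}\le1+\varepsilon$ for $0<\varepsilon<1$ then give the $(1\pm\varepsilon)$ distortion.

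\textbf{Your Beta/Dasgupta--Gupta route also works,} provided you use their explicit tails rather than an unnamed constant. For $p<D$ and $B=\|P_LRu\|^2/\|u\|^2$, their lemma gives
\[
\Pp\{(D/p)B\le1-\varepsilon\}\le\exp\{\tfrac p2(\varepsilon+\log(1-\varepsilon))\}\le e^{-p\varepsilon^2/4},
\]
\[
\Pp\{(D/p)B\ge1+\varepsilon\}\le\exp\{\tfrac p2(\log(1+\varepsilon)-\varepsilon)\}\le e^{-p\varepsilon^2/12}
\]
for $\varepsilon<1$. Their sum is at most $2e^{-p\varepsilon^2/12}\le4e^{-p\varepsilon^2/64}$. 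This route is sharper and would permit a smaller $\varepsilon_{\rm JL}$. The paper's route is self-contained from the same Laurent--Massart inequality it uses for the Gaussian-norm event.

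Finally, state explicitly that pairs with $Z_i=Z_j$ satisfy the distortion bounds trivially, since the ratio you normalize by is undefined there. Your noise step, the intersection with the calibration event, and the final division by $1\pm\varepsilon_{\rm JL}$ are correct as written.
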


\begin{proof}
Fix $u=Z_i-Z_j\ne0$ and write $v=Ru/\|u\|$. Since $R$ is Haar, $v$ is uniform on the unit sphere and may be represented as $g/\|g\|$ for $g\sim\cN(0,I_D)$. Let
\begin{equation}
A=\sum_{\ell=1}^p g_\ell^2\sim\chi_p^2,
\qquad
T=\sum_{\ell=1}^D g_\ell^2\sim\chi_D^2.
\end{equation}
Then
\begin{equation}
\frac{\|a_{ij}\|^2}{\|u\|^2}=\frac{D}{p}\frac{A}{T}.
\label{eq:ratioAT}
\end{equation}
For $0<\varepsilon<1$, choose $t=p\varepsilon^2/64$. Laurent--Massart gives
\begin{align}
\Pp\!\left(|A-p|>\frac{\varepsilon p}{3}\right)
&\le2e^{-p\varepsilon^2/64},\\
\Pp\!\left(|T-D|>\frac{\varepsilon D}{3}\right)
&\le2e^{-p\varepsilon^2/64}.
\end{align}
For the second inequality, $p\le D$ implies
$2\sqrt{Dt}+2t\le(9/32)\varepsilon D<\varepsilon D/3$ and
$2\sqrt{Dt}\le\varepsilon D/4$. On their intersection,
\begin{equation}
\frac{1-\varepsilon/3}{1+\varepsilon/3}
\le\frac{D}{p}\frac{A}{T}
\le\frac{1+\varepsilon/3}{1-\varepsilon/3}.
\end{equation}
Since $0<\varepsilon<1$,
\begin{equation}
\frac{1-\varepsilon/3}{1+\varepsilon/3}\ge1-\varepsilon,
\qquad
\frac{1+\varepsilon/3}{1-\varepsilon/3}\le1+\varepsilon.
\end{equation}
Therefore, for one pair,
\begin{equation}
(1-\varepsilon)\|u\|^2\le\|a_{ij}\|^2\le(1+\varepsilon)\|u\|^2
\end{equation}
except on an event of probability at most $4e^{-p\varepsilon^2/64}$. A union bound over $M_N^0$ pairs and the choice \eqref{eq:epsjl} give a simultaneous Haar event of failure probability at most $\delta_{\rm JL}$.

First form the Gaussian norm event for $\xi_{ij}=\sqrt{2D/p}\,\sigma g'$ with $g'\sim\cN(0,I_p)$ (the case $\sigma=0$ is deterministic). Intersect this event with \eqref{eq:sigmaupper} to replace $\sigma$ by $\overline\sigma$; no conditional Gaussian law given a successful calibration event is asserted. Gaussian norm concentration yields
\begin{equation}
\Pp\!\left(\|g'\|>\sqrt p+\sqrt{2t}\right)\le e^{-t}.
\end{equation}
With $t=\log(M_N^0/\delta_\xi)$ and another union bound,
\begin{equation}
\|\xi_{ij}\|\le\eta_N
\end{equation}
for all queried pairs except on probability $\delta_\xi$. The triangle inequality now gives
\begin{equation}
(s_{ij}-\eta_N)_+\le\|a_{ij}\|\le s_{ij}+\eta_N.
\end{equation}
Combining with the simultaneous Haar event and dividing by $1\pm\varepsilon_{\rm JL}$ proves \eqref{eq:distinterval}--\eqref{eq:simdist}. The three failure events are combined by a union bound.
\end{proof}

\begin{proposition}[Exact Haar--Beta alternative]
\label{prop:haarbeta}
The Laurent--Massart/JL envelope above is convenient asymptotically but can be numerically loose when $p$ is moderate.  For $1\le p<D$, fixed $u\neq0$ and an independent Haar $R$,
\begin{equation}
B_u:=\frac{\|P_LRu\|^2}{\|u\|^2}
\sim\operatorname{Beta}\!\left(\frac p2,\frac{D-p}{2}\right).
\label{eq:haarbeta}
\end{equation}
Let $F_{p,D}$ be this Beta cdf and, for $M_N^0$ queried pairs, set
\begin{equation}
b_- =F_{p,D}^{-1}\!\left(\frac{\delta_{\rm JL}}{2M_N^0}\right),\qquad
b_+ =F_{p,D}^{-1}\!\left(1-\frac{\delta_{\rm JL}}{2M_N^0}\right),
\end{equation}
\begin{equation}
\kappa_- =\frac Dp b_-,\qquad \kappa_+=\frac Dp b_+.
\end{equation}
Then, with probability at least $1-\delta_{\rm JL}$, simultaneously over all queried pairs,
\begin{equation}
\kappa_-\|u_{ij}\|^2\le\|a_{ij}\|^2\le\kappa_+\|u_{ij}\|^2.
\label{eq:betasim}
\end{equation}
On the same Gaussian-noise event used in Proposition~\ref{prop:interval}, this yields the valid distance interval
\begin{equation}
\boxed{
L_{ij}^{\rm Beta}=\frac{(s_{ij}-\eta_N)_+^2}{\kappa_+},\qquad
U_{ij}^{\rm Beta}=\frac{(s_{ij}+\eta_N)^2}{\kappa_-}.}
\label{eq:betainterval}
\end{equation}
No condition analogous to $\varepsilon_{\rm JL}<1$ is required. When $p=D$, use the deterministic factor $\kappa_- =\kappa_+=1$ with zero projection failure probability; the displayed Beta distribution is not defined at that boundary. A zero latent difference satisfies the distortion bounds deterministically.
\end{proposition}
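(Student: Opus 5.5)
The plan is to prove the Beta law \eqref{eq:haarbeta} first, then the simultaneous distortion \eqref{eq:betasim} by a quantile union bound, and finally to transfer it to the interval \eqref{eq:betainterval} exactly as in the last step of Proposition~\ref{prop:interval}.

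\emph{Step 1 (the Beta law).} For fixed $u\ne0$ and Haar $R$ independent of $u$, the vector $v=Ru/\|u\|$ is uniform on $S^{D-1}$. Represent it as $g/\|g\|$ with $g\sim\cN(0,I_D)$. Then $B_u=A/(A+A')$ with $A=\sum_{\ell\le p}g_\ell^2\sim\chi^2_p$ and $A'=\sum_{\ell>p}g_\ell^2\sim\chi^2_{D-p}$ independent. The classical gamma-ratio identity gives $A/(A+A')\sim\operatorname{Beta}(p/2,(D-p)/2)$; this needs $D-p\ge1$, which is why $p<D$ is assumed. Because $R$ is drawn independently of the latents, the same argument applies conditionally on $(Z_i)$, so each queried difference $u_{ij}$ may be treated as fixed.

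\emph{Step 2 (simultaneous distortion).} The Beta cdf $F_{p,D}$ is continuous and strictly increasing on $(0,1)$. Hence for each pair
\[
\Pp(B_{u}<b_-)=\frac{\delta_{\rm JL}}{2M_N^0},\qquad
\Pp(B_{u}>b_+)=\frac{\delta_{\rm JL}}{2M_N^0}.
\]
A union bound over the $M_N^0$ pairs of $\mathcal P_N^0$ and both tails gives total failure at most $\delta_{\rm JL}$. The bound holds whatever the dependence between pairs induced by the shared $R$. Multiplying by $D/p$ and using $\|a_{ij}\|^2=(D/p)\|P_LRu_{ij}\|^2$ yields \eqref{eq:betasim}. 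Zero differences satisfy \eqref{eq:betasim} trivially since both sides vanish.

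\emph{Step 3 (interval).} Intersect the event of \eqref{eq:betasim} with two further events from Proposition~\ref{prop:interval}: the calibration event \eqref{eq:sigmaupper} and the Gaussian norm event $\|\xi_{ij}\|\le\eta_N$. As there, the triangle inequality gives
\[
(s_{ij}-\eta_N)_+\le\|a_{ij}\|\le s_{ij}+\eta_N .
\]
Squaring and dividing by $\kappa_+$ for the lower bound, and by $\kappa_-$ for the upper bound, gives \eqref{eq:betainterval}. Unlike the JL route, no condition of the form $\varepsilon<1$ is needed. We do need $\kappa_->0$. This holds because the quantile level $\delta_{\rm JL}/(2M_N^0)$ lies in $(0,1/2)$, so $b_->0$. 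When $p=D$, $R$ is an isometry and $\|a_{ij}\|=\|u_{ij}\|$ exactly, so $\kappa_\pm=1$ with zero projection failure probability.

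The only delicate point is Step 1's conditioning. The Beta law is stated for a fixed $u$, so we must confirm that independence of $R$ from the latents and localization noise lets us apply it pair by pair. We must also confirm that fixing $\mathcal P_N^0$ before drawing $R$ keeps the union bound valid despite later data-dependent queries. Everything else is routine.
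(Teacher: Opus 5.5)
Your proposal is correct and follows the paper's proof essentially step for step. The shared steps are the Gaussian representation $v=g/\|g\|$ yielding the Beta law, a two-tailed quantile union bound over the pre-specified pair family, and the triangle-inequality transfer divided by $\kappa_+$ and $\kappa_-$. You also check points the paper leaves implicit: that $b_->0$, so dividing by $\kappa_-$ is legitimate, and that independence of $R$ from the latents lets each $u_{ij}$ be treated as fixed.
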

\begin{proof}
For $v=Ru/\|u\|$ uniform on $S^{D-1}$, write $v=g/\|g\|$ with $g\sim\cN(0,I_D)$. Then
\begin{equation}
\|P_Lv\|^2=\frac{\sum_{j=1}^p g_j^2}{\sum_{j=1}^Dg_j^2},
\end{equation}
which has the Beta law in \eqref{eq:haarbeta}.  Each pair violates the quantile interval with probability at most $\delta_{\rm JL}/M_N^0$; a union bound gives \eqref{eq:betasim}.  The triangle inequality gives $(s_{ij}-\eta_N)_+^2\le\|a_{ij}\|^2\le(s_{ij}+\eta_N)^2$, and division by $\kappa_+$ and $\kappa_-$ gives \eqref{eq:betainterval}.
\end{proof}

\paragraph{Finite-$N$ implementation.}
The exact Beta alternative removes the crude JL constant but retains the
information loss of a small projection. Propositions~\ref{prop:interval}
and~\ref{prop:haarbeta} are optional high-dimensional localization
interfaces. The principal experiments below use dedicated localization
views in the full stated target coordinates. A different localization
coordinate budget does not change the target metric or the analysis-view
independence requirement.

For any queried radius $r$, define the \emph{certified core}, \emph{possible set}, and ambiguous shell
\begin{equation}
\cI_i(r)=\{j:U_{ij}\le r^2\},\qquad
\cP_i(r)=\{j:L_{ij}\le r^2\},\qquad
\cA_i(r)=\cP_i(r)\setminus\cI_i(r).
\label{eq:shellsets}
\end{equation}
On the simultaneous event \eqref{eq:simdist}, if $\cB_i(r)=\{j:\|Z_i-Z_j\|\le r\}$ is the latent ball membership set, then
\begin{equation}
\boxed{\cI_i(r)\subseteq\cB_i(r)\subseteq\cP_i(r).}
\label{eq:shellsandwich}
\end{equation}
This graded statement is the recommended finite-$N$ interface. Exact neighbor recovery is only its zero-ambiguity special case.
\subsubsection{Why marginal Gaussianity is insufficient}
\label{app:theorem-recheck}
Take $q=2$, $d=1$, $Q=\operatorname{diag}(1,0)$ and iid
$Z_i\sim N(0,I_2)$. Set $\eta_i=-Z_i$ and $X_i^A=0$. The errors are
marginally iid $N(0,I_2)$, but their law given the latents is degenerate.
Use perfect localization $\mathcal L_N=\sigma(Z_1,\ldots,Z_N)$ and exact
membership $I=P=B^{(N)}$. Given the latents, the localization observations
are constants, so an assertion of conditional independence between
localization and analysis noise is satisfied. Fix the identified target to
this known Gaussian law and take exact noise bounds
$\underline\sigma^2=\overline\sigma^2=1$; thus the structural statements
need not fail in this counterexample.

For $B=B(0,r)$ put $s=r^2/2$. Rotational symmetry gives
\[
\mu(B)=1-e^{-s},\qquad
R_Q^\mu(0,r)=\frac{1-(1+s)e^{-s}}{1-e^{-s}}>0.
\]
All observed residuals are zero. On a fixed finite ladder of positive radii,
the core counts tend to infinity almost surely, hence $e_j,g_j\to0$.
The optional direct upper bound becomes
$[\widehat U_j+e_j-\underline\sigma^2]_+=0$ eventually, while
$\omega_j=1$. Consequently $\overline R_j\le g_j\to0$, contradicting
the positive population residual with probability tending to one.

The law-of-large-numbers argument proves the failure under merely marginal
Gaussianity. Equation~\eqref{eq:conditional-noise-law} excludes this example.
The noncentral chi-square proof and its constants require that conditional
product law.

\section{Proofs of the population residual certificate}
\label{app:population-proof}
The definitions and statement are in Section~\ref{sec:thm1}. The following calculations prove Theorem~\ref{thm:rect} and the covered-scale corollary.
\subsection{Finite sets and membership uncertainty}
\begin{lemma}[Pair residual equals covariance residual]
\label{lem:paircov}
For a finite nonempty set $S=\{z_1,\ldots,z_m\}$, an orthogonal projector $Q$, and $\bar z=m^{-1}\sum_i z_i$,
\begin{equation}
\frac{1}{2m^2}\sum_{u,v\in S}\|Q(z_u-z_v)\|^2
=\frac1m\sum_{u\in S}\|Q(z_u-\bar z)\|^2.
\label{eq:paircov}
\end{equation}
\end{lemma}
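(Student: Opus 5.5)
The plan is to expand the left-hand side directly by centering every point at the mean $\bar z$. Write $w_u=Q(z_u-\bar z)$. Then $Q(z_u-z_v)=w_u-w_v$ by linearity of $Q$, and $\sum_{u\in S}w_u=Q\bigl(\sum_u z_u-m\bar z\bigr)=0$. Expanding the squared norm gives
\[
\sum_{u,v\in S}\|w_u-w_v\|^2=\sum_{u,v}\bigl(\|w_u\|^2+\|w_v\|^2\bigr)-2\Bigl\langle\sum_u w_u,\sum_v w_v\Bigr\rangle=2m\sum_{u\in S}\|w_u\|^2 .
\]
Dividing by $2m^2$ yields exactly the right-hand side of \eqref{eq:paircov}.

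The argument needs only linearity of $Q$. It does not use $Q^2=Q=Q^\top$; the identity holds for any linear map. The only point that needs care is bookkeeping: the double sum runs over ordered pairs including $u=v$, and the diagonal terms vanish. That is why the normalisation is $1/(2m^2)$ rather than a U-statistic normalisation. I will state this explicitly, because later the unbiased factor $\nu_j=n_j/(n_j-1)$ comes from converting to the off-diagonal sum over the $m(m-1)$ ordered pairs $u\ne v$. Under that normalisation the same computation gives $\frac{1}{2m(m-1)}\sum_{u\ne v}\|Q(z_u-z_v)\|^2=\frac{1}{m-1}\sum_u\|w_u\|^2$, and I will record this as a remark for use in the transfer step.

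I do not expect a genuine obstacle here. The only step to handle carefully is the cross-term cancellation, which relies on the centred vectors $w_u$ summing to zero.
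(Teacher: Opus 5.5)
Your proof is correct and is essentially the paper's argument: both expand the ordered-pair sum of squared norms. The only difference is that you center at $\bar z$ before expanding, so the cross term vanishes immediately, whereas the paper expands the uncentered $y_u=Qz_u$ and then uses $m\sum_u\|y_u\|^2-\|\sum_u y_u\|^2=m\sum_u\|y_u-\bar y\|^2$. Your remarks are also correct: only linearity of $Q$ is needed, and the off-diagonal normalization gives the factor $m/(m-1)$ that the paper uses later.
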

\begin{proof}
Put $y_u=Qz_u$ and $\bar y=m^{-1}\sum_u y_u$. Expanding the ordered-pair sum,
\begin{align}
\sum_{u,v}\|y_u-y_v\|^2
&=\sum_{u,v}(\|y_u\|^2+\|y_v\|^2-2y_u^\top y_v)\\
&=2m\sum_u\|y_u\|^2-2\left\|\sum_u y_u\right\|^2\\
&=2m\sum_u\|y_u-\bar y\|^2.
\end{align}
Division by $2m^2$ proves \eqref{eq:paircov}. Thus the pair statistic used by the shell argument is exactly the frozen-plane covariance residual.
\end{proof}

\begin{proposition}[Weighted shell-to-residual envelope]
\label{prop:shellres}
Let $n=|\cI_i(r)|$, $a=|\cA_i(r)|$, and let $Q$ be any orthogonal projector. For a finite nonempty set $S$ define
\begin{equation}
\cR_S(Q)=\frac{1}{2|S|^2}\sum_{u,v\in S}\|Q(Z_u-Z_v)\|^2.
\end{equation}
If $n>0$, then on \eqref{eq:shellsandwich}, with
\begin{equation}
\omega_i(r)=\left(\frac{n}{n+a}\right)^2,
\label{eq:shellomega}
\end{equation}
we have the sharper one-sided bound
\begin{equation}
\boxed{\cR_{\cB_i(r)}(Q)
\le \omega_i(r)\cR_{\cI_i(r)}(Q)+2r^2\{1-\omega_i(r)\}.}
\label{eq:shellweighted}
\end{equation}
Consequently,
\begin{equation}
\cR_{\cB_i(r)}(Q)
\le \cR_{\cI_i(r)}(Q)+\widehat\Delta_i^{\rm shell}(r),
\qquad
\widehat\Delta_i^{\rm shell}(r)=2r^2\{1-\omega_i(r)\},
\label{eq:shellDelta}
\end{equation}
but \eqref{eq:shellweighted} is always at least as tight as \eqref{eq:shellDelta}.  In addition, the covariance representation and the anchor as a competitor give the free global bound $\cR_{\cB_i(r)}(Q)\le r^2$, hence
\begin{equation}
\cR_{\cB_i(r)}(Q)\le\min\left\{r^2,\;\omega_i(r)\cR_{\cI_i(r)}(Q)+2r^2[1-\omega_i(r)]\right\}.
\label{eq:shellweighted-min}
\end{equation}
The coefficient $\omega_i(r)$ is the exact worst-case lower bound on the unknown core-pair fraction given only $|B|\le a$; the factor $2$ in the affine shell penalty is uniformly sharp as the shell fraction tends to zero.
\end{proposition}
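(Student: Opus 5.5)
Write $B=\cB_i(r)$, $I=\cI_i(r)$ and $A=B\setminus I$. On the sandwich event \eqref{eq:shellsandwich} we have $I\subseteq B\subseteq\cP_i(r)$, so $|A|=:b\le a$ and $|B|=n+b$.

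The plan is to expand the ordered-pair sum defining $\cR_B(Q)$ into the pairs lying inside $I$ and the pairs involving at least one shell point. By definition,
\[
2(n+b)^2\cR_B(Q)=\sum_{u,v\in I}\|Q(Z_u-Z_v)\|^2+\sum_{(u,v)\in B^2\setminus I^2}\|Q(Z_u-Z_v)\|^2 .
\]
The first sum equals $2n^2\cR_I(Q)$. Every point of $B$ lies in $B(Z_i,r)$ and $Q$ is a contraction, so each term of the second sum is at most $\|Z_u-Z_v\|^2\le(2r)^2$. The number of such pairs is $(n+b)^2-n^2$. Dividing through gives
\[
\cR_B(Q)\le \frac{n^2}{(n+b)^2}\cR_I(Q)+2r^2\Bigl(1-\frac{n^2}{(n+b)^2}\Bigr).
\]

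The right-hand side is a convex combination $w\,\cR_I+(1-w)\,2r^2$ with $w=n^2/(n+b)^2$. To pass to the worst case $w=\omega_i(r)$ we need $w\ge\omega_i(r)$, which holds because $b\le a$, and we need $\cR_I(Q)\le 2r^2$. The latter follows from the pair representation, since each pair term is at most $4r^2$ and the normalization is $1/(2n^2)$. Together these give \eqref{eq:shellweighted}. Because $\omega\le1$, the bound $\omega\cR_I+2r^2(1-\omega)\le\cR_I+2r^2(1-\omega)$ yields \eqref{eq:shellDelta}, and shows that \eqref{eq:shellweighted} is always at least as tight.

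For the global bound $\cR_B(Q)\le r^2$, use Lemma~\ref{lem:paircov} to rewrite $\cR_B$ as a centered residual, namely the minimum over centers $c$ of $|B|^{-1}\sum\|Q(Z_u-c)\|^2$. Taking $c=Z_i$ gives at most $r^2$. This produces \eqref{eq:shellweighted-min}.

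The sharpness remarks are the delicate part. I would verify them with explicit configurations.
\begin{itemize}
\item For $\omega$: the shell count bound $b\le a$ is attained when every possible point is a true member, so $n^2/(n+a)^2$ is exactly the smallest possible core-pair fraction.
\item For the factor $2$: place the core points at one endpoint of a diameter in the $Q$-range, at distance $r$ from the anchor, and a single shell point at the opposite endpoint. Then $\cR_I=0$ and every cross pair attains $4r^2$. The exact value is $\cR_B=4r^2\cdot 2n/(2(n+1)^2)$, while the bound is $2r^2(1-n^2/(n+1)^2)=2r^2(2n+1)/(n+1)^2$. The ratio of the two tends to $1$ as the shell fraction $1/n\to0$, which gives the asymptotic sharpness of the constant.
\end{itemize}

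The main obstacle I expect is checking that the diameter-endpoint configuration is compatible with the ball constraint in the anchored setting. This needs the anchor at the midpoint, so that both endpoints lie in $B(Z_i,r)$ and the worst pair distance $2r$ is attained.
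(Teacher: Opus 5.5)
Your proof is correct and follows the paper's argument essentially step for step. It uses the same split of the ordered-pair sum into core--core pairs and shell-involving pairs, with each cross term bounded by $(2r)^2$, and monotonicity in the core-pair fraction via $\cR_I(Q)\le 2r^2$ and $b\le a$. The $r^2$ cap comes from the same anchor-as-competitor bound. Your sharpness example is the paper's antipodal core/shell configuration; the paper phrases it with core weight $w\uparrow1$ and ratio $4w/(1+w)\to2$.
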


\begin{proof}
Write $T=\cB_i(r)=I\cup B$ with $I=\cI_i(r)$ and $B\subseteq\cA_i(r)$, and put $b=|B|\le a$.  By Lemma~\ref{lem:paircov}, the ordered-pair residual decomposes according to whether both endpoints belong to $I$:
\begin{align}
\cR_T(Q)
&=\left(\frac{n}{n+b}\right)^2\cR_I(Q)+\cE_{I,B}(Q),
\end{align}
where $\cE_{I,B}(Q)$ contains all ordered pairs with at least one endpoint in $B$.  Every point of $T$ lies in the radius-$r$ latent ball, hence for every such pair
\begin{equation}
\frac12\|Q(Z_u-Z_v)\|^2\le\frac12\|Z_u-Z_v\|^2\le2r^2.
\end{equation}
The fraction of ordered pairs represented by $\cE_{I,B}$ is $1-(n/(n+b))^2$, so
\begin{equation}
\cR_T(Q)\le c_b\cR_I(Q)+2r^2(1-c_b),
\qquad c_b=\left(\frac{n}{n+b}\right)^2.
\end{equation}
Also $\cR_I(Q)\le2r^2$, because $I\subseteq T$.  Therefore the right-hand side is nonincreasing in $c_b$. Since $b\le a$ implies $c_b\ge(n/(n+a))^2=\omega_i(r)$,
\begin{align}
\cR_T(Q)
&\le \omega_i(r)\cR_I(Q)+2r^2\{1-\omega_i(r)\},
\end{align}
proving \eqref{eq:shellweighted}.  Finally, $0\le\omega_i(r)\le1$ gives $\omega_i(r)\cR_I(Q)\le\cR_I(Q)$, yielding \eqref{eq:shellDelta}.
For sharpness of the coefficient $2$, take a core concentrated at $r$
and a shell concentrated at $-r$ on one line, with core weight $w\uparrow1$.
Then $\mathcal R_I=0$, $\mathcal R_T=4r^2w(1-w)$, and
$\mathcal R_T/[r^2(1-w^2)]=4w/(1+w)\to2$.
Repeated points may be replaced by arbitrarily close distinct points.
\end{proof}

For comparison with the stricter construction, an observed candidate set $\widehat\cN_k(i)$ satisfies exact latent $k$NN recovery whenever
\begin{equation}
\mathfrak m_{i,k}
=\min_{\ell\notin\widehat\cN_k(i)}L_{i\ell}
-\max_{j\in\widehat\cN_k(i)}U_{ij}>0,
\label{eq:islg}
\end{equation}
which implies
\begin{equation}
\boxed{\widehat\cN_k(i)=\cN_k^{\rm latent}(i).}
\label{eq:exactknn}
\end{equation}
The stress test in Section~\ref{sec:hard-experiments} shows why \eqref{eq:shellsandwich}, rather than exact $k$NN recovery, is the useful finite-sample formulation. No intrinsic dimension or target exponents enter either gate. Proposition~\ref{prop:shellres} is applied only after the certified-core residual has been tracked across scales, so shell ambiguity is paid once at the target scale rather than accumulated through the telescope.

\subsection{Proof of the population certificate}
\label{app:population}
\subsubsection{Conditioning and probability bookkeeping}
Throughout this section, target coordinates, query radii and projectors are fixed by Assumption~\ref{ass:honest}. Condition on $\mathcal G$ for population-sampling arguments. For Gaussian-noise arguments additionally condition on all validation latent variables and localization observations, which do not contain validation analysis noise. The C0 good event is intersected with concentration events; we never assert that Gaussian noise remains Gaussian after conditioning on a validation-derived good event. Failure budgets are combined by union bounds.

\subsubsection{Why multiplying a raw pair score is insufficient}
Let $I$ be a realized core of size $n$, $Q$ have rank $m$, and $R_I^Z$ use denominator $n$. Uniform distinct-pair sampling satisfies
\[
\E_{\rm pair}\frac12\|Q(Z_U^\circ-Z_V^\circ)\|^2
=\frac n{n-1}R_I^Z.
\]
Adding independent observation noises gives expectation
$\nu R_I^Z+m\sigma^2$. Multiplying the entire observed score by $(n-1)/n$ gives
\[
R_I^Z+(1-1/n)m\sigma^2.
\]
Its difference between scales $j$ and $j+1$ contains
$m\sigma^2(1/n_{j+1}-1/n_j)$; it does not have an exactly constant noise floor. We therefore use the unbiased sample covariance \eqref{eq:unbiased-core}, whose signal target is $T_j=\nu_jR^Z_{I_j}$, and keep this factor in every later inequality.

\subsubsection{Exact Gaussian covariance concentration}
\begin{lemma}[Unbiased core residual]
\label{lem:core-gaussian}
Under \eqref{eq:conditional-noise-law}, conditional as above, for $n_j\ge2$ and $\sigma>0$,
\begin{equation}
\frac{(n_j-1)\widehat U_j}{\sigma^2}
\sim\chi'^2_{m(n_j-1)}
\left(\frac{(n_j-1)T_j}{\sigma^2}\right),\qquad
\E[\widehat U_j]=T_j+m\sigma^2.
\label{eq:core-chi}
\end{equation}
For $\sigma=0$, $\widehat U_j=T_j$ deterministically, so the same mean and concentration conclusion hold without dividing by $\sigma^2$.
On the localization and noise-calibration event, the radii in \eqref{eq:ej-main-repaired} yield
$|\widehat U_j-(T_j+m\sigma^2)|\le e_j$ simultaneously over $K$ queries, except on an additional probability $\delta_R$.
\end{lemma}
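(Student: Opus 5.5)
The plan is to condition, as in the bookkeeping paragraph, on $\mathcal G$, on all validation latents $(Z_i^\circ)$, and on the localization sigma-field $\mathcal L_N$. Under this conditioning the core $I_j$, its size $n=n_j$, the projector $Q=Q_j$ of rank $m$, and the centered latent configuration are all deterministic. By \eqref{eq:conditional-noise-law}, the analysis noises $(\eta_i)_{i\in I_j}$ remain iid $N(0,\sigma^2I_q)$. The case $\sigma=0$ is immediate, since $X_i^A=Z_i^\circ$ gives $\widehat U_j=T_j$ by the definitions of $T_j$ and $\nu_j$. So assume $\sigma>0$ from here on.

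\emph{Distributional step.} Let $H=I_n-n^{-1}\mathbf 1\mathbf 1^\top$ and let $V\in\R^{n\times(n-1)}$ have orthonormal columns spanning $\operatorname{range}H$. Let $Y$, $Z$ and $E$ be the $q\times n$ matrices of observations, latents and noises on $I_j$. Then
\[
(n-1)\widehat U_j=\|QYH\|_F^2=\|QZV+QEV\|_F^2 .
\]
Choose an orthonormal basis $W\in\R^{q\times m}$ of $\operatorname{range}Q$. By rotation invariance, $W^\top EV$ has $m(n-1)$ iid $N(0,\sigma^2)$ entries. Hence $(n-1)\widehat U_j/\sigma^2$ is noncentral $\chi'^2_{m(n-1)}(\lambda)$ with $\lambda=\|QZV\|_F^2/\sigma^2$. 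By the same identity applied to $Z$,
\[
\|QZV\|_F^2=\sum_{i\in I_j}\|Q(Z_i^\circ-\bar Z^\circ_{I_j})\|^2=nR^Z_{I_j}=(n-1)T_j .
\]
This gives \eqref{eq:core-chi}, and the mean $T_j+m\sigma^2$ follows from $\E\chi'^2_k(\lambda)=k+\lambda$.

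\emph{Concentration step.} I would invoke the Laurent--Massart/Birgé two-sided bounds for $X\sim\chi'^2_k(\lambda)$:
\[
\Pp\bigl(X-(k+\lambda)\ge2\sqrt{(k+2\lambda)t}+2t\bigr)\le e^{-t},\qquad
\Pp\bigl(X-(k+\lambda)\le-2\sqrt{(k+2\lambda)t}\bigr)\le e^{-t}.
\]
Take $k=m(n-1)$, $\lambda=(n-1)T_j/\sigma^2$, multiply through by $\sigma^2$ and divide by $n-1$. Except on conditional probability $2e^{-t}$,
\[
|\widehat U_j-(T_j+m\sigma^2)|\le2\sqrt{\frac{(m\sigma^4+2\sigma^2T_j)t}{n-1}}+\frac{2\sigma^2t}{n-1}.
\]
Next I bound $T_j$ deterministically on the sandwich event. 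There $I_j\subseteq B(x,r_j)$, so using the anchor $x$ as a competitor for the mean gives $R^Z_{I_j}\le r_j^2$. Since $n_j\ge2$ gives $\nu_j\le2$, we get $2\sigma^2T_j\le4\sigma^2r_j^2$. Finally, the right side is increasing in $\sigma$, so on the calibration event $\sigma^2\le\bar\sigma^2$ it is at most $e_j$ in \eqref{eq:ej-main-repaired} with $t=t_R=\log(2K/\delta_R)$. Then $2e^{-t_R}=\delta_R/K$, and a union bound over the $K$ frozen queries gives total conditional failure probability $\delta_R$. Integrating out the conditioning gives the same unconditional bound.

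\emph{Main obstacle.} The delicate point is order of operations, not the chi-square algebra. The concentration event must be formed with the true $\sigma$ under the conditional product law. Only afterwards is it intersected with the C0 events (sandwich and $\sigma^2\le\bar\sigma^2$), which are measurable with respect to the conditioning and the independent calibration block. We never claim Gaussianity conditional on a validation-derived good event. The C0 failures are charged to $\delta_{\rm str}$ and the Gaussian failure to $\delta_R$.
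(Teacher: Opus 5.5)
Your proposal is correct and follows essentially the same route as the paper. The steps coincide: the orthonormal-basis reduction of $\|H_nXQ\|_F^2$ to a noncentral $\chi^2_{m(n_j-1)}$ with noncentrality $(n_j-1)T_j/\sigma^2$, a two-sided noncentral chi-square tail, the anchor-competitor bound $T_j\le 2r_j^2$, monotone replacement of $\sigma^2$ by $\bar\sigma^2$, and a union bound at $t_R=\log(2K/\delta_R)$, with the Gaussian event formed under the conditional product law before intersecting with the C0 events. The only difference is that you quote the tail bound from Birg\'e/Laurent--Massart, whereas the paper rederives it from the cumulant generating function.
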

\begin{proof}
Let $H_n=I_n-\mathbf1\mathbf1^\top/n$. Its eigenvalues are $1$ with multiplicity $n-1$ and $0$ once. Write the $n\times q$ data matrix as $X=Z+E$. Then
\[
(n-1)\widehat U=\|H_nXQ\|_F^2.
\]
Choose orthonormal bases diagonalizing $H_n$ and $Q$. In the resulting $(n-1)\times m$ block the random entries are independent $N(0,\sigma^2)$ plus their fixed signal means. Their squared norm gives \eqref{eq:core-chi}, since $\|H_nZQ\|_F^2=(n-1)T$.

For completeness, if $V=\|a+\sigma g\|^2$ with $g\sim N(0,I_k)$, direct Gaussian integration gives, for $|\lambda|<(2\sigma^2)^{-1}$,
\[
\log\E e^{\lambda(V-\E V)}
=-\frac{k}{2}\{\log(1-2\lambda\sigma^2)+2\lambda\sigma^2\}
+\frac{2\lambda^2\sigma^2\|a\|^2}{1-2\lambda\sigma^2}.
\]
Using the power series of $-\log(1-u)-u$ and bounding its absolute coefficients,
\[
\log\E e^{\lambda(V-\E V)}
\le\frac{\lambda^2(k\sigma^4+2\sigma^2\|a\|^2)}
{1-2\sigma^2|\lambda|}.
\]
The exponential Markov bound, optimized in $\lambda$ (or evaluated at the standard sub-gamma optimizer), gives
\[
\Pp\!\left(|V-\E V|>
2\sqrt{(k\sigma^4+2\sigma^2\|a\|^2)t}+2\sigma^2t\right)
\le2e^{-t}.
\]
This is standard Gaussian quadratic-form concentration; see \citet{laurentmassart2000,hsu2012}.

Apply it with $k=m(n-1)$ and $\|a\|^2=(n-1)T$, then divide by $n-1$. On C0, the anchor is a competitor for the empirical covariance center, so
\[
R_I^Z\le\frac1n\sum_{i\in I}\|Q(Z_i^\circ-x)\|^2\le r^2,
\qquad T=\nu R_I^Z\le2r^2.
\]
Replace $\sigma^2$ by its upper confidence bound and set $t=t_R$. A union bound over $K$ queries proves the statement. The case $\sigma=0$ follows directly: the Gaussian error is zero. The signal is held fixed here, so no population-sampling term is hidden in $e_j$.
\end{proof}

\subsubsection{An endpoint bound is enough}
\begin{lemma}[Terminal completion without intermediate error accumulation]
\label{lem:endpoint}
On the simultaneous event of Lemma~\ref{lem:core-gaussian}, the quantities in \eqref{eq:core-min} satisfy
\[
0\le T_j\le\mathcal C_j,\qquad
\mathcal C^{\rm EP}_{j:T}\le\mathcal C^{\rm TV}_{j:T}.
\]
\end{lemma}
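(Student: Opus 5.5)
The plan is to work on the simultaneous event of Lemma~\ref{lem:core-gaussian}, where for every query $k$ we have $|\widehat U_k-(T_k+m\sigma^2)|\le e_k$. The same noise variance $\sigma^2$ and the same rank $m$ of $Q$ are used along the whole ladder. Nonnegativity $T_j\ge0$ is immediate, since $T_j=\nu_jR^Z_{I_j}$ is a sum of squared norms. It therefore suffices to show that $T_j$ is bounded by each of the three members of the minimum \eqref{eq:core-min}. Separately, we must show $\mathcal C^{\rm EP}_{j:T}\le\mathcal C^{\rm TV}_{j:T}$ deterministically. The terminal case $\mathcal C_T=\nu_Tr_T^2$ is covered by the first member.

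\textbf{Trivial member.} The proof of Lemma~\ref{lem:core-gaussian} already shows, using the anchor as a competitor for the center, that $R^Z_{I_j}\le r_j^2$. Hence $T_j\le\nu_jr_j^2$. The same bound applied at the terminal query gives $T_T\le\nu_Tr_T^2\le2r_T^2$, because $n_T\ge2$ forces $\nu_T\le2$.

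\textbf{Endpoint member.} We telescope exactly, $T_j=T_T+(T_j-T_T)$, and observe that the common noise floor cancels:
\[
T_j-T_T=(\widehat U_j-m\sigma^2)-(\widehat U_T-m\sigma^2)+\rho_j-\rho_T,
\qquad |\rho_k|\le e_k .
\]
Here $\rho_k:=(T_k+m\sigma^2)-\widehat U_k$. This gives
\[
T_j\le 2r_T^2+\widehat U_j-\widehat U_T+e_j+e_T .
\]
Since $T_j\ge0$, taking positive parts preserves the inequality. This is the one place where the shared $\sigma^2$ along the ladder matters: no bound on $\sigma$ itself is required.

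\textbf{Total-variation member.} We write
\[
T_j-T_T=\sum_{k=j}^{T-1}(T_k-T_{k+1}).
\]
Each increment satisfies
\[
|T_k-T_{k+1}|\le|\widehat U_k-\widehat U_{k+1}|+e_k+e_{k+1}.
\]
Summing over $k$ counts $e_j$ and $e_T$ once and every interior $e_k$ twice, which yields exactly \eqref{eq:core-tail-main}.

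\textbf{Comparison of the two members.} This is deterministic. By the triangle inequality,
\[
\widehat U_j-\widehat U_T\le\sum_{k=j}^{T-1}|\widehat U_k-\widehat U_{k+1}|,
\]
and the interior terms $2\sum_{k=j+1}^{T-1}e_k$ are nonnegative. Hence the quantity inside the positive part of $\mathcal C^{\rm EP}$ is at most $\mathcal C^{\rm TV}$. Because $\mathcal C^{\rm TV}\ge0$, the positive part is also at most $\mathcal C^{\rm TV}$.

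\textbf{Optional lower-noise member.} When $\underline\sigma^2$ is available and is included on its calibration event, we use
\[
T_j\le\widehat U_j+e_j-m\sigma^2\le\widehat U_j+e_j-m\underline\sigma^2,
\]
and then take the positive part.

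The only step requiring care is the bookkeeping of the floor cancellation. One must confirm that the bound $e_k$ from Lemma~\ref{lem:core-gaussian} is stated about $T_k+m\sigma^2$ with the true $\sigma^2$, and not about $\bar\sigma^2$. The replacement by $\bar\sigma^2$ enters only the width $e_k$, which is monotone in $\bar\sigma^2\ge\sigma^2$. Once that is checked, everything else is algebra.
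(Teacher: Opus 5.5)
Your proof is correct and follows essentially the same route as the paper. You write $\widehat U_k=T_k+m\sigma^2+\epsilon_k$ with $|\epsilon_k|\le e_k$, cancel the common floor $m\sigma^2$ by exact telescoping to the terminal query using $T_T\le\nu_Tr_T^2\le 2r_T^2$, get the TV bound from absolute values of adjacent differences, compare EP with TV deterministically through the triangle inequality and the nonnegative interior $e_k$, and handle the optional $\underline\sigma^2$ member exactly as the paper does.
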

\begin{proof}
Write $\widehat U_j=T_j+m\sigma^2+\epsilon_j$ with $|\epsilon_j|\le e_j$. Because $m$ and $\sigma^2$ are the same at both endpoints,
\begin{align}
T_j&=T_T+\widehat U_j-\widehat U_T-\epsilon_j+\epsilon_T\\
&\le2r_T^2+\widehat U_j-\widehat U_T+e_j+e_T.
\end{align}
The right side is nonnegative on this event, so taking its positive part preserves the bound. Also $T_j\le\nu_jr_j^2$. Applying absolute values to each adjacent difference proves the TV bound. Finally,
\[
\widehat U_j-\widehat U_T
=\sum_{k=j}^{T-1}(\widehat U_k-\widehat U_{k+1})
\le\sum_{k=j}^{T-1}|\widehat U_k-\widehat U_{k+1}|,
\]
and the TV bound contains all endpoint errors plus nonnegative interior errors. Both TV and EP positive parts are therefore ordered as claimed. If $\underline\sigma^2\le\sigma^2$, then
$T_j\le[\widehat U_j+e_j-m\underline\sigma^2]_+$, proving the optional direct sharpening.
\end{proof}

\subsubsection{Empirical latent-ball covariance is not population covariance}
\begin{lemma}[Order-two population transfer]
\label{lem:population-transfer}
Let $M$ iid points from $\muc(\cdot\mid B(x,r))$ be evaluated using a projector $Q$ fixed independently of them. For $M\ge2$, with failure probability at most $2e^{-t}$,
\begin{equation}
\left|R_Q^{\muc}(x,r)-\frac M{M-1}R^Z_{B^{(N)}}(Q)\right|
\le r^2\sqrt{\frac{2t}{\lfloor M/2\rfloor}}.
\label{eq:ustat-population}
\end{equation}
The same assertion holds conditional on a random true-ball count $M$ in an independent iid validation sample.
\end{lemma}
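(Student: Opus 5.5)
The plan is to write the rescaled empirical residual as an order-two U-statistic and then apply Hoeffding's bounded-kernel inequality for U-statistics \citep{hoeffding1963}. Let $W_1,\dots,W_M$ be the iid points from $\muc(\cdot\mid B(x,r))$ and define the symmetric kernel
\[
h(w,w')=\tfrac12\|Q(w-w')\|^2 .
\]
By Lemma~\ref{lem:paircov}, the ordered-pair average with denominator $M^2$ equals $R^Z_{B^{(N)}}(Q)$. Diagonal pairs contribute zero, so the average over the $M(M-1)$ distinct ordered pairs equals $\frac{M}{M-1}R^Z_{B^{(N)}}(Q)$. This gives the identity
\[
U_M:=\binom M2^{-1}\sum_{u<v}h(W_u,W_v)=\frac M{M-1}R^Z_{B^{(N)}}(Q).
\]
For independent $W,W'$ with this conditional law, $\E h(W,W')=\tr\{Q\operatorname{Cov}(W)\}=R^{\muc}_Q(x,r)$. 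Hence $U_M$ is an unbiased U-statistic for the population residual. Because $Q$ is fixed independently of the sample, $h$ is a deterministic kernel under this conditioning.

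The range of the kernel is the constant that matters. Both points lie in $B(x,r)$, so $\|Q(w-w')\|\le\|w-w'\|\le 2r$. This gives $0\le h\le 2r^2$, a range of length $b=2r^2$. Hoeffding's 1963 inequality for U-statistics of order two writes $U_M$ as an average over permutations of means of $k=\lfloor M/2\rfloor$ independent blocks. Using the convexity of the exponential, it yields
\[
\Pp\bigl(|U_M-\E U_M|\ge\varepsilon\bigr)\le 2\exp\bigl(-2k\varepsilon^2/b^2\bigr).
\]
Setting the exponent equal to $t$ gives $\varepsilon=b\sqrt{t/(2k)}=2r^2\sqrt{t/(2k)}=r^2\sqrt{2t/k}$, which is exactly \eqref{eq:ustat-population}. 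I would check this constant carefully, since a crude range bound of $4r^2$ would produce a worse constant. The case $M\ge2$ guarantees $k\ge1$.

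The conditional version requires some care, and I expect the bookkeeping there to be the main obstacle. Let $Z_1^\circ,\dots,Z_N^\circ$ be iid from $\muc$, independent of $\mathcal G$ and therefore of $Q$ and the frozen query. Conditional on the index set $B^{(N)}=\{i:Z_i^\circ\in B(x,r)\}$, including its size $M$, the points $(Z_i^\circ)_{i\in B^{(N)}}$ are iid from $\muc(\cdot\mid B(x,r))$. This is the standard thinning or rejection property of iid samples: the joint density factorizes into in-ball and out-of-ball indicators. So on each event $\{B^{(N)}=S\}$ with $|S|\ge2$, the fixed-$M$ bound applies with the same $2e^{-t}$. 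Integrating over $S$ gives the unconditional statement on $\{M\ge2\}$. The projector must not depend on the validation points; Assumption~\ref{ass:honest} guarantees this because $Q$ is $\mathcal G$-measurable. In Theorem~\ref{thm:rect} this bound will then be combined with the core-based quantities through Proposition~\ref{prop:shellres} and Lemma~\ref{lem:endpoint}, with $t=t_P$ and a union bound over $K$ queries. For the present lemma, only the single-query statement is needed.
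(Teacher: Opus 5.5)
Your proof is correct and follows the paper's argument: the same kernel $h(w,w')=\tfrac12\|Q(w-w')\|^2$ with range $[0,2r^2]$, the identity $U_M=\frac{M}{M-1}R^Z_{B^{(N)}}(Q)$, Hoeffding's permutation-averaged bound over $\lfloor M/2\rfloor$ independent pairs, and conditioning on the true-ball index set for the random-$M$ version. The only difference is presentational: the paper writes out the Jensen/moment-generating-function step that you invoke as Hoeffding's inequality, and your constant $r^2\sqrt{2t/\lfloor M/2\rfloor}$ agrees exactly with the paper's.
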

\begin{proof}
Set $h(z,z')=\|Q(z-z')\|^2/2$. Since both points are in $B(x,r)$,
$0\le h\le2r^2$. Independent population draws give
\[
\E h(Z,Z')=\tr\{Q\operatorname{Cov}(Z)\}=R_Q^{\muc}(x,r).
\]
The distinct-pair U-statistic satisfies
\[
U_M=\frac1{M(M-1)}\sum_{i\ne k}h(Z_i,Z_k)
=\frac M{M-1}R^Z_{B^{(N)}}(Q).
\]
Put $l=\lfloor M/2\rfloor$. For a uniform random permutation $\pi$, let
$A_\pi=l^{-1}\sum_{a=1}^l h(Z_{\pi(2a-1)},Z_{\pi(2a)})$.
Averaging over $\pi$ gives $U_M$. Jensen's inequality, followed by the bounded-variable exponential inequality for the $l$ independent pairs, yields
\begin{align}
\E\exp\{\lambda(U_M-\E h)\}
&\le\E_\pi\E\exp\{\lambda(A_\pi-\E h)\}\\
&\le\exp\{\lambda^2r^4/(2l)\}.
\end{align}
Optimizing the Chernoff bound gives
$\Pp(|U_M-\E h|>v)\le2\exp\{-lv^2/(2r^4)\}$.
Substituting $v=r^2\sqrt{2t/l}$ proves \eqref{eq:ustat-population}. This is the classical permutation proof for bounded U-statistics \citep{hoeffding1963}. Conditional on $M$, the selected iid points have the conditional ball law, so the same bound holds for each $M\ge2$ and hence after averaging over $M$.
\end{proof}

\subsubsection{Completion of the population interval proof}
For a given query let $M=|B^{(N)}|$. On C0, $n\le M\le n+a$. The exact ordered-pair decomposition gives both
\begin{equation}
\omega R_I^Z(Q)\le R_{B^{(N)}}^Z(Q)
\le\omega R_I^Z(Q)+2r^2(1-\omega).
\label{eq:shell-both}
\end{equation}
For the lower bound use nonnegativity of every remaining pair contribution and $(n/M)^2\ge\omega$. For the upper bound use Proposition~\ref{prop:shellres}. The $n/(n-1)$ factor belongs to $T$, not to the Gaussian-noise floor.

On the population-transfer event,
\begin{align}
R_Q^{\muc}
&\le\frac M{M-1}R_{B^{(N)}}^Z+g\\
&\le\nu\{\omega T/\nu+2r^2(1-\omega)\}+g\\
&\le\omega\mathcal C+2r^2\nu(1-\omega)+g,
\end{align}
where $M/(M-1)\le\nu$ and $\lfloor M/2\rfloor\ge\lfloor n/2\rfloor$ justify replacing the unknown count by the observable core count in $g$. Independently, $R_Q^{\muc}\le r^2$, so the minimum in \eqref{eq:taildom} is valid. For the lower bound,
\[
R_Q^{\muc}\ge\frac M{M-1}R_{B^{(N)}}^Z-g
\ge \omega T/\nu-g
\ge\omega\underline T/\nu-g.
\]
Truncating below at zero proves \eqref{eq:pop-lower}.

Take $t=t_P=\log(2K/\delta_P)$ in Lemma~\ref{lem:population-transfer} for all queries. Population concentration is established before intersecting with localization events. Thus data-dependent membership intervals do not require a false conditional iid assertion about the realized core. The union of the structural, Gaussian and population failures has probability at most $\delta_{\rm str}+\delta_R+\delta_P$. This proves \eqref{eq:main-population}.

\subsubsection{Observable mass intervals and post-selection order}
\label{app:mass}
For each frozen query in an independent mass block, write
\[
\widehat p_B=\frac1{n_M}\sum_{i=1}^{n_M}
\mathbf1\{Z_i^\circ\in B(x,r)\}.
\]
Conditional on the design and anchor, the summands are iid Bernoulli with mean $p_B=\muc(B(x,r))$. Hoeffding and a union bound give
$|\widehat p_B-p_B|\le u_M$ at all $K$ queries except on probability $\delta_M$. Although $\widehat p_B$ is unobserved, C0 gives
$\widehat p_I\le\widehat p_B\le\widehat p_P$. Hence
\[
(\widehat p_I-u_M)_+\le p_B\le
\min\{1,\widehat p_P+u_M\}.
\]
No independence between the count sandwich and the Bernoulli concentration event is required.
The selected integer $d$ is fixed before this mass block. If dimension selection reuses this block, all candidate queries and dimensions must be covered simultaneously before selecting. Finite mass intervals give bounds on the resolved density ratios; they do not prove a uniform Ahlfors law at unqueried scales.

For the normalized population ball law, the optimal affine-plane identity gives
\begin{align}
\beta_{\muc,2}^d(x,r)^2
&=r^{-d-2}\inf_L\int_{B(x,r)}\dist(y,L)^2\,d\muc(y)\\
&=\frac{\muc(B(x,r))}{r^{d+2}}R_d^{\muc}(x,r).
\label{eq:betaidentity}
\end{align}
Substitution of the population residual and mass upper bounds proves \eqref{eq:finite-beta}.

\subsection{Covered-scale rectifiability}
\label{app:residbeta}
\begin{corollary}[Covered-scale geometric consequence]
\label{cor:rect}
Fix the same probability measure $\muc$ for all sample sizes, with compact support $E$ and local $d$-Ahlfors bounds
\begin{equation}
0<c_-\le\muc(B(x,r))/r^d\le c_+<\infty
\quad(x\in E,\ 0<r\le r_0).
\label{eq:ahlfors}
\end{equation}
Let $r_j=r_0\rho^j$, $0<\rho<1$, and $J_N\to\infty$. At epoch $N$ use an independent anchor block which is an $r_{J_N}/4$-net of $E$ except on probability $\delta_{{\rm cover},N}$. Apply Theorem~\ref{thm:rect} to all anchors at radii $2r_j$, $j\le J_N$, with terminal queries as required. Let
\[
\sum_N(\delta_{{\rm str},N}+\delta_{R,N}+\delta_{P,N}
+\delta_{M,N}+\delta_{{\rm cover},N})<\infty.
\]
Freeze $\alpha>0$ and $C<\infty$ independently of epoch acceptance. An epoch is eligible only if every required anchor, scale, and terminal bound is defined. If infinitely many eligible epochs pass
\begin{equation}
\frac{\overline p_{a,j}\overline R_{a,j}}{(2r_j)^{d+2}}
\le Cr_j^{2\alpha}
\quad\text{for every anchor }a\text{ and }j\le J_N,
\label{eq:vdecay}
\end{equation}
then, almost surely on this infinite-acceptance event, $\muc$ is $d$-rectifiable.
\end{corollary}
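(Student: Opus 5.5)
We will combine Borel--Cantelli with the finite-sample validity of Theorem~\ref{thm:rect} to turn the observable acceptance inequality \eqref{eq:vdecay} into a deterministic bound on the true Jones numbers. We will then invoke an existing square-function rectifiability criterion under the Ahlfors bounds \eqref{eq:ahlfors}, specifically the Azzam--Tolsa/Pajot type theorem: $\int_0^1\beta_{\muc,2}^d(x,r)^2\,dr/r<\infty$ for $\muc$-a.e.\ $x$ with positive lower density implies $d$-rectifiability.

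\textbf{Step 1 (good event).} Let $G_N$ be the event at epoch $N$ on which three things hold simultaneously: the anchor block is an $r_{J_N}/4$-net, every population and mass interval of Theorem~\ref{thm:rect} at the queried radii $2r_j$ holds, and the membership sandwiches hold. Its failure probability is at most the summed budgets, which is summable by hypothesis. Borel--Cantelli therefore gives $G_N$ for all large $N$ almost surely. On the intersection of this event with the infinite-acceptance event, we obtain infinitely many epochs $N$ at which $G_N$ holds and \eqref{eq:vdecay} passes. On each such epoch, \eqref{eq:finite-beta} at radius $2r_j$ yields $\beta_{\muc,2}^d(x_a,2r_j)^2\le Cr_j^{2\alpha}$ for every anchor $a$ and every $j\le J_N$.

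\textbf{Step 2 (transfer from anchors to all points).} Fix $y\in E$ and $j\le J_N$. Choose an anchor $x_a$ with $|y-x_a|\le r_{J_N}/4\le r_j/4$. Then $B(y,r_j)\subseteq B(x_a,2r_j)$. Since the infimum over affine planes can only decrease on a subdomain, we get
\[
\beta^d_{\muc,2}(y,r_j)^2\le 2^{d+2}\beta^d_{\muc,2}(x_a,2r_j)^2\le 2^{d+2}Cr_j^{2\alpha}.
\]
Because $\muc$ is fixed and $J_N\to\infty$ along the accepted epochs, this inequality holds for every $j$ and every $y\in E$. Comparing each $r\in[r_{j+1},r_j]$ with the dyadic-type scale $r_j$ costs a factor $\rho^{-(d+2)}$, by the same containment argument. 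Hence
\[
\int_0^{r_0}\beta^d_{\muc,2}(y,r)^2\,\frac{dr}{r}\lesssim\sum_j r_j^{2\alpha}<\infty
\]
uniformly in $y\in E$.

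\textbf{Step 3 (apply the criterion).} The lower Ahlfors bound in \eqref{eq:ahlfors} gives positive lower $d$-density everywhere on $E$, and the upper bound gives finite upper density. The uniformly finite square function from Step 2 then satisfies the hypotheses of \citet{azzamtolsa2015} (equivalently \citet{pajot1997}), which yield $d$-rectifiability of $\muc$.

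The main obstacle is Step 2's bookkeeping. The acceptance threshold has to be frozen in advance so that no selection bias enters. The event also has to be handled carefully: acceptance at infinitely many epochs must coincide with validity of the intervals at those same epochs. Borel--Cantelli resolves this, since validity holds at all but finitely many epochs, so it holds in particular at infinitely many accepted ones. A secondary point is that the certificate bounds the residual only for the frozen $Q$, whereas $\beta$ involves $R_d^{\muc}$. The second inequality in \eqref{eq:main-population} already bounds $R_d^{\muc}$, and \eqref{eq:betaidentity} converts this bound into a bound on $\beta$.
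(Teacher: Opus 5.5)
Your proposal is correct and follows essentially the same route as the paper's proof. Both arguments use Borel--Cantelli for eventual validity (not acceptance), transfer from anchors to every point via $B(y,r_j)\subset B(a,2r_j)$ with factor $2^{d+2}$, compare intermediate radii at cost $\rho^{-(d+2)}$, sum the geometric series $\sum_j r_j^{2\alpha}$, and then invoke Azzam--Tolsa, with Pajot/Badger--Schul also applicable under the positive lower density supplied by the Ahlfors bounds.
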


This is a conditional geometric consequence, not a finite-sample proof of the manifold hypothesis. Section~\ref{app:residbeta} supplies the covering probability, transfers anchor bounds to every point, and proves the continuous square-function bound. The geometric endpoint is due to existing work of Pajot, Badger--Schul, and Azzam--Tolsa \citep{pajot1997,badgerschul2016,azzamtolsa2015}. Ahlfors regularity is an explicit class assumption; finite mass intervals alone do not establish it at unmeasured scales. The distinction from
\citet[Theorem 0.5]{buet2015} is the particular noisy finite-sample
confidence construction, not the general idea of obtaining rectifiability
from controlled finite approximations.

\begin{lemma}[Independent anchors cover an Ahlfors support]
\label{lem:cover}
Let $E=\operatorname{supp}\muc$ be compact and suppose
$\muc(B(x,t))\ge c_-t^d$ for $x\in E$ and $0<t\le r_0$.
For $0<\varepsilon\le r_0$ and $n_A$ iid anchors from $\muc$,
\begin{equation}
\Pp(\text{anchors fail to be an }\varepsilon\text{-net of }E)
\le \frac{4^d}{c_-\varepsilon^d}
\exp\{-n_Ac_-(\varepsilon/2)^d\}.
\label{eq:cover-probability}
\end{equation}
\end{lemma}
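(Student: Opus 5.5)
The plan is the standard maximal-packing and coupon-collector argument. Choose a maximal $\varepsilon/2$-separated subset $\{y_1,\ldots,y_L\}\subseteq E$. Since $E$ is compact, such a set is finite. By maximality, every $x\in E$ lies within $\varepsilon/2$ of some $y_\ell$.

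Suppose every ball $B(y_\ell,\varepsilon/2)$ contains at least one anchor. For any $x\in E$, pick $\ell$ with $\|x-y_\ell\|\le\varepsilon/2$ and an anchor $a\in B(y_\ell,\varepsilon/2)$. The triangle inequality gives $\|x-a\|\le\varepsilon$, so the anchors form an $\varepsilon$-net of $E$. Hence the failure event is contained in $\bigcup_\ell\{\text{no anchor in }B(y_\ell,\varepsilon/2)\}$.

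Each anchor is drawn iid from $\muc$. Since $y_\ell\in E$ and $\varepsilon/2\le r_0$, the Ahlfors lower bound gives $\muc(B(y_\ell,\varepsilon/2))\ge c_-(\varepsilon/2)^d$. Therefore
\[
\Pp(\text{no anchor in }B(y_\ell,\varepsilon/2))\le\{1-c_-(\varepsilon/2)^d\}^{n_A}\le\exp\{-n_Ac_-(\varepsilon/2)^d\}.
\]
A union bound over the $L$ centers then bounds the failure probability by $L\exp\{-n_Ac_-(\varepsilon/2)^d\}$.

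The remaining step, and the only place any care is needed, is to show $L\le 4^d/(c_-\varepsilon^d)$. Because the $y_\ell$ are $\varepsilon/2$-separated, the balls $B(y_\ell,\varepsilon/4)$ are pairwise disjoint. To be safe at the boundary case of separation exactly $\varepsilon/2$, I would use open balls, or take the separation strict, which is harmless here. Each of these balls has measure at least $c_-(\varepsilon/4)^d$, again by the Ahlfors lower bound, since $\varepsilon/4\le r_0$. Because $\muc$ is a probability measure, summing over the disjoint balls gives $L\,c_-(\varepsilon/4)^d\le1$, that is,
\[
L\le\frac{4^d}{c_-\varepsilon^d}.
\]
This is exactly the prefactor in \eqref{eq:cover-probability}, which completes the argument. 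No obstacle beyond this bookkeeping is expected.
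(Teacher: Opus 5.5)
Your proposal is correct and matches the paper's proof step for step. Both choose a maximal $\varepsilon/2$-separated subset of $E$ and bound its size by $4^d/(c_-\varepsilon^d)$ using disjoint radius-$\varepsilon/4$ balls, then bound by $\exp\{-n_Ac_-(\varepsilon/2)^d\}$ the chance that a radius-$\varepsilon/2$ ball around a center contains no anchor, and finish with a union bound. For the boundary case the paper uses open balls and takes a limit of smaller radii in the lower mass bound; your alternative of strict separation with closed balls works just as well.
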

\begin{proof}
Choose a maximal $\varepsilon/2$-separated set $\{z_1,\ldots,z_M\}\subset E$.
The open radius-$\varepsilon/4$ balls are disjoint. Using arbitrarily smaller radii and taking their limit if necessary, the lower mass bound gives
$Mc_-(\varepsilon/4)^d\le1$. Maximality makes the centers an $\varepsilon/2$-net.
Each $B(z_k,\varepsilon/2)$ is missed by all anchors with probability
\[
(1-\muc(B(z_k,\varepsilon/2)))^{n_A}
\le \exp\{-n_Ac_-(\varepsilon/2)^d\}.
\]
If none is missed, every $x\in E$ lies within $\varepsilon/2$ of a net center and within $\varepsilon$ of an anchor. The union bound proves \eqref{eq:cover-probability}.
\end{proof}

\begin{proof} (of Corollary~\ref{cor:rect}). 
Let $\mathcal E_N$ be the intersection of the residual, mass, identification, localization, and covering validity events. Summability gives, by Borel--Cantelli,
$\Pp(\mathcal E_N\text{ eventually})=1$. This controls validity, not acceptance.
Work on a sample path on which validity is eventual and infinitely many epochs pass \eqref{eq:vdecay}.

Fix a ladder index $j$ and a point $x\in E$. Choose an accepted, valid epoch with $J_N\ge j$. Its anchors contain $a$ with
$\|a-x\|\le r_{J_N}/4\le r_j/4$. Hence
$B(x,r_j)\subset B(a,2r_j)$. Use an optimal or $\epsilon$-optimal affine plane at the anchor as a competitor:
\begin{align}
\beta_{\muc,2}^d(x,r_j)^2
&\le r_j^{-d-2}
\inf_L\int_{B(a,2r_j)}\dist(y,L)^2\,d\muc(y)\\
&=2^{d+2}\beta_{\muc,2}^d(a,2r_j)^2\\
&\le2^{d+2}
\frac{\overline p_{a,j}\overline R_{a,j}}{(2r_j)^{d+2}}\\
&\le 2^{d+2}Cr_j^{2\alpha}.
\label{eq:anchor-transfer}
\end{align}
The same constants apply for every $x$ and $j$; the anchor may depend on them. For $r\in[r_{j+1},r_j]$, using a competitor at $r_j$ gives
\begin{equation}
\beta_{\muc,2}^d(x,r)^2
\le(r_j/r)^{d+2}\beta_{\muc,2}^d(x,r_j)^2
\le\rho^{-(d+2)}\beta_{\muc,2}^d(x,r_j)^2.
\label{eq:between-scales}
\end{equation}
Consequently,
\begin{align}
\int_0^{r_0}\beta_{\muc,2}^d(x,r)^2\,\frac{dr}{r}
&\le 2^{d+2}C\rho^{-(d+2)}\log(1/\rho)
\sum_{j=0}^\infty r_j^{2\alpha}\\
&=\frac{2^{d+2}C\rho^{-(d+2)}
\log(1/\rho)r_0^{2\alpha}}{1-\rho^{2\alpha}}<\infty.
\label{eq:jonesfinite}
\end{align}
The Ahlfors law implies positive finite upper $d$-density. Theorem 1.1 of \citet{azzamtolsa2015} now yields $d$-rectifiability; under positive lower density, the earlier criteria of \citet{pajot1997,badgerschul2016} also apply. All these geometric implications are prior results. Our finite-sample calculation precedes their application.
\end{proof}

\paragraph{Local certified subsets.}
For a fixed Borel subset $E$ of a larger support, the same argument applies if the covering and density hypotheses hold there, with ball residuals still computed for the stated full target. The square function for $\muc|_E$ is no larger than that for $\muc$. At $\muc$-almost every point of $E$, differentiation of Radon measures gives
$\muc|_E(B(x,r))/\muc(B(x,r))\to1$, so positive finite density transfers to the restricted measure. Random subsets changing with $N$ require a separately specified fixed limiting set or exhaustion; no such convergence is assumed implicitly.

\Needspace{3.6in}

\section{Proof of geometric intrinsic-dimension consistency}
\label{app:dimension-bridge}
The observation design and theorem are stated in Section~\ref{sec:dimension-bridge-main}. All calculations below use the actual ball-conditioned target law.
\subsection{An explicit covariance limit for the actual ball law}
Translate $x$ to the origin and write a local normal graph
\[
F(u)=Uu+g(u),\quad U^TU=I_d,\quad U^Tg(u)=0,
\quad g(0)=Dg(0)=0.
\]
For some $K,L<\infty$, $f_0=f(x)>0$, and $0<\alpha\le1$, choose a
sufficiently small graph neighborhood in which
\[
\|Dg(u)\|_{\op}\le K\|u\|,\quad
\|g(u)\|\le K\|u\|^2/2,\quad
|f(F(u))-f_0|\le L\|u\|^\alpha.
\]
This follows from the stipulated local $C^2$ graph and H\"older density;
the constants are local bounds, not unknown inputs to the estimator.
Put $\ell=L/f_0$ and let $v_d$ be the volume of the unit $d$-ball.
The actual parameter domain is
\[
D_r=\{u:\|u\|^2+\|g(u)\|^2\le r^2\}.
\]
For $\rho_r=(1+K^2r^2/4)^{-1/2}$,
\[
r\rho_r B_d\subseteq D_r\subseteq rB_d,
\qquad
1\le J(u)=\sqrt{\det(I+Dg(u)^TDg(u))}
\le (1+K^2r^2)^{d/2}.
\]
Indeed, $\|u\|\le r\rho_r$ implies
$\|u\|^2(1+K^2\|u\|^2/4)\le r^2$; the other inclusion follows
from the nonnegative normal term. The area formula gives the density
of $V=u/r$ conditional on $F(u)\in B(0,r)$ proportional to
\[
a_r(v)=\frac{f(F(rv))}{f_0}J(rv)\,1_{D_r/r}(v).
\]
Define
\begin{align}
b_r&=(1+\ell r^\alpha)(1+K^2r^2)^{d/2}-1+1-\rho_r^d,
& L_r&=\frac{2b_r}{1-b_r},\label{eq:dimension-density-error}\\
\epsilon_{\rm geo}(r)&=L_r+L_r^2+Kr+K^2r^2/4
&&\text{when }b_r<1.\label{eq:dimension-geo-error}
\end{align}
The $L^1$ distance between $a_r$ and $1_{B_d}$ is at most $v_db_r$:
the first term controls the density and area distortion on $D_r/r$,
and the second controls the missing shell. If $A_r=\int a_r$, then
$|A_r-v_d|\le v_db_r$ and $A_r\ge v_d(1-b_r)$. Hence
\[
\int\left|\frac{a_r(v)}{A_r}-\frac{1_{B_d}(v)}{v_d}\right|\,dv
\le \frac{\int|a_r-1_{B_d}|+|v_d-A_r|}{A_r}
\le L_r.
\]
The uniform unit-ball law has mean zero and covariance $I_d/(d+2)$.
Testing the last inequality against $v$ and $(w^Tv)^2$, $\|w\|=1$,
gives
\[
\|\E V\|\le L_r,\qquad
\left\|\operatorname{Cov}(V)-\frac{I_d}{d+2}\right\|_{\op}
\le L_r+L_r^2.
\]
Now $F(rV)/r=UV+B$ with $\|B\|\le Kr/2$. The covariance
cross terms have combined operator norm at most $Kr$, by
Cauchy--Schwarz and $\E\|UV-\E UV\|^2\le1$; the covariance of
$B$ has norm at most $K^2r^2/4$. We obtain the quantitative bridge
\begin{equation}
\left\|r^{-2}C_x(r)-\frac{UU^T}{d+2}\right\|_{\op}
\le\epsilon_{\rm geo}(r)\longrightarrow0,
\quad
\left|\frac{\muc(B(x,r))}{f_0v_dr^d}-1\right|\le b_r.
\label{eq:dimension-bridge-quantitative}
\end{equation}
This argument derives isotropy after ball conditioning. It does not
postulate that the original chart samples have an isotropic distribution.
The local covariance/tangent-disk principle has a direct predecessor in
\citet[Proposition 4.4 and Theorem 5.3]{lim2024}.

\subsection{The spectral crossing has the required geometric target}
Let $\bar\theta=q^{-1}\tr C_x(r)$. Weyl's inequality and the
normalized trace bound in \eqref{eq:dimension-bridge-quantitative} give
\begin{align*}
r^{-2}(\theta_d-\bar\theta)&\ge
\frac{1-d/q}{d+2}-2\epsilon_{\rm geo}(r),\\
r^{-2}(\bar\theta-\theta_{d+1})&\ge
\frac{d/q}{d+2}-2\epsilon_{\rm geo}(r).
\end{align*}
Write $g_{d,q}=(d+2)^{-1}\min\{d/q,1-d/q\}$.
When $\epsilon_{\rm geo}(r)\le g_{d,q}/4$, every eigenvalue on
the correct side of the mean has distance at least $g_{d,q}r^2/2$.
Thus
\begin{equation}
d_{\rm spec}(r):=\#\{i:\theta_i(r)>\bar\theta(r)\}=d,
\quad
\Delta_r:=\min\{\theta_d-\bar\theta,\bar\theta-\theta_{d+1}\}
\ge g_{d,q}r^2/2.
\label{eq:dimension-crossing}
\end{equation}
The constant is positive precisely because $1\le d<q$, and
$g_{d,q}\ge1/[q(q+1)]$. For
$G_k(r)=\sum_{i\le k}\theta_i(r)-k\bar\theta(r)$,
$G_k-G_{k-1}=\theta_k-\bar\theta$; hence the unique maximizer is
$d$ and its score gap against every other integer is at least $\Delta_r$.
An arbitrary anisotropic global covariance need not have this property.
For example, the uniform law on $[-3,3]\times[-0.6,0.6]\times\{0\}$
has global covariance $\operatorname{diag}(3,0.12,0)$ and global
mean-crossing dimension one; each ball centered at zero of radius below
$0.6$ has covariance $(r^2/4)\operatorname{diag}(1,1,0)$ and dimension two.

\subsection{Observable localization, covariance, and a spectral certificate}
Fix an epoch and radius. Write $\bar L_i$ and $\bar L_x$ for averages
of $k$ independent localization measurements. Take a separate set of
$n_C$ calibration pairs, and set
\[
S_C=\tfrac12\sum_{j=1}^{n_C}\|X_{Cj}^{(1)}-X_{Cj}^{(2)}\|^2,
\quad \bar\sigma^2=S_C/\chi^2_{qn_C,\delta_C}.
\]
At $\sigma=0$ set the upper bound to zero, as all differences are zero.
For $J$ pilot blocks with $N$ points each and one common anchor, put
\[
a=\frac{\bar\sigma}{\sqrt{k}}
\left[\sqrt q+\sqrt{2\log((JN+1)/\delta_L)}\right],\quad h=2a.
\]
The norm tail of a standard Gaussian and a union bound show, outside
calibration/localization failure $\delta_C+\delta_L$, that all point
and anchor mean errors are at most $a$. No independence of distances
sharing an anchor is required. If $h\ge r$, abstain. Otherwise define
\begin{equation}
I=\{i:\|\bar L_i-\bar L_x\|\le r-h\},\quad
P=\{i:\|\bar L_i-\bar L_x\|\le r+h\},\quad
n=|I|,\ p=|P|,\ \omega=(n/p)^2.
\label{eq:dimension-core}
\end{equation}
On this event, in each block,
\begin{equation}
B^{(N)}(x,r-2h)\subseteq I\subseteq B^{(N)}(x,r)
\subseteq P\subseteq B^{(N)}(x,r+2h),
\label{eq:dimension-sandwich}
\end{equation}
where a ball of negative radius is empty. Abstain if $n<2$. For the
analysis observations $A_i=Z_i+\eta_i$, use
\[
\widehat M=\frac1{n-1}\sum_{i\in I}(A_i-\bar A_I)(A_i-\bar A_I)^T.
\]
For simultaneous control over $J=1$ or $2$ blocks define
\begin{align}
t_G&=q\log9+\log(2J/\delta_G),
&t_P&=\log(4qJ/\delta_P),\nonumber\\
E_G&=4\sqrt{\frac{(\bar\sigma^4+4\bar\sigma^2r^2)t_G}{n-1}}
       +\frac{4\bar\sigma^2t_G}{n-1},\label{eq:dimension-gaussian-error}\\
E_P&=r^2\left[\sqrt{\frac{8t_P}{n}}
                   +\left(\frac83+2q\right)\frac{t_P}{n}\right],\nonumber\\
E&=E_G+2r^2(1-\omega)+\frac{r^2}{n-1}+E_P.
\label{eq:dimension-total-error}
\end{align}
With probability at least $1-\delta_C-\delta_L-\delta_G-\delta_P$,
simultaneously in every eligible block,
\begin{equation}
\|\widehat M-(C_x(r)+\sigma^2I_q)\|_{\op}\le E.
\label{eq:dimension-operator-bound}
\end{equation}
Here and below, an error bound for an ineligible block is $+\infty$.
All displayed radii are functions of observations and specified budgets.
In particular, define $\widehat a_i=\lambda_i(\widehat M)-\tr\widehat M/q$.
Weyl's inequality gives
$|\widehat a_i-(\theta_i-\bar\theta)|\le2E$.
An optional observable emission rule is
\begin{equation}
\text{emit }\widehat d=\#\{i:\widehat a_i>0\}
\quad\text{only if}\quad
1\le\widehat d<q\ \text{and}\ \min_i|\widehat a_i|>2E.
\label{eq:dimension-cert}
\end{equation}
On \eqref{eq:dimension-operator-bound}, every emitted value equals
$d_{\rm spec}(r)$. A finite-sample geometric interpretation additionally
requires the local geometric model and a radius satisfying
\eqref{eq:dimension-crossing}; the data-only rule does not verify these
model premises. Under these premises the plugin estimate obeys
\begin{equation}
\Pp(\widehat d\ne d)\le
\delta_C+\delta_L+\delta_G+\delta_P
+\Pp\{E\ge g_{d,q}r^2/4\}.
\label{eq:dimension-finite-error}
\end{equation}
The last term includes all abstentions by the $E=+\infty$ convention.
It is removed asymptotically by the rate conditions, not discarded by
conditioning on successful emission.

\subsection{Proof of the covariance bound, including selected-core bias}
\paragraph{Conditional analysis noise.}
Freeze the latents, calibration, and all localization data. Conditional
on these, analysis errors retain their independent Gaussian law.
For a unit vector $v$, let $C_I^0$ denote the raw latent core covariance
and $T_v=(n/(n-1))v^TC_I^0v$. On the membership event,
$T_v\le nr^2/(n-1)\le2r^2$. Here is the rank-one calculation explicitly.
Let $m=n-1$. Centering and an orthonormal basis for the centered sample
space give $m v^T\widehat Mv=\|w+\sigma g\|^2$, where
$g\sim N(0,I_m)$, $w$ is fixed, and $\|w\|^2=mT_v$.
For $D=\|w+\sigma g\|^2-\|w\|^2-m\sigma^2$, direct Gaussian
integration gives, for $0\le s<(2\sigma^2)^{-1}$,
\begin{align*}
\log\E e^{sD}
&=-sm\sigma^2-\frac m2\log(1-2s\sigma^2)
  +\frac{2s^2\sigma^2\|w\|^2}{1-2s\sigma^2}\\
&\le\frac{s^2(m\sigma^4+2\sigma^2\|w\|^2)}{1-2s\sigma^2}.
\end{align*}
For $s\ge0$, the corresponding negative-tail bound is
$\log\E e^{-sD}\le s^2(m\sigma^4+2\sigma^2\|w\|^2)$.
Chernoff optimization therefore bounds $|D|$ by
$2\sqrt{(m\sigma^4+2\sigma^2\|w\|^2)t}+2\sigma^2t$
except with probability $2e^{-t}$, consistently with the classical
Gaussian quadratic-form bounds \citep{laurentmassart2000,hsu2012}.
At $\sigma=0$ the fluctuation is identically zero. Dividing by $m$ yields
\[
\left|v^T\left\{\widehat M-\frac{n}{n-1}C_I^0-\sigma^2I_q\right\}v\right|
\le 2\sqrt{\frac{(\sigma^4+2\sigma^2T_v)t}{n-1}}
       +\frac{2\sigma^2t}{n-1}
\]
except with probability $2e^{-t}$. A deterministic $1/4$-net of the
unit sphere has at most $9^q$ points. For a symmetric matrix its
operator norm is at most twice the maximum absolute quadratic form
on this net. Union over blocks, and then intersect with the calibration
and membership events, to obtain $E_G$. This is an intersection argument;
it does not assert iid sampling from a selected core.

\paragraph{Finite-set transfer.}
Let $B$ be the true latent-ball index set, $m=|B|$, and
$\omega_B=(n/m)^2$. The identity
\[
v^TC_J^0v=\frac1{2|J|^2}\sum_{i,j\in J}(v^T(Z_i-Z_j))^2
\]
and $\|Z_i-Z_j\|\le2r$ give, simultaneously for every unit $v$,
\[
\omega_Bv^TC_I^0v\le v^TC_B^0v
\le\omega_Bv^TC_I^0v+2r^2(1-\omega_B).
\]
Since $\|C_I^0\|_{\op}\le r^2$, $p\ge m$, and $\omega_B\ge\omega$,
\[
\|C_I^0-C_B^0\|_{\op}\le2r^2(1-\omega),\qquad
\left\|\frac n{n-1}C_I^0-C_I^0\right\|_{\op}\le\frac{r^2}{n-1}.
\]
These inequalities are deterministic and allow arbitrary dependence
introduced by localization selection.

\paragraph{Population transfer on the true ball.}
Conditional on the true count $m$, the true-ball points are iid from
$\muc(\cdot\mid B(x,r))$. Let $Y=Z-\E(Z\mid B)$ and $C=C_x(r)$.
Then $\|Y\|\le2r$, $\tr C\le r^2$, and for $X=YY^T-C$,
\[
\|X\|_{\op}\le4r^2,\qquad
0\preceq\E X^2=\E[\|Y\|^2YY^T]-C^2\preceq4r^2C,
\quad\|\E X^2\|_{\op}\le4r^4.
\]
Self-adjoint matrix Bernstein \citep{tropp2012} gives
\[
\left\|m^{-1}\sum YY^T-C\right\|_{\op}
\le r^2\left[\sqrt{8t/m}+8t/(3m)\right]
\]
except with probability $2q e^{-t}$. Each coordinate of $Y$ has range
length at most $2r$, so scalar Hoeffding and a coordinate union bound
give $\|\bar Y\|^2\le2qr^2t/m$ except with probability $2q e^{-t}$.
Subtracting $\bar Y\bar Y^T$ therefore bounds $\|C_B^0-C\|_{\op}$
by the expression $E_P$ with $m$ in place of $n$. This holds before
intersecting with localization validity. On that intersection $m\ge n$,
so replacing $m$ by $n$ only enlarges the bound. Union over the blocks
and combine the four terms to prove \eqref{eq:dimension-operator-bound}.

\subsection{Cross-fitted OC score: preserving the original estimator target}
Take two pilot blocks, one for training and one for evaluation, with
$J=2$ in the preceding bounds. They may share an independent calibration
block and localization anchor; their individual errors are controlled
simultaneously. Let $\widehat P_k$ project onto the leading $k$
eigenvectors of $\widehat M_{\rm tr}$. Define the covariance version of
the orthogonally cross-fitted score by
\begin{equation}
\widehat G_k=\tr\left[(\widehat P_k-kI_q/q)\widehat M_{\rm ev}\right],
\qquad
\widehat d_{\rm OC}=\arg\max_{1\le k<q}\widehat G_k.
\label{eq:dimension-crossfit}
\end{equation}
Use the sentinel zero if either block is ineligible, and choose the
smallest maximizer in a tie. This implemented variant uses unbiased core
covariances; it is not an unjustified iid-pair replacement for a selected
core. Set $M=C_x(r)+\sigma^2I_q$, and let $P_k$ maximize $\tr(PM)$
over rank-$k$ orthogonal projectors. Ky Fan optimality yields
\[
0\le\tr(P_kM)-\tr(\widehat P_kM)\le2kE_{\rm tr}.
\]
Furthermore $\|\widehat P_k-kI_q/q\|_*=2k(1-k/q)$, whence
\begin{equation}
|\widehat G_k-G_k(r)|
\le2kE_{\rm tr}+2k(1-k/q)E_{\rm ev}
\le2(q-1)(E_{\rm tr}+E_{\rm ev}).
\label{eq:dimension-crossfit-error}
\end{equation}
The isotropic term has trace zero in this score. If twice the final
uniform error bound is below $\Delta_r$, the unique maximizer is $d$.
Thus covariance error $o(r^2)$ also suffices for the original OC target;
no fixed-margin assumption and no oracle projector are used.

\subsection{Proof of eventual almost sure correctness}
Set $n_C=N$ and the four budgets to $(N+1)^{-4}/8$. Their sum is
summable, also for $J=2$ because the block union is already in $t_G,t_P$
and the localization logarithm. Epochs need not be independent for the
first Borel--Cantelli lemma.

\paragraph{Calibration and localization.}
For $\sigma>0$, $S_C/\sigma^2\sim\chi^2_{qN}$.
Exponential chi-square tail bounds show $S_C\le2qN\sigma^2$
eventually almost surely. Also
$\Pp(\chi^2_{qN}<qN/2)\le\exp(-c qN)$ for a constant $c>0$,
which is eventually smaller than $\delta_{C,N}$. Therefore
$\chi^2_{qN,\delta_{C,N}}\ge qN/2$ and
$\bar\sigma^2\le4\sigma^2$ eventually almost surely. At $\sigma=0$
the claim is exact with zero on both sides. Consequently,
\[
\frac{h_N}{r_N}
=O\left(\sqrt{\frac{\log(N+1)}{k_Nr_N^2}}\right)\longrightarrow0
\quad\text{almost surely}.
\]
Summability ensures that all membership and covariance events also hold
for every sufficiently large epoch, almost surely.

\paragraph{Counts and the membership shell.}
Equation~\eqref{eq:dimension-bridge-quantitative} implies
$p_x(r)/(r^d)\to c_x=f_0v_d>0$.
For any fixed rational $0<\varepsilon<1$, binomial Chernoff bounds and
$Nr_N^d/\log(N+1)\to\infty$ imply, almost surely,
\[
\frac{|B^{(N)}(x,(1\pm\varepsilon)r_N)|}{Nr_N^d}
\longrightarrow c_x(1\pm\varepsilon)^d.
\]
For completeness, each fixed relative count deviation has probability
at most $2\exp(-c_\varepsilon Nr_N^d)$; this is summable. Applying
this to countably many rational deviations proves the displayed limits.
Since $2h_N/r_N<\varepsilon$ eventually, the sandwich
\eqref{eq:dimension-sandwich} and then $\varepsilon\downarrow0$ yield
\[
\frac{n_N}{Nr_N^d}\longrightarrow c_x,\qquad
\frac{p_N}{Nr_N^d}\longrightarrow c_x,\qquad \omega_N\longrightarrow1
\quad\text{almost surely}.
\]
This squeeze avoids treating a data-dependent random radius as a fixed
binomial parameter. It also proves eventual eligibility.

\paragraph{All errors are smaller than the geometric gap.}
With $q$ fixed and $t_G,t_P=O(\log(N+1))$, the population and shell
terms obey
\[
E_P/r_N^2=O\left(\sqrt{\log(N+1)/n_N}+\log(N+1)/n_N\right)\to0,
\quad 2(1-\omega_N)+(n_N-1)^{-1}\to0.
\]
The Gaussian term, using bounded $\bar\sigma$ and $n_N\asymp Nr_N^d$,
is bounded after normalization by a constant times
\[
\sigma^2\sqrt{\frac{\log(N+1)}{Nr_N^{d+4}}}
+\sigma\sqrt{\frac{\log(N+1)}{Nr_N^{d+2}}}
+\frac{\sigma^2\log(N+1)}{Nr_N^{d+2}}\longrightarrow0.
\]
The first rate in \eqref{eq:dimension-rates} implies all three limits.
Thus $E_N/r_N^2\to0$ almost surely. The geometric error also tends to
zero, so eventually $2E_N<g_{d,q}r_N^2/2$ and every centered eigenvalue
has its correct sign. With the slightly stronger eventual inequality
$E_N<g_{d,q}r_N^2/8$, its observed distance from zero exceeds $2E_N$,
so the optional spectral certificate emits as well.
Equation~\eqref{eq:dimension-crossfit-error} gives the same conclusion
for the cross-fitted argmax. This proves
\eqref{eq:dimension-eventual}. Dominated convergence applied to
$1_{\{\widehat d_N\ne d\}}$ gives convergence in probability.

Finally, $d+4\le q+3$, so \eqref{eq:dimension-schedule} satisfies both
rate conditions without knowing $d$. This is a sufficient schedule,
not a sharp rate. Counting localization, analysis, anchor, and calibration
measurements gives $JN(k_N+1)+k_N+2N$ raw $q$-vector measurements per
epoch. A fixed number of localization replicates at fixed positive
noise does not satisfy the stated localization condition.

\subsection{Scope and comparison with local PCA}
A fixed symmetric-chart calculation alone does not identify the covariance
of an actual Euclidean-ball conditional law. The graph and density argument
above treats that law explicitly. Equations~\eqref{eq:dimension-crossing}
and~\eqref{eq:dimension-total-error} compare the crossing margin and
operator error on the same shrinking scale.
The local-PCA consistency objective itself already has close precedents
\citep{little2017,lim2024}. In particular, \citet{lim2024} gives explicit
dimension and tangent-space bounds under a bounded-noise model. The
present sufficient design uses unbounded Gaussian analysis noise,
independent averaged localization views, and calibrated core sandwiches.
This difference in observation models is not a proof of novelty or of
uniform improvement over the prior results.

The theorem is pointwise at a regular interior point of a $C^2$ manifold
with positive H\"older density. It does not cover boundaries without an
additional calculation, arbitrary singular measures, purely Ahlfors
regular fractals, varying ambient dimension, or $d=q$. A projected target
has its own geometric dimension; equality to an unprojected dimension
requires a separate noncollapse assumption. No validation on fractal
images is substituted for any of these premises.

Figure~\ref{fig:local-covariance} displays the population covariance
calculation for three of the smooth reference laws. At small radii the
normalized eigenvalues approach the tangent-disk values used in the proof.
At larger radii the same covariance statistic can reflect global anisotropy
or a fully tied spectrum. The formulas and integration domains are given
in Section~\ref{sec:dimension-experiments}.

\clearpage

\section{Averaged-view perturbation and mass-certificate proofs}
\label{app:hard-cert}
Theorem~\ref{thm:hard-repeated} is proved by the following finite-set, perturbation and mass calculations.
\subsection{An optimal finite-set variance transfer constant}
Let $I\subset B\subset P$ be finite index sets, with all points indexed
by $B$ lying in $B(x,r)$. Points in $P\setminus B$ need not lie in that
ball. Put $0<n=|I|\le m=|B|\le p=|P|$ and use raw covariances
$C_I^0,C_B^0$.
Define, for $0\le w\le1$,
\begin{equation}
S(w)=1-(2w-1)_+^2
=\begin{cases}1,&w\le1/2,\\4w(1-w),&w>1/2.\end{cases}
\label{eq:hard-shell-function}
\end{equation}
Then the replacement for the earlier shell term is
\begin{equation}
\|C_B^0-C_I^0\|_{\op}\le r^2 S(n/p)
\le 2r^2\{1-(n/p)^2\}.
\label{eq:hard-sharp-shell}
\end{equation}
The first constant is optimal given only the ball support and the
mixture weight (up to approximating weights by rational finite-set weights).

\paragraph{Calculation.}
For a unit direction, divide the projected coordinates by $r$ and subtract
the ball center. The core law $Q$ and missing law $R$ are supported in
$[-1,1]$, and the full law is $wQ+(1-w)R$, with $w=n/m$.
Write their means as $a,b$ and variances as $v_Q,v_R$. The variance
difference is exactly
\[
D=(1-w)(v_R-v_Q)+w(1-w)(a-b)^2.
\]
Its negative part is at most $1-w$, because $D\ge-(1-w)v_Q\ge-(1-w)$.
For its positive part, use $v_Q\ge0$, $v_R\le1-b^2$, and maximize
over $a\in[-1,1]$. Symmetry permits $a=1$. Thus
\[
D\le(1-w)\max_{-1\le b\le1}\{1-b^2+w(1-b)^2\}.
\]
The case $w=1$ gives zero directly. For $w<1$, the unconstrained
maximizing value is $b=-w/(1-w)$. If $w\le1/2$
this gives $D\le1$; otherwise the endpoint $b=-1$ gives
$D\le4w(1-w)$. Both bounds dominate the negative part. They are attained
by a degenerate core at $1$ and an appropriate endpoint-supported missing
law. Taking the supremum over directions proves the operator bound.
Since $w=n/m\ge n/p$ and $S$ is nonincreasing, the observable weight
may replace $w$. This is a deterministic calculation and requires no iid
assumption for the selected core.

\subsection{A sharper centered-spectrum perturbation constant}
For symmetric $A,B$ with $\|A-B\|_{\op}\le E$, define
$A^\circ=A-\tr(A)I_q/q$. Weyl's inequality gives the sharper bound
\begin{equation}
|\lambda_i(A^\circ)-\lambda_i(B^\circ)|
\le c_qE,\qquad c_q=2(1-1/q).
\label{eq:hard-centered-constant}
\end{equation}
Indeed, with $H=A-B$, the upper perturbation is at most
$\lambda_{\max}(H)-\tr(H)/q\le2(q-1)E/q$; the lower perturbation
is analogous. $H=\operatorname{diag}(E,-E,\ldots,-E)$ shows sharpness.
For the previous analysis-view covariance bound, replace its shell term by
\eqref{eq:hard-sharp-shell} to obtain $E_{\rm sharp}$. The spectral
emission rule becomes $\min_i|\widehat a_i|>c_qE_{\rm sharp}$,
instead of $2E$. It has the same four-part failure budget. This improves
the earlier certificate, but does not by itself guarantee useful power.

\subsection{Use the accurately averaged views for dimension estimation}
The previous estimator used one noisy analysis view even though $k$
localization replicates were available. For dimension estimation alone,
their averages can also supply the covariance. This does not require
Gaussian noise to remain independent after selection: we use a uniform
deterministic perturbation bound instead.

On the localization event, write $\bar L_i=Z_i+e_i$, $\|e_i\|\le a$,
where $h=2a$ as in \eqref{eq:dimension-core}. Let
\[
\widehat C_L^0=\frac1n\sum_{i\in I}(\bar L_i-\bar L_I)
                                      (\bar L_i-\bar L_I)^T.
\]
The covariance coupling identity, Cauchy--Schwarz, and
$\tr C_I^0\le r^2$, $\tr C_e^0\le a^2$ imply
\begin{equation}
\|\widehat C_L^0-C_I^0\|_{\op}\le2ra+a^2.
\label{eq:hard-localization-covariance}
\end{equation}
Selection can depend arbitrarily on these averages; the displayed
inequality holds for every selected set on the same uniform event.
With $t_P=\log(4q/\delta_P)$, the true-ball population calculation in
Section~\ref{app:dimension-bridge} therefore yields
\begin{equation}
\|\widehat C_L^0-C_x(r)\|_{\op}\le E_L,
\quad
E_L=2ra+a^2+r^2S(n/p)
 +r^2\left[\sqrt{8t_P/n}+(8/3+2q)t_P/n\right]
\label{eq:hard-localization-error}
\end{equation}
outside failure $\delta_C+\delta_L+\delta_P$. No analysis view or
noise-floor subtraction is used here. The raw $1/n$ normalization is
intentional; an unbiased covariance is not silently substituted.

\paragraph{Improved sufficient rate.}
Under the local smoothness and density conditions of
Theorem~\ref{thm:dimension-consistency}, the same mass and count argument
now gives $E_L/r_N^2\to0$ almost surely whenever
\begin{equation}
r_N\downarrow0,\qquad
Nr_N^d/\log(N+1)\to\infty,\qquad
k_Nr_N^2/\log(N+1)\to\infty,
\label{eq:hard-improved-rates}
\end{equation}
with polynomial budgets, for example each budget $(N+1)^{-4}/8$.
Summability alone would not control an arbitrarily large confidence
logarithm. The factor $r_N^4$ in the earlier sample
requirement is thus unnecessary for this changed covariance estimator.
Mean crossing in $\widehat C_L^0$ and its cross-fitted version recover
$d$ eventually almost surely. This is not an improved rate for the
unchanged single-analysis-view estimator. Counting raw measurements gives
the sufficient radius cost $Nr^{-2}\log^3 N$ with $Nr^d\gg\log N$;
up to these logarithms its exponent is $d+2$, compared with $d+6$ for
the earlier sufficient design. Neither exponent is asserted minimax.

\subsection{Exact binomial intervals despite uncertain membership}
For a fixed radius $r$, the unobserved true count $M_r$ is
$\operatorname{Bin}(N,p_x(r))$. Let $L_N(m;\varepsilon)$ and
$U_N(m;\varepsilon)$ be the two-sided Clopper--Pearson bounds:
\begin{align*}
L_N(m;\varepsilon)&=
\begin{cases}0,&m=0,\\
\operatorname{Beta}^{-1}(\varepsilon/2;m,N-m+1),&m>0,
\end{cases}\\
U_N(m;\varepsilon)&=
\begin{cases}1,&m=N,\\
\operatorname{Beta}^{-1}(1-\varepsilon/2;m+1,N-m),&m<N.
\end{cases}
\end{align*}
These classical binomial intervals are credited to
\citet{clopperpearson1934}. Both endpoints are nondecreasing in $m$.
For observable core and possible counts $n_r\le M_r\le p_r$, define
\[
\underline p_r=L_N(n_r;\varepsilon),\qquad
\overline p_r=U_N(p_r;\varepsilon).
\]
Intersecting the true binomial coverage event with localization validity
proves coverage for these widened intervals. It does not treat $n_r$ as
a binomial count. For two radii $r,2r$, set $\varepsilon=\delta_M/2$.
Then, except on failure $\delta_C+\delta_L+\delta_M$, both intervals
cover simultaneously and, when $\underline p_r>0$,
\begin{equation}
\frac{p_x(2r)}{p_x(r)}\in
\left[\frac{\underline p_{2r}}{\overline p_r},
       \frac{\overline p_{2r}}{\underline p_r}\right]=:\mathcal R_N.
\label{eq:hard-mass-ratio-ci}
\end{equation}
Unresolved localization, zero lower mass, or inconsistent intervals cause
abstention. The two radii and the class constants below are specified
before inspecting these data.

\subsection{A finite geometric-dimension certificate}
Suppose the unknown local dimension belongs to a declared candidate set
$\mathcal D\subseteq\{1,\ldots,q\}$. For candidate $k$, define the class
of local $k$-dimensional graphs whose graph and relative density bounds
from the geometric bridge obey the specified $K,\ell,\alpha$ throughout
the ball of radius $2r$. Assume that the true law belongs to the union of
these classes. Only the true member's bounds must hold; the law is not
assumed to have all candidate dimensions simultaneously.
For $k<q$ put
\[
b_k(t)=(1+\ell t^\alpha)(1+K^2t^2)^{k/2}
              -(1+K^2t^2/4)^{-k/2}.
\]
For $k=q$, the normal graph is identically zero, so use the sharper
$b_q(t)=\ell t^\alpha$. If $b_k(2r)<1$, define
\begin{equation}
\mathcal B_k(r)=2^k
\left[\frac{1-b_k(2r)}{1+b_k(r)},
       \frac{1+b_k(2r)}{1-b_k(r)}\right].
\label{eq:hard-geometric-ratio-band}
\end{equation}
If the bound is not informative, set $\mathcal B_k=(0,\infty)$,
rather than discard that candidate. The mass calculation already proved
in \eqref{eq:dimension-bridge-quantitative} gives
$p_x(2r)/p_x(r)\in\mathcal B_d(r)$ for the true $d$.
Retain the candidates whose bands intersect $\mathcal R_N$ and emit a
dimension only when exactly one is retained. The true dimension is
retained on the joint coverage event, so
\begin{equation}
\Pp\{\text{an incorrect geometric dimension is emitted}\}
\le\delta_C+\delta_L+\delta_M.
\label{eq:hard-geometric-cert}
\end{equation}
This is a geometric certificate conditional on the declared class,
including the validity of its supplied constants. It does not certify
those assumptions from the same samples. There is no ground-truth
dimension in the emission rule.

\paragraph{Eventual emission.}
With fixed class constants and $r_N\downarrow0$, the candidate bands
converge to the distinct numbers $2^k$. Under
\eqref{eq:hard-improved-rates}, core and possible counts are asymptotically
equal to $Np_x(r_N)$ and $Np_x(2r_N)$. To verify contraction directly,
fix $0<\eta<1$ and an observed count $m$ with $m/\log N\to\infty$.
For binomial parameters $(1\pm\eta)m/N$ in $[0,1]$, Chernoff's bound
puts the relevant tail beyond $m$ below $\exp(-c_\eta m)$.
This is eventually less than the polynomial endpoint budget
$\varepsilon_N/2$. Inversion of the tails defining Clopper--Pearson
therefore places both endpoints within relative $\eta$ of $m/N$.
Apply this to the core and possible counts and then to countably many
rational $\eta\downarrow0$.
Thus $\mathcal R_N$ contracts to $2^d$, and the certificate emits $d$
eventually almost surely. Here $d=q$ is allowed; a centered spectral
crossing alone cannot handle that case.

\section{Single-view identification and energy proofs}
\label{app:hard-single}
These are the detailed constructions and proofs for Theorem~\ref{thm:hard-single}. The two identification classes remain distinct.
\subsection{A finite noise interval under an unknown affine support}
Assume the entire latent law is supported on some unknown affine space
of dimension $k\le q-1$. This is a global affine-span condition. A curved
manifold of intrinsic dimension below $q$ need not satisfy it.
Take $n$ independent calibration observations, put $m=n-1$, and compute
the unbiased covariance $\widehat\Sigma$ and
$\lambda=\lambda_{\min}(\widehat\Sigma)$. For $0<\delta_C<1$, define
\begin{align}
t_C&=\log(2/\delta_C),\qquad
a_n=\sqrt{n-q}-\sqrt q-\sqrt{2t_C},\nonumber\\
\underline s&=\frac{m\lambda}{\chi^2_{m,1-\delta_C/2}},\qquad
\overline s=\frac{m\lambda}{a_n^2},\quad\text{provided }a_n>0.
\label{eq:hard-single-noise}
\end{align}
Otherwise abstain from a finite upper interval. Then
\begin{equation}
\Pp\{\underline s\le\sigma^2\le\overline s\}\ge1-\delta_C.
\label{eq:hard-single-noise-coverage}
\end{equation}
No latent dimension, normal vector, support radius, or true variance is
needed to compute these endpoints.

\paragraph{Lower endpoint.}
There exists a fixed unit vector $v$ normal to the affine support. Centering
removes its latent component, so
$m v^T\widehat\Sigma v/\sigma^2\sim\chi^2_m$ when $\sigma>0$.
Since $\lambda\le v^T\widehat\Sigma v$, the chi-square upper quantile
proves $\underline s\le\sigma^2$ except with probability $\delta_C/2$.

\paragraph{Upper endpoint: the missing calculation.}
Condition on the calibration latents and use an orthonormal basis for the
centered sample space. The resulting $m\times q$ data matrix is
$A+\sigma G$, where $\operatorname{rank}(A)\le k$ and $G$ has independent
standard Gaussian entries. Let $H$ project in the sample space onto the
orthogonal complement of the columns of $A$. Its rank is
$\ell=m-\operatorname{rank}(A)\ge n-q$. Then
\[
m\widehat\Sigma=(A+\sigma G)^T(A+\sigma G)
\succeq (A+\sigma G)^TH(A+\sigma G)=\sigma^2G^THG.
\]
Conditional on the latents, $G^THG$ is a $q$-dimensional Wishart matrix
with $\ell$ degrees of freedom. The Gaussian smallest-singular-value
inequality gives
\[
\Pp\{s_{\min}(G_\ell)<\sqrt\ell-\sqrt q-\sqrt{2t_C}\}
\le e^{-t_C}.
\]
This imported Gaussian bound is credited to
\citet[Theorem II.13]{davidsonszarek2001}, following the comparison method of \citet{gordon1985}. Consequently $m\lambda\ge\sigma^2a_n^2$ except with probability
$\delta_C/2$, proving the upper endpoint. No estimated latent subspace
is inserted into this proof. At $\sigma=0$ the sample covariance has
rank below $q$, and both endpoints are zero in exact arithmetic.

\paragraph{A curved-manifold counterexample to weakening the premise.}
For the uniform law on the unit sphere $S^2\subset\mathbb R^3$ and
$\sigma=0$, rotational symmetry gives $\operatorname{Cov}(Z)=I_3/3$.
Thus the intrinsic dimension is two, but the sample minimum eigenvalue
converges to $1/3$. Applying the affine formula outside its premises
would give both noise endpoints converging to $1/3$, although the true
variance is zero. The compact-support route below applies to this sphere
with $R=1$; the affine route does not.

\paragraph{Rate from the interval itself.}
For fixed $q$, $n=N$, and polynomially summable $\delta_{C,N}$, the ratio
of the upper and lower multipliers in \eqref{eq:hard-single-noise} is
$1+O(\sqrt{\log N/N})$. On the eventual coverage event this proves
\begin{equation}
\overline s_N-\underline s_N
=O(\sigma^2\sqrt{\log N/N})\quad\text{almost surely}.
\label{eq:hard-single-noise-rate}
\end{equation}
This proof does not require moments of the latent coordinates: the affine
rank condition and independent Gaussian errors supply both bounds.

\subsection{A second identification route for general compact supports}
To cover latent supports whose affine span is all of $\mathbb R^q$,
assume instead a specified coordinate bound $|Z_1|\le R<\infty$.
Let $M_n=\max_{i\le n}|X_{i1}|$. If $G_i$ are independent standard
Gaussians, the deterministic inequality
\[
\left|M_n-\sigma\max_{i\le n}|G_i|\right|\le R
\]
holds for the coupled model. The maximum of their absolute values has
CDF $(2\Phi(t)-1)^n$. Let $a_{n,\delta}$ and $b_{n,\delta}$ be its
$\delta/2$ and $1-\delta/2$ quantiles. A valid single-view noise interval is
\begin{equation}
\underline s_n^{\max}=
 \left(\frac{(M_n-R)_+}{b_{n,\delta}}\right)^2,
\qquad
\overline s_n^{\max}=
 \left(\frac{M_n+R}{a_{n,\delta}}\right)^2.
\label{eq:hard-compact-noise}
\end{equation}
Its coverage is at least $1-\delta$. It uses the support bound explicitly;
that bound is not estimated by treating the observed maximum as a latent
maximum.

For strong consistency use epochs $n_j=2^j$ and summable budgets
$\delta_j=c(j+1)^{-4}$. Gaussian tail bounds give
$a_{n_j,\delta_j},b_{n_j,\delta_j}
=\sqrt{2j\log2}+O(\log j/\sqrt j)$. On their eventual joint event,
\begin{equation}
\overline s_j^{\max}-\underline s_j^{\max}
=O\left(\frac{R\sigma}{\sqrt j}
        +\frac{\sigma^2\log j+R^2}{j}\right)
\quad\text{almost surely}.
\label{eq:hard-compact-noise-rate}
\end{equation}
For example, integration by parts gives for $u>0$
$\frac{u}{1+u^2}\phi(u)\le1-\Phi(u)\le\phi(u)/u$;
substitution in $(2\Phi(u)-1)^{2^j}$ proves the quantile expansion.
For clarity, the exact maximum quantile at level $p$ is
$u_{n,p}=\Phi^{-1}((1+p^{1/n})/2)$. At $n=2^j$ the two tail
probabilities $1-p^{1/n}$ have orders $(\log j)/2^j$ and
$j^{-4}/2^j$. Applying the displayed Mills bounds therefore gives
$u_{n,p}^2=2j\log2+O(\log j)$ at both endpoints and
$b_{n,\delta}/a_{n,\delta}=1+O(\log j/j)$.
On $\{a\le\max|G_i|\le b\}$, use $M_n\le\sigma b+R$ and
$M_n\ge\sigma a-R$. The difference of the squared endpoint bounds is
at most
\[
(\sigma b/a+2R/a)^2-(\sigma a/b-2R/b)_+^2,
\]
which gives \eqref{eq:hard-compact-noise-rate}, including $\sigma=0$.

This route is consistent but can be very imprecise at finite sample sizes.
Polynomially small budgets in $n$, rather than in the epoch index $j$,
would make the upper maximum quantile too large for this width argument.

\subsection{An observable Gaussian energy and its exact Fourier identity}
For independent $Z,Z'\sim\mu$, define
\begin{equation}
\mathcal E_\mu(r)=\E\exp\{-\|Z-Z'\|^2/(2r^2)\}.
\label{eq:hard-energy}
\end{equation}
Let $G\sim N(0,I_q)$ and $U=G/r$. The Gaussian characteristic function
and Fubini's theorem give
\[
\mathcal E_\mu(r)=\E_U|\phi_\mu(U)|^2,
\qquad |\phi_X(u)|^2=e^{-\sigma^2\|u\|^2}|\phi_\mu(u)|^2.
\]
This identity is valid for singular measures; it does not assume a latent
Lebesgue density. Correlation-dimension estimation itself is classical
\citep{grassbergerprocaccia1983}.

Take an independent analysis sample consisting of $N$ disjoint pairs of
single observations $(X_i,X_i')$. Draw independent $G_i\sim N(0,I_q)$.
After freezing the calibration interval $[L,U]$ and a finite cutoff $T$,
for each $s\in\{L,(L+U)/2,U\}$ compute
\begin{equation}
Y_i(s,r)=e^{s\|G_i/r\|^2}
\cos\{(G_i/r)^T(X_i-X_i')\}\,1_{\{\|G_i/r\|\le T\}}.
\label{eq:hard-fourier-statistic}
\end{equation}
These are iid conditional on calibration and bounded in
$[-B_s,B_s]$, where $B_s=e^{sT^2}$. Their mean is exactly
\[
F_r(s)=\E_{G/r}\left[
e^{(s-\sigma^2)\|G/r\|^2}|\phi_\mu(G/r)|^2
1_{\{\|G/r\|\le T\}}\right].
\]
Because the integrand without the exponential factor is nonnegative,
$F_r(L)\le\mathcal E_\mu^{(T)}(r)\le F_r(U)$ on the noise coverage event.
Moreover,
\begin{equation}
0\le\mathcal E_\mu(r)-\mathcal E_\mu^{(T)}(r)
\le Q_q(r^2T^2),\qquad Q_q(z)=\Pp(\chi^2_q>z).
\label{eq:hard-fourier-tail}
\end{equation}
The implementation uses one random frequency per independent pair. It
therefore has $O(Nq)$ work per radius, without unverified numerical
quadrature or an all-pairs independence assumption.

\subsection{Finite energy intervals and a bounded analytic branch}
For $J$ pre-specified radii, let $V_N(s,r)$ be the unbiased sample variance
of $Y_i(s,r)$ and set $t=\log(12J/\delta_E)$. The two-sided form of
\citet[Theorem 4]{maurerpontil2009}, scaled to range length $2B_s$, yields
\begin{equation}
e(s,r)=\sqrt{2V_N(s,r)t/N}+\frac{14B_st}{3(N-1)}.
\label{eq:hard-energy-eb}
\end{equation}
The union over three values of $s$ and $J$ radii has failure at most
$\delta_E$. Conditional independence is valid because the calibration
block is disjoint. Thus, except on failure $\delta_C+\delta_E$,
\begin{equation}
\mathcal E_\mu(r)\in
\left[\overline Y(L,r)-e(L,r),\
\overline Y(U,r)+e(U,r)+Q_q(r^2T^2)\right]\cap[0,1]
\label{eq:hard-energy-interval}
\end{equation}
simultaneously at these radii. Inconsistent intervals cause abstention.
The middle estimate supplies the point estimate, clipped to $[0,1]$;
zero estimates are not logged.

When $r^2\ge4U$, use an exact bounded alternative with no random
frequencies:
\begin{equation}
K_{s,r}(x-x')=
\left(\frac{r^2}{r^2-2s}\right)^{q/2}
\exp\left\{-\frac{\|x-x'\|^2}{2(r^2-2s)}\right\}.
\label{eq:hard-analytic-kernel}
\end{equation}
Its expectation has the same Fourier formula without the cutoff, hence
is monotone in $s$, and at $s=\sigma^2$ equals $\mathcal E_\mu(r)$.
This also follows by completing the Gaussian square for
$X-X'=Z-Z'+N(0,2\sigma^2I_q)$.
Here $0\le K_{s,r}\le B_s=(r^2/(r^2-2s))^{q/2}\le2^{q/2}$.
Use range length $B_s$ in the empirical Bernstein bound, replacing
$14B_s$ by $7B_s$, and omit the Fourier tail term. The switch depends
only on calibration and pre-specified radii. This branch does not on its
own reach arbitrarily small radii at fixed positive noise; the truncated
Fourier branch supplies the asymptotic extension.

For every Borel probability measure on $\mathbb R^q$, the energy is
strictly positive and its finite-scale slope satisfies
\begin{equation}
D(r_1,r_2):=\frac{\log\mathcal E_\mu(r_2)-\log\mathcal E_\mu(r_1)}
                         {\log(r_2/r_1)}\in[0,q],\qquad 0<r_1<r_2.
\label{eq:hard-energy-slope-range}
\end{equation}
Indeed, the spatial kernel increases with $r$, whereas the Fourier identity gives
\[
\frac{\mathcal E_\mu(r)}{r^q}
=(2\pi)^{-q/2}\int_{\mathbb R^q}
e^{-r^2\|u\|^2/2}|\phi_\mu(u)|^2\,du,
\]
which decreases with $r$. Consequently
$1\le\mathcal E_\mu(r_2)/\mathcal E_\mu(r_1)\le(r_2/r_1)^q$,
without density, moment, or Ahlfors assumptions. This is a direct consequence
of the standard Gaussian Fourier identity, not a new dimension-identification
theorem.

Energy intervals $[l_i,u_i]\subset[0,1]$ with $u_i>0$ therefore yield
the confidence interval
\begin{equation}
\left[\max\left\{0,\frac{\log(l_2/u_1)}{\log(r_2/r_1)}\right\},
       \min\left\{q,\frac{\log(u_2/l_1)}{\log(r_2/r_1)}\right\}\right],
\label{eq:hard-energy-slope-ci}
\end{equation}
where a zero $l_2$ gives lower endpoint $0$, and a zero $l_1$ gives upper
endpoint $q$. Empty intervals or a zero upper energy endpoint cause abstention.
The confidence level is unchanged by intersection with the deterministic
range. An interval reduced only to $[0,q]$ supplies no information beyond
that range, and the implementation records whether the upper endpoint came
from the ambient bound. Finite-scale slope coverage is not automatically
coverage of the limiting dimension; an additional bias bound would be
required for that assertion.

\subsection{Strong consistency under affine support}
Assume the latent probability measure is $s$-Ahlfors regular on its
support: for some fixed $0<c\le C<\infty$ and $r_0>0$,
\begin{equation}
ct^s\le\mu(B(z,t))\le Ct^s\quad
(z\in\operatorname{supp}\mu,\ 0<t\le r_0).
\label{eq:hard-ahlfors}
\end{equation}
Then $0<c_-\le c_+<\infty$ exist with
$c_-r^s\le\mathcal E_\mu(r)\le c_+r^s$ for small $r$; the full
calculation is given in Section~\ref{app:hard-fractal}.

Under the additional affine support condition, use $n_C=N$ calibration
points, $N$ analysis pairs, and summable polynomial budgets. Fix
$s_0>0$, $0<c_T<1/2$, $0<a<1/2$, and $0<\beta<1$. Set
\begin{equation}
T_N^2=\frac{c_T\log(N+1)}{s_0^2+U_N},\quad
r_N=(\log(N+1))^{-a},\quad R_N=r_N^\beta.
\label{eq:hard-single-schedule}
\end{equation}
The constants are design inputs and do not use the true dimension or
variance. The noise width obeys
$(U_N-L_N)T_N^2=O((\log N)^{3/2}/\sqrt N)\to0$ almost surely.
For any $s_N\in[L_N,U_N]$,
\[
e^{-(U_N-L_N)T_N^2}\mathcal E_\mu^{(T_N)}(r)
\le F_r(s_N)\le
e^{(U_N-L_N)T_N^2}\mathcal E_\mu^{(T_N)}(r).
\]
Furthermore $B_{U_N}\le(N+1)^{c_T}$. Conditional Hoeffding and
Borel--Cantelli give stochastic error
$O(N^{c_T-1/2}\sqrt{\log N})$, which is $o(r_N^s)$.
The empirical Bernstein widths also tend to zero relative to $r_N^s$:
$V_N\le4B_{U_N}^2$ gives the same leading bound and the smaller term
$O(N^{c_T-1}\log N)$.
Since $r_NT_N\asymp(\log N)^{1/2-a}\to\infty$, the chi-square
tail in \eqref{eq:hard-fourier-tail} is $o(r_N^s)$. For example, for
large $z$, $Q_q(z)\le C_q(1+z^{q/2})e^{-z/4}$, which is enough.
Both radii therefore satisfy
\begin{equation}
\widehat{\mathcal E}_N(r)/\mathcal E_\mu(r)\to1
\quad\text{almost surely}.
\label{eq:hard-energy-relative}
\end{equation}
If $\sigma>0$, the Fourier branch is used eventually. If $\sigma=0$,
the exact branch is bounded and the same stochastic conclusion holds.

Consequently the observed two-scale estimator
\begin{equation}
\widehat s_N=
\frac{\log\widehat{\mathcal E}_N(R_N)
      -\log\widehat{\mathcal E}_N(r_N)}{\log(R_N/r_N)}
\longrightarrow s\quad\text{almost surely and in probability}.
\label{eq:hard-single-dimension}
\end{equation}
Indeed, $\log\mathcal E_\mu(r)=s\log r+O(1)$ and
$\log(R_N/r_N)=(1-\beta)a\log\log(N+1)\to\infty$.
For an integer-dimensional Ahlfors regular manifold, rounding the result
to the nearest admissible integer is eventually correct almost surely.
For a fractal, retain the real-valued estimate.

\subsection{Strong consistency under the compact-support route}
Use $N_j=2^j$ calibration points and $N_j$ analysis pairs, noise interval
\eqref{eq:hard-compact-noise}, and summable epoch budgets. Fix
$0<a<b<1/4$, $s_0>0$, and $0<\beta<1$, and set
\[
T_j=\frac{j^b}{\sqrt{s_0^2+U_j}},\qquad r_j=j^{-a},\qquad R_j=r_j^\beta.
\]
Then $(U_j-L_j)T_j^2=O(j^{2b-1/2}+j^{2b-1}\log j)\to0$,
$r_jT_j\asymp j^{b-a}\to\infty$, and $B_{U_j}\le e^{j^{2b}}$.
The stochastic bound is $O(e^{j^{2b}}\sqrt{\log j/2^j})=o(r_j^s)$;
the cutoff tail is also $o(r_j^s)$. If $\sigma=0$ and the bounded analytic
branch is used, $U_j=O(R^2/j)$ and $U_j/r_j^2\to0$. Its calibration
bias is also relatively vanishing. To see this without assuming regular
variation, the Gaussian Fourier identity gives
$0\le r\mathcal E_\mu'(r)/\mathcal E_\mu(r)\le q$:
the left inequality follows from the original kernel, and the right from
the nonincreasing Fourier integral $\mathcal E_\mu(r)/r^q$.
Completing the square with $r_s^2=r^2-2s$ gives expected kernel value
$(r/r_s)^q\mathcal E_\mu(r_s)$, whose ratio to $\mathcal E_\mu(r)$
lies between $1$ and $(1-2s/r^2)^{-q/2}=1+O(U_j/r_j^2)$.
Repeating the preceding proof gives
\eqref{eq:hard-single-dimension} along these epochs. Holding the estimate
constant between epochs extends almost sure convergence to all sample
sizes. This route permits full affine span and $s=q$, under the stated
support bound and Ahlfors assumptions, but has much slower resolution.

\paragraph{Target and impossibility boundary.}
The single-view result estimates the global correlation dimension. Under
\eqref{eq:hard-ahlfors} it equals the Hausdorff dimension of the support,
and for the corresponding homogeneous smooth class it equals geometric
intrinsic dimension. It is not a local dimension estimator for arbitrary
mixtures of different-dimensional components. Without an identification
restriction, the Gaussian semigroup counterexample in Section~\ref{app:id}
still applies. The compact or affine premise is essential to the stated
noise calibration; neither is inferred merely from a small observed
eigenvalue. Appending a deterministic zero coordinate to observed data
does not create a new independent Gaussian noise coordinate and cannot
be used to manufacture the affine-support observation model.

\section{Fractal Dimension: Valid Connections and Exact Counterexamples}
\label{app:hard-fractal}
Mean-crossing dimension, local mass dimension, correlation dimension, and
Hausdorff dimension are different quantities. The following calculations
specify when they agree and when an asserted equality is false. The IFS
and self-similarity foundations are credited to \citet{hutchinson1981};
the correlation-dimension objective to \citet{grassbergerprocaccia1983}.
They do not validate the withdrawn custom M\"obius example or reproduce a
plate from \citet{mumfordserieswright2002}.

\subsection{Ahlfors regularity identifies the Gaussian energy exponent}
Suppose \eqref{eq:hard-ahlfors} holds for $s>0$. Let
$H(t)=\Pp(\|Z-Z'\|\le t)=\int\mu(B(z,t))\,d\mu(z)$.
Then $ct^s\le H(t)\le Ct^s$ for $t\le r_0$.
Writing $C_* =\max(C,r_0^{-s})$ extends $H(t)\le C_*t^s$ to all $t>0$.
The layer-cake calculation is
\begin{align}
\mathcal E_\mu(r)
&=\int_0^\infty\frac{t}{r^2}e^{-t^2/(2r^2)}H(t)\,dt,\nonumber\\
e^{-1/2}cr^s\le\mathcal E_\mu(r)
&\le C_*r^s\int_0^\infty u^{s+1}e^{-u^2/2}\,du
=C_*2^{s/2}\Gamma(1+s/2)r^s.
\label{eq:hard-energy-ahlfors-proof}
\end{align}
The lower bound uses only the pairs with distance at most $r$.
Thus the Gaussian energy exponent and the usual correlation exponent
$\lim\log H(r)/\log r$ both equal $s$.

The same Ahlfors premise gives $\dim_H\operatorname{supp}\mu=s$.
For the lower bound, a cover by small balls has
$1\le C\sum r_i^s$, so the $s$-dimensional Hausdorff content is positive
up to the conventional radius/diameter factor. For the upper bound,
a maximal disjoint packing by balls of radius $r/2$ has at most
$1/[c(r/2)^s]$ elements, and the balls of radius $r$ cover the support.
The resulting covering number is $O(r^{-s})$, giving Hausdorff dimension
at most $s$. This proves the equality used in the single-view theorem.

\paragraph{Why the homogeneity condition matters.}
For a positive-weight mixture of a uniform line segment and a separated
uniform planar patch, the cross terms in Gaussian energy are exponentially
small as $r\downarrow0$. The within-component terms have orders $r$
and $r^2$, so the correlation dimension is one while the Hausdorff
dimension of the union is two. A global energy estimator cannot silently
be interpreted as the local dimension at every point of such a mixture.

\subsection{A real-valued mass estimator for fractals with repeated views}
Under \eqref{eq:hard-ahlfors}, the repeated-view sandwiches give
$n_r,p_r\asymp Nr^s$ almost surely under
$Nr^s/\log N\to\infty$ and $h/r\to0$. For any fixed $0<\beta<1$,
use two shrinking radii $r_N$ and $R_N=r_N^\beta$ and their core counts.
Then
\begin{equation}
\widehat s_N^{\rm mass}=
\frac{\log(n_{R_N}/N)-\log(n_{r_N}/N)}{\log(R_N/r_N)}
\longrightarrow s\quad\text{almost surely}.
\label{eq:hard-fractal-mass-estimator}
\end{equation}
Indeed, the Ahlfors bounds and count concentration give
$\log(n_r/N)=s\log r+O(1)$; the denominator tends to infinity.
Unlike an adjacent-radius ratio, this result does not require regular
variation or absence of log-periodic oscillations. A dimension-free
sufficient schedule for $s\le q$ is $r_N=N^{-a}$ with $0<a<1/q$ and
$k_N=\lceil r_N^{-2}\log^3(N+1)\rceil$.
No integer rounding is appropriate when estimating a fractal exponent.

\subsection{A radial moment really does recover dimension under regular variation}
Fix $x$, let $F(r)=\mu(B(x,r))>0$, and assume the stronger premise
\begin{equation}
\frac{F(ru)}{F(r)}\longrightarrow u^s
\quad\text{for every fixed }0<u<1,\qquad s>0.
\label{eq:hard-regular-variation}
\end{equation}
Put $\tau_x(r)=r^{-2}\E(\|Z-x\|^2\mid Z\in B(x,r))$.
Integration by parts gives the exact identity
\begin{equation}
\tau_x(r)=1-2\int_0^1u\frac{F(ru)}{F(r)}\,du
\longrightarrow\frac{s}{s+2},\qquad
s=\frac{2\tau}{1-\tau}.
\label{eq:hard-radial-dimension}
\end{equation}
Dominated convergence is justified by $0\le F(ru)/F(r)\le1$.
The covariance connection is explicitly
\begin{equation}
\tau_x(r)=\frac{\tr C_x(r)+\|\E(Z\mid B(x,r))-x\|^2}{r^2}.
\label{eq:hard-radial-covariance}
\end{equation}
Using the covariance trace alone requires the additional centroid condition
$\|\E(Z\mid B(x,r))-x\|/r\to0$. The scalar multiplier carrying $s$
disappears when a spectrum is divided by its trace, so normalized spectral
shape alone does not identify a general fractal dimension.

An observed radial estimate uses the same averaged localization data:
\[
\widehat\tau_N=\frac1{n_r r^2}\sum_{i\in I_r}\|\bar L_i-\bar L_x\|^2.
\]
On the distance-error event $|\|\bar L_i-\bar L_x\|-\|Z_i-x\||\le h$,
each squared-distance error in the core is at most $2rh+h^2$.
For a scalar function in $[0,r^2]$, adding the missing true-ball points
changes its mean by at most $r^2(1-n/m)\le r^2(1-n/p)$.
A true-ball Hoeffding event, before intersection with localization, therefore
gives the fully observable bound
\begin{equation}
|\widehat\tau_N-\tau_x(r)|
\le 2h/r+(h/r)^2+(1-n/p)
       +\sqrt{\log(2/\delta_P)/(2n)}.
\label{eq:hard-radial-error}
\end{equation}
It has failure at most $\delta_C+\delta_L+\delta_P$. With each budget
$(N+1)^{-4}/8$, $NF(r_N)/\log N\to\infty$, $k_Nr_N^2/\log N\to\infty$,
and \eqref{eq:hard-regular-variation},
$2\widehat\tau_N/(1-\widehat\tau_N)\to s$ almost surely.
Regular variation supplies vanishing relative shell mass by squeezing
between $(1-\varepsilon)r_N$ and $(1+\varepsilon)r_N$ and then letting
$\varepsilon\downarrow0$. Ahlfors regularity by itself supplies the
comparability needed for \eqref{eq:hard-fractal-mass-estimator}, but does
not automatically supply this vanishing-shell assertion.
For smooth positive-density interior points, $F(r)\sim f(x)v_dr^d$
supplies this premise. For an arbitrary Ahlfors fractal it need not hold.

\subsection{Exact middle-third Cantor calculations}
Let $C=\sum_{j\ge1}2B_j3^{-j}$, where the $B_j$ are independent
Bernoulli$(1/2)$ variables, and embed this measure on the first coordinate
axis of $\mathbb R^q$, $q\ge2$. Its dimension is
$s=\log2/\log3$. At $x=0$ and $r_m=3^{-m}$,
\[
\mu([0,r_m])=2^{-m},\qquad
\mathcal L(C\mid C\le r_m)=\mathcal L(r_m C).
\]
The endpoint at $2r_m$ has zero mass, so
$\mu([0,2r_m])=\mu([0,r_m])$. Therefore
\begin{equation}
\frac{\log\mu([0,r_m])}{\log r_m}=\frac{\log2}{\log3},
\qquad
\frac{\log\{\mu([0,2r_m])/\mu([0,r_m])\}}{\log2}=0.
\label{eq:hard-cantor-mass}
\end{equation}
This proves that Ahlfors regularity alone does not justify an adjacent
radius slope or \eqref{eq:hard-regular-variation}.

The independent digit sums give
\[
\E C=\frac12,\qquad \operatorname{Var}(C)=\sum_{j\ge1}9^{-j}=\frac18,
\qquad \E C^2=\frac38.
\]
Consequently the local covariance spectrum is
$(r_m^2/8,0,\ldots,0)$ and the mean-crossing dimension is one. In fact
it is one at every positive radius, because the conditional measure is
nondegenerate and supported on a line. At $r_m$ the uncentered radial
statistic is $\tau_0(r_m)=3/8$, so applying the radial inversion without
its regular-variation premise would give $6/5$, not $\log2/\log3$.
Using the covariance trace alone would instead give $2/7$, because the
centroid is $r_m/2$. All three failures are exact, not simulation effects.

For completeness, if $3^{-(m+1)}<r\le3^{-m}$, the cylinder containing
$x$ at level $m+1$ lies in $B(x,r)$ and has mass $2^{-(m+1)}$.
A ball intersects at most a bounded number of level-$m$ cylinders;
one can use the bounds $\tfrac12 r^s\le\mu(B(x,r))\le4r^s$.
Thus the Cantor measure is Ahlfors regular and its Gaussian energy
exponent is $s$, despite the spectral and adjacent-ratio failures above.

\begin{center}
\begin{tabular}{ll}
\toprule
Quantity at the Cantor endpoint & Exact value or limit\\
\midrule
Hausdorff / correlation / local mass dimension & $\log2/\log3$\\
Mean-crossing spectral dimension & $1$\\
Adjacent mass slope at $r_m,2r_m$ & $0$\\
Radial inversion at $r_m$ without regular variation & $6/5$\\
Trace-only inversion without the centroid term & $2/7$\\
\bottomrule
\end{tabular}
\end{center}
This is the appropriate boundary of a fractal extension: mass and energy
exponents recover the fractal dimension under stated regularity, whereas
an integer mean-crossing count generally does not.

\input{fractal_controls_calculations.tex}

\section{Supporting Spectral and Geometric Results}
\label{app:supporting}
\label{sec:supporting}
\subsection{Generic Random Matrices Do Not Create Finite-Dimensional Trace Condensation}
\label{app:rmt}
This section separates generic high-dimensional covariance behavior from genuine finite-dimensional structure.

\begin{proposition}[No finite-dimensional condensation in generic Wishart noise]
\label{prop:no-condensation}
Let $Z\in\R^{D\times N}$ have iid entries from one fixed law with $\E Z_{11}=0$, $\E Z_{11}^2=1$, and $\E Z_{11}^4<\infty$ and let $S=N^{-1}ZZ^\top$. If $D/N\to\gamma\in(0,\infty)$ and $d_D=o(D)$, then
\begin{equation}
\frac{\sum_{j=1}^{d_D}\lambda_j(S)}{\tr S}\to_P0,
\qquad
\frac{\sum_{j>d_D}\lambda_j(S)}{\tr S}\to_P1.
\end{equation}
On the event $\tr S=0$, define the two ratios arbitrarily; that event
has probability tending to zero under the stated assumptions.
\end{proposition}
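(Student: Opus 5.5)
The plan is to bound the numerator by $d_D\lambda_1(S)$ and to show that the denominator grows like $D$, while $\lambda_1(S)$ stays bounded in probability. The second ratio equals one minus the first on $\{\tr S>0\}$, so only the first ratio needs attention.

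\emph{Trace.} Note that $\tr S=N^{-1}\sum_{i,k}Z_{ik}^2$ is an average of $DN$ iid variables with mean one, scaled by $D$. Hence $\tr S/D\to1$ almost surely by the strong law of large numbers, since only a finite second moment of $Z_{11}^2$ is needed here. In particular $\Pp(\tr S=0)\to0$, and $\tr S\ge D/2$ with probability tending to one.

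\emph{Top eigenvalue.} I would invoke the Bai--Yin theorem \citep{baiyin1993}. Under zero mean, unit variance, finite fourth moment and $D/N\to\gamma$, it gives $\lambda_1(S)\to(1+\sqrt\gamma)^2$ almost surely. It therefore suffices that $\lambda_1(S)=O_P(1)$. As a fallback that avoids the sharp edge result, I would use the crude bound $\lambda_1(S)\le\|S\|_F$. Computing $\E\|S\|_F^2=N^{-2}\sum_{i,j}\E(\sum_k Z_{ik}Z_{jk})^2$ gives order $D^2/N+D\,\E Z^4/N$, so $\|S\|_F=O_P(\sqrt{D})$. That only yields $d_D\lambda_1/\tr S=O_P(d_D/\sqrt D)$, which is insufficient when $d_D\gg\sqrt D$. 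For this reason the Bai--Yin edge bound is the step to rely on.

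\emph{Conclusion.} On the event $\{\tr S\ge D/2\}$,
\[
0\le\frac{\sum_{j\le d_D}\lambda_j(S)}{\tr S}\le\frac{2d_D\lambda_1(S)}{D},
\]
and the right side is $o(1)\cdot O_P(1)\to_P0$ because $d_D=o(D)$. The complementary ratio equals one minus the first on $\{\tr S>0\}$ and so converges in probability to one.

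The main obstacle is the operator-norm control. It is exactly where the fourth-moment hypothesis is used, and Bai--Yin is what supplies it. For completeness I would instead cite the Marchenko--Pastur law \citep{marchenko1967}. Weak convergence of the empirical spectral distribution, together with the fact that the top $d_D$ eigenvalues have vanishing spectral fraction, would give the result. However, converting that into control of a trace share needs uniform integrability of the spectral measure, which again requires an edge bound. I would therefore keep the Bai--Yin route as the primary argument.
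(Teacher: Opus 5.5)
Your argument is correct given the edge theorem, but it takes a heavier route than the paper.

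You bound the top-$d_D$ partial sum by $d_D\lambda_1(S)$. You then need $\lambda_1(S)=O_P(1)$, which you import from Bai--Yin. This gives the sharper rate $O_P(d_D/D)$ and the bounded top eigenvalue as a by-product, at the cost of a deep citation.

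The paper instead applies Cauchy--Schwarz across the top eigenvalues: $\sum_{j\le d_D}\lambda_j(S)\le\sqrt{d_D\sum_{j\le d_D}\lambda_j(S)^2}\le\sqrt{d_D\,\tr(S^2)}$. It combines this with the exact moment identities $\E\tr S=D$, $\operatorname{Var}(\tr S/D)=(\mu_4-1)/(DN)$ and $\E\tr(S^2)=D[1+(D+\mu_4-2)/N]$. Chebyshev and Markov then give $\tr S/D\to_P1$ and $\tr(S^2)/D=O_P(1)$. The ratio is therefore $O_P(\sqrt{d_D/D})\to_P0$, with no operator-norm control at all. The fourth moment enters only through these variance computations, not through an edge bound.

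This is exactly your discarded Frobenius fallback, repaired. The loss came from bounding $d_D\lambda_1\le d_D\|S\|_F$ instead of $\sum_{j\le d_D}\lambda_j\le\sqrt{d_D}\,\|S\|_F$. Since you already showed $\|S\|_F=O_P(\sqrt D)$, the second bound closes the argument at once.

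For the same reason, your closing remark is inaccurate. Uniform integrability of the empirical spectral distribution follows from $\tr(S^2)/D=O_P(1)$ and needs no edge bound.

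A minor point on the trace step: the law of large numbers needs only $\E Z_{11}^2<\infty$. Also, an almost-sure statement presupposes a coupling of the matrices across $N$ that the proposition does not specify; convergence in probability, which is all you use, holds regardless.
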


\begin{proof}
Put $\mu_4=\E Z_{11}^4$. Direct entrywise expansion gives
\[
\E\tr S=D,\qquad
\operatorname{Var}(\tr S/D)=\frac{\mu_4-1}{DN},\qquad
\E\tr(S^2)=D\left[1+\frac{D+\mu_4-2}{N}\right].
\]
For the last identity, $\E S_{ii}^2=1+(\mu_4-1)/N$ and
$\E S_{ij}^2=1/N$ for $i\ne j$. Consequently $\tr S/D\to_P1$ by
Chebyshev, and $\tr(S^2)/D=O_P(1)$ by Markov and $D/N\to\gamma$.
Cauchy--Schwarz for the ordered nonnegative eigenvalues gives
\[
0\le\frac{\sum_{j=1}^{d_D}\lambda_j(S)}{\tr S}
\le\frac{\sqrt{d_D\tr(S^2)}}{\tr S}
=\sqrt{\frac{d_D}{D}}\,O_P(1)\longrightarrow0.
\]
The second conclusion follows by subtraction. Thus this trace assertion
has a direct fourth-moment proof; it does not need a largest-eigenvalue
edge theorem. Classical Wishart edge results remain separately credited
\citep{baiyin1993}. A spiked alternative is outside the fixed iid-entry
model, so it is not excluded by this proposition.
\end{proof}

\subsection{OC-CIGD: Conditional Spectral Score Results}

\label{app:oc}
The statements in this section concern a general spectral target. The
additional geometric identification and shrinking-scale error conditions
are now proved in Theorem~\ref{thm:dimension-consistency} and
Section~\ref{app:dimension-bridge}, for the explicitly implemented
covariance pilot and its cross-fitted version.
These auxiliary score results concern a specified conditional iid pair law.
They require training/evaluation independence and the stated moment or support
bounds for that law. To use covariance notation below, $S$ is centered; one sufficient construction is $S=(Z-Z')/\sqrt2$ for two independent draws from the same specified conditional latent law. The observed pair is normalized in the same way, and its noise retains a conditional $N(0,\sigma^2 I_q)$ law. They do not establish those sampling assumptions for an
arbitrary noisy selected core. The finite-sample population theorem does not
need a spectral-dimension correctness claim: its proposed rank is frozen.

For example, eigenvalues $(100,1,0,\ldots,0)$ in $q=16$ give rank two but
mean-crossing argmax one, since $\bar\theta=6.3125$.
At shrinking scales define
\[
\Delta_{N,j}=\min\{\theta_{j,d}-\bar\theta_j,\,
\bar\theta_j-\theta_{j,d+1}\}>0.
\]
Uniform recovery requires $\max_j\rho_{G,N,j}/\Delta_{N,j}\to0$.
A fixed positive gap cannot persist when the entire signal covariance is
$O(r_j^2)$. The fixed-margin proposition below is a fixed-scale statement,
or applies to consistently normalized scores; it is not used to avoid this
shrinking-gap condition.

We give the promised calculations behind \eqref{eq:cancel} and the population dimension score.

\subsubsection{Exact isotropic-noise cancellation}
Write $Y=S+\eta$, with $\eta\sim\cN(0,\sigma^2I_q)$ conditionally independent of $S$ and the training projector $\widehat P_d$. Since $A_d=\widehat P_d-(d/q)I_q$ is symmetric,
\begin{align}
Y^\top A_dY
&=(S+\eta)^\top A_d(S+\eta)\\
&=S^\top A_dS+2S^\top A_d\eta+\eta^\top A_d\eta.
\end{align}
Conditioning on $(S,\widehat P_d)$ gives
\begin{align}
\E[2S^\top A_d\eta\mid S,\widehat P_d]&=0,\\
\E[\eta^\top A_d\eta\mid\widehat P_d]
&=\tr\!\left(A_d\E[\eta\eta^\top]\right)\\
&=\sigma^2\tr A_d\\
&=\sigma^2\left(\tr\widehat P_d-d\right)=0.
\end{align}
Thus \eqref{eq:cancel} is exact for every $\sigma^2$.

For Gaussian noise, the conditional variance follows from the standard quadratic-form identity. With $A=A_d$,
\begin{align}
\operatorname{Var}(Y^\top AY\mid S,\widehat P_d)
&=\operatorname{Var}(2S^\top A\eta+\eta^\top A\eta\mid S,\widehat P_d)\\
&=4\sigma^2S^\top A^2S+2\sigma^4\tr(A^2),
\end{align}
because the linear and centered quadratic Gaussian terms are uncorrelated. Since $\widehat P_d^2=\widehat P_d$,
\begin{align}
A_d^2
&=\widehat P_d-2\frac dq\widehat P_d+\frac{d^2}{q^2}I_q,\\
\tr(A_d^2)
&=d-2\frac{d^2}{q}+\frac{d^2}{q}
=d\left(1-\frac dq\right).
\end{align}

\subsubsection{Population score and mean crossing}
Let $C=\E[SS^\top]$ have eigenvalues $\theta_1\ge\cdots\ge\theta_q$ and let $P_d$ be the top-$d$ spectral projector. Denote $\bar\theta=q^{-1}\tr C$. Then
\begin{align}
G_d
&=\E[S^\top(P_d-dI_q/q)S]\\
&=\tr(P_dC)-\frac dq\tr C\\
&=\sum_{k=1}^d\theta_k-d\bar\theta.
\end{align}
Therefore
\begin{equation}
G_d-G_{d-1}=\theta_d-\bar\theta.
\end{equation}
If for some $d_*$ and $\Delta_0>0$,
\begin{equation}
\theta_{d_*}\ge\bar\theta+\Delta_0,
\qquad
\theta_{d_*+1}\le\bar\theta-\Delta_0,
\end{equation}
then $G_d$ increases strictly through $d_*$ and decreases strictly afterwards, so $d_*=\arg\max_d G_d$.

\subsubsection{Projector estimation does not require projector consistency}
The variational eigenvalue principle is classical \citep{kyfan1949}.
Let $M=C+\sigma^2I_q$ be the training covariance and $\widehat M$ its empirical estimate. Let $P_d$ and $\widehat P_d$ maximize $\tr(PM)$ and $\tr(P\widehat M)$ over rank-$d$ projectors. Then
\begin{align}
0&\le \tr(P_dM)-\tr(\widehat P_dM)\\
&=\tr[P_d(M-\widehat M)]
 +\underbrace{\tr(P_d\widehat M)-\tr(\widehat P_d\widehat M)}_{\le0}
 +\tr[\widehat P_d(\widehat M-M)]\\
&\le |\tr[P_d(M-\widehat M)]|+|\tr[\widehat P_d(M-\widehat M)]|\\
&\le 2d\|\widehat M-M\|_{\op}.
\end{align}
Because $\tr(P_d\sigma^2I_q)=\tr(\widehat P_d\sigma^2I_q)=d\sigma^2$, the same bound holds for the signal score. Hence score consistency follows from operator-norm covariance consistency even without an eigengap strong enough to make $\widehat P_d\to P_d$.

\begin{proposition}[Conditional spectral dimension consistency]
\label{prop:conditional-dimension}
Let $1\le d_*\le d_{\max}<q$ with fixed finite $d_{\max}$. Assume the population score has the mean-crossing margin above with one $\Delta_0>0$ uniform in $N$. Conditional on training, let the $L_N$ evaluation pairs be iid from the specified law, with $L_N\to\infty$ and $\E[\|Y_{N,1}\|^4\mid\mathcal G]\le K_4<\infty$ uniformly in $N$ almost surely. If $\|\widehat M_N-M_N\|_{\op}\to_P0$, then
\begin{equation}
\max_{d\le d_{\max}}|\widehat G_{N,d}-G_{N,d}|\to_P0,
\qquad
\Pp(\widehat d_N=d_*)\to1.
\end{equation}
\end{proposition}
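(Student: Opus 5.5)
The plan is to reduce both conclusions to a uniform score bound, and then reuse the deterministic Ky Fan step and the mean-crossing structure already derived above. First I will decompose the score error. Write $\widehat G_{N,d}=L_N^{-1}\sum_\ell Y_\ell^\top\widehat A_dY_\ell$ with $\widehat A_d=\widehat P_d-(d/q)I_q$, where $\widehat P_d$ comes from the training covariance $\widehat M_N$. Then
\[
\widehat G_{N,d}-G_{N,d}
=\bigl(\widehat G_{N,d}-\E[Y^\top\widehat A_dY\mid\mathcal G]\bigr)
+\bigl(\tr(\widehat P_dC_N)-\tr(P_dC_N)\bigr).
\]
The second term uses the exact cancellation \eqref{eq:cancel}, which removes $\sigma^2$ because $\tr\widehat A_d=0$. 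It also uses $\E[Y^\top\widehat A_dY\mid\mathcal G]=\tr(\widehat A_dC_N)=\tr(\widehat P_dC_N)-(d/q)\tr C_N$, together with the matching expression for $G_{N,d}$ with $P_d$.

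For the projector term, I invoke the Ky Fan calculation already displayed. It gives $0\le\tr(P_dM_N)-\tr(\widehat P_dM_N)\le2d\|\widehat M_N-M_N\|_{\op}$. The identity part of $M_N=C_N+\sigma^2I_q$ contributes equally to both traces, so the same bound holds with $C_N$ in place of $M_N$. Taking the maximum over $d\le d_{\max}$ gives a bound $2d_{\max}\|\widehat M_N-M_N\|_{\op}\to_P0$, with no eigengap or projector consistency needed.

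The fluctuation term is where I expect the main obstacle: conditioning must be handled correctly so that $\widehat P_d$ can be treated as fixed. Conditional on $\mathcal G$, which contains training, the evaluation summands $Y_\ell^\top\widehat A_dY_\ell$ are iid. Their absolute value is at most $\|\widehat A_d\|_{\op}\|Y_\ell\|^2\le\|Y_\ell\|^2$, because $\widehat A_d$ has eigenvalues $1-d/q$ and $-d/q$. The conditional variance is therefore at most $K_4$. Conditional Chebyshev gives
\[
\Pp\bigl(|\widehat G_{N,d}-\E[\cdot\mid\mathcal G]|>\epsilon\mid\mathcal G\bigr)\le K_4/(L_N\epsilon^2)
\]
almost surely. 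A union over the finitely many $d\le d_{\max}$ costs a factor $d_{\max}$. Taking expectations removes the conditioning, and $L_N\to\infty$ gives $\to_P0$. Combining the two terms proves the first display.

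For the dimension claim, I use the population computation $G_{N,d}-G_{N,d-1}=\theta_d-\bar\theta$. Under the uniform margin $\Delta_0$, every $d\ne d_*$ in $\{1,\dots,d_{\max}\}$ satisfies $G_{N,d_*}-G_{N,d}\ge\Delta_0$. The increments are at least $\Delta_0$ before $d_*$. After $d_*$ the ordered eigenvalues stay at or below $\theta_{d_*+1}\le\bar\theta-\Delta_0$, so each increment is at most $-\Delta_0$. On the event $\max_d|\widehat G_{N,d}-G_{N,d}|<\Delta_0/2$, the argmax over $d\le d_{\max}$ is therefore uniquely $d_*$, and any tie rule is irrelevant. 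This event has probability tending to one by the first part, which gives $\Pp(\widehat d_N=d_*)\to1$.
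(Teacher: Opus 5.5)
Your proposal is correct and follows essentially the same route as the paper. You split $\widehat G_{N,d}-G_{N,d}$ into a held-out evaluation fluctuation and a training-projector term. The first is controlled by conditional Chebyshev with the fourth-moment bound and a union over $d\le d_{\max}$, giving $d_{\max}K_4/(L_N t^2)$. The second is controlled by the Ky Fan bound $2d_{\max}\|\widehat M_N-M_N\|_{\op}$, and the fixed-margin argmax argument finishes. You also make explicit two details the paper leaves terse: $\|\widehat A_d\|_{\op}\le 1$ for the variance bound, and the fact that the margin gives $G_{N,d_*}-G_{N,d}\ge\Delta_0$ for every competitor, so a uniform error below $\Delta_0/2$ fixes the argmax.
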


\begin{proof}
Decompose $\widehat G_{N,d}-G_{N,d}=E_{N,d}+R_{N,d}$ into held-out evaluation fluctuation and training-projector error. Uniformly in $d\le d_{\max}$, Chebyshev and a union bound give
\begin{equation}
\Pp\!\left(\max_d|E_{N,d}|>t\right)
\le \frac{d_{\max}K_4}{L_Nt^2}\to0,
\end{equation}
while the Ky--Fan bound above gives
\begin{equation}
\max_d|R_{N,d}|\le2d_{\max}\|\widehat M_N-M_N\|_{\op}\to_P0.
\end{equation}
Thus the score vector converges uniformly. The population maximizer is separated by a fixed positive margin, so the argmax mapping is eventually constant with probability tending to one.
\end{proof}

A clipped version with threshold $B_N=c\log N$ yields almost-sure eventual recovery under summable evaluation and training concentration tails and $L_N/(\log N)^3\to\infty$; the proof is a direct Hoeffding--Borel--Cantelli argument.

\begin{lemma}[Explicit clipping-bias bound]
\label{lem:clipbias}
Let $B>0$, let $Z$ be any scalar score contribution with $\E Z^2\le K_2$, and let $\psi_B(z)=\max(-B,\min(z,B))$. Then
\begin{equation}
\boxed{|\E\psi_B(Z)-\E Z|\le \frac{K_2}{4B}.}
\label{eq:clipbias-explicit}
\end{equation}
\end{lemma}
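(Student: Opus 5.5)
The proof is a pointwise comparison between $\psi_B(Z)$ and $Z$, followed by taking expectations. Write
\[
Z-\psi_B(Z)=(Z-B)_+-(-Z-B)_+ ,
\]
so that
\[
|\E\psi_B(Z)-\E Z|\le \E(|Z|-B)_+ .
\]
This holds because the two positive parts are supported on the disjoint events $\{Z>B\}$ and $\{Z<-B\}$. Each difference equals $|Z|-B$ on its event, and $|Z|-B\ge0$ there. It therefore suffices to bound $\E(|Z|-B)_+$ by $K_2/(4B)$ using only the second moment. The assumption $\E Z^2\le K_2<\infty$ makes both expectations finite, so the subtraction is legitimate.

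The key step is an elementary pointwise inequality. For every real $u\ge0$ and $B>0$, we aim for
\[
(u-B)_+\le \frac{u^2}{4B}.
\]
When $u\le B$ the left side is zero. For $u>B$, multiply through by $4B$ and rearrange; the inequality becomes
\[
u^2-4Bu+4B^2=(u-2B)^2\ge0,
\]
which always holds. Equality occurs at $u=2B$, so the constant $1/4$ is the sharp choice within this pointwise approach.

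Applying the inequality with $u=|Z|$ and taking expectations gives
\[
\E(|Z|-B)_+\le \frac{\E Z^2}{4B}\le\frac{K_2}{4B},
\]
which is the boxed bound \eqref{eq:clipbias-explicit}.

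There is no real obstacle here. The only point needing care is to avoid a cruder Markov-type bound such as $\E|Z|1_{\{|Z|>B\}}\le K_2/B$, which loses the factor $4$. The subtraction of $B$ inside the positive part is exactly what produces the $1/(4B)$ constant.
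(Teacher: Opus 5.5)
Your proof is correct and matches the paper's argument: both reduce the clipping bias to $\E(|Z|-B)_+$ and then apply the pointwise inequality $(t-B)_+\le t^2/(4B)$, which follows from $(t-2B)^2\ge0$. The paper also shows that $1/4$ is sharp for the lemma itself, not only within the pointwise approach: take $Z=2B$ with probability $p$ and $Z=0$ otherwise.
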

\begin{proof}
The pointwise inequality $(t-B)_+\le t^2/(4B)$ follows, for $t>B$,
from $(t-2B)^2\ge0$. Therefore
\begin{align}
|\E\psi_B(Z)-\E Z|
&\le \E(|Z|-B)_+\\
&\le \frac{\E Z^2}{4B}\le \frac{K_2}{4B}.
\end{align}
The universal coefficient $1/4$ is sharp: put $Z=2B$ with probability
$p\in(0,1]$ and $Z=0$ otherwise, so both sides equal $pB$ when
$K_2=4pB^2$. This is an elementary refinement of the usual clipping bound,
not a new concentration theorem.
\end{proof}

\subsubsection{Explicit training-covariance confidence radius}
The remaining finite-sample nuisance in the projector bound is $\|\widehat M_j-M_j\|_{\op}$.  We now make this radius explicit rather than leave it as an abstract covariance-consistency assumption.  An admissible theoretical construction forms $n_j^{\rm tr}$ \emph{independent training pairs} in the analysis block,
\begin{equation}
W_{j\ell}=\frac{X^A_{i_\ell}-X^A_{i'_\ell}}{\sqrt2},
\qquad
M_j=\E[W_{j\ell}W_{j\ell}^{\top}],
\end{equation}
with disjoint observation indices inside the training-pair block. Each scale uses one specified iid pair law; $n_j^{\rm tr}\ge1$ and $R_j>0$ are fixed before observing those pairs, or frozen by independent preliminary data. For such a clipping radius, define
\begin{equation}
\overline W_{j\ell}=W_{j\ell}\min\left\{1,\frac{R_j}{\|W_{j\ell}\|}\right\},
\qquad
\widehat M_{j,R}=\frac1{n_j^{\rm tr}}\sum_{\ell=1}^{n_j^{\rm tr}}\overline W_{j\ell}\overline W_{j\ell}^{\top}.
\end{equation}

\begin{proposition}[Matrix-Bernstein covariance certificate]
\label{prop:matrixbernstein}
Assume $\E\|W_{j\ell}\|^4\le K_{4,j}$.  Let $S_N$ training scales be queried and put
\begin{equation}
x_{\rm cov}=\log\frac{2qS_N}{\delta_{\rm cov}}.
\end{equation}
Then, simultaneously over all queried scales, with probability at least $1-\delta_{\rm cov}$,
\begin{equation}
\boxed{
\|\widehat M_{j,R}-M_j\|_{\op}\le
R_j^2\left[
\frac{x_{\rm cov}}{3n_j^{\rm tr}}
+\sqrt{\frac{x_{\rm cov}}{2n_j^{\rm tr}}
+\frac{x_{\rm cov}^2}{9(n_j^{\rm tr})^2}}
\right]
+\frac{K_{4,j}}{4R_j^2}
=: \rho_{M,j}.}
\label{eq:rhoM-explicit}
\end{equation}
If $\|S_{j\ell}\|^2\le2\overline r_j^2$ holds almost surely under that specified pair law, and its independent pair-noise is $N(0,\sigma^2I_q)$ with $\sigma^2\le\overline\sigma^2$, one may take the fourth-moment bound below. The constants must be deterministic or frozen by independent calibration; any calibration failure is added to the total budget. A realized finite-sample C0 membership event alone does not supply this law-level support assumption.
\begin{equation}
\boxed{
K_{4,j}
=4\overline r_j^4
+4(q+2)\overline\sigma^2\overline r_j^2
+\overline\sigma^4q(q+2).}
\label{eq:K4-explicit}
\end{equation}
\end{proposition}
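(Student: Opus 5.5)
The plan is to split the error into a clipping bias and a bounded-matrix fluctuation,
\[
\widehat M_{j,R}-M_j=\bigl(\widehat M_{j,R}-\E\overline W\,\overline W^\top\bigr)+\bigl(\E\overline W\,\overline W^\top-M_j\bigr),
\]
and to control each term separately. For the bias, I would note that $WW^\top-\overline W\,\overline W^\top=(1-c^2)WW^\top$ with $c=\min\{1,R_j/\|W\|\}$. This matrix is positive semidefinite, so its operator norm is its trace, namely $(\|W\|^2-R_j^2)_+$. The pointwise inequality from Lemma~\ref{lem:clipbias}, namely $(t-B)_+\le t^2/(4B)$, applied with $t=\|W\|^2$ and $B=R_j^2$, then gives
\[
\|\E\overline W\,\overline W^\top-M_j\|_{\op}\le \frac{\E\|W\|^4}{4R_j^2}\le\frac{K_{4,j}}{4R_j^2}.
\]
Jensen's inequality passes the operator norm through the expectation, since the matrix difference is positive semidefinite pointwise.

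For the fluctuation, set $X_\ell=\overline W_\ell\overline W_\ell^\top-\E\overline W\,\overline W^\top$. Both $\overline W_\ell\overline W_\ell^\top$ and its mean lie between $0$ and $R_j^2 I$, so $\|X_\ell\|_{\op}\le R_j^2$. For the variance, $\E X_\ell^2\preceq\E\|\overline W\|^2\overline W\,\overline W^\top\preceq R_j^2\E\overline W\,\overline W^\top$, and therefore $\|\E X^2\|\le R_j^4$. A sharper variance proxy can be obtained from the fact that the matrix lies in $[0,R_j^2]$: for a unit $v$, $\operatorname{Var}(v^\top\overline W\,\overline W^\top v)\le R_j^4/4$ by Popoviciu's inequality, and I would aim for the matrix analogue $\|\E X^2\|\le R_j^4/4$. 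Self-adjoint matrix Bernstein \citep{tropp2012} with $\sigma^2=nR_j^4/4$ and $L=R_j^2$ gives a two-sided failure probability $2q\exp\{-t^2/(2\sigma^2+2Lt/3)\}$. Setting this equal to $\delta_{\rm cov}/S_N$ and solving the quadratic for $t/n$ yields exactly
\[
R_j^2\Bigl[\frac{x}{3n}+\sqrt{\frac{x}{2n}+\frac{x^2}{9n^2}}\Bigr].
\]
This matches \eqref{eq:rhoM-explicit}, which confirms that the intended variance proxy is $R_j^4/4$. A union bound over the $S_N$ scales then completes the argument. The scales, $R_j$ and $n_j^{\rm tr}$ are frozen before the pairs are seen, so conditional iid-ness holds. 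The main obstacle is justifying $\|\E X^2\|_{\op}\le R_j^4/4$. If the matrix Popoviciu bound is not clean, I would write $\E X^2=\E(\overline W\,\overline W^\top)^2-A^2$ with $A$ the mean, bound $\E(\overline W\,\overline W^\top)^2\preceq R_j^2A$, and use $R_j^2A-A^2\preceq R_j^4 I/4$, which holds because $A$ has spectrum in $[0,R_j^2]$.

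For \eqref{eq:K4-explicit}, write $W=S+\xi$ with $\xi\sim N(0,\sigma^2 I_q)$ independent of $S$, where the pair normalization keeps the noise variance at $\sigma^2$. Expand
\[
\|W\|^4=(\|S\|^2+2S^\top\xi+\|\xi\|^2)^2
\]
and take expectations. Odd Gaussian moments vanish, $\E(S^\top\xi)^2=\sigma^2\|S\|^2$, $\E\|\xi\|^2=q\sigma^2$ and $\E\|\xi\|^4=q(q+2)\sigma^4$, so
\[
\E\|W\|^4=\E\|S\|^4+(4+2q)\sigma^2\E\|S\|^2+q(q+2)\sigma^4.
\]
Inserting $\|S\|^2\le2\overline r_j^2$ and $\sigma\le\overline\sigma$ gives $4\overline r_j^4+2(q+2)\cdot2\overline\sigma^2\overline r_j^2+\overline\sigma^4q(q+2)$, which is exactly the displayed $K_{4,j}$. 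Any calibration failure for $\overline\sigma$ is added to the total failure budget.
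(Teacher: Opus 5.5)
Your proposal is correct and follows essentially the same route as the paper. The steps coincide: the clipping bias via $(\|W\|^2-R_j^2)_+\le\|W\|^4/(4R_j^2)$; the variance proxy $\E(A-M_R)^2\preceq R_j^2M_R-M_R^2\preceq R_j^4I_q/4$ obtained from $A^2\preceq R_j^2A$; matrix Bernstein with exact inversion of the quadratic and a union bound over the $S_N$ scales; and the conditional Gaussian fourth-moment expansion for $K_{4,j}$. Your fallback variance argument is exactly the paper's, so the scalar Popoviciu remark is unnecessary, and on its own it would not bound $\|\E X_\ell^2\|_{\op}$.
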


\begin{proof}
Write $M_{j,R}=\E[\overline W\overline W^\top]$.  Since radial clipping changes $WW^\top$ only on $\{\|W\|>R_j\}$,
\begin{align}
\|M_j-M_{j,R}\|_{\op}
&\le \E\|WW^\top-\overline W\overline W^\top\|_{\op}\\
&= \E(\|W\|^2-R_j^2)_+\\
&\le \frac{\E\|W\|^4}{4R_j^2}
\le \frac{K_{4,j}}{4R_j^2}.
\label{eq:covclipbias}
\end{align}
For $X_\ell=\overline W_\ell\overline W_\ell^\top-M_{j,R}$, both matrices in the difference lie between $0$ and $R_j^2I_q$, hence
\begin{equation}
-R_j^2I_q\preceq X_\ell\preceq R_j^2I_q,
\qquad
\|X_\ell\|_{\op}\le R_j^2,
\end{equation}
and the positive-semidefinite interval gives a sharper variance bound.
Put $A=\overline W\overline W^T$ and $M_R=\E A$. Since
$0\preceq A\preceq R_j^2I_q$, we have $A^2\preceq R_j^2A$, and hence
\begin{align}
\E(A-M_R)^2
&=\E A^2-M_R^2\preceq R_j^2M_R-M_R^2\nonumber\\
&=\frac{R_j^4}{4}I_q-
\left(M_R-\frac{R_j^2}{2}I_q\right)^2
\preceq\frac{R_j^4}{4}I_q.
\label{eq:psd-variance-sharp}
\end{align}
No commutation of distinct summands is assumed. Thus
$\|\sum_\ell\E X_\ell^2\|_{\op}\le n_j^{\rm tr}R_j^4/4$.
The coefficient $1/4$ is attained by $A=R_j^2vv^T$ with probability
$1/2$ and $A=0$ otherwise, for a fixed unit vector $v$.
Applying the classical self-adjoint matrix Bernstein inequality
\citep[Theorem~1.4]{tropp2012} to both signs gives
\begin{equation}
\Pp\left(
\|\widehat M_{j,R}-M_{j,R}\|_{\op}>t
\right)
\le
2q\exp\left\{
-\frac{n_j^{\rm tr}t^2}
{R_j^4/2+(2/3)R_j^2t}
\right\}.
\end{equation}
Solving the quadratic in this tail bound exactly gives
\begin{equation}
t=R_j^2\left[
\frac{x}{3n_j^{\rm tr}}
+\sqrt{\frac{x}{2n_j^{\rm tr}}+
\frac{x^2}{9(n_j^{\rm tr})^2}}
\right]
\end{equation}
with $x=x_{\rm cov}$ and a union bound over $S_N$ scales proves \eqref{eq:rhoM-explicit} after adding \eqref{eq:covclipbias}.
Indeed, writing $t=R_j^2c$ and $v=x/n_j^{\rm tr}$, the positive root
satisfies $c^2-(2v/3)c-v/2=0$. These are elementary improvements in
the application of the cited inequality; no new concentration theorem
or optimality of the resulting full confidence radius is claimed.

For \eqref{eq:K4-explicit}, expand the square of
$\|S+\eta\|^2=\|S\|^2+2S^T\eta+\|\eta\|^2$ conditional on $S$.
The odd Gaussian terms vanish, while
$\E(S^T\eta)^2=\sigma^2\|S\|^2$ and
$\E\|\eta\|^4=q(q+2)\sigma^4$. Thus, exactly,
\[
\E(\|S+\eta\|^4\mid S)
=\|S\|^4+2(q+2)\sigma^2\|S\|^2+q(q+2)\sigma^4.
\]
Substitute $\|S\|^2\le2\overline r_j^2$ and
$\sigma^2\le\overline\sigma^2$ to obtain \eqref{eq:K4-explicit}.
For the clipping constant, $W$ equal to $\sqrt2R_j v$ with probability
$p$ and zero otherwise, with fixed unit $v$, attains the coefficient
$1/4$. Independence from the training data remains a premise for choosing
the clipping radius or its moment bound.
\end{proof}

With a valid $K_{4,j}$ frozen independently of the training pairs, the radius can be optimized without tuning the population target. Selecting it on those same pairs would require a separate uniform argument. If
\begin{equation}
c_j=\frac{x_{\rm cov}}{3n_j^{\rm tr}}+
\sqrt{\frac{x_{\rm cov}}{2n_j^{\rm tr}}+
\frac{x_{\rm cov}^2}{9(n_j^{\rm tr})^2}},
\end{equation}
then, for $K_{4,j}>0$, $\rho_{M,j}=c_jR_j^2+K_{4,j}/(4R_j^2)$ is minimized by
\begin{equation}
R_j^4=K_{4,j}/(4c_j),
\qquad
\rho_{M,j}^{\rm opt}=\sqrt{K_{4,j}c_j}.
\label{eq:rhoM-opt}
\end{equation}
If $K_{4,j}=0$, then $W=0$ almost surely, so $M_j=\widehat M_j=0$
exactly and the covariance error is zero. The positive-radius optimization
is unnecessary in this degenerate case.
Thus the OC-CIGD argmax gate has a concrete covariance error budget and no longer requires an unspecified operator-norm consistency event.

\subsubsection{Finite-sample certified argmax}
For a theorem-grade finite-sample gate, clip each held-out score contribution to $[-B_j,B_j]$. If $S_N$ scales and $d_{\max}$ candidate dimensions are queried, Hoeffding gives the simultaneous evaluation radius
\begin{equation}
t_{G,j}=B_j\sqrt{\frac{2\log(2S_Nd_{\max}/\delta_G)}{L_j}}.
\label{eq:tGj}
\end{equation}
Use the clipped training-pair covariance $\widehat M_{j,R}$ from Proposition~\ref{prop:matrixbernstein}; equation~\eqref{eq:rhoM-explicit} supplies $\rho_{M,j}$ simultaneously over scales with failure probability at most $\delta_{\rm cov}$. The Ky--Fan calculation above then bounds the projector-induced score loss by $2d_{\max}\rho_{M,j}$. If $b_{{\rm clip},j}$ is a deterministic evaluation-score clipping-bias bound, define
\begin{equation}
\rho_{G,j}=t_{G,j}+2d_{\max}\rho_{M,j}+b_{{\rm clip},j}.
\label{eq:rhoG}
\end{equation}
The evaluation clipping bias is also explicit if the evaluation-pair law has fourth moment at most $K_{4,j}$: because $\|A_d\|_{\op}\le1$,
\[
|\E H-\E\operatorname{clip}(H,[-B_j,B_j])|
\le \E H^2/(4B_j)\le \E\|W\|^4/(4B_j)\le K_{4,j}/(4B_j),
\quad H=W^\top A_dW.
\]
Thus $b_{{\rm clip},j}=K_{4,j}/(4B_j)$ is valid under this stated moment premise.
Let $\widehat\Delta_{G,j}$ be the empirical gap between the winning score and its runner-up. If
\begin{equation}
\boxed{\widehat\Delta_{G,j}>2\rho_{G,j},}
\label{eq:certargmax}
\end{equation}
then the population and empirical argmax agree. Indeed, for every competitor $d$,
\begin{align}
G_{\widehat d}-G_d
&\ge \widehat G_{\widehat d}-\widehat G_d
-|\widehat G_{\widehat d}-G_{\widehat d}|-|\widehat G_d-G_d|\\
&>\widehat\Delta_{G,j}-2\rho_{G,j}>0.
\end{align}
Thus the additional spectral argmax claim has failure budget $\delta_{\rm dim}=\delta_G+\delta_{\rm cov}$; include it only when that claim is reported. Failure of \eqref{eq:certargmax} again causes abstention rather than a dimension claim.

\subsection{Learned-plane interpretation: a standard conditional bound}
\label{app:learned}
Theorem~\ref{thm:rect} covers the residual for any independently frozen
orthogonal projector. A resolved population eigengap is not needed for
that coverage or for $R_d\le R_{\widehat Q}$. A gap and covariance
concentration become relevant when interpreting the frozen plane as a
particular population eigenspace; additional curvature or locality bias
control is needed to identify that eigenspace with a tangent space.

Let $P$ be the leading rank-$d$ projector of a symmetric population matrix
$M$, with $\Delta=\lambda_d(M)-\lambda_{d+1}(M)>0$. Use one specified
training estimator $\widehat M$ and its projector $\widehat P$.
On a valid event $\|\widehat M-M\|_{\op}\le\rho_M$, the population-gap
variant of Davis--Kahan \citep[Theorem 2]{yuwangsamworth2015}, following
\citet{daviskahan1970}, gives the conservative bound
\[
\|\widehat P-P\|_{\op}\le a,
\qquad a=\min\{1,2\sqrt d\,\rho_M/\Delta\}.
\]
For every positive semidefinite target-ball covariance $\Sigma$ with
$\tr\Sigma\le r^2$,
\[
\big|\tr[(I-\widehat P)\Sigma]-\tr[(I-P)\Sigma]\big|
\le a r^2.
\]
The last inequality is the elementary trace/operator-norm inequality.
Weyl's inequality supplies an observable lower bound
$\Delta\ge\widehat\Delta-2\rho_M$, where
$\widehat\Delta=\lambda_d(\widehat M)-\lambda_{d+1}(\widehat M)$.
If this lower bound is positive, it may replace $\Delta$ in the denominator;
otherwise this event supplies no resolved population-plane interpretation.

The radius in Proposition~\ref{prop:matrixbernstein} belongs to its
specified clipped, disjoint-pair estimator with
$W=(X-X')/\sqrt2$ and a valid upper bound on $\E\|W\|^4$.
It cannot be attached without proof to ordinary sample covariance.
Setting $\rho_M=0$ is justified only by a valid zero-error premise.
This is a conditional interpretation of a learned plane, not a
finite-sample power assertion for a particular fitting implementation.
\subsection{From Projected to Ambient Rectifiability}
\label{app:lift}
The propositions below require an \emph{ambient} frozen-plane residual bound and uniform constants and fine-scale thresholds. When Assumption~\ref{ass:honest} identifies the full ambient target and its residuals, Theorem~\ref{thm:rect} supplies population residual bounds in those coordinates. When the identified target is only a projection, it supplies no bound on the hidden component, and these lifting propositions cannot be invoked without a separate ambient premise. There is no lifting step in Corollary~\ref{cor:rect}.

\begin{proposition}[Frozen-plane residual implies one-sided flatness]
\label{prop:flatness}
Let $E\subset\R^D$, let $x\in E$, fix $\alpha>0$, and suppose for all sufficiently small $r$ that
\begin{equation}
c_-s^{d_*}\le \muc(B(z,s))\quad
\text{for }z\in E\cap B(x,r),\ 0<s\le r/2,
\label{eq:lowerahlforslocal}
\end{equation}
and $\muc(B(x,r))\le c_+r^{d_*}$.  Let $L_{x,r}$ be a $d_*$-plane satisfying
\begin{equation}
\frac1{\muc(B(x,r))}\int_{B(x,r)}\dist(y,L_{x,r})^2\,d\muc(y)
\le C_R r^{2+2\alpha}.
\label{eq:frozen-residual}
\end{equation}
Then, for all sufficiently small $r$,
\begin{equation}
\sup_{z\in E\cap B(x,r/2)}\dist(z,L_{x,r})
\le C_{\rm flat}r^{1+2\alpha/(d_*+2)},
\label{eq:flatnessrate}
\end{equation}
where
\begin{equation}
C_{\rm flat}=\left(\frac{2^{d_*+2}c_+C_R}{c_-}\right)^{1/(d_*+2)}.
\end{equation}
\end{proposition}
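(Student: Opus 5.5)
The plan is the standard mass-versus-residual (Chebyshev-type) argument. I argue by contradiction at a single point: a point of the support far from the frozen plane carries a small ball of definite mass, and that mass alone would overshoot the residual budget \eqref{eq:frozen-residual}.

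Fix a sufficiently small $r$ and a point $z\in E\cap B(x,r/2)$. Write $h=\dist(z,L_{x,r})$ and suppose $h>0$. Set $s=\min\{h/2,r/2\}$. Then $B(z,s)\subseteq B(x,r)$, since $\|z-x\|\le r/2$ and $s\le r/2$. Moreover every $y\in B(z,s)$ satisfies $\dist(y,L_{x,r})\ge h-s\ge h/2$, because the distance to a plane is $1$-Lipschitz.

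Next I compare the two sides of the residual bound. The lower Ahlfors bound \eqref{eq:lowerahlforslocal} applies at $z\in E\cap B(x,r)$ with radius $s\le r/2$, so
\[
\int_{B(x,r)}\dist(y,L_{x,r})^2\,d\muc(y)\ge \frac{h^2}{4}\,\muc(B(z,s))\ge \frac{h^2}{4}\,c_-s^{d_*}.
\]
In the other direction, the upper mass bound and \eqref{eq:frozen-residual} give
\[
\int_{B(x,r)}\dist(y,L_{x,r})^2\,d\muc(y)\le c_+C_R\,r^{d_*+2+2\alpha}.
\]

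The main case is $h\le r$, where $s=h/2$. Combining the two displays gives $h^{d_*+2}\le 2^{d_*+2}c_+C_R\,r^{d_*+2+2\alpha}/c_-$. Taking the $(d_*+2)$-th root yields exactly $h\le C_{\rm flat}\,r^{1+2\alpha/(d_*+2)}$.

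The remaining case $h>r$ is the only real obstacle. Here $s=r/2$, and the same comparison gives $c_-(r/2)^{d_*}r^2/4\le c_+C_Rr^{d_*+2+2\alpha}$, that is, $c_-2^{-d_*-2}\le c_+C_Rr^{2\alpha}$. This fails once $r$ is small enough. So for all sufficiently small $r$ this case is impossible, and this is precisely where the hypothesis ``for all sufficiently small $r$'' in the statement is used.

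A similar point concerns the radii. The hypothesis \eqref{eq:lowerahlforslocal} holds for all sufficiently small $r$, but I need its conclusion at the specific radius $s\le r/2$. This is fine because \eqref{eq:lowerahlforslocal} is stated at the same $r$ for every $s\le r/2$, so no change of scale is required.

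Taking the supremum over $z$ then proves \eqref{eq:flatnessrate}.
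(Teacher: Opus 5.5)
Your proposal is correct and follows the paper's proof. The paper likewise first rules out $\dist(z,L_{x,r})\ge r$ for small $r$, using the ball $B(z,r/4)$ where you use $B(z,r/2)$. It then compares $\frac{h^2}{4}\,c_-(h/2)^{d_*}$, the contribution of $B(z,h/2)$, against the residual budget $c_+C_R r^{d_*+2+2\alpha}$ to obtain exactly $C_{\rm flat}$. Your single choice $s=\min\{h/2,r/2\}$ is just a compact way of packaging those same two cases.
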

\begin{proof}
Fix $z\in E\cap B(x,r/2)$ and write $h r=\dist(z,L_{x,r})$.  First, $h<1$ for all sufficiently small $r$.  Indeed, if $h\ge1$, then for every $y\in B(z,r/4)$,
\begin{equation}
\dist(y,L_{x,r})\ge \dist(z,L_{x,r})-\|y-z\|\ge 3r/4.
\end{equation}
Since $B(z,r/4)\subset B(x,3r/4)\subset B(x,r)$, \eqref{eq:lowerahlforslocal} gives
\begin{equation}
\int_{B(x,r)}\dist(y,L_{x,r})^2d\muc(y)
\ge \frac{9r^2}{16}c_-\left(\frac r4\right)^{d_*},
\end{equation}
whereas \eqref{eq:frozen-residual} and the Ahlfors upper bound give
\begin{equation}
\int_{B(x,r)}\dist(y,L_{x,r})^2d\muc(y)
\le c_+C_Rr^{d_*+2+2\alpha},
\end{equation}
a contradiction for small $r$.  If $h=0$ the desired bound is immediate. Otherwise eventually $0<h<1$.

Now $B(z,hr/2)\subset B(x,r)$ and, for $y$ in this smaller ball,
\begin{equation}
\dist(y,L_{x,r})\ge hr-hr/2=hr/2.
\end{equation}
Therefore
\begin{align}
\int_{B(x,r)}\dist(y,L_{x,r})^2d\muc(y)
&\ge \left(\frac{hr}{2}\right)^2\muc\!\left(B\!\left(z,\frac{hr}{2}\right)\right)\\
&\ge c_-2^{-(d_*+2)}h^{d_*+2}r^{d_*+2}.
\end{align}
Comparing with the upper bound $c_+C_Rr^{d_*+2+2\alpha}$ gives
\begin{equation}
h^{d_*+2}\le \frac{2^{d_*+2}c_+C_R}{c_-}r^{2\alpha},
\end{equation}
which is exactly \eqref{eq:flatnessrate}.
\end{proof}

\begin{proposition}[Multiscale non-collapse implies a local bi-Lipschitz lift]
\label{prop:lift}
Let $r_j=r_0\rho^j$, $0<\rho<1$, and suppose Proposition~\ref{prop:flatness} holds with fixed $c_-,c_+,C_R,\alpha>0$ for every $x\in E$ and all ladder scales below one common positive radius, with planes $L_{x,r_j}$.  Let $\Pi$ be an orthogonal projection and assume
\begin{equation}
\inf_{x,j}\sigma_{\min}(\Pi|_{\operatorname{dir}(L_{x,r_j})})\ge c_0>0.
\label{eq:noncollapse}
\end{equation}
Then $\Pi|_E$ is locally bi-Lipschitz for sufficiently small pair distances.  In particular, if $\Pi(E)$ is $d_*$-rectifiable, then $E$ is $d_*$-rectifiable.
\end{proposition}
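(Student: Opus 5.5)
The plan is to turn the multiscale flatness of Proposition~\ref{prop:flatness} into a lower bound on $\|\Pi(z)-\Pi(z')\|$ for nearby $z,z'\in E$. Fix $z,z'\in E$ with $t=\|z-z'\|$ small, and choose the ladder index $j$ with $r_{j+1}\le 4t< r_j$. Then $z'\in E\cap B(z,r_j/2)$, and $z$ is the centre. Proposition~\ref{prop:flatness} at $x=z$, $r=r_j$ gives
\[
\dist(z,L_{z,r_j})\le C_{\rm flat}r_j^{1+\gamma},\qquad
\dist(z',L_{z,r_j})\le C_{\rm flat}r_j^{1+\gamma},\qquad
\gamma=\frac{2\alpha}{d_*+2}>0.
\]
Let $p,p'$ be the nearest points of $L=L_{z,r_j}$ to $z,z'$. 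Then $v=p'-p$ lies in $\operatorname{dir}(L)$, and $\|(z'-z)-v\|\le 2C_{\rm flat}r_j^{1+\gamma}$.

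Next, apply the non-collapse hypothesis \eqref{eq:noncollapse} to $v$:
\[
\|\Pi(z'-z)\|\ge\|\Pi v\|-2C_{\rm flat}r_j^{1+\gamma}\ge c_0\|v\|-2C_{\rm flat}r_j^{1+\gamma}\ge c_0t-4C_{\rm flat}r_j^{1+\gamma},
\]
using $\|\Pi\|_{\op}\le1$ and $\|v\|\ge t-2C_{\rm flat}r_j^{1+\gamma}$. Since $r_j< 4t/\rho$, the error term is at most $4C_{\rm flat}(4/\rho)^{1+\gamma}t^{1+\gamma}$. This is at most $c_0t/2$ once $t$ is below a threshold depending only on $c_0$, $C_{\rm flat}$, $\rho$ and $\gamma$. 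Hence $\|\Pi z-\Pi z'\|\ge (c_0/2)\|z-z'\|$ for all pairs at distance below some $t_0>0$. The upper Lipschitz bound is trivial. This proves local bi-Lipschitz behaviour for small pair distances.

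For the rectifiability transfer, cover $E$ by countably many pieces of diameter less than $t_0$; separability makes this possible. On each piece $E_k$, $\Pi$ is bi-Lipschitz, so $E_k=(\Pi|_{E_k})^{-1}(\Pi E_k)$ is the Lipschitz image of a subset of $\Pi(E)$. Composing Lipschitz parametrizations of $\Pi(E)$ with the Lipschitz inverse, and keeping $\mathcal H^{d_*}$-null exceptional sets null under Lipschitz maps, shows that $E_k$ and hence $E$ is $d_*$-rectifiable.

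The main obstacle is uniformity. The threshold must not depend on $z$, so the uniform validity scale assumed for Proposition~\ref{prop:flatness} across all $x\in E$ is essential; "sufficiently small $r$" must mean one common radius below which both flatness and the $h<1$ step hold. I would check that the proof of Proposition~\ref{prop:flatness} gives such a common radius, using only the fixed constants $c_\pm,C_R,\alpha$. I would also check that $4t<r_j$ can be arranged below it for every small $t$, using the discrete ladder and the factor $\rho^{-1}$.
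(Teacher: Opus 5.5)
Your proposal is correct and follows the paper's proof: choose a ladder scale comparable to the pair distance, center the flatness bound at one of the two points, and combine non-collapse on $\operatorname{dir}(L)$ with the small normal component. The paper decomposes $v=v_L+v_\perp$ orthogonally with the scale $2\delta<r_j\le 2\delta/\rho$, whereas you use nearest points on $L$ and $r_{j+1}\le 4t<r_j$, which changes nothing. Your countable small-diameter decomposition for the rectifiability transfer, and your check that the flatness radius depends only on $c_\pm,C_R,\alpha$, make explicit two points the paper leaves implicit.
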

\begin{proof}
Let $z_1,z_2\in E$ and put $\delta=\|z_1-z_2\|$.  For sufficiently small $\delta$, choose a ladder scale $r_j$ such that
\begin{equation}
2\delta<r_j\le\frac{2\delta}{\rho}.
\label{eq:ladderpair}
\end{equation}
Set $x=z_1$ and $L=L_{x,r_j}$.  Then $z_1,z_2\in B(x,r_j/2)$ and
\begin{equation}
\delta\ge \frac{\rho}{2}r_j.
\label{eq:sep-rho}
\end{equation}
Write $v=z_1-z_2=v_L+v_\perp$ with respect to $L$.  From \eqref{eq:flatnessrate},
\begin{equation}
\|v_\perp\|\le 2\delta_{r_j}r_j,
\qquad
\delta_{r_j}:=C_{\rm flat}r_j^{2\alpha/(d_*+2)}.
\end{equation}
Using \eqref{eq:sep-rho},
\begin{equation}
\frac{\|v_\perp\|}{\|v\|}\le\frac{4\delta_{r_j}}{\rho}.
\end{equation}
Hence
\begin{equation}
\|v_L\|\ge\|v\|\sqrt{1-\left(\frac{4\delta_{r_j}}{\rho}\right)^2}.
\end{equation}
By \eqref{eq:noncollapse} and $\|\Pi v_\perp\|\le\|v_\perp\|$,
\begin{align}
\|\Pi v\|
&\ge \|\Pi v_L\|-\|\Pi v_\perp\|\\
&\ge \left[c_0\sqrt{1-\left(\frac{4\delta_{r_j}}{\rho}\right)^2}
-\frac{4\delta_{r_j}}{\rho}\right]\|v\|.
\label{eq:liftfactor}
\end{align}
Because $\delta_{r_j}\to0$, the bracket tends to $c_0$ and is at least $c_0/2$ at sufficiently fine scales.  The opposite inequality $\|\Pi v\|\le\|v\|$ is automatic.  Thus, on a sufficiently small neighborhood,
\begin{equation}
\frac{c_0}{2}\|z_1-z_2\|\le\|\Pi z_1-\Pi z_2\|\le\|z_1-z_2\|.
\end{equation}
Therefore $\Pi|_E$ is locally injective with Lipschitz inverse.  Rectifiability is preserved under bi-Lipschitz maps, so rectifiability of $\Pi(E)$ lifts to $E$.
\end{proof}

These are conditional ambient implications. A projected residual alone
cannot imply them. For a counterexample, let
$\mu=\operatorname{Unif}[0,1]\otimes\delta_0\otimes\nu_C$ in $\R^3$,
where $\nu_C$ is the middle-third Cantor probability measure. Projection to
the first two coordinates is uniform on a line and has zero projected
one-dimensional residual. With $s=\log2/\log3>0$,
$\mu(B(z,r))\lesssim r^{1+s}$, so any finite-length Lipschitz curve, coverable
by $O(r^{-1})$ balls, has $\mu$-mass at most $O(r^s)\to0$. Hence the ambient
measure is not one-rectifiable despite the rectifiable projection and perfect
non-collapse on the horizontal direction.

\section{Smooth Calibration, Scale Choice, and Exponent Inference}
\label{sec:calibration}
\subsection{Forward Smooth Calibration: Why the Exponents are 2 and 4}
\label{app:smooth}
This section is a forward smooth calibration. The distributions, symmetry and regularity assumptions below are additional premises. An exponent-four claim requires $\kappa_x>0$; a flat patch with $\kappa_x=0$ has no such identifiable leading exponent. For frozen estimated planes, their approximation error must also be included in the stated remainder envelope.

Let $F$ be a $C^3$ chart on a fixed neighborhood of the origin, with $F(0)=x$ and $J=DF(0)$ satisfying $J^\top J=I_{d_*}$. Let $U,U'$ be independent with one fixed bounded, symmetric law, $U\overset d=-U$, and $\operatorname{Cov}(U)$ positive definite. Bounded support makes the Taylor remainder below uniform and permits taking expectations of its products. For sufficiently small $r$,
\begin{equation}
F(rU)=x+rJU+\frac{r^2}{2}H[U,U]+\frac{r^3}{6}T[U,U,U]+o(r^3).
\end{equation}
Use the normalized pair
\[
S_r=\frac{F(rU)-F(rU')}{\sqrt2},\qquad
\E[S_rS_r^\top]=\operatorname{Cov}(F(rU)).
\]
The observed pair is $Y_r=(F(rU)+\varepsilon-F(rU')-\varepsilon')/\sqrt2$, with independent $N(0,\sigma^2 I_q)$ errors independent of $(U,U')$, so its noise variance is $\sigma^2 I_q$. The normalized signal has the expansion
\begin{equation}
S_r=rL+r^2Q+r^3C+o(r^3).
\end{equation}
The covariance becomes
\begin{align}
C_r&=\E[S_rS_r^\top]\\
&=r^2\E[LL^\top]
+r^3\E[LQ^\top+QL^\top]\\
&\quad+r^4\E[QQ^\top+LC^\top+CL^\top]+o(r^4).
\end{align}
Under the symmetry, $L$ is odd and $Q$ is even, hence $\E[LQ^\top]=0$ and
\begin{equation}
C_r=r^2C_2+r^4C_4+o(r^4),\qquad \rank C_2=d_*.
\end{equation}
Classical perturbation theory therefore gives tangent eigenvalues
\begin{equation}
\theta_k(r)=a_kr^2+O(r^4),\quad k\le d_*,
\end{equation}
and transverse eigenvalues $O(r^4)$. More explicitly, the transverse trace has the leading coefficient
\begin{equation}
\sum_{k>d_*}\theta_k(r)=\kappa_xr^4+o(r^4),
\qquad
\kappa_x=\frac14\E\left\|II_x(U,U)-\E II_x(U,U)\right\|^2.
\end{equation}
With isotropic observation noise,
\begin{align}
\cT(r)&=d_*\sigma^2+a_xr^2+O(r^4),\\
\cR(r)&=(q-d_*)\sigma^2+\kappa_xr^4+o(r^4).
\end{align}
At scales $r$ and $\rho r$, subtraction eliminates the constant noise term and leaves leading exponents $2$ and $4$ when their coefficients are positive. This fixed-coordinate-law model is not automatically the conditional law in an ambient Euclidean ball; transferring the expansion to such balls requires its own density, boundary, and remainder analysis. Omitting the pair normalization would double both the signal covariance and noise floor.

\subsection{Federer Reach as a Positive-Reach Calibration}
\label{app:reach}
This section strengthens the smooth calibration only. It is \emph{not} an assumption of the rectifiability converse. Let $\mathcal M\subset\mathbb R^D$ be a $C^3$ embedded submanifold with Federer reach
\[
\tau=\operatorname{reach}(\mathcal M)>0.
\]
Federer's characterization implies that for every $x,y\in\mathcal M$,
\begin{equation}
\operatorname{dist}(y,x+T_x\mathcal M)
\le \frac{\|x-y\|^2}{2\tau}.
\label{eq:federerflat}
\end{equation}
Hence the normalized one-sided tangent flatness obeys
\begin{equation}
\sup_{y\in\mathcal M\cap B(x,r)}
\frac{\operatorname{dist}(y,x+T_x\mathcal M)}{r}
\le \frac{r}{2\tau}.
\label{eq:reachflatness}
\end{equation}
Thus the coarse-side locality requirement in the Goldilocks argument can be written as $r/\tau\ll1$ rather than the scale-free shorthand $r\ll1$.

\paragraph{Reach controls the $r^4$ transverse coefficient.}
Positive reach also bounds the second fundamental form:
\begin{equation}
\|II_x\|_{\mathrm{op}}\le \tau^{-1}.
\label{eq:iireach}
\end{equation}
Using the coefficient from Section~\ref{app:smooth},
\begin{align}
\kappa_x
&=\frac14\E\left\|II_x(U,U)-\E II_x(U,U)\right\|^2\\
&\le \frac14\E\|II_x(U,U)\|^2\\
&\le \frac{1}{4\tau^2}\E\|U\|^4.
\end{align}
For $U$ uniform in the unit $d$-ball, its radius has density $d r^{d-1}$ and therefore
\begin{equation}
\E\|U\|^4
=d\int_0^1 r^{d+3}\,dr
=\frac{d}{d+4}.
\end{equation}
Consequently
\begin{equation}
\boxed{
\kappa_x\le \frac{d}{4(d+4)\tau^2}.
}
\label{eq:kappareachgeneral}
\end{equation}
Thus positive reach does more than explain the exponent $4$: it gives the amplitude a geometric scale.

\paragraph{Exact two-dimensional codimension-one calculation.}
For a surface in $\mathbb R^3$, choose principal coordinates at $x$ and write the principal curvatures as $k_1,k_2$. For $U=(U_1,U_2)$ uniform in the unit disk,
\[
II_x(U,U)=(k_1U_1^2+k_2U_2^2)n_x.
\]
The disk moments are
\begin{equation}
\E U_1^2=\E U_2^2=\frac14,\qquad
\E U_1^4=\E U_2^4=\frac18,\qquad
\E U_1^2U_2^2=\frac1{24}.
\label{eq:diskmoments}
\end{equation}
Substitution into the centered variance gives
\begin{equation}
\boxed{
192\,\kappa_x
=
3(k_1^2+k_2^2)-2k_1k_2.
}
\label{eq:kappaexact2d}
\end{equation}
For $\kappa_x>0$, let $m_x=\max(|k_1|,|k_2|)>0$ and $\rho_{\mathrm{curv}}(x)=m_x^{-1}$. If $\kappa_x=0$, the positive-definite quadratic form above forces $k_1=k_2=0$ and the curvature radius is $+\infty$; the divisions below are then replaced by this convention. For $\kappa_x>0$, without loss of generality set $|k_1|=m_x$ and $z=k_2/k_1\in[-1,1]$. Then
\[
3(1+z^2)-2z
\]
has minimum $8/3$ at $z=1/3$ and maximum $8$ at $z=-1$. Hence
\begin{equation}
\frac{m_x^2}{72}\le \kappa_x\le \frac{m_x^2}{24},
\end{equation}
or equivalently
\begin{equation}
\boxed{
\frac{1}{\sqrt{72\kappa_x}}
\le \rho_{\mathrm{curv}}(x)
\le
\frac{1}{\sqrt{24\kappa_x}}.
}
\label{eq:curvradiusband}
\end{equation}
This interval concerns the \emph{local curvature radius}. Federer reach may be strictly smaller because it also sees global self-approach or bottleneck pairs:
\begin{equation}
\operatorname{reach}(\mathcal M)
\le \inf_x \rho_{\mathrm{curv}}(x).
\label{eq:reachvscurv}
\end{equation}
This is why we do not call the scalar residual amplitude a reach estimator.
\subsection{Noise-Aware Goldilocks Scales}
\label{sec:gold}
For iid sampling under \eqref{eq:ahlfors}, local counts obey, with high probability when $Nr^d$ dominates the confidence logarithm (including the number of simultaneous queries),
\begin{equation}
M_N(x,r)\asymp Nr^d.
\label{eq:localsample}
\end{equation}
Exact cancellation removes the noise mean, not its variance. For $H=\|Q(S+\eta)\|^2$,
\begin{equation}
\operatorname{Var}(H\mid S)=2m\sigma_N^4+4\sigma_N^2\|QS\|^2.
\label{eq:noise-aware-variance}
\end{equation}
Thus a signed residual difference from independent scale blocks, whose signal is $\kappa r^4$ with $\kappa>0$, retains a relative Gaussian-noise term of order
$\sigma_N^2\sqrt{m/L}/r^4$. For this independent-block estimator, a necessary condition for the Gaussian component to have vanishing relative mean-square error is
\begin{equation}
\frac{Lr^8}{m\sigma_N^4}\longrightarrow\infty.
\label{eq:resolution}
\end{equation}
For fixed $m$, fixed nonzero noise and $L\asymp Nr^d$, this is $Nr^{d+8}\to\infty$. If an additional low-noise/variance assumption yields fluctuations of order $r^2/\sqrt L$, a sufficient condition for that fluctuation bound to be negligible relatively is $Nr^{d+4}\to\infty$. Neither is asserted as a universal minimax threshold.

More generally, under a specified relative-error proxy
\begin{equation}
\mathrm{Err}(r)\lesssim Ar^\eta+
B(Nr^{d+\ell})^{-1/2},
\label{eq:biasvar}
\end{equation}
where $\ell=8$ for the fixed-noise component above and $\ell=4$ for the stipulated $r^2/\sqrt L$ regime. Write $m_N\asymp Nr_N^d$ for the local point count, distinct from the number $k_N$ of localization replicates. Setting $m_N=N^\vartheta$ gives
\begin{equation}
\vartheta>\frac{\ell}{d+\ell},\qquad \vartheta<1.
\label{eq:alphac}
\end{equation}
Balancing this proxy gives
\begin{equation}
r_N^*\asymp N^{-1/(2\eta+d+\ell)},\qquad
m_N^*\asymp N^{(2\eta+\ell)/(2\eta+d+\ell)}.
\label{eq:opt}
\end{equation}
These are conditional calculations, not automatic consequences of local RMT. Population-transfer, shell, terminal, and training errors must also be controlled. In particular, with $\alpha>0$,
\[
\frac{g_j}{r_j^{2+2\alpha}}\lesssim
\sqrt{\frac{\log(2K/\delta_P)}{n_jr_j^{4\alpha}}},
\qquad
\frac{1-\omega_j}{r_j^{2\alpha}}=O(1)
\]
are required for non-vacuous decay bounds. Section~\ref{app:ahlforsrmt} gives a compatible shrinking-noise example and states its additional assumptions.

For a smooth positive-reach subclass, the coarse-scale restriction is
\begin{equation}
\dist(y,x+T_x\mathcal M)\le\frac{\|x-y\|^2}{2\tau},
\qquad r/\tau\ll1.
\label{eq:reachlocality}
\end{equation}
Reach and positive curvature amplitude are smooth-calibration assumptions, not hypotheses needed by Theorem~\ref{thm:rect}.

\subsection{Sampling, Noise, and Terminal-Buffer Compatibility}
\label{app:ahlforsrmt}
\paragraph{Local counts and spectral regimes.}
For a frozen anchor, a validation count is binomial with mean
$N\muc(B(x,r))\asymp Nr^d$. A sampled anchor is excluded from its own count, or belongs to a separate anchor block. Chernoff yields, for $0<\epsilon<1$,
\[
\Pp(|M_N/(N\muc(B))-1|>\epsilon)
\le2\exp\{-c\epsilon^2Nr^d\}.
\]
If an ambient dimension obeys $D_N/N\to\gamma>0$, then the local ratio
$D_N/M_N\asymp r^{-d}$ diverges as $r\downarrow0$. A diagnostic choice $q_N\asymp M_N$ restores a proportional covariance regime, but does not establish operator-norm covariance consistency at fixed noise. Nor can an unscaled Haar projection with $q_N/D_N\to0$ have a uniform non-collapse constant on a fixed independent direction: for unit $v$,
\[
\E\|\Pi_Nv\|^2=q_N/D_N,\qquad
\Pp(\|\Pi_Nv\|\ge c_0)\le q_N/(D_Nc_0^2)\to0.
\]
Rescaling the projection also rescales noise and must be included in the analysis.

\begin{proposition}[Terminal remainder and count compatibility]
\label{prop:terminalfeasible}
For parent and terminal radii $r_{J_N}=N^{-a}$ and $r_{T_N}=N^{-b}$ and target exponent $2+2\alpha$,
\begin{equation}
r_{T_N}^2=o(r_{J_N}^{2+2\alpha}),\quad
Nr_{T_N}^d\to\infty
\quad\Longleftrightarrow\quad
a(1+\alpha)<b<1/d.
\label{eq:terminal-window}
\end{equation}
This is compatibility of the deterministic completion term and point count only; it is not sufficient for a non-vacuous complete certificate.
\end{proposition}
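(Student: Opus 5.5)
The claim reduces to two elementary limit computations with power laws in $N$. With $r_{J_N}=N^{-a}$ and $r_{T_N}=N^{-b}$, take $a,b>0$ as the implicit convention for shrinking radii, and fix $d\ge1$ and $\alpha>0$. I will show that each of the two conditions on the left of \eqref{eq:terminal-window} is equivalent to one of the two inequalities on the right, and then take the conjunction. No probabilistic input is needed. The constants are fixed and the radii are deterministic powers of $N$, so everything follows from the behavior of $N^{c}$ as $N\to\infty$.

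\textbf{Remainder condition.} First compute $r_{T_N}^2/r_{J_N}^{2+2\alpha}=N^{-2b+2a(1+\alpha)}$. Write $c=2a(1+\alpha)-2b$. The quantity $N^{c}$ tends to zero if and only if $c<0$. If $c=0$ it is identically one, and if $c>0$ it diverges. Hence $r_{T_N}^2=o(r_{J_N}^{2+2\alpha})$ holds exactly when $b>a(1+\alpha)$.

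\textbf{Count condition.} Next compute $Nr_{T_N}^d=N^{1-bd}$. This tends to infinity if and only if $1-bd>0$, that is, $b<1/d$. At $b=1/d$ the quantity is constantly one, so the boundary case fails as well.

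\textbf{Conclusion and scope.} Combining the two equivalences gives $a(1+\alpha)<b<1/d$. As a by-product, the window is nonempty precisely when $a<1/[d(1+\alpha)]$. The only step needing care is to state explicitly that the proposition concerns these pure power schedules, so that ``$o(\cdot)$'' and ``$\to\infty$'' reduce to sign conditions on the exponents. For general sequences one direction would only hold along subsequences. I would also add one sentence on the caveat in the statement. Here \emph{compatibility} means only that the terminal completion term $2r_T^2$ from \eqref{eq:core-min} is negligible at the parent scale and that terminal cores are eventually nonempty. It says nothing about $e_j$, $g_j$, or the shell term $1-\omega_j$. Consequently no claim of a non-vacuous certificate follows, and none needs proof.
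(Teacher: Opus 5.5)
Your proof is correct and follows essentially the same route as the paper: both reduce the two left-hand conditions to sign conditions on the exponents of $N^{2a(1+\alpha)-2b}$ and $N^{1-bd}$. Your explicit treatment of the boundary cases and your remark on what ``compatibility'' does not cover (the terms $e_j$, $g_j$, and $1-\omega_j$) are accurate additions that the paper leaves implicit.
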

\begin{proof}
The two quantities are
$r_{T_N}^2/r_{J_N}^{2+2\alpha}
=N^{-2b+a(2+2\alpha)}$
and $Nr_{T_N}^d=N^{1-bd}$. Their required limits give the two strict inequalities.
\end{proof}

\paragraph{The other necessary checks.}
The repaired bound also requires control of $e_j,e_T,g_j$ and the shell term. If $K$ is polynomial in $N$, the confidence logarithms are $O(\log N)$. At a parent scale,
\[
\frac{g_j}{r_j^{2+2\alpha}}
\lesssim\sqrt{\frac{\log N}{Nr_j^{d+4\alpha}}}.
\]
The Gaussian part has orders
\[
e_j\lesssim
\{\sigma_N^2\sqrt m+\sigma_Nr_j\}
\sqrt{\frac{\log N}{n_j}}
+\frac{\sigma_N^2\log N}{n_j}.
\]
These terms, and the terminal analogues divided by the parent target rate, must be assessed directly. Ahlfors mass growth alone does not give a sharp thin-shell estimate. For a concrete forward non-vacuity analysis one may additionally assume
\begin{equation}
\muc(B(x,r+h)\setminus B(x,r-h))\le C h r^{d-1}
\quad(0<h\le r/2),
\label{eq:annular}
\end{equation}
as in a regular smooth model.

\paragraph{One compatible rate example, with explicit additional premises.}
Consider fixed $q$, $d=2$, $\alpha=1$, polynomially many frozen queries, independent blocks proportional to $N$, and a smooth compact target satisfying \eqref{eq:annular}. Suppose the frozen planes already satisfy
$\sup_{z\in B(x,r_j)\cap E}\dist(z,L_{x,j})\le C r_j^2$.
This is a forward plane-accuracy premise, not a consequence of the converse theorem. For example, tangent planes in a positive-reach model have such a bound; an estimated plane must have its own verified accuracy.
Choose
\[
r_{J_N}=N^{-1/20},\quad r_{T_N}=N^{-1/8},
\quad \sigma_N=N^{-1}.
\]
The localization norm radius is $O(N^{-1}\sqrt{\log N})$, which is $o(r_{J_N}^3)$ and $o(r_{T_N})$. On the uniform noise event, the core contains the ball of radius $r_j-2\eta_L$ and the possible set lies in the ball of radius $r_j+2\eta_L$. Thin-shell control and binomial concentration imply, uniformly over polynomially many queries, $n_j\asymp Nr_j^2$ and
\[
1-\omega_j
=O\!\left(\eta_L/r_j+\sqrt{\frac{\log N}{Nr_j^2}}
+\frac{\log N}{Nr_j^2}\right)
=o(r_j^2)
\quad(j\le J_N).
\]
Here the displayed empirical error can be obtained by bounding both core and shell counts by bounded-indicator concentration; its deliberately loose square-root term is sufficient.
At the finest parent,
\[
\frac{r_T^2}{r_J^4}=N^{-1/20}\to0,\qquad
\frac{g_J}{r_J^4}
=O(N^{-7/20}\sqrt{\log N})\to0.
\]
The terminal count is of order $N^{3/4}$. Its Gaussian cross term is of order
$\sigma_Nr_T\sqrt{\log N/n_T}=N^{-3/2}\sqrt{\log N}$, also negligible relative to $r_J^4=N^{-1/5}$. The remaining Gaussian terms are smaller. Independent $O(N)$ anchors cover at $r_J/4$ with exponentially small failure by Lemma~\ref{lem:cover}.
Finally, the forward flatness premise bounds the core signal by $T_j=O(r_j^4)$, and the endpoint error then gives a non-vacuous $O(r_j^4)$ population bound. This example establishes compatibility of rates under its stated premises. It is not a power theorem for the former fixed-noise benchmark or an unproved claim about learned planes.

\subsection{Optimizing a Stated Goldilocks Error Proxy}
\label{app:optimal}
Let $\ell>0$ encode the specified variance regime and put $u=(d+\ell)/2$.
Assume, rather than infer from Ahlfors growth alone,
\[
E_N(r)=Ar^\eta+BN^{-1/2}r^{-u},\qquad A,B,\eta>0.
\]
Then
\[
E_N'(r)=A\eta r^{\eta-1}-BuN^{-1/2}r^{-u-1}.
\]
The stationary equation is $r^{\eta+u}=BuN^{-1/2}/(A\eta)$, so
\[
r_N^*=\left(\frac{Bu}{A\eta}\right)^{1/(\eta+u)}
N^{-1/(2\eta+d+\ell)},\qquad
m_N^*\asymp N^{(2\eta+\ell)/(2\eta+d+\ell)}.
\]
The error is $O(N^{-\eta/(2\eta+d+\ell)})$. The population-transfer, localization, and terminal terms are additional constraints. This calculation does not optimize their full sum or establish a minimax lower bound.

\subsection{Auxiliary Result: Richardson-Type Scaling Enclosure}
\label{sec:thm2}
At a frozen geometric triple $r_j=r_*\lambda^j$, $j=0,1,2$, $\lambda>1$, let $F_j$ be the population frozen-plane residual, for either the tangent or transverse component. Suppose its scientifically specified calibration model is
\begin{equation}
F_j=b+c\lambda^{j\beta}+\epsilon_j,\qquad
c>0,\quad \beta\in\mathcal B=[\beta_{\min},\beta_{\max}]
\subset(0,\infty),\quad |\epsilon_j|\le b_j.
\label{eq:frfit}
\end{equation}
The nonnegative remainder bounds $b_j$ are fixed before validation. An asymptotic $O(r^{4+\eta})$ without a known constant is not a finite-sample remainder bound. The proposition concerns these scaling parameters, not a test of unrestricted smoothness.

\begin{proposition}[Richardson-type contrast enclosure and equivalence]
\label{thm:adaptive}
Let $\mathcal E_0$ be the shared structural good event, with $\Pp(\mathcal E_0^c)\le\delta_0$. Given the frozen design information $\mathcal G_s$ in a direction $s$, suppose the simultaneous population residual intervals satisfy
\begin{equation}
\Pp\!\left(\mathcal E_0\cap
\{\exists j:F_j\notin[\ell_j,u_j]\}\mid\mathcal G_s\right)
\le\zeta_s.
\label{eq:conditional-miscoverage}
\end{equation}
This is a bound on an intersection with $\mathcal E_0$, not a conditional probability given that validation-dependent event. If a required input interval is unavailable, the procedure abstains before testing equivalence. Formally, extend an unavailable interval to $\mathbb R$ when stating \eqref{eq:conditional-miscoverage}, and evaluate the formulas below only when all three inputs are finite and available. Do not condition the error budget on successful emission. Define, for $k=1,2$,
\begin{equation}
A_k=\ell_k-u_0-b_k-b_0,\qquad
B_k=u_k-\ell_0+b_k+b_0.
\label{eq:contrast-bounds}
\end{equation}
If $A_1\le0$, report $\mathcal B$ (no nontrivial equivalence claim). Otherwise form
\begin{equation}
\mathcal H=
\left[\frac{\max\{0,A_2\}}{B_1},\,\frac{B_2}{A_1}\right]
\cap[1+\lambda^{\beta_{\min}},1+\lambda^{\beta_{\max}}],
\quad
CI_\beta=\left\{\frac{\log(h-1)}{\log\lambda}:h\in\mathcal H\right\}.
\label{eq:exact-beta-ci}
\end{equation}
An empty $\mathcal H$, inconsistent input interval, or nonpositive denominator causes abstention. On the residual-coverage event, $CI_\beta$ contains the true $\beta$.

For each honest direction, require nonempty intervals to satisfy
\begin{equation}
CI_\parallel\subset[2-\epsilon_\parallel,2+\epsilon_\parallel],
\qquad
CI_\perp\subset[4-\epsilon_\perp,4+\epsilon_\perp],
\label{eq:eqband}
\end{equation}
with pre-specified bands. If each component interval has conditional miscoverage at most $\zeta_s$ in direction $s$, the intersection--union argument and union over the two directions give
\begin{equation}
\boxed{\Pp(\text{a false reported equivalence})
\le\delta_0+\zeta_{A\to B}+\zeta_{B\to A}.}
\label{eq:deltatotal}
\end{equation}
No independence between directions is required. In particular, set each $\zeta_s=(\delta_{\rm total}-\delta_0)/2$.
\end{proposition}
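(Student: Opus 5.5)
The plan is to work on the event where the true residuals $F_0,F_1,F_2$ lie in their intervals $[\ell_j,u_j]$, and there show deterministically that $CI_\beta$ contains $\beta$. The error budget \eqref{eq:deltatotal} then follows by bookkeeping.

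\emph{Enclosure.} From \eqref{eq:frfit}, write $F_k-F_0=c(\lambda^{k\beta}-1)+\epsilon_k-\epsilon_0$ for $k=1,2$. On the coverage event, $\ell_k-u_0\le F_k-F_0\le u_k-\ell_0$, and $|\epsilon_k-\epsilon_0|\le b_k+b_0$. Hence the signal contrast $S_k:=c(\lambda^{k\beta}-1)$ satisfies $A_k\le S_k\le B_k$ with the definitions in \eqref{eq:contrast-bounds}. Since $c>0$ and $\lambda^\beta>1$, we have $S_k>0$, so $S_2\ge\max\{0,A_2\}$. When $A_1>0$, the denominators $B_1\ge S_1\ge A_1>0$ are positive.

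The exact Richardson identity $S_2/S_1=(\lambda^{2\beta}-1)/(\lambda^\beta-1)=1+\lambda^\beta$ removes the unknown $c$. It gives $\max\{0,A_2\}/B_1\le S_2/S_1\le B_2/A_1$. Because $\beta\in\mathcal B$ and $h\mapsto 1+\lambda^h$ is increasing, $1+\lambda^\beta$ lies in $[1+\lambda^{\beta_{\min}},1+\lambda^{\beta_{\max}}]$. Therefore $h_*=1+\lambda^\beta\in\mathcal H$, and $\beta=\log(h_*-1)/\log\lambda\in CI_\beta$. Since $h-1\ge\lambda^{\beta_{\min}}>0$ on $\mathcal H$, the logarithm is defined.

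The abstention branches never emit a false claim, so they need no separate treatment:
\begin{itemize}
\item an empty $\mathcal H$;
\item a nonpositive denominator;
\item an unavailable input;
\item the case $A_1\le0$, where only the trivial statement $\mathcal B\ni\beta$ is reported.
\end{itemize}

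\emph{Error bookkeeping.} A false reported equivalence in the sense of \eqref{eq:eqband} requires all component claims to be emitted, and at least one emitted $CI$ to exclude the true exponent. This uses the intersection--union principle: the reported claim ``the true exponent lies in the band'' is false only if some emitted interval misses it, given that the interval lies inside the band.

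So the false-report event is contained in $\mathcal E_0^c\cup\bigcup_s\bigl(\mathcal E_0\cap\{\exists j: F_j\notin[\ell_j,u_j]\text{ in direction }s\}\bigr)$. Unavailable intervals are extended to $\mathbb R$ and so never contribute a miss.

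The probability of this event is bounded as follows. Integrate \eqref{eq:conditional-miscoverage} over $\mathcal G_s$, which is legitimate because the bound is on an intersection with $\mathcal E_0$, not a conditional probability given that event. Then take a union bound over the two directions. This gives $\delta_0+\zeta_{A\to B}+\zeta_{B\to A}$ without requiring independence between directions. Setting each $\zeta_s=(\delta_{\rm total}-\delta_0)/2$ yields the final sentence of the statement.

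\emph{Main obstacle.} The delicate step is not the algebra but the measure-theoretic bookkeeping. The deterministic enclosure uses only coverage, so it can be applied pathwise. The danger is conditioning on the validation-dependent event $\mathcal E_0$ or on emission. To avoid this, I will phrase everything as set inclusions of the false-report event in the union of the failure events above, and apply the tower property only to the $\mathcal G_s$-conditional bound.
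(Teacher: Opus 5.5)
Your enclosure argument is the same as the paper's. On coverage you show $A_k\le c(\lambda^{k\beta}-1)\le B_k$, use the identity $(\lambda^{2\beta}-1)/(\lambda^\beta-1)=1+\lambda^\beta$ to eliminate $c$, and invert $h\mapsto\log(h-1)/\log\lambda$ on the image of $\mathcal B$. Your handling of the abstention branches is also fine.

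The gap is in the within-direction bookkeeping. You include the false-report event in direction $s$ in the event that \emph{some} emitted interval, tangent or transverse, misses its exponent. That is a miss somewhere among both components' residual triples, and you then charge it $\zeta_s$. The operative hypothesis, however, is per component: each component's triple has conditional miscoverage at most $\zeta_s$. Your inclusion is a union over the two components. Under the stated hypothesis it yields only $2\zeta_s$ per direction, i.e.\ $\delta_0+2\zeta_{A\to B}+2\zeta_{B\to A}$. That contradicts the boxed bound and the recommended allocation $\zeta_s=(\delta_{\rm total}-\delta_0)/2$, unless you silently strengthen the hypothesis to joint coverage of both components at level $\zeta_s$. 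Your phrase ``some emitted interval misses it'' describes exactly the union bound that the intersection--union principle is meant to avoid.

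The missing idea is the fixed-null reduction. Given $\mathcal G_s$, the frozen projectors make the true exponents $(\beta_\parallel,\beta_\perp)$ nonrandom, so which null is violated is a $\mathcal G_s$-measurable fact.
\begin{itemize}
\item On $\{\beta_\parallel\notin[2-\epsilon_\parallel,2+\epsilon_\parallel]\}$, a false emission forces the nonempty $CI_\parallel$, which lies inside that band, to miss $\beta_\parallel$.
\item On the complementary $\mathcal G_s$-event where only $\beta_\perp$ is out of band, it forces $CI_\perp$ to miss $\beta_\perp$.
\end{itemize}
By your enclosure, each such miss on $\mathcal E_0$ lies in the residual-miss event of that single, $\mathcal G_s$-selected component. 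Hence $\Pp(\{\text{false emission in direction } s\}\cap\mathcal E_0\mid\mathcal G_s)\le\zeta_s$, with no split between the two marginal intervals. Integrating over $\mathcal G_s$, taking the union over the two directions, and adding $\Pp(\mathcal E_0^c)\le\delta_0$ once then gives the stated bound $\delta_0+\zeta_{A\to B}+\zeta_{B\to A}$.
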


The intervals can be supplied by Theorem~\ref{thm:rect} applied separately to rank-$d$ and rank-$(q-d)$ residual projectors, with budgets including every scale in each component. The exact identity
$(\lambda^{2\beta}-1)/(\lambda^\beta-1)=1+\lambda^\beta$
proves \eqref{eq:exact-beta-ci}; Section~\ref{app:split} gives the complete proof and a consistency condition. No CLT is needed for this certificate.

\paragraph{Classical identity and conservative enclosure.}
For zero remainders and exact $F_j$, write $a=\lambda^\beta$. Then
\[
\frac{F_2-F_0}{F_1-F_0}=1+a,
\qquad
\beta=\frac{\log((F_2-F_1)/(F_1-F_0))}{\log\lambda}.
\]
This is the equal-ratio three-grid observed-order formula; see
\citet[Step 3, equation (3)]{celik2008} and its Richardson lineage
\citep{richardsongaunt1927}. Equation~\eqref{eq:exact-beta-ci} encloses the
contrast ratio using bounded-error interval arithmetic
\citep{jaulinwalter1993}. Because the shared $F_0$, amplitude, and remainder
constraints are relaxed, it is generally an outer interval rather than
the exact projection of the joint feasible parameter set. The algebraic
identity is exact; the uncertainty enclosure may be conservative.
Probabilistic extrapolation and estimation of convergence order also
have prior statistical treatments \citep{oates2025}.
For example, take $\lambda=2$, $\mathcal B=[1/2,2]$, zero remainders,
$F_0\in[10,11]$, $F_1=12$, and $F_2=14$. The displayed enclosure is
$[1/2,\log_2 3]$, whereas the exact feasible exponent range is
$[1/2,1]$: the shared $F_0$ forces
$(F_2-F_1)/(F_1-F_0)\in[1,2]$. This demonstrates why an exact
set-inversion claim would be too strong.

The equivalence step follows classical intersection--union reasoning
\citep{berger1982,schuirmann1987,bergerhsu1996}.

Profile least-squares and its sandwich approximation remain useful higher-power diagnostics under the additional bias, identifiability and conditional-CLT assumptions in Sections~\ref{app:fr}--\ref{app:clt}. They are not substituted for finite-sample coverage without verifying those assumptions.
\subsection{Proof of Finite-Sample Held-Out Equivalence}
\label{app:split}
\subsubsection{Conservative contrast enclosure}
On simultaneous coverage $F_j\in[\ell_j,u_j]$, the bounded-remainder model gives, for $k=1,2$,
\begin{align}
D_k&=c(\lambda^{k\beta}-1)
=F_k-F_0-(\epsilon_k-\epsilon_0),\\
A_k&\le D_k\le B_k.
\end{align}
Since $\beta>0$, $c>0$, both contrasts are positive. If $A_1>0$,
\[
\frac{\max\{0,A_2\}}{B_1}
\le\frac{D_2}{D_1}
=\frac{\lambda^{2\beta}-1}{\lambda^\beta-1}
=1+\lambda^\beta
\le\frac{B_2}{A_1}.
\]
Intersecting with the image of $\mathcal B$ and inverting this strictly increasing function proves \eqref{eq:exact-beta-ci}. If the observed denominator cannot be bounded away from zero, returning the whole parameter domain preserves coverage. An empty compatible interval is not accepted by a vacuous set-inclusion check.

\subsubsection{Conditioning, IUT, and the global budget}
For direction $A\to B$, condition on the entire design/training block $A$ and the fixed preliminary information; do not condition on a residual or mass good event computed from $B$. Use the intersection bound \eqref{eq:conditional-miscoverage} for each component. It follows from the population and Gaussian failure bounds conditional on the frozen design, intersected afterwards with localization and calibration validity. The theorem above then gives the same intersection bound for exponent miscoverage.

If the tangent parameter lies outside its equivalence band, a nonempty interval contained in that band cannot contain the true parameter. Thus
\[
\{C_{A\to B}\text{ is false}\}\cap\mathcal E_0
\subseteq\{\beta_\parallel\notin CI_\parallel\}\cap\mathcal E_0.
\]
If instead only the transverse parameter is outside its band, use its interval. For a frozen design the null parameters are fixed, so one of these inclusions supplies the directional error bound $\zeta_{A\to B}$. This is the established intersection--union principle \citep{schuirmann1987}. There is no additional within-direction split between the two marginal exponent intervals; each interval must nevertheless account for all residual queries used to construct it.

The reverse direction uses the same argument. Their decisions may be dependent. A union bound and $\Pp(\mathcal E_0^c)\le\delta_0$ give \eqref{eq:deltatotal}. If the two directions use different target windows, the reported assertion is that at least one of the accepted frozen windows has the stated scaling parameters. It is not an assertion about every window.

\subsubsection{When the conservative interval shrinks}
Suppose for $j=0,1,2$ that interval widths divided by $c_N$ tend to zero in probability, that $b_{j,N}/c_N\to0$, and that $\beta_*$ is fixed in the interior of $\mathcal B$. Also require $\delta_{0,N}+\zeta_{A\to B,N}+\zeta_{B\to A,N}\to0$, so simultaneous coverage has probability tending to one. On coverage, the interval endpoints for $D_k/c_N$ converge to $\lambda^{k\beta_*}-1$. In particular,
$A_1/c_N\to\lambda^{\beta_*}-1>0$. The lower and upper ratios both converge to $1+\lambda^{\beta_*}$. Continuity of $h\mapsto\log(h-1)/\log\lambda$ then gives convergence of both confidence endpoints to $\beta_*$. With structural validity probability tending to one, and true parameters strictly inside both equivalence bands, the directional acceptance probability tends to one. This requires those width and remainder conditions; sample splitting alone does not provide them.

\subsubsection{Wald intervals have a separate, conditional role}
The profile estimator from Section~\ref{app:fr} and the sandwich calculation from Section~\ref{app:clt} can be used as diagnostics or, with verified uniform conditional coverage, as an alternative inferential procedure. A pointwise CLT, reused cross-fit dispersion, or an unspecified smooth remainder does not by itself establish that coverage. The finite-sample statement of Proposition~\ref{thm:adaptive} avoids this gap by using population residual intervals and a stated remainder envelope.

\subsection{Finite-Resolution Exponent Estimation}

\label{app:fr}
This is an auxiliary asymptotic profile model, not the finite-sample guarantee
of Proposition~\ref{thm:adaptive}. Assume a compact exponent interval contained
in $(0,\infty)$, at least two distinct nonzero scale contrasts, an interior
true exponent, positive amplitude, and a fixed positive-definite weight
matrix. The sampling/model remainder conditions below must be verified for
the estimator actually used.

For a fixed finite $J\ge2$ and fixed ratios $r_{N,j}=\lambda_jr_{N,0}$, $1=\lambda_0<\lambda_1<\cdots<\lambda_J$, suppose
\begin{equation}
V_{N,j}=a_N(r_{N,j}^{\beta_*}-r_{N,0}^{\beta_*})+\rho_{N,j}.
\end{equation}
Let $c_N^*=a_Nr_{N,0}^{\beta_*}$ and $\phi_j(\beta)=\lambda_j^\beta-1$. The weighted least-squares estimator is
\begin{equation}
(\widehat c_N,\widehat\beta_N)
=\arg\min_{c\ge0,\beta\in\mathcal B}
\|\widehat V_N-c\phi(\beta)\|_W^2,\qquad
\langle u,v\rangle_W=u^\top Wv,\quad W=W^\top\succ0.
\end{equation}
For fixed $\beta$, profiling gives
\begin{equation}
\widehat c_N(\beta)=
\max\left\{0,\frac{\langle\widehat V_N,\phi(\beta)\rangle_W}{\|\phi(\beta)\|_W^2}\right\}.
\end{equation}
Near the positive true amplitude the truncation is asymptotically inactive. After normalization by $c_N^*$, the limiting profile loss is
\begin{equation}
q(\beta)=\|\phi(\beta_*)\|_W^2-
\frac{[\langle\phi(\beta_*),\phi(\beta)\rangle_W]_+^2}{\|\phi(\beta)\|_W^2}\ge0.
\end{equation}
The positive part is essential for a general positive-definite $W$; it can be dropped for a positive diagonal $W$, but not globally in general. Equality holds iff $\phi(\beta)$ and $\phi(\beta_*)$ are positively collinear. Near $\beta_*$ the inner product is positive, so the subsequent local derivative calculation is unchanged. With two distinct ratios $1<\lambda_1<\lambda_2$, consider
\begin{equation}
H(\beta)=\frac{\lambda_2^\beta-1}{\lambda_1^\beta-1}.
\end{equation}
Its log derivative is
\begin{equation}
\frac{H'(\beta)}{H(\beta)}
=\frac{\lambda_2^\beta\log\lambda_2}{\lambda_2^\beta-1}
-\frac{\lambda_1^\beta\log\lambda_1}{\lambda_1^\beta-1}.
\end{equation}
Let $F_\beta(x)=x/(1-e^{-\beta x})$. Then
\begin{equation}
F_\beta'(x)=
\frac{1-e^{-\beta x}-\beta x e^{-\beta x}}{(1-e^{-\beta x})^2}>0,
\end{equation}
because multiplying the numerator by $e^{\beta x}$ yields $e^{\beta x}-1-\beta x>0$. Thus $H$ is strictly increasing and $\beta_*$ is the unique minimizer.

Define the sampling error $\xi_N=\widehat V_N-V_N$. If
\begin{equation}
\delta_N=\frac{\max_j|\rho_{N,j}+\xi_{N,j}|}{a_Nr_{N,0}^{\beta_*}}\to_P0,
\end{equation}
uniform convergence of the finite-dimensional profile objective and the unique-minimizer argument give $\widehat\beta_N\to_P\beta_*$. The local profile curvature is
\begin{equation}
\kappa_*=\|\phi_*'\|_W^2-
\frac{\langle\phi_*,\phi_*'\rangle_W^2}{\|\phi_*\|_W^2}>0,
\end{equation}
which also yields $\widehat\beta_N-\beta_*=O_P(\delta_N)$.

\paragraph{Why naive log--log fitting overshoots.}
For $V(r)=a(r^\beta-r_0^\beta)$,
\begin{equation}
\frac{d\log V}{d\log r}
=\frac{\beta}{1-(r_0/r)^\beta}>\beta.
\end{equation}
The local slope diverges as $r\downarrow r_0$, explaining the finite-resolution exponent overshoots seen in uncorrected fits.

\subsection{Asymptotic Normality and Sandwich Variance}
\label{app:clt}

These auxiliary asymptotics assume a specified conditional iid validation
law at each frozen scale. They do not follow automatically for the repaired
finite cores. Let the entire design/training sigma-field be $\mathcal G$.
Assume identifiability, consistency, negligible model/training bias at the
CLT scale, and the conditional limit below with one deterministic finite
positive-semidefinite matrix $\Omega_*$. A design-dependent random covariance would require a separately formulated mixed-normal limit. Assume
\begin{equation}
s_N(\widehat V_N-c_N^*\phi(\beta_*))/c_N^*
\Rightarrow N(0,\Omega_*),\qquad s_N\to\infty .
\label{eq:vclt}
\end{equation}
Then the profile influence calculation below yields
\begin{equation}
s_N(\widehat\beta_N-\beta_*)\Rightarrow
N\!\left(0,\frac{h_*^\top W\Omega_*Wh_*}{(h_*^\top Wh_*)^2}\right).
\label{eq:betaclt}
\end{equation}
This pointwise assertion is not a uniform coverage theorem. Let
\begin{equation}
\widehat V_N=c_N^*\phi_*+e_N,
\qquad
\psi_*=\phi'(\beta_*).
\end{equation}
Write $\widehat c_N=c_N^*+\Delta c_N$ and $\widehat\beta_N=\beta_*+\Delta\beta_N$. Since $c_N^*$ may vanish, expand in the normalized amplitude $\widehat c_N/c_N^*$. The positive local profile curvature and interior positive amplitude give the local implicit-function expansion; consistency places the minimizer in this neighborhood with probability tending to one. A first-order expansion gives
\begin{align}
\phi(\widehat\beta_N)
&=\phi_*+\psi_*\Delta\beta_N+O(\Delta\beta_N^2),\\
\widehat V_N-\widehat c_N\phi(\widehat\beta_N)
&=e_N-\phi_*\Delta c_N-c_N^*\psi_*\Delta\beta_N+o_P(\|e_N\|).
\end{align}
The normal equations for $c$ and $\beta$ are, to first order,
\begin{align}
\phi_*^\top W[e_N-\phi_*\Delta c_N-c_N^*\psi_*\Delta\beta_N]&=o_P(\|e_N\|),\\
\psi_*^\top W[e_N-\phi_*\Delta c_N-c_N^*\psi_*\Delta\beta_N]&=o_P(\|e_N\|).
\end{align}
The first equation gives
\begin{equation}
\Delta c_N=
\frac{\phi_*^\top We_N}{\phi_*^\top W\phi_*}
-c_N^*\frac{\phi_*^\top W\psi_*}{\phi_*^\top W\phi_*}\Delta\beta_N
+o_P(\|e_N\|).
\end{equation}
Define the nuisance-orthogonal direction
\begin{equation}
h_*=\psi_*-\phi_*\frac{\phi_*^\top W\psi_*}{\phi_*^\top W\phi_*},
\qquad \phi_*^\top Wh_*=0.
\end{equation}
Substituting $\Delta c_N$ into the second normal equation cancels the amplitude nuisance and yields
\begin{equation}
h_*^\top We_N-c_N^*h_*^\top Wh_*\Delta\beta_N=o_P(\|e_N\|).
\end{equation}
Hence the influence representation is
\begin{equation}
\Delta\beta_N=
\frac{h_*^\top We_N}{c_N^*h_*^\top Wh_*}
+o_P\left(\frac{\|e_N\|}{c_N^*}\right).
\end{equation}
Applying \eqref{eq:vclt} and the continuous mapping theorem proves \eqref{eq:betaclt}. A plug-in sandwich estimator additionally requires $\widehat c_N/c_N^*\to_P1$, $\widehat h\to_P h_*$, and normalized covariance consistency as specified below; absolute consistency of a vanishing amplitude alone is insufficient.

In the smooth transverse case $c_N^*\asymp r_{N,0}^4$. If the held-out residual fluctuation is $O_P(r_{N,0}^2/\sqrt{L_N})$, then
\begin{equation}
\frac{e_N}{c_N^*}=O_P\left(\frac1{\sqrt{L_N}r_{N,0}^2}\right),
\end{equation}
so the natural rate is $s_N=\sqrt{L_N}r_{N,0}^2$, requiring $L_Nr_{N,0}^4\to\infty$. This is a corollary under the stated variance scaling, not a universal rate assumption.

\subsubsection{Scale-disjoint held-out covariance}
The repeated cross-fit variability used for algorithmic stability should not be confused with sampling variance, because repeated random splits can reuse the same observations. A theorem-grade implementation avoids this ambiguity. Conditional on the selector block, partition the validation sample into disjoint scale blocks $\cB_0,\ldots,\cB_J$. At scale $j$, estimate the frozen-scale residual mean from held-out pair contributions
\begin{equation}
\widehat R_j=\frac1{L_j}\sum_{\ell=1}^{L_j} Z_{j\ell},\qquad
Z_{j\ell}=\|(I-\widehat P_j)Y_{j\ell}\|^2.
\end{equation}
Under iid validation sampling and conditional on the scale-specific training projector, the blocks are independent and
\begin{equation}
\operatorname{Cov}(\widehat R\mid\mathcal G)
=\Sigma_R=\operatorname{diag}\!\left(\frac{v_0}{L_0},\ldots,\frac{v_J}{L_J}\right),
\qquad v_j=\operatorname{Var}(Z_{j1}\mid\mathcal G).
\end{equation}
For $L_j\ge2$, the plug-in estimator
\begin{equation}
\widehat v_j=\frac1{L_j-1}\sum_{\ell=1}^{L_j}(Z_{j\ell}-\widehat R_j)^2
\end{equation}
requires an additional triangular-array moment condition for consistency. With $J$ fixed, one sufficient conditional condition for every nondegenerate scale is
\begin{equation}
L_j\longrightarrow\infty,\qquad
\frac{\E[(Z_{j1}-\E[Z_{j1}\mid\mathcal G])^4\mid\mathcal G]}
     {L_j v_j^2}\longrightarrow0
\label{eq:variance-ui}
\end{equation}
almost surely with respect to the frozen design (or in probability for the joint formulation). Then $\widehat v_j/v_j\to_P1$: Chebyshev controls the average squared centered score relative to $v_j$, while the squared sample-mean error divided by $v_j$ is $O_P(L_j^{-1})$. If $v_j=0$, the score is conditionally constant and both variances are zero. Iid sampling within each row alone is insufficient when its law changes with $N$; Section~\ref{sec:sandwich-counterexample} gives a counterexample. For smooth calibration, use the signed finite-resolution vector $V=C R$ (equivalently $V_j=R_j-R_0$ when residuals are increasing across the frozen window). Then
\begin{equation}
\Sigma_{e,N}=C\Sigma_RC^\top,
\qquad
\widehat\Sigma_{e,N}=C\widehat\Sigma_RC^\top,
\end{equation}
and the unscaled first-order sandwich variance estimate is
\[
\widehat{\operatorname{Var}}(\widehat\beta_N)=
\frac{\widehat h^\top W\widehat\Sigma_{e,N}W\widehat h}
{\widehat c_N^{\,2}(\widehat h^\top W\widehat h)^2}.
\]
For a valid plug-in CLT covariance, impose additionally
\begin{equation}
\frac{s_N^2}{(c_N^*)^2}\Sigma_{e,N}\longrightarrow\Omega_*,\qquad
\frac{s_N^2}{(c_N^*)^2}\|\widehat\Sigma_{e,N}-\Sigma_{e,N}\|_{\op}
\longrightarrow_P0.
\label{eq:sandwich-consistency}
\end{equation}
The first convergence is not implied by weak convergence in \eqref{eq:vclt} alone. Condition~\eqref{eq:variance-ui}, a fixed contrast matrix $C$, and bounded normalized component variances $s_N^2v_j/((c_N^*)^2L_j)$ suffice for the second convergence. For an ordinary studentized Wald interval also require $h_*^\top W\Omega_*Wh_*>0$ and the stated negligible CLT-scale bias. The displayed sandwich is a first-order approximation, not an exact finite-sample variance or coverage guarantee. Keeping the normalizations separate avoids a missing amplitude or rate factor. Absolute variation remains the rectifiability-side statistic; signed increments are used only for the smooth exponent CLT, thereby avoiding the non-differentiability of the absolute-value map at zero.

\subsection{The positive-part constraint in the global profile}
The amplitude is constrained to be nonnegative. For any symmetric
positive-definite $W$, exact minimization over $c\ge0$ gives
\[
\inf_{c\ge0}\|y-c\phi\|_W^2
=\|y\|_W^2-\frac{[\langle y,\phi\rangle_W]_+^2}{\|\phi\|_W^2}.
\]
A positive-definite matrix need not give a positive inner product between
two componentwise positive vectors. For ratios $(2,4)$, true exponent $1$,
and candidate exponent $3$, take
\[
y=(1,3)^\top,\quad \phi=(7,63)^\top,\quad
W=\begin{pmatrix}25.1&-5\\-5&1.1\end{pmatrix}.
\]
Here $W=(-5,1)^\top(-5,1)+0.1I\succ0$,
$\langle y,\phi\rangle_W=-36.4$, $\|y\|_W^2=5$, and
$\|\phi\|_W^2=1185.8$. The constrained minimum is $5$, attained at
$c=0$; the former formula without the positive part gives $3.8826446$.
Section~\ref{app:fr} now uses the correct global loss and a matrix-weighted
objective. Local curvature and the influence calculation remain valid near
the positive true amplitude, where the truncation is inactive.

\subsection{Why iid rows and a weak CLT do not validate the sandwich}
\label{sec:sandwich-counterexample}
Let the nonnegative score in row $N$ be
\[
Z_{N\ell}=E_\ell+NB_{N\ell},\qquad
E_\ell\sim\operatorname{Exp}(1),\quad
B_{N\ell}\sim\operatorname{Bernoulli}(N^{-2}),\quad L_N=N,
\]
with all variables independent within the row. Each row is iid with finite
moments of every order, mean $1+N^{-1}$, and variance $2-N^{-2}$.
Yet $\Pp(\exists\ell:B_{N\ell}=1)\le N^{-1}\to0$. With probability
tending to one the entire observed row consists only of exponential draws.
Therefore its sample variance tends in probability to $1$, not to the
row-variance limit $2$, and
\[
\sqrt N\{\overline Z_N-(1+N^{-1})\}\Rightarrow N(0,1)
\]
even though the variance of this normalized mean tends to $2$.
Thus weak convergence alone does not identify the limit of covariances.

The repaired sandwich section imposes
\eqref{eq:variance-ui} and \eqref{eq:sandwich-consistency} separately from
the CLT. These are sufficient extra premises, not consequences of selected
finite-core sampling. Positive limiting variance and negligible bias are
also required for ordinary Wald coverage. The finite-sample contrast
enclosure in Proposition~\ref{thm:adaptive} does not depend on this CLT.

\input{list_of_codes.tex}

\end{document}

%% file: fractal_controls_experiments.tex
\subsection{Smooth Julia and Cantor controls with observable estimators}
\label{sec:fractal-controls}
We add two controls whose assumptions and reference quantities can be
checked directly. The unit circle is the Julia set of $z\mapsto z^2$;
the middle-third Cantor measure is a classical self-similar example,
with $s=\log 2/\log 3$, credited to the framework of
\citet{hutchinson1981}. Neither model nor this dimension formula is a
new contribution. The correlation-energy objective retains the credit
to \citet{grassbergerprocaccia1983}. The new computation is an evaluation
of the manuscript's specified observation-based estimators, including
their failures. General Mandelbrot boundaries, arbitrary Julia sets and
Kleinian limit sets are not added as empirical validations: their
relevant measure, observation law and geometric assumptions would first
have to be established.

\paragraph{Two observation designs and their costs.}
Both controls have $q=3$, with independent isotropic Gaussian noise in
all three observed coordinates. For the circle, $N=1{,}000,6{,}000,10{,}000$
latent points and a fixed anchor $(1,0,0)$ have averaged localization
views with $k=10{,}000$ replicates each. Independent calibration uses
$N$ raw replicate pairs. Thus the raw measurement cost per trial is
$Nk+k+2N$; exact Gaussian means are simulated without storing all raw
replicates. The unknown standard deviations are $0,0.02,0.06$, and the
five fixed radii are $0.15,0.30,0.60,1.20,2.10$. The existing averaged-view
\texttt{fit} receives observations and a total per-fit failure budget
$0.05$, shared across these radii. True noise and dimension are not
arguments to the estimator.

For the Cantor control, let $Z=(2C-1,0,0)$ with
$C=\sum_{j\ge1}2B_j3^{-j}$. Each trial instead uses $3N$ distinct
independent latent points: $N$ for affine noise calibration and two
samples of size $N$ for independent-pair energy estimation. Each point
has one noisy observation. Here $N=2{,}000,10{,}000,50{,}000$ and
$\sigma=0,0.015,0.04$. The global affine span has dimension one below
$q=3$, satisfying the affine identification premise of
Theorem~\ref{thm:hard-single}; it is a property of the latent model.
No zero coordinate is appended to observed data. The existing
\texttt{affine\_noise\_interval} and \texttt{energy\_intervals}
receive disjoint observations, with budgets $0.025$ and $0.025$.
The five radius pairs have
$r=0.01,0.02,0.04,0.08,0.16$ and $R=\sqrt r$.

\paragraph{What is measured.}
There are 20 independent trials at each $(N,\sigma)$, with all five radii
evaluated on the same trial observations: 180 independent trials and
900 radius-level outputs per family. These are fixed experimental
grids, not the shrinking-radius, increasing-replication and summable-budget
schedules of the almost-sure theorems. The real-valued Cantor estimate
targets the finite-scale slope
\[
s(r,R)=\frac{\log\mathcal E_\mu(R)-\log\mathcal E_\mu(r)}{\log(R/r)}.
\]
Its reported interval covers that target under the observation premises;
without a finite-scale bias bound it is not an interval for the limiting
dimension $s$. Point estimates requiring the logarithm of a nonpositive
clipped energy are recorded as unavailable. Empirical summaries retain
their denominators. The Cantor algorithm produces a scalar estimate,
not a reconstructed fractal image.

\clearpage
\begin{figure}[H]
\centering
\includegraphics[width=\linewidth,height=.64\textheight,keepaspectratio]{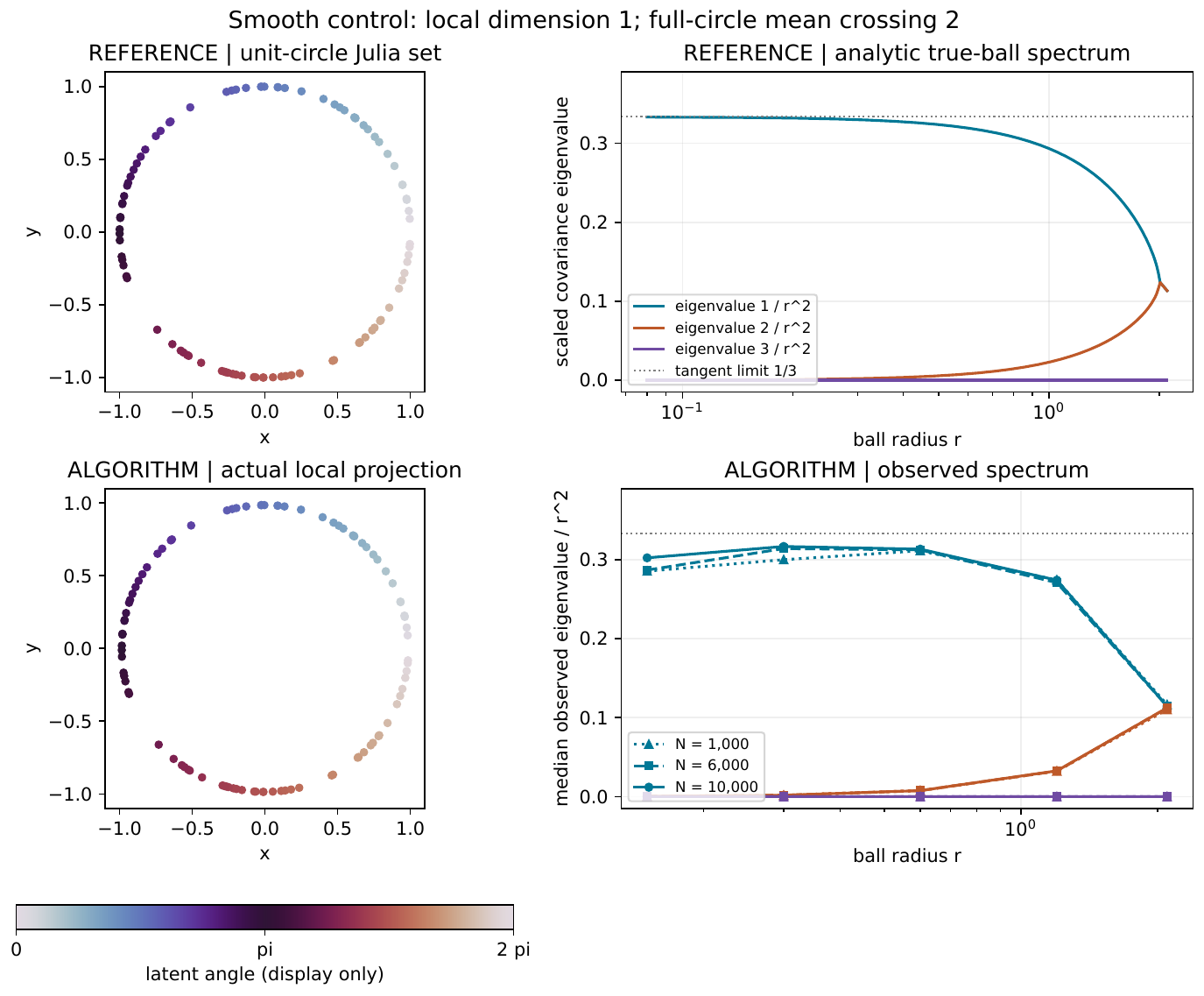}
\caption{Smooth Julia control. Upper left: independently generated unit-circle
reference points, colored by latent angle for display only. Lower left:
actual \texttt{local\_project} outputs for the same 96 held-out queries,
using $N=k=10{,}000$, $\sigma=0.06$ and $r=0.30$. Both are $xy$ views of
three-dimensional data. Upper right: exact true-ball covariance eigenvalues
divided by $r^2$. Lower right: median observed eigenvalues at $\sigma=0.06$;
color identifies the ordered eigenvalue and line style identifies $N$.
The apparent visual agreement does not establish improved reconstruction:
median three-dimensional query error increases from $0.000801$ before
projection to $0.014257$ after projection, consistent with curvature bias.}
\label{fig:circle-control}
\end{figure}

All 720 radius-level point estimates at $r\le1.20$ equal one; all 180 at
$r=2.10$ equal two, the full-circle population mean crossing. These results
agree with the exact covariance calculation in
Section~\ref{app:fractal-control-calculations}. There are no unavailable
point ranks, but also no spectral certificate emissions in these 900
fits. No geometric bounds were supplied to the numerical fit and hence
no geometric certificate is asserted. Point agreement, conservative
certification and projection quality are three distinct outcomes.

\clearpage
\begin{figure}[H]
\centering
\includegraphics[width=\linewidth,height=.60\textheight,keepaspectratio]{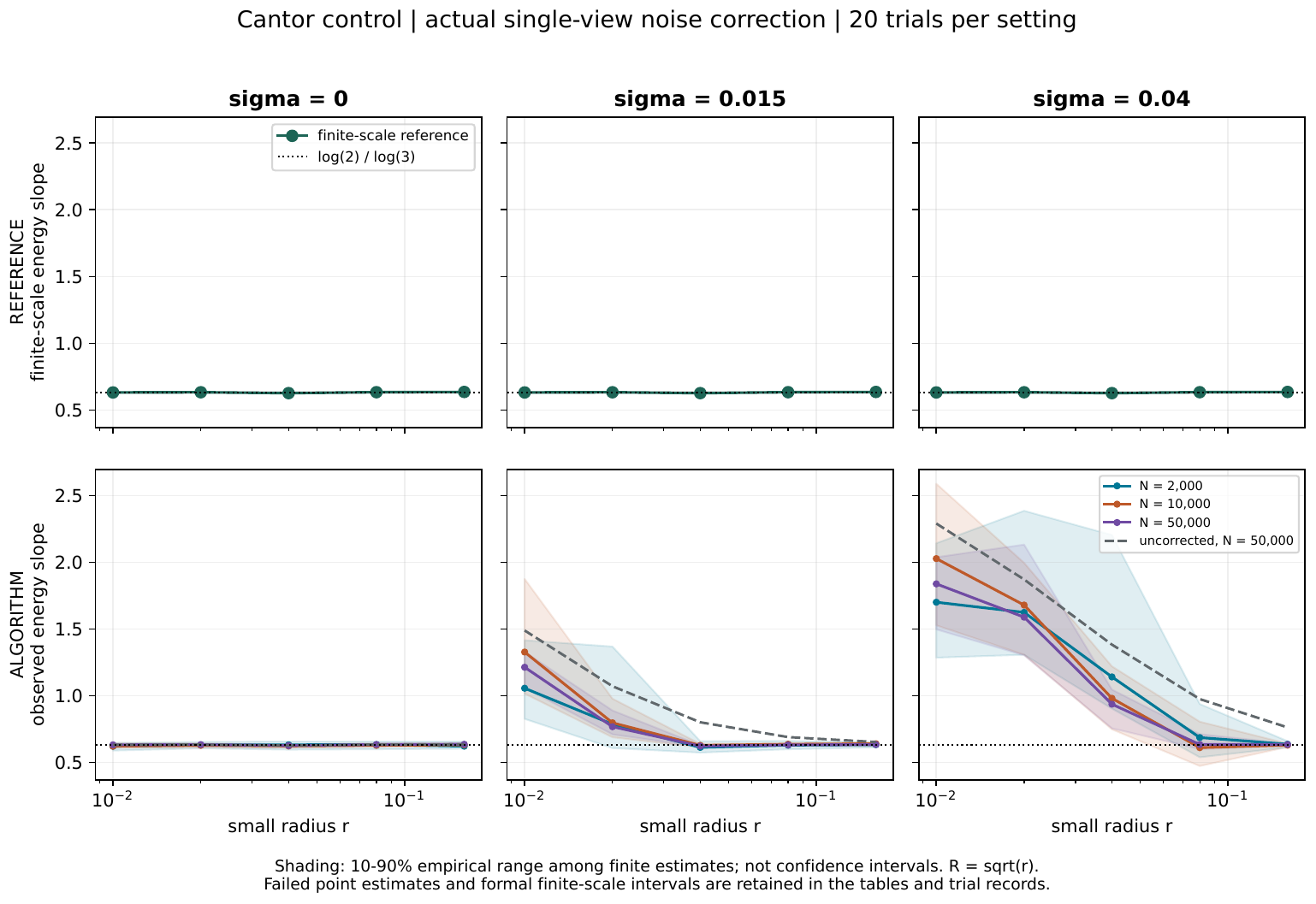}
\caption{Cantor energy control. Upper panels: the same independently
calculated population finite-scale reference, repeated above each noise
condition for direct comparison. Its narrow band bounds digit truncation
analytically; floating-point rounding is not rigorously enclosed. Lower
panels: medians of the actual single-view estimates, with color identifying
$N$. Shading is the empirical 10--90\% range conditional on a finite point
estimate, not a confidence interval. The dashed gray curve applies the
uncorrected Gaussian kernel to the same noisy pairs at $N=50{,}000$.
The dotted horizontal line is the known limiting value $\log2/\log3$;
it is not an estimator input. Shared axes retain the poor resolution at
small radii. All missing outputs and formal interval widths are retained
in the table and trial records.}
\label{fig:cantor-control}
\end{figure}

At $(N,\sigma,r)=(50{,}000,0.015,0.08)$ the median corrected slope is
$0.631338$, compared with finite-scale reference $0.633604$ and
uncorrected median $0.689819$. At $\sigma=0.04,r=0.01$, only 15 of 20
point estimates are available; their median is $1.837851$ and the median
formal interval width is the entire ambient range, three. Such a result
has little resolving power. Across all settings, 39 of 900 point estimates
are unavailable. All 900 computed intervals contain the numerical reference
band, but the five intervals within a trial are dependent and many are
wide. This containment check is neither a study-wide 95\% guarantee nor
evidence for almost-sure convergence.

\clearpage
\input{fractal_control_results.tex}
For $\sigma=0.04,r=0.08$, increasing $N$ from $2{,}000$ through
$10{,}000$ to $50{,}000$ reduces the observed finite-scale RMSE from
$0.186195$ through $0.129036$ to $0.042920$. This is a comparison at a fixed
radius. The radius-dependent bias and the failure of finer noisy scales
are still present; no asymptotic rate is inferred from these three sample
sizes. At finite radii the population slope oscillates around $s$, so the
reference curve itself need not approach $s$ monotonically.
Reference precision is derived in
Section~\ref{app:fractal-control-calculations}; all trial records and
replay commands are supplied under C13. The controls add no numbered
theorem or general fractal reconstruction guarantee.

%% file: fractal_control_results.tex
\begin{table}[H]\centering\small
\begin{tabular}{rrrrrrr}
\toprule
$\sigma$ & $r$ & Finite / 20 & Median & Reference & RMSE & CI width\\
\midrule
0.000 & 0.01 & 20 & 0.632155 & 0.631618 & 0.006406 & 0.114664\\
0.000 & 0.02 & 20 & 0.633511 & 0.633006 & 0.006129 & 0.109296\\
0.000 & 0.04 & 20 & 0.626207 & 0.626580 & 0.005669 & 0.105429\\
0.000 & 0.08 & 20 & 0.634208 & 0.633604 & 0.005855 & 0.105901\\
0.000 & 0.16 & 20 & 0.635884 & 0.635245 & 0.005971 & 0.113879\\
0.015 & 0.01 & 19 & 1.213172 & 0.631618 & 0.746265 & 3.000000\\
0.015 & 0.02 & 20 & 0.767734 & 0.633006 & 0.161306 & 1.640854\\
0.015 & 0.04 & 20 & 0.622534 & 0.626580 & 0.005880 & 0.128854\\
0.015 & 0.08 & 20 & 0.631338 & 0.633604 & 0.004419 & 0.111568\\
0.015 & 0.16 & 20 & 0.633968 & 0.635245 & 0.004273 & 0.115894\\
0.040 & 0.01 & 15 & 1.837851 & 0.631618 & 1.514982 & 3.000000\\
0.040 & 0.02 & 20 & 1.588353 & 0.633006 & 1.117078 & 3.000000\\
0.040 & 0.04 & 20 & 0.936465 & 0.626580 & 0.319922 & 3.000000\\
0.040 & 0.08 & 20 & 0.634462 & 0.633604 & 0.042920 & 0.690981\\
0.040 & 0.16 & 20 & 0.634099 & 0.635245 & 0.005304 & 0.128641\\
\bottomrule
\end{tabular}
\caption{All five declared Cantor radii at $N=50{,}000$ per sample, with $3N$ single-view observations per trial. Medians and RMSE use finite point estimates; RMSE is relative to the finite-scale reference. CI width is the median width of the formal finite-scale interval across all 20 trials. It does not quantify finite-scale bias relative to $\log2/\log3$.}
\label{tab:cantor-control-results}
\end{table}
\begin{table}[H]\centering\small
\begin{tabular}{rrrr}
\toprule
$N$ & $\sigma$ & $r$ & Finite point estimates / 20\\
\midrule
2,000 & 0.015 & 0.01 & 12\\
2,000 & 0.015 & 0.02 & 19\\
2,000 & 0.040 & 0.01 & 14\\
2,000 & 0.040 & 0.02 & 13\\
2,000 & 0.040 & 0.04 & 19\\
10,000 & 0.040 & 0.01 & 14\\
10,000 & 0.040 & 0.02 & 16\\
50,000 & 0.015 & 0.01 & 19\\
50,000 & 0.040 & 0.01 & 15\\
\bottomrule
\end{tabular}
\caption{Every condition with an unavailable Cantor point estimate. The other 36 conditions each produce 20 finite estimates. All conditions still return finite-scale intervals, which can be as wide as $[0,3]$.}
\label{tab:cantor-control-missing}
\end{table}

%% file: proof_audit.tex
\section{Proof audit in logical order}
\label{app:precision-audit}
The manuscript contains four theorems, six lemmas, eleven propositions,
and one corollary: 22 mathematical statements, together with one
observation assumption. Their printed numbering shares a counter with
that assumption. This audit checks the displayed arguments and the
assumptions at each imported theorem's interface. It is not
proof-assistant verification or an independent proof of the cited
geometric and concentration theorems.

\begin{longtable}{p{.15\linewidth}p{.77\linewidth}}
\toprule Group / count & Statements and calculation checked\\
\midrule\endhead
I / 3 & Propositions~\ref{prop:nonidentifiability}, \ref{prop:interval},
\ref{prop:haarbeta}: Gaussian convolution obstruction; fixed-query
projection distortion; exact Beta quantiles.\\[4pt]
II / 8 & Lemmas~\ref{lem:paircov}, \ref{lem:core-gaussian},
\ref{lem:endpoint}, \ref{lem:population-transfer}, \ref{lem:cover};
Proposition~\ref{prop:shellres}; Theorem~\ref{thm:rect};
Corollary~\ref{cor:rect}: raw/unbiased covariance, conditional Gaussian
law, terminal completion, shell weights, true-ball transfer, mass and
covering.\\[4pt]
III / 3 & Proposition~\ref{prop:conditional-dimension};
Theorems~\ref{thm:dimension-consistency}, \ref{thm:hard-repeated}:
population score, actual-ball covariance, shrinking margin,
replication and count schedules, sharp shell transfer, and binomial
candidate retention.\\[4pt]
IV / 1 & Theorem~\ref{thm:hard-single}: two separate noise intervals,
Fourier and analytic energy branches, stochastic/cutoff/noise-width
errors, and the Ahlfors energy exponent.\\[4pt]
V / 5 & Proposition~\ref{prop:no-condensation},
Lemma~\ref{lem:clipbias}, Propositions~\ref{prop:matrixbernstein},
\ref{prop:flatness}, \ref{prop:lift}: fourth moments, improved clipping
constants, covariance concentration, one-sided flatness, and a
conditional ambient lift.\\[4pt]
VI / 2 & Propositions~\ref{prop:terminalfeasible}, \ref{thm:adaptive}:
terminal exponent inequalities, positive contrast denominators,
conservative enclosure, and the intersection--union error budget.\\
\bottomrule
\end{longtable}

\paragraph{The precise geometric bridge.}
For the local graph and density assumptions of
Theorem~\ref{thm:dimension-consistency}, the proof establishes
\[
\left\|r^{-2}C_x(r)-\frac{P_{T_xM}}{d+2}\right\|_{\op}
\le\varepsilon_{\rm geo}(r)\longrightarrow0.
\]
The limiting centered eigenvalues are $(1-d/q)/(d+2)$ in tangent
directions and $-d/[q(d+2)]$ in normal directions. Their distance from
zero is at least
$g_{d,q}=\min(d/q,1-d/q)/(d+2)>0$ when $1\le d<q$.
Thus an $o(r^2)$ covariance error, including membership error, suffices
for eventual correct mean crossing. The error conditions are
$Nr^{d+4}/\log N\to\infty$ for the independent analysis view and
$Nr^d/\log N\to\infty$ for accurate localization averages; both require
$kr^2/\log N\to\infty$. These compare different estimators with growing
replication, not fixed-total-observation minimax risks.
The local covariance principle is credited to
\citet{lim2024,little2017,kaslovsky2014,aamari2019}.

\paragraph{Numerical precision is another error term.}
The consistency statements concern the mathematical estimators. If a
computed covariance $\widetilde C_N$ has a verified deterministic error
$\|\widetilde C_N-\widehat C_N\|_{\op}\le\xi_N$, triangle inequality
replaces the covariance radius $E_N$ by $E_N+\xi_N$. The centered
eigenvalue error is at most $2(1-1/q)(E_N+\xi_N)$.
Consequently the same consistency argument applies if
$\xi_N/r_N^2\to0$. Fixed machine precision alone does not establish this
as $r_N\to0$. The supplied float64 routines use tie and distance guards;
they are not validated interval arithmetic. Their certificate flags
implement statistical formulas, with this arithmetic limitation stated
in the returned metadata. No empirical success rate proves an
almost-sure assertion.

\paragraph{Checks that prevent invalid extensions.}
The population concentration event is formed for the true iid ball law
before intersection with the noisy localization event. The core itself
is never silently declared iid. A positive lower mass bound, independent
anchors, a common target, and infinitely many accepted shrinking-scale
epochs are required for the rectifiability consequence. The sphere
invalidates applying affine noise calibration to every curved manifold;
the Cantor example invalidates identifying an integer covariance count
with every Ahlfors dimension. The profile CLT and studentization retain
their separate triangular-array moment, relative-amplitude, and
normalized covariance assumptions. The attached audit records these
boundary checks and executable algebraic regressions statement by
statement.

\paragraph{Strengthened supporting calculations.}
Lemma~\ref{lem:clipbias} and Proposition~\ref{prop:matrixbernstein} now
use the sharp pointwise clipping coefficient $1/4$. The latter also uses
the exact conditional Gaussian fourth moment. Their optimized clipped
covariance radius becomes $\sqrt{K_{4,j}c_j}$; the previous conservative
formula was $2\sqrt{K_{4,j}c_j}$ for the same supplied moment bound.
The additional refinement uses
$\E(A-\E A)^2\preceq R^4I_q/4$ for $0\preceq A\preceq R^2I_q$
and exact inversion of the matrix Bernstein tail
\citep[Theorem~1.4]{tropp2012}. With $v_j=x_{\rm cov}/n_j^{\rm tr}$,
the coefficient is now $c_j=v_j/3+\sqrt{v_j/2+v_j^2/9}$.
This is a sharper application of the classical inequality, not a new
concentration theorem. These changes concern the specified auxiliary clipped-pair estimator;
they are not retroactively attached to the ordinary sample covariance
used in the figures.

%% file: fractal_controls_calculations.tex
\subsection{Exact references for the two added controls}
\label{app:fractal-control-calculations}
For $f(z)=z^2$, $|f^n(z)|=|z|^{2^n}$; the boundary of the bounded-orbit
set is the unit circle. Its smoothness, reach one and constant arclength
density give an elementary control within the smooth local class.
At $x=(1,0,0)$, a true Euclidean ball of radius $0<r<2$ restricts the
uniform angle to $[-\alpha,\alpha]$, where $\alpha=2\arcsin(r/2)$.
Integration of $\cos\theta$, $\cos^2\theta$ and $\sin^2\theta$ gives
\begin{align}
C_x(r)&=\operatorname{diag}\left(
\frac12+\frac{\sin(2\alpha)}{4\alpha}
 -\left(\frac{\sin\alpha}{\alpha}\right)^2,
\frac12-\frac{\sin(2\alpha)}{4\alpha},0\right),\nonumber\\
\lambda_{\rm radial}(r)&=r^4/45+O(r^6),\qquad
\lambda_{\rm tangent}(r)=r^2/3+O(r^4).
\label{eq:control-circle-covariance}
\end{align}
Consequently $r^{-2}C_x(r)\to\operatorname{diag}(0,1/3,0)$.
For $r\ge2$ the covariance is $\operatorname{diag}(1/2,1/2,0)$ and the
strict mean crossing is two. The global spectrum therefore does not
identify the one-dimensional local manifold. The experiment's
$r\le1.20$ population crossings follow by evaluating the displayed
formula, not by extending its small-radius limit to every radius.
The conditional centroid is $(\sin\alpha/\alpha,0,0)$, explaining the
inward bias of a local affine-plane projection even with exact local
covariance. Such a projection is an additional numerical operation;
dimension consistency does not imply that it reduces location error.

The unscaled Cantor measure above is Ahlfors regular; the affine image
$Z=(2C-1,0,0)$ preserves its exponent $s=\log2/\log3$ and has global
affine rank one. These facts verify the measure and identification
premises for the energy control. They are classical properties, not
inferred from the rendered points. The classical self-similar measure
theory under the open set condition is credited to
\citet{hutchinson1981}. The real-valued energy exponent is distinct from
the mean-crossing count, which is one for this line-supported measure.

For an explicit deterministic energy reference, truncate $C$ after $D$
digits and put $Z_D=(2C_D-1,0,0)$. The first-coordinate difference
$\Delta_D=Z_{D,1}-Z'_{D,1}$ has independent digits
\[
\Delta_D=\sum_{j=1}^D A_j3^{-j},\qquad
\Pp(A_j=-4)=\Pp(A_j=4)=1/4,\quad\Pp(A_j=0)=1/2.
\]
Thus the $3^D$-term weighted sum for
$\mathcal E_D(r)=\E e^{-\Delta_D^2/(2r^2)}$ is exact before numerical
evaluation. Coupling the infinite and finite expansions gives
$|\Delta-\Delta_D|\le2\,3^{-D}$, since each omitted coordinate tail lies
in $[0,2\,3^{-D}]$. The Gaussian kernel has derivative bounded by
$1/(r\sqrt e)$, its maximum at absolute argument $r$. Therefore
\begin{equation}
|\mathcal E_\mu(r)-\mathcal E_D(r)|
\le\frac{2\,3^{-D}}{r\sqrt e}.
\label{eq:control-cantor-tail}
\end{equation}
Intersecting the resulting energy bounds with $[0,1]$ and applying the
monotone log-ratio endpoints gives the reference slope band. This is an
analytic digit-truncation guarantee; the implementation uses ordinary
double precision and does not give a validated floating-point enclosure.
Independent quadrature of the circle moments and a direct enumeration
of all pairs of depth-four Cantor points check the reference implementation.
The depth-40 sampling error per latent coordinate is at most
$2\,3^{-40}$ before floating-point rounding. This is negligible at the
tested radii, but a fixed-depth model has dimension zero in its own
arbitrarily-small-scale limit. The mathematical consistency result refers
to the infinite Cantor measure and its stated schedules, not to that
fixed-depth limit or to the finite parameter grid.

%% file: list_of_codes.tex
\section{List of Codes}
\label{app:code-map}
Directory-qualified paths below are relative to the source archive;
short Python filenames in C04--C07 are in
\path{benchmark_comparison/code/}. The archive includes
executable source, observation arrays for direct replay, returned arrays,
seed and split definitions, error records, and a SHA256 manifest. No
reference array is an input to the reconstruction function. Code and
numeric records, rather than a screenshot, define each lower panel.

\begin{longtable}{p{.09\linewidth}p{.24\linewidth}p{.58\linewidth}}
\toprule ID & Result or figure & Executable source and evidence\\
\midrule\endhead
C00 & Referee entry point & \path{code/reproduce.py}; all commands below
run from the source root. The default operations use included data.\\[4pt]
C01 & Theorem~\ref{thm:rect}, Proposition~\ref{thm:adaptive} &
\path{code/core_certificate.py}: unbiased core residual, population and
mass bounds, contrast interval, and nonempty equivalence acceptance.\\[4pt]
C02 & Theorem~\ref{thm:dimension-consistency} &
\path{code/intrinsic_dimension.py}: independent-analysis pilot,
shrinking-scale schedule, covariance radius and cross-fitting.\\[4pt]
C03 & Theorems~\ref{thm:hard-repeated}, \ref{thm:hard-single} &
\path{code/hard_calculations.py}; repeated-view interface in
\path{benchmark_comparison/code/id_estimation/dimension.py}.
Outputs distinguish point rank, spectral certificate, and conditional
geometric certificate.\\[4pt]
C04 & Swiss roll, torus, Lorenz and R\"ossler &
\path{benchmark_comparison/code/audited_point_figures.py} calls
\path{reconstruct_visuals.py}: \texttt{local\_project}; observations,
returned values and full fit records are in
\path{benchmark_comparison/audited_points/}.\\[4pt]
C05 & MNIST & \path{benchmark_comparison/code/reconstruct_visuals.py}:
\texttt{mnist}; \path{plot_reconstruction_revision.py}:
\texttt{mnist}, \texttt{mnist\_sensitivity}. The included official
\path{benchmark_comparison/data/mnist.npz} is checked by SHA256;
training indices, all 40 test indices and returned pixels are saved.\\[4pt]
C06 & Dependent KS trajectory diagnostics &
\path{benchmark_comparison/code/numerics.py}:
\texttt{KSSolver}; \path{reconstruct_visuals.py}: \texttt{ks};
\path{plot_reconstruction_revision.py}: \texttt{ks\_state},
\texttt{ks\_volume}. Arrays are in
\path{benchmark_comparison/revision_20260913/}.\\[4pt]
C07 & Independent KS rollouts &
\path{benchmark_comparison/code/ks_batch.py},
\path{ks_rollout_study.py}, \path{ks_rollout_evaluate.py}, and
\path{plot_ks_rollouts.py} in the same directory.
\path{benchmark_comparison/ks_rollouts_v4/data/} includes observed
pilot/test arrays and saved projections.\\[4pt]
C08 & Direct KS replay & \path{code/replay_ks_observations.py}: recomputes
all local projections from observations and checks ranks, core counts,
abstentions and output values. Clean references are used only for
comparison after fitting.\\[4pt]
C09 & Population/statistical plots &
\path{code/render_manuscript_figures.py}: exact population formulas,
independent quadrature and reaggregation of stored trial records.
Plot replay creates no new Monte Carlo trials.\\[4pt]
C10 & Proof audit and improved constants &
\path{code/validate_all_proofs.py},
\path{code/verify_precision_revision.py},
\path{code/sharp_covariance_bounds.py}.
The statement ledger distinguishes premises, calculations and limits.\\[4pt]
C11 & PDE numerical diagnostics &
\path{benchmark_comparison/code/ks_rollout_validate.py}: time-step,
spatial-grid and batched/scalar comparisons. Its measured long-time
errors are not a proof of continuum convergence.\\
C12 & Fast referee review & \path{code/replay_display_outputs.py} checks
48 point/ODE projections and all 40 displayed MNIST outputs from observed
inputs. The \texttt{quick} stage adds 108 KS projections, exact algebra,
citation and file checks. \path{audit/precision_review/figure_provenance.json}
binds the 13 comparison/diagnostic PDFs to plot code and arrays.\\
C13 & Smooth Julia and Cantor controls & \path{code/fractal_controls.py}
calls the existing averaged fit, local projection and single-view energy
estimator. All trial records and 18 first-trial observation cohorts are in
\path{benchmark_comparison/fractal_controls/}. Its replay also checks all
96 displayed circle projections. \path{code/summarize_fractal_controls.py}
checks independent reference calculations and regenerates the complete
tables, including failed point estimates.\\
\bottomrule
\end{longtable}

\paragraph{Commands and cost.}
Python package versions are pinned in \path{requirements.txt}; a TeX
installation with \texttt{pdflatex} and \texttt{bibtex} is also required.
The exact Python and numerical-library versions used are recorded in the
manifest. NumPy, SciPy and Matplotlib are credited to
\citet{numpy2020,scipy2020,matplotlib2007}. The computational implementations
of local PCA and the PDE integrator retain the credits given in the text.
\begin{lstlisting}
python -m pip install -r requirements.txt
# Short review using the included observations and PDFs:
python code/reproduce.py --stage quick

# Full algebra, figure rendering and document build:
python code/reproduce.py --stage checks
python code/reproduce.py --stage figures
python code/reproduce.py --stage build

# Refit from supplied observations (no PDE integration):
python code/reproduce.py --stage mnist
python code/reproduce.py --stage ks-fit
python code/reproduce.py --stage fractal-replay

# Recreate the sample itself from the declared seed:
python code/reproduce.py --stage points
python code/reproduce.py --stage ks-seed --workers 8
python code/reproduce.py --stage trajectory
python code/reproduce.py --stage fractal-controls
# Then regenerate the plots and PDF:
python code/reproduce.py --stage figures
python code/reproduce.py --stage build
\end{lstlisting}
The 61,152-rollout command is substantially more expensive than replotting
or replaying saved observations. Completed simulation batches are
checkpointed and validated against the requested coefficients before reuse.
Independent pilot, anchor and design samples remain distinct. For each
sample-size comparison, the pilot prefixes are nested and the test sample
is fixed. The claimed number of pilot rollouts is never the number of
plotted test outputs.
The short review keeps the original full-query confidence budgets and
checks the first two predeclared queries in every point/ODE and KS
configuration, together with all 40 displayed MNIST queries. It does not
regenerate the data or replace the full-output replay command. It uses
the included PDFs, so a TeX installation is needed only for rebuilding.

\paragraph{The reconstruction actually evaluated.}
The executable \texttt{local\_project(points, x, radius, localization\_h)}
first constructs the core, computes its full covariance spectrum (using
an exact dual Gram matrix when the core is smaller than $q$), applies the
strict mean-crossing rank and numerical tie guard, and returns
\[
\bar x_I+U_{\widehat d}U_{\widehat d}^T(x-\bar x_I).
\]
Only observed pilot points and the observed query occur in its arguments.
It returns unavailable values if there is no eligible core or the spectrum
is numerically tied. Digit labels, clean test points, and true dimension
are never substituted into this function. It appends a conventional PCA
projection to the estimator; none of the four theorems guarantees this
projection's image quality or field error.

\input{certificate_pseudocode.tex}

\paragraph{Current runs and retained records.}
The point/ODE and independent KS samples, the MNIST refit and the
4,000-trial population-certificate run come from the precision revision.
Dependent KS diagnostics and other controlled-model records retain their
declared historical data. The focused review reused those observations,
sharpened the auxiliary covariance bound and ran the short replay;
its record is \path{audit/precision_review/reproduce_quick.json}.
The present addition freshly generates the smooth Julia and Cantor
controls from their own seed, retaining every trial and unavailable
output. Its two figures compare independent references with actual
observed estimators. The Cantor panels report energy slopes, not
geometric reconstructions. The 18 stored control fits and 96 displayed
circle projections replay with maximum absolute difference zero in the
recorded environment. This addition performs no new KS integration.
No further mathematical statement is inferred from a figure.

%% file: certificate_pseudocode.tex
\subsection{Certificate pseudocode}
\label{sec:algorithm}
\begin{figure}[H]
\centering
\fbox{\begin{minipage}{0.94\linewidth}\small
\textbf{Input:} target-coordinate localization/analysis views satisfying Assumption~\ref{ass:honest}; independent calibration, design, anchor, residual and mass blocks; candidate dimensions/windows; fixed error budgets.\\
\textbf{1. Identify and freeze.} Fix the target measure/metric. On design data propose $d$ (optionally using OC-CIGD), train all projectors, and select radii without inspecting validation outcomes. Freeze $q,d$ within each ladder.\\
\textbf{2. Localize.} Apply simultaneous intervals to the complete pre-specified pair family in the residual and mass blocks. Compute core/possible sets. Abstain at a required query if $n_j<2$.\\
\textbf{3. Compute population intervals.} Form unbiased core covariance $\widehat U_j$, exact-Gaussian radius $e_j$, terminal endpoint bound $\mathcal C_j$, and the separate population radius $g_j$. Evaluate \eqref{eq:pop-lower}--\eqref{eq:taildom} and mass intervals.\\
\textbf{4. Report resolved-scale geometry.} Return the upper Jones bounds \eqref{eq:finite-beta} on the queried scale range. The limit statement requires Corollary~\ref{cor:rect}, including spatial covering and repeated acceptance.\\
\textbf{5. Optional scaling validation.} On frozen geometric triples use Proposition~\ref{thm:adaptive}, including a pre-specified remainder envelope. Accept an equivalence claim only for a nonempty confidence interval contained in its band.\\
\textbf{6. Two directions.} Repeat honestly in the reverse direction and allocate the global error budget to their union. \texttt{NO CERTIFICATE} is abstention.
\end{minipage}}
\caption{Population certification procedure. Every returned population bound includes finite-core normalization and empirical-to-population uncertainty. Independent repeated views are one concrete way to satisfy the observation design.}
\label{alg:main}
\end{figure}